\newcommand{\R}{{\mathbb R}}       
\newcommand{\Z}{{\mathbb Z}}       
\newcommand{\BB}{{\mathcal B}}
\newcommand{\DD}{{\mathcal D}}
\newcommand{\FF}{{\mathcal F}}
\newcommand{\HH}{{\mathcal H}}
\newcommand{\GG}{{\mathcal G}}
\newcommand{\LL}{{\mathcal L}}
\newcommand{\AZ}{{\mathcal A}}
\newcommand{\TT}{{\mathcal T}}
\newcommand{\RR}{{\mathcal R}}
\newcommand{\CH}{{\mathcal Ch}}
\newcommand{\SSS}{{\mathcal S}}
\newcommand{\CC}{{\mathcal C}}
\newcommand{\diam}{\operatorname{diam}}
\newcommand{\dist}{\operatorname{dist}}
\newcommand{\rf}[1]{{\eqref{#1}}}
\newcommand{\supp}{\operatorname{supp}}
\newcommand{\vphi}{{\varphi}}
\newcommand{\ve}{{\varepsilon}}
\newcommand{\vv}{{\vspace{2mm}}}
\newcommand{\wt}[1]{{\widetilde{#1}}}
\newcommand{\wh}[1]{{\widehat{#1}}}
\newcommand{\rest}{{\lfloor}}
\newcommand{\sss}{\operatorname{Stop}}
\newcommand{\ttt}{\operatorname{Top}}
\newcommand{\tree}{\operatorname{Tree}}
\newcommand{\roo}{\operatorname{Root}}
\newtheorem{theorem}{Theorem}[section]
\newtheorem{lemma}[theorem]{Lemma}
\newtheorem{coro}[theorem]{Corollary}
\newtheorem*{theorem*}{Theorem}
\theoremstyle{definition}
\newtheorem{definition}[theorem]{Definition}
\theoremstyle{remark}
\newtheorem{rem}[theorem]{\bf Remark}
\numberwithin{equation}{section}
\newcommand{\brem}{\begin{rem}}
\newcommand{\erem}{\end{rem}}
\definecolor{uuuuuu}{rgb}{0.27,0.27,0.27}
\begin{document}

\title[Riesz transforms on uniformly disconnected sets]{Riesz transforms of non-integer homogeneity on uniformly disconnected sets}

\author{Maria Carmen Reguera and Xavier Tolsa}

\address{Maria Carmen Reguera. Departament de
Ma\-te\-m\`a\-ti\-ques, Universitat Aut\`onoma de Bar\-ce\-lo\-na, Catalonia, Spain and School of Mathematics, University of Birmingham, Birmingham, UK.} \email{m.reguera@bham.ac.uk}

\address{Xavier Tolsa. Instituci\'{o} Catalana de Recerca i Estudis Avan\c{c}ats (ICREA) and Departament de
Ma\-te\-m\`a\-ti\-ques, Universitat Aut\`onoma de Bar\-ce\-lo\-na, Catalonia} \email{xtolsa@mat.uab.cat}

\thanks{Both authors were partially supported by grants 2009SGR-000420 (Catalonia), and MTM-2010-16232 and  MTM2013-44304-P (Spain). M.C.R. was also supported by the Juan de la Cierva programme 2011.
X.T. was also funded by the European Research
Council under the European Union's Seventh Framework Programme (FP7/2007-2013) /
ERC Grant agreement 320501 and by 2014 SGR 75 (Catalonia).}

\begin{abstract}
In this paper we obtain precise estimates for the $L^2$ norm of the $s$-dimensional Riesz transforms
on very general measures supported on Cantor sets in $\R^d$, with $d-1<s<d$. From these estimates we
infer that, for the so called uniformly disconnected compact sets, the capacity $\gamma_s$ associated with the Riesz kernel $x/|x|^{s+1}$ is comparable
to the capacity $\dot{C}_{\frac{2}{3}(d-s),\frac{3}{2}}$ from non-linear potential theory.  
\end{abstract}

\maketitle

\section{Introduction}

In this paper we provide estimates from below for the $L^{2}$ norm of the $s$-dimensional Riesz transform of measures supported on very general Cantor sets in $\R^d$, for $s\in(d-1,d)$. The bounds obtained are written in terms of the densities of the cubes appearing in the construction of the Cantor sets. Our estimates 
 allow us to establish an equivalence between the capacity $\gamma_{s}$ associated with the s-dimensional Riesz kernel and the capacity $\dot{C}_{\frac{2}{3}(d-s),\frac{3}{2}}$ from non-linear potential theory for the so called uniformly disconnected compact sets.  

In this paper we combine some of the techniques of previous works on the 
subject by Mateu and Tolsa \cite{MT}, \cite{T}, with others from Eiderman, Nazarov, Tolsa, and Volberg \cite{ENV12}, \cite{NToV}.
 The general case for arbitrary compact sets still remains open. 
 
 To state our results precisely, we need to introduce some notation.
For $0<s<d$ and $x\in\R^d\setminus\{0\}$, we denote
$$K^s(x)=\frac x{|x|^{s+1}},$$
and we let $\RR^s$ be the associated Riesz transform, so that for a measure $\mu$ in $\R^d$ and $x\in\R^d$,
$$\RR^s \mu(x) =\int K^s(x-y)\,d\mu(y),$$
whenever the integral makes sense. To avoid delicate issues with convergence, we will work with the truncated Riesz transform
$$\RR^s_{\ve}  \mu(x) =\int_{|x-y|>\ve} K^s(x-y)\,d\mu(y).$$
We say that $\RR^s \mu$ is bounded in $L^{2}(\mu)$ if the truncated Riesz transforms $\RR^s_{\ve} \mu$ are bounded in $L^{2}(\mu)$ uniformly in $\ve$.

We now construct the Cantor sets $E$ that we will study, by the following algorithm. Let $Q^0\subset \R^d$ be a compact set. Take now disjoint closed subsets $Q^1_1,\ldots,Q^1_{N_1}\subset Q^0$
and set $E_1= \bigcup_{i=1}^{N_1} Q_i^1$. In the general step $k$ of the construction,
we are given a family of closed sets $Q_1^k,\ldots,Q^k_{N_k}$. Then for each set $Q_i^k$ we take
a finite family of closed sets $Q^{k+1}_j$, $j\in I_{Q_i^k}$, contained in $Q_i^k$ and we denote 
$$\CH(Q_i^k)=\{Q^{k+1}_j\}_{j\in I_{Q_i^k}}$$
(the notation $\CH(Q_i^k)$ stands for ``children'' of $Q_i^k$).
Renumbering these cubes if necessary, we set
$$\{Q_1^{k+1},\ldots,Q_{N_{k+1}}^{k+1}\} := \bigcup_{j=1}^{N_k}\CH(Q_i^k).$$
Then we denote 
$$E_k = \bigcup_{i=1}^{N_{k+1}} Q_i^{k+1},\qquad E=\bigcap_{k=1}^\infty E_k,$$
so that $E$ is a compact set. 
The sets $Q^k_i$ in this construction will be called ``cubes'', although they need not be ``true'' cubes.
The side length of $Q_i^k$ is $\ell(Q_i^k):=\diam(Q_i^k)$.
In this construction we assume that, for all $i,k$,  
\begin{equation}\label{eq000}
\frac18\,\ell(Q_i^k)\leq \ell(Q_j^{k+1})\leq \frac13\,\ell(Q_i^k)\quad\mbox{ for $Q_j^{k+1}\in\CH
(Q_i^k)$,}
\end{equation}
and the separation condition
\begin{equation}\label{eqsepara}
\dist(Q_j^{k+1},Q_{j'}^{k+1})\geq c_{sep}\,\ell(Q_i^k)\quad \mbox{for all $Q_j^{k+1}, Q_{j'}^{k+1}\in\CH
(Q_i^k)$},
\end{equation}
for some fixed constant $c_{sep}>0$. These conditions guaranty that $E$ is a totally disconnected set.
 In particular, we infer that
$$\dist(Q_i^k,\,E\setminus Q_i^k)\geq c^{-1}\,\ell(Q_i^k).$$
Notice that we allow the family of children of $Q_i^k$ to be formed by a single cube $Q_j^{k+1}$.
On the other hand, the assumptions \rf{eq000} and \rf{eqsepara} imply that the number of children is
bounded above uniformly.

\vv
We denote by $\DD$ the family of all the cubes $Q_i^k$ in the construction above. That is,
$$\DD = \{Q_i^k\}_{\substack{1\leq k \leq\infty \\ 1\leq i\leq N_k}}.$$
Given a measure $\mu$ supported on $E$ and a cube $Q\in\DD$, we consider the $s$-dimensional density of 
$\mu$ on $Q$:
$$\Theta^{s}_{\mu}(Q) = \frac{\mu(Q)}{\ell(Q)^s}.$$
We can now state our main result.

\begin{theorem}\label{teopri}
Let $s\in (d-1,d)$.
Let $\mu$ be a finite Borel measure supported on the Cantor set $E\subset\R^d$ described above. Suppose that 
$$\sup_{Q\in \DD} \Theta_\mu^s(Q)\leq C.$$
Then, 
\begin{equation}\label{eqdyad1}
\|\RR^s \mu\|_{L^2(\mu)}^2 \approx  \sum_{Q\in\DD} \Theta_\mu^s(Q)^2\,\mu(Q),
\end{equation}
where the comparability constant depends only on $s$, $d$ and $c_{sep}$.
\end{theorem}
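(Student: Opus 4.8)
The plan is to prove the two inequalities in \rf{eqdyad1} separately, the lower bound being the substantial one. For the upper bound, I would show the pointwise (or $L^2$-averaged) estimate that the truncated Riesz transform is controlled by a square function built from the densities. Concretely, for $x\in Q_i^k\in\DD$, the kernel $K^s(x-y)$ for $y\in E\setminus Q_i^k$ is roughly $\ell(Q_i^k)^{-s}$ in size when $y$ lies in the ``annular'' region at the scale of $Q_i^k$, because of the separation \rf{eqsepara} and \rf{eq000}; summing the contributions of all ancestors of the point $x$ and using that the scales decrease geometrically (so the geometric series converges), one gets $|\RR^s_\ve\mu(x)|\lesssim \sum_{Q\ni x}\Theta^s_\mu(Q)$, and then by Cauchy--Schwarz against the geometric decay of scales, $|\RR^s_\ve\mu(x)|^2\lesssim \sum_{Q\ni x}\Theta^s_\mu(Q)^2$. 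Integrating $d\mu(x)$ and swapping the order of summation gives $\|\RR^s\mu\|_{L^2(\mu)}^2\lesssim \sum_{Q\in\DD}\Theta^s_\mu(Q)^2\mu(Q)$, using the density bound $\sup_Q\Theta^s_\mu(Q)\le C$ only mildly (to absorb constants). Here the uniform disconnectedness is essential: it is what makes the kernel ``almost constant'' on each dyadic annulus and lets the sum telescope cleanly.

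For the lower bound I would exploit a suitable orthogonality/almost-orthogonality decomposition of $\RR^s\mu$ adapted to the tree $\DD$. The natural idea, following \cite{MT} and \cite{T}, is to write $\mu=\sum_{k}(\mu_{k+1}-\mu_k)$ where $\mu_k$ is an approximation of $\mu$ that is ``flat'' (say, a normalized sum of point masses or uniformly distributed pieces) at the scale of the cubes $Q_i^k$, and to use the smoothing and cancellation properties of $K^s$ together with \rf{eq000}--\rf{eqsepara} to show that $\RR^s(\mu_{k+1}-\mu_k)$ are nearly orthogonal in $L^2(\mu)$ across scales, while each block $\|\RR^s(\mu_{k+1}-\mu_k)\|^2$ is bounded below by a multiple of $\sum_{Q_i^k}\Theta^s_\mu(Q_i^k)^2\mu(Q_i^k)$. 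For this one wants to test $\RR^s$ against cleverly chosen functions (or, dually, estimate $\langle \RR^s_\ve\mu, h\rangle$ for bump-type $h$ supported near a cube $Q$), picking up a contribution proportional to $\Theta^s_\mu(Q)$ from the non-cancellation of the Riesz kernel at the ``hole'' between the children of $Q$. The fact that $s$ is a \emph{non-integer} is what prevents the relevant integrals of $K^s$ over configurations of children from cancelling, and is used to get the lower bound on each block with a constant depending only on $s,d,c_{sep}$.

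The main obstacle is the lower bound, and within it the almost-orthogonality across widely separated scales together with the control of the ``off-diagonal'' interactions between a cube $Q$ and its far-away relatives. One has to show that the single-scale lower bounds do not get destroyed when one sums over all scales, i.e.\ that the error terms coming from interactions between cubes $Q$ and $Q'$ that are not in an ancestor relationship are summable and do not overwhelm the diagonal. I expect the technical heart to be a careful stopping-time / Corona-type argument organizing $\DD$ according to the sizes of the densities $\Theta^s_\mu(Q)$ (distinguishing cubes with large density jumps from those where the density is roughly stable along a branch), combined with the kernel estimates exploiting \rf{eq000}, \rf{eqsepara}, and non-integrality of $s$; this is presumably where the techniques of \cite{ENV12} and \cite{NToV} on the Riesz transform enter. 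Once both directions are in place, \rf{eqdyad1} follows by combining them, and the stated dependence of the comparability constant on $s,d,c_{sep}$ is then just bookkeeping through the argument.
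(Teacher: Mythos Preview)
Your upper-bound sketch is fine and matches the known argument (indeed the paper simply cites \cite{ENV10} for it). The issue is entirely with the lower bound.

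The block decomposition you describe --- writing $\mu=\sum_k(\mu_{k+1}-\mu_k)$ and arguing almost-orthogonality of $\RR^s(\mu_{k+1}-\mu_k)$ across scales --- is the mechanism that works in \cite{MT} and \cite{T}, but it relies on the fact that in those Cantor sets all cubes of a given generation have the \emph{same} side length and density. Here they do not, and the single-scale blocks no longer carry the right lower bound, nor are they almost-orthogonal in any usable way. You seem to sense this and pivot to a corona decomposition, which is correct, but the description stays at the level of ``organize by density oscillations and use \cite{ENV12}, \cite{NToV} somehow''; the actual content of those inputs is where the proof lives, and it is not what you guess.

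Two concrete gaps. First, you attribute the lower bound to non-integrality of $s$ preventing cancellation in kernel integrals over child configurations. That is not how the argument runs, and it does not explain the hypothesis $s\in(d-1,d)$ rather than $s\notin\Z$. The restriction $d-1<s<d$ enters through a \emph{maximum principle} for $|\RR\nu|^r+\RR^*(h\nu)$ (Lemma~\ref{lemmaxprin} in the paper), which fails for $s<d-1$; this maximum principle is what makes a variational argument go through. Inside each ``tractable'' tree one minimizes a functional of the form $\nu\mapsto\int|\RR\nu+f|^r\,d\nu+\text{penalty}$ over admissible measures, derives a first-variation inequality on $\supp\nu$, extends it to all of $\R^d$ by the maximum principle, and tests against a function $\psi$ with $\RR(\psi\,\LL^d)$ prescribed via Fourier inversion of the Riesz kernel. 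This is the engine that produces the lower bound $\|\RR\eta+f\|_{L^r(\eta)}^r\gtrsim\Theta(HD_1)^r\eta(HD_1)$ and is absent from your outline. Second, the corona decomposition alone does not control the \emph{size} of the trees; one needs to show each tree has finitely many generations in a quantitative sense (Lemma~\ref{lemdif1}). The paper does this with a touching-point argument: inside any cube $R'$ in the tree one finds a ball disjoint from the fine-scale set $F_N(R')$ whose boundary touches it, and at the touching point the Riesz transform of $\chi_{F_{N_0}(R')\setminus P}\mu$ is large (Lemma~\ref{lemtouchingpoint}), forcing many stopping cubes nearby. This geometric step uses $s>d-1$ (so that a cone of small aperture has small $\mu$-mass, Lemma~\ref{lemcon}) and is not a consequence of generic kernel smoothness.

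In short: the corona scaffolding you anticipate is right, but the two load-bearing pieces --- the variational/maximum-principle argument for the key lower bound on tractable trees, and the touching-point argument bounding tree depth --- are missing, and the role you assign to ``$s$ non-integer'' misidentifies why the hypothesis is $d-1<s<d$.
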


We remark that the estimate from above is already known for any $0<s<d$ due to the work of Eiderman, Nazarov and Volberg \cite{ENV10} (extending previous arguments by Mateu, Prat and Verdera \cite{MPV} for the case $0<s<1$). So the novelty lies in the converse inequality. This is a very delicate estimate, in particular because we are not in the realm where the notion of Menger curvature is useful, namely the case $0<s\leq1$.

In the case $0<s<1$,  Mateu, Prat and Verdera \cite{MPV} have shown that\rf{eqdyad1} holds for any arbitrary compact set $E$ (with $\DD$ being the family of all dyadic cubes). Mateu and Tolsa \cite{MT} have studied the general case $0<s<d$ when $\mu$ is the probability measure on a Cantor set in which the densities of the cubes in the construction decrease with side length. This unnecessary restriction is removed by Tolsa in \cite{T}. The Cantor sets considered in \cite{MT} and \cite{T} are some kind of high-dimensional variants of the well known $1/4$ planar Cantor set.
The advantage of these Cantor sets is that, in a given stage $k$ of the construction, all the cubes $Q_i^k$ have the same side lengths and densities. In this paper, we go a step forward by considering much more general Cantor sets and measures where we no longer have the aforementioned properties. On the down side, we have to restrict ourselves to $d-1<s<d$ due to the use of a maximum principle for the $s$-dimensional Riesz transforms which, apparently, fails for $s<d-1$.

Theorem \ref{teopri} can also be understood as a refined quantitative version of the results of Prat \cite{Pr}, Vihtil\"a \cite{Vih} and Eiderman, Nazarov and Volberg \cite{ENV12}, in the
particular case of the preceding Cantor sets. In these works it is shown that,
given $F\subset\R^d$ with $0<\HH^s(F)<\infty$ and $\mu = \mathcal H^{s}|_F$,  
 the $s$-dimensional Riesz transform with respect to  $\mu$ is unbounded in $L^2(\mu)$, in the cases $0<s<1$ \cite{Pr}; $s\in (0,d)\setminus\Z$, $\mu$ with positive lower $s$-dimensional density \cite{Vih}; and for $d-1<s<d$, $\mu$ with zero lower $s$-dimensional density \cite{ENV12}.  
 Finally, there is another quantitative result worth mentioning in the work of Jaye, Nazarov and Volberg \cite{JNV} that relates the boundedness of the fractional $s$-dimensional Riesz transform $d-1<s<d$ with a weak type estimate for a Wolff potential of exponential type.

Theorem \ref{teopri} has an important collorary regarding the capacities
$\gamma_s$ and $ \dot C_{\frac23(d-s),\frac32}$. To present the corollary we need some extra definitions.
Given a compact set $F\subset \R^d$, the capacity $\gamma_s$
of $F$ is defined by
$$
\gamma_s(F) = \sup|\langle T,1\rangle|,
$$
where the supremum runs over all distributions $T$ supported
on $F$ such that $\|\RR^{s}(T)\|_{L^\infty(\R^d)} \leq 1$.

Denote by $\Sigma(F)$ the family of measures $\mu$ supported on $F$ such that  
$\mu(B(x,r))\leq r^n$ for all $x\in\R^d$ and $r>0$. It turns out that
$$\gamma_s(F) \approx \sup\{\mu(F):\,\mu\in\Sigma(F),\,\|\RR^s\mu\|_{L^2(\mu)}^2\leq \mu(F)\}.$$
This was first shown for $s=1,d=2$ by Tolsa \cite{Tolsa-sem}, and it was extended to
the case $s=d-1$ by Volberg \cite{Vo}, and to the other values of $s$ and $d$ by Prat \cite{Laura}.

Now we turn to non linear potential theory. Given
$\alpha>0$ and $1<p<\infty$ with $0<\alpha p <d$, the capacity $\dot C_{\alpha,p}$
of $F\subset\R^d$ is defined as
$$\dot C_{\alpha,p}(F) = \sup_\mu \mu(F)^p,$$
where the supremum is taken over all positive measures $\mu$ supported on $F$ such that
$$I_\alpha(\mu)(x) = \int \frac1{|x-y|^{d-\alpha}}\,d\mu(x)$$
satisfies $\|I_\alpha(\mu)\|_{p'}\leq 1$, where as usual $p'=p/(p-1)$. 

We are interested in the characterization of $\dot C_{\alpha,p}$ in terms of Wolff
potentials. Consider
$$\dot W^\mu_{\alpha,p}(x) = \int_0^\infty \biggl(\frac{\mu(B(x,r))}{r^{d-\alpha p}}\biggr)^{p'-1}\,\frac{dr}r.$$
A classical theorem of Wolff establishes
\begin{equation*}
\dot C_{\alpha,p}(F) \approx \sup_\mu \mu(F),
\end{equation*}
where the supremum is taken over all measures $\mu$ supported on $F$ such that
$\int \dot W_{\alpha,p}^\mu(x)\,d\mu\leq \mu(F)$ (see \cite[Chapter 4]{Adams-Hedberg},
for instance).

Finally we wish to remark that the class of Cantor sets $E$ considered in Theorem \ref{teopri}
coincides with the class of compact {\em uniformly disconnected sets}. According to 
\cite[p.156]{David-Semmes-fractured}, a set $F\subset\R^d$ is called uniformly disconnected
if there exists a constant $c_F>0$ so that for each $x\in F$ and $r>0$ one can find a
closed  (with respect to $F$) subset $A\subset F$ such that $A\subset B(x,r)$, $A\supset F\cap B(x,c_F^{-1}r)$,
and $\dist(A,F\setminus A)\geq c_F^{-1}r$. One can check that any uniformly disconnected set can be constructed as one of the Cantor sets $E$ considered in Theorem \ref{teopri}, 
for a suitable separation constant $c_{sep}$, and replacing the constants $1/8$ and $1/3$ in \rf{eq000}
by others if necessary. Conversely, it is immediate  that any such
set $E$ is uniformly disconnected.

We are now ready to formulate the corollary to Theorem \ref{teopri}.

\begin{coro}
Let $F\subset\R^d$ be compact and uniformly disconnected, with constant $c_F$, and let $d-1<s<d$. Then
\begin{equation} \label{e.capacities}
\gamma_s(F) \approx \dot C_{\frac23(d-s),\frac32}(F),
\end{equation}
with the comparability constant depending only on $d$, $s$, and $c_F$.
\end{coro}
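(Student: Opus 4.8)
The plan is to reduce \eqref{e.capacities} to a comparison between two extremal problems over measures on $F$, one describing $\gamma_s(F)$ and one describing $\dot C_{\frac23(d-s),\frac32}(F)$, and then to identify these via Theorem \ref{teopri}. Since $F$ is compact and uniformly disconnected with constant $c_F$, we first realize $F=E$ as one of the Cantor sets of Theorem \ref{teopri}, with $c_{sep}$ and the ratios replacing $\tfrac18,\tfrac13$ in \rf{eq000} depending only on $d$ and $c_F$ (as recalled just before the statement of the corollary); let $\DD$ be the associated family of cubes. Observe that for $\alpha=\frac23(d-s)$ and $p=\frac32$ one has $p'=3$, $p'-1=2$ and $d-\alpha p=s$, so that
\[
\dot W^\mu_{\frac23(d-s),\frac32}(x)=\int_0^\infty\Bigl(\frac{\mu(B(x,r))}{r^s}\Bigr)^2\,\frac{dr}r .
\]

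The geometric heart of the argument is the comparison, valid for every finite Borel measure $\mu$ supported on $F$,
\begin{equation}\label{eqplanW}
\int \dot W^\mu_{\frac23(d-s),\frac32}\,d\mu\ \approx\ \sum_{Q\in\DD}\Theta^s_\mu(Q)^2\,\mu(Q),
\end{equation}
with constants depending only on $d,s,c_F$. To prove \eqref{eqplanW} I would fix $x\in F$ and consider the chain $Q_0(x)\supset Q_1(x)\supset\cdots$ of cubes of $\DD$ containing $x$, whose side lengths decrease geometrically by \rf{eq000}. Using the separation condition \rf{eqsepara} together with its consequence $\dist(Q,E\setminus Q)\gtrsim\ell(Q)$, one checks that for each $k$ one has $B(x,r)\cap F=Q_k(x)\cap F$ for all $r$ in a sub-interval of $[\ell(Q_k(x)),\ell(Q_{k-1}(x))]$ whose endpoints have a fixed ratio, while for every $r$ the set $B(x,r)\cap F$ is sandwiched between $Q_j(x)\cap F$ and $Q_{j'}(x)\cap F$ with $\ell(Q_j(x))\approx r\approx\ell(Q_{j'}(x))$ and $j-j'$ uniformly bounded. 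Integrating $dr/r$ scale-block by scale-block then gives $\dot W^\mu_{\frac23(d-s),\frac32}(x)\approx\sum_{Q\in\DD:\,Q\ni x}\Theta^s_\mu(Q)^2$ for every $x\in F$ (the outer tail $r\ge\ell(Q_0(x))$ only adds an extra $O(1)\,\Theta^s_\mu(Q_0(x))^2$); integrating this in $d\mu(x)$, using Tonelli and $\mu(\{x\in F:\,Q\ni x\})=\mu(Q)$, yields \eqref{eqplanW}.

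Combining \eqref{eqplanW} with Theorem \ref{teopri}, for every measure $\mu$ on $F$ with $\sup_{Q\in\DD}\Theta^s_\mu(Q)\le C$ we obtain $\|\RR^s\mu\|_{L^2(\mu)}^2\approx\int \dot W^\mu_{\frac23(d-s),\frac32}\,d\mu$. Now I would invoke the two dual descriptions recalled in the introduction, $\gamma_s(F)\approx\sup\{\mu(F):\,\mu\in\Sigma(F),\ \|\RR^s\mu\|_{L^2(\mu)}^2\le\mu(F)\}$ and Wolff's theorem $\dot C_{\frac23(d-s),\frac32}(F)\approx\sup\{\mu(F):\ \int\dot W^\mu_{\frac23(d-s),\frac32}\,d\mu\le\mu(F)\}$. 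For $\gamma_s(F)\lesssim\dot C_{\frac23(d-s),\frac32}(F)$, start from a near-extremal $\mu$ for the first problem; since $\mu\in\Sigma(F)$ forces $\sup_Q\Theta^s_\mu(Q)\le1$, the displayed equivalence applies and $\int\dot W^\mu\,d\mu\lesssim\|\RR^s\mu\|_{L^2(\mu)}^2\le\mu(F)$, so after multiplying $\mu$ by a suitable dimensional constant $t\le1$ — using the homogeneity $\int\dot W^{t\mu}\,d(t\mu)=t^{p'}\int\dot W^\mu\,d\mu$ — one gets an admissible competitor for Wolff's problem with comparable mass. For the reverse inequality one argues symmetrically from a near-extremal $\mu$ for $\dot C_{\frac23(d-s),\frac32}(F)$; the one external input needed here is that such a $\mu$ may be taken with $s$-growth, $\mu(B(x,r))\lesssim r^s$ (equivalently $\sup_Q\Theta^s_\mu(Q)\lesssim1$), which is the classical fact that the capacitary measure associated with $\dot C_{\alpha,p}$ has bounded Wolff potential and hence polynomial growth (see e.g.\ \cite{Adams-Hedberg}). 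Then Theorem \ref{teopri} applies, $\|\RR^s\mu\|_{L^2(\mu)}^2\approx\int\dot W^\mu\,d\mu\le\mu(F)$, and rescaling $\mu$ down by a dimensional constant turns it into an admissible competitor for the $\gamma_s$ problem, giving $\dot C_{\frac23(d-s),\frac32}(F)\lesssim\gamma_s(F)$.

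The serious difficulty is entirely carried by Theorem \ref{teopri}, which we take as given; within the corollary the only delicate points are the staircase estimate \eqref{eqplanW}, where the separation hypotheses \rf{eqsepara} and \rf{eq000} are used in an essential way to pin $\mu(B(x,r))$ down to a single cube for most scales $r$, and the routine bookkeeping of the homogeneities and of the $s$-growth of the extremal measures when transferring between the two dual extremal problems.
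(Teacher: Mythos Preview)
Your proposal is correct and follows essentially the same route as the paper: realize $F$ as a Cantor set $E$, identify $\int\dot W^\mu_{\frac23(d-s),\frac32}\,d\mu$ with $\sum_{Q\in\DD}\Theta_\mu^s(Q)^2\,\mu(Q)$ via the scale-by-scale comparison you describe, and then invoke Theorem~\ref{teopri} together with the dual characterizations of $\gamma_s$ and $\dot C_{\frac23(d-s),\frac32}$. The only difference is that the paper appeals to \cite{ENV10} for the direction $\gamma_s(F)\gtrsim\dot C_{\frac23(d-s),\frac32}(F)$ (already known for arbitrary compact $F$), whereas you re-derive it from Theorem~\ref{teopri} at the cost of the additional, standard, input that a near-extremal measure for $\dot C_{\alpha,p}$ can be taken with $s$-growth.
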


The estimate \eqref{e.capacities} was proved by Mateu, Prat and Verdera when $0<s<1$ in \cite{MPV}. 
By using the upper estimate in the inequality \rf{eqdyad1},  Eiderman, Nazarov and Volberg \cite{ENV10}, 
showed that 
for all indices $0<s<d$
$$\gamma_s(F) \gtrsim \dot C_{\frac23(d-s),\frac32}(F),$$ 
for any compact set $F\subset\R^d$.
It is known that the opposite inequality is false when $s$ is integer, see \cite{ENV10}. 
On the other hand, in the works of Tolsa and Mateu \cite{MT} and \cite{T} mentioned above, it is shown that the comparability \eqref{e.capacities} holds for the Cantor sets studied in these papers for $0<s<d$.
Although our corollary extends the result to a more general family of compact sets when $d-1<s<d$, the general case when $0<s<d$ is non integer and $F$ is a general compact set remains open. 

From Theorem \ref{teopri} it follows easily that 
the comparability
$\gamma_s(E) \approx \dot C_{\frac23(d-s),\frac32}(E)$ holds for the Cantor sets defined above, with the constant in the comparability depending only on $d$, $s,$ and the separation constant $c_{sep}$. Indeed, recall that one just has to show that $\gamma_s(E) \lesssim \dot C_{\frac23(d-s),\frac32}(E)$. To this end,
take $\mu\in\Sigma(E)$ with $\|R^s\mu\|_{L^2(\mu)}^2\leq \mu(E)$ such that
$\gamma_s(E) \approx \mu(E)$. Then, by Theorem \ref{teopri},
$$\sum_{Q\in\DD} \Theta_\mu^s(Q)^2\,\mu(Q)\approx \|\RR^s \mu\|_{L^2(\mu)}^2 \leq\mu(E).$$
It is easy to check that the above sum on the left had side is comparable to
$\int \dot W_{\frac23(d-s),\frac32}^\mu(x)\,d\mu$. So one infers that
$\gamma_s(E) \lesssim \dot C_{\frac23(d-s),\frac32}(E)$, as wished.

In order to prove Theorem \ref{teopri} we will consider a stopping time argument. The stopping conditions will take into account the oscillations of the densities on the different cubes from $\DD$ and  
 the possible large values of the $s$-dimensional Riesz transform on each cube. 
 In this way we will split $\DD$ into different families of  cubes, which we will call ``trees'',
 following an approach similar to the one in \cite{T}. One the main differences of the present work with respect to the latter reference is that, in some key steps,  our work paper implements a variational argument 
 borrowed from work of Eiderman, Nazarov, Volberg \cite{ENV12} and Nazarov, Tolsa and Volberg \cite{NToV}, suitably adapted to our setting. This variational argument requires the
 $s$-dimensional Riesz transforms to satisfy the maximum principle mentioned above.

The paper is organized as follows. Section 2 contains the basic background. Section 3 is devoted to the description of the stopping time argument and the properties associated to it. In Section 4 we prove that the sizes of trees obtained by the stopping time argument must be small. We will use a touching point argument for this purpose, where fact that $s$ is fractional will play an important role. In Section 5 we describe four relevant families of enlarged trees and we start analysing the easier ones. Section 6 contains some Fourier analysis that will be necessary for the development of Section 7. The analysis of the most difficult family of trees is included in Section 7. And at last, Section 8 puts together all the estimates obtained in previous sections to provide the proof of the Main Theorem \ref{teopri}.


\section{Preliminaries}

\subsection{About cubes, trees}

Below, to simplify notation we will write $\RR$, $K$ and $\Theta$ instead of $\RR^s$, $K^s$ and $\Theta_\mu^s$, respectively.

Notice that if we replace the cubes $Q\in\DD$ by 
$$\wh Q = \Bigl\{x\in \R^d: \dist(x,Q)\leq \frac1{10}c_{sep}\,\ell(Q)\Bigr\},$$
the separation condition \rf{eqsepara} still holds with a slightly worse constant. 
Analogously, \rf{eq000} is still satisfied, possibly after modifying suitably the constants $1/8$ and $1/3$. The advantage of $\wh Q$ over $Q$ is that the Lebesgue measure of $\wh{Q}$ is comparable to 
$\diam(\wh Q)^d$, which is not 
guaranteed for $Q$. To avoid some technicalities, in the proof of Theorem \ref{teopri} we will assume that the original cubes $Q\in\DD$
satisfy $\LL^d(Q)\approx\ell(Q)^d$, where $\LL^d$ stands for the Lebesgue measure on $\R^d$.
Moreover, we will assume that the separation constant $c_{sep}$ does not exceed $1/10$, say.

For $j\geq0$, we denote by $\DD_j$ the family of cubes of generation $j$ that appear in the construction of $E$, and we set $\DD=\bigcup_{j\geq0} \DD_j$. We assume that $\mu(Q)>0$ for all $Q\in\DD$. Otherwise we
eliminate $Q$ from the construction of $E$.
If $R\in\DD_j$, we denote by $\DD_k(R)$ the family of the cubes from $\DD_{j+k}$ which are contained in $R$.
Notice that if $Q\in\DD_k(R)$, then
$$8^{-k}\leq\frac{\ell(Q)}{\ell(R)}\leq 3^{-k}.$$

Given a cube $Q\in\DD$, for any constant $a>1$ we denote
$$aQ = \{x\in\R^d:\dist(x,Q)\leq (a-1)\,\ell(Q)\}.$$
Also, we set
$$p(Q) =\sum_{P\in\DD:P\supset Q} \frac{\ell(Q)}{\ell(P)}\,\Theta(P).$$
So $p(Q)$ should be understood as a smoothened version of $\Theta(Q)$. Also, given $Q,R\in\DD$ with 
$Q\subset R$, we set
$$p(Q,R) =\sum_{P\in\DD:Q\subset P\subset R} \frac{\ell(Q)}{\ell(P)}\,\Theta(P).$$

We say that a cube $Q$ is  $p$-doubling (with constant $c_{db}$) if 
\begin{equation}\label{eqdob}
p(Q)\leq c_{db}\,\Theta(Q).
\end{equation}

Given a family of cubes $\TT\subset \DD$, we denote
$$\sigma(\TT) =\sum_{Q\in\TT} \Theta(Q)^2\,\mu(Q).$$
So Theorem \ref{teopri} asserts that $\|\RR \mu\|_{L^2(\mu)}^2 \approx\sigma(\DD)$ under the assumption $\sup_{Q\in \DD} \Theta_\mu^s(Q)\leq C$.
\vv

\begin{lemma}\label{lemdob0}
Suppose that $c_{db}$ is big enough, depending on $a,s,d$.
Let $Q_0,Q_1,\ldots,Q_n$ be a family of cubes from $\DD$ such that $Q_j$ is son of $Q_{j-1}$ for $1\leq j\leq 
n$. Suppose that $Q_j$ is not $p$-doubling for $1\leq j\leq n$.
Then
\begin{equation}\label{eqcad35}
\Theta(Q_j)\leq 2^{-j/2}\,p(Q_0).
\end{equation}
\end{lemma}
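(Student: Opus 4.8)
The plan is to track how $\Theta$ can change from a parent to a non-$p$-doubling son, and to chain these estimates. First I would record the basic relation between $p(\cdot)$ at consecutive generations. From the definition
$$p(Q_j) = \sum_{P\in\DD:P\supset Q_j} \frac{\ell(Q_j)}{\ell(P)}\,\Theta(P) = \Theta(Q_j) + \frac{\ell(Q_j)}{\ell(Q_{j-1})}\sum_{P\supset Q_{j-1}} \frac{\ell(Q_{j-1})}{\ell(P)}\,\Theta(P) = \Theta(Q_j) + \frac{\ell(Q_j)}{\ell(Q_{j-1})}\,p(Q_{j-1}),$$
and since $\tfrac18\ell(Q_{j-1})\le \ell(Q_j)\le\tfrac13\ell(Q_{j-1})$ by \rf{eq000}, this gives
$$p(Q_j) \le \Theta(Q_j) + \tfrac13\,p(Q_{j-1}), \qquad p(Q_j)\ge \Theta(Q_j).$$

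Next I would use the non-$p$-doubling hypothesis: for $1\le j\le n$ we have $p(Q_j) > c_{db}\,\Theta(Q_j)$, i.e. $\Theta(Q_j) < c_{db}^{-1}\,p(Q_j)$. Plugging this into the recursion $p(Q_j)\le \Theta(Q_j) + \tfrac13 p(Q_{j-1})$ yields $p(Q_j) \le c_{db}^{-1}p(Q_j) + \tfrac13 p(Q_{j-1})$, hence, provided $c_{db}\ge 2$ say,
$$p(Q_j) \le \frac{1}{3(1-c_{db}^{-1})}\,p(Q_{j-1}) \le \frac{1}{2}\,p(Q_{j-1})$$
once $c_{db}$ is large enough that $3(1-c_{db}^{-1})\ge 2$, i.e. $c_{db}\ge 3$. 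Iterating from $j$ down to $1$ gives $p(Q_j)\le 2^{-(j-1)}p(Q_0)$. Wait — I should be slightly more careful to land exactly on the exponent $2^{-j/2}$ claimed; in fact one gets the stronger $p(Q_j)\le 2^{-j+1}p(Q_0)$, and since $\Theta(Q_j)\le p(Q_j)$ trivially (the first term in the sum defining $p(Q_j)$), this certainly implies $\Theta(Q_j)\le 2^{-j/2}p(Q_0)$ for $j\ge 1$; for $j=0$ we would need a separate trivial remark, but the hypothesis is stated for $1\le j\le n$ and the conclusion is presumably intended for the same range (or $\Theta(Q_0)\le p(Q_0)$ handles $j=0$ anyway). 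The slack between $2^{-j+1}$ and $2^{-j/2}$ is harmless and probably left in the statement to absorb the role of the constant $a$ and the geometry; alternatively one may only be assuming $c_{db}$ large relative to the crude bound $\ell(Q_j)/\ell(Q_{j-1})\le 1/3$, in which case the factor could be taken as $\tfrac13 + c_{db}^{-1} \le 2^{-1/2}$, again giving the claim directly.

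I do not expect a genuine obstacle here: the whole argument is the elementary observation that $p(\cdot)$ is a geometric-type average that contracts by a fixed factor under passing to a son unless the son carries a definite fraction of the mass (i.e. is $p$-doubling). The only point requiring minor care is choosing $c_{db}$ consistently with whatever other constraints appear elsewhere (the dependence on $a,s,d$ mentioned in the statement must come from those other uses of $p$-doubling, not from this computation, which only needs $c_{db}$ bounded below by an absolute constant); I would simply note that $c_{db}$ can be taken large enough to make the contraction factor at most $2^{-1/2}$, and then \rf{eqcad35} follows by iteration together with $\Theta(Q_j)\le p(Q_j)$.
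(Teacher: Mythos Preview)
Your argument is correct and in fact cleaner than the paper's. You exploit the exact one-step recursion
\[
p(Q_j)=\Theta(Q_j)+\frac{\ell(Q_j)}{\ell(Q_{j-1})}\,p(Q_{j-1}),
\]
feed in the non-$p$-doubling bound $\Theta(Q_j)<c_{db}^{-1}p(Q_j)$, and obtain a contraction $p(Q_j)\le \lambda\,p(Q_{j-1})$ with $\lambda=\tfrac{1}{3(1-c_{db}^{-1})}\le 2^{-1/2}$ for $c_{db}$ large enough; then $\Theta(Q_j)\le p(Q_j)$ finishes. (Your iteration actually gives $p(Q_j)\le 2^{-j}p(Q_0)$, not $2^{-j+1}$, so even the minor worry about $j=1$ evaporates.)

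The paper instead expands $p(Q_j)$ all the way down to the terms $\Theta(Q_{j-k})$ and $p(Q_0)$, and proves $\Theta(Q_j)\le 2^{-j/2}p(Q_0)$ by a direct induction on $j$, substituting the inductive bound into each term of that sum. This works but requires summing a geometric series at each step and is heavier bookkeeping. Your route is shorter because it tracks $p(Q_j)$ rather than $\Theta(Q_j)$ through the chain, which turns the telescoping into a single multiplicative recursion. Both proofs need only an absolute lower bound on $c_{db}$; your remark that the stated dependence on $a,s,d$ must be for compatibility with other uses of $p$-doubling elsewhere is reasonable.
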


\begin{proof}
For $1\leq j \leq n$,
the fact that $Q_j$ is not $p$-doubling implies that
\begin{equation}\label{eqsak33}
\Theta(Q_j) \leq \frac1{c_{db}}\,p(Q_j) = \frac1{c_{db}}\Biggl (\sum_{k=0}^{j-1} \frac{\ell(Q_j)}{\ell(Q_{j-k})}\,
\Theta(Q_{j-k})+ \frac{\ell(Q_j)}{\ell(Q_0)}\,p(Q_0)\Biggr).
\end{equation}
We will prove \rf{eqcad35} by induction on $j$. For $j=0$ this is in an immediate consequence of the
definition of $p(Q)$. Suppose that \rf{eqcad35} holds for $0\leq h\leq j$, with $j\leq n-1$, and let us 
consider the case $j+1$. From \rf{eqsak33} and the induction hypothesis we get
\begin{align*}
\Theta(Q_{j+1}) & \leq  \frac1{c_{db}}\Biggl (\Theta(Q_{j+1}) + \sum_{k=1}^j \frac{\ell(Q_{j+1})}{\ell(Q_{j+1-k})}\,
\Theta(Q_{j+1-k})+ \frac{\ell(Q_{j+1})}{\ell(Q_0)}\,p(Q_0)\Biggr)\\
&\leq \frac1{c_{db}}\Biggl (\Theta(Q_{j+1}) + \sum_{k=1}^j 3^{-k}\,
\Theta(Q_{j+1-k})+ 3^{-j-1}\,p(Q_0)\Biggr)\\
&\leq \frac1{c_{db}}\Biggl (\Theta(Q_{j+1}) + \sum_{k=1}^j 3^{-k}\,2^{(-j-1+k)/2}p(Q_0)
+ 3^{-j-1}\,p(Q_0)\Biggr)
\end{align*}
Since $\sum_{k=1}^j 3^{-k}\,2^{(-j-1+k)/2}\lesssim 2^{-j/2}$, we obtain
\begin{align*}
\Theta(Q_{j+1})  &\leq  \frac1{c_{db}}\bigl (\Theta(Q_{j+1}) + C\,2^{-j/2}\,p(Q_0)+ 3^{-j-1}\,p(Q_0)\bigr)\\
&\leq \frac1{c_{db}}\bigl (\Theta(Q_{j+1}) + \tilde{C}\,2^{-j/2}\,p(Q_0)\bigr) \\
\end{align*}
It is straightforward to check that yields $\Theta(Q_{j+1})\leq 2^{-(j+1)/2}\,p(Q_0)$ if $c_{db}$
is big enough.
\end{proof}

\vv

For the rest of the paper we will fix $c_{db}$ so that \rf{eqcad35} holds.

\begin{lemma}\label{lemdob}
For a fixed $Q\in\DD$, let $J\subset \DD$ be a family of cubes contained in $Q$ such that for every $P\in J$, every 
cube $P'\in\DD$ such that $P\subset P'\subset Q$ is not $p$-doubling. Then
$$\sigma(J)\leq 2\,p(Q)^2\,\mu(Q).$$
\end{lemma}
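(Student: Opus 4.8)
The plan is to partition $J$ into levels according to the size of the cubes and estimate the contribution of each level, using the geometric decay of densities along non-$p$-doubling chains provided by Lemma~\ref{lemdob0}. For $P\in J$, let $n(P)$ be the number of generations separating $P$ from $Q$, i.e. $P\in\DD_{n(P)}(Q)$, and for $j\geq 0$ set $J_j=\{P\in J:n(P)=j\}$. The key point is that by hypothesis every cube $P'$ with $P\subset P'\subset Q$ is not $p$-doubling, so the chain from $Q$ down to $P$ consists entirely of non-$p$-doubling cubes; applying Lemma~\ref{lemdob0} (with $Q_0=Q$) gives $\Theta(P)\leq 2^{-j/2}\,p(Q)$ for every $P\in J_j$. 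Since the cubes in $J_j$ are disjoint and contained in $Q$, we have $\sum_{P\in J_j}\mu(P)\leq \mu(Q)$, hence
\[
\sigma(J_j)=\sum_{P\in J_j}\Theta(P)^2\,\mu(P)\leq \bigl(2^{-j/2}\,p(Q)\bigr)^2\sum_{P\in J_j}\mu(P)\leq 2^{-j}\,p(Q)^2\,\mu(Q).
\]

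Summing over $j\geq 0$ then yields
\[
\sigma(J)=\sum_{j\geq 0}\sigma(J_j)\leq \sum_{j\geq 0}2^{-j}\,p(Q)^2\,\mu(Q)=2\,p(Q)^2\,\mu(Q),
\]
which is exactly the claimed bound. One small point to address is the case $j=0$, i.e. whether $Q$ itself can belong to $J$: if $Q\in J$ then the condition "$P'$ not $p$-doubling for all $P\subset P'\subset Q$" applied with $P=P'=Q$ would force $Q$ itself to be non-$p$-doubling, and Lemma~\ref{lemdob0} with $n=0$ gives $\Theta(Q)\leq p(Q)$ directly from the definition of $p(Q)$, so the bound $\sigma(J_0)\leq p(Q)^2\mu(Q)$ still holds and the argument goes through uniformly; if one prefers, one notes that $J\subset Q$ need not contain $Q$ and the estimate only improves.

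I do not expect any serious obstacle here: the lemma is essentially a bookkeeping consequence of Lemma~\ref{lemdob0} combined with disjointness of cubes at a fixed generation and summation of a geometric series. The only thing requiring a moment's care is making sure the hypothesis is applied correctly — namely that for $P\in J_j$ \emph{all} intermediate cubes $Q_1,\dots,Q_{j-1}$ (the ancestors of $P$ strictly between $P$ and $Q$) as well as $P=Q_j$ itself are non-$p$-doubling, which is precisely what is assumed, so that Lemma~\ref{lemdob0} applies with this full chain and constant $c_{db}$ already fixed earlier in the paper.
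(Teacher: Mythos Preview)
Your proof is correct and follows essentially the same approach as the paper: partition $J$ by generation below $Q$, apply Lemma~\ref{lemdob0} to bound $\Theta(P)$ on each level, use disjointness of same-generation cubes, and sum the resulting geometric series. The paper's argument is identical, though it does not pause to discuss the $j=0$ case separately.
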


\begin{proof}
For $j\geq0$, let $J_j$ be the subfamily of the cubes from $J$ which are $j$ generations below $Q$. That
is, $P$ is from $J_j$ if it belongs to $J$ and it is an $j$-th descendant of $Q$. By the preceding lemma, 
it turns out that 
$$\Theta(P)\leq 2^{-j/2}\,p(Q)\qquad\mbox{ if $P\in J_j$.}$$
Taking also into account that the cubes from $J_j$ are pairwise disjoint, we get
$$\sigma(J_j)\leq 2^{-j}\,p(Q)^2\sum_{P\in J_j:\subset Q}\mu(P) \leq 2^{-j}\,p(Q)^2\,\mu(Q).$$
Therefore,
$$\sigma(J) = \sum_{j=0}^n \sigma(J_j) \leq \sum_{j=0}^\infty 2^{-j}\,p(Q)^2\,\mu(Q) = 2\,p(Q)^2\,\mu(Q),$$
where $n$ is the maximum number of generations between $Q$ and the cubes belonging to $J$.
\end{proof}


\vv
\subsection{The operators $D_Q$}

Given a cube $Q\in\DD$ and a function $f\in L^1(\mu)$, we denote by $m_Qf$ the mean of $f$ on $Q$ with respect to $\mu$. That is, $m_Qf =\frac1{\mu(Q)}\int_{Q} f\,d\mu$. Then we define
$$D_Q f = \sum_{P\in\CH(Q)} \chi_P\,(m_Pf - m_Q f).$$ 
The functions $D_Qf$, $Q\in\DD$, are orthogonal, and it is well known that
$$\|f\|_{L^2(\mu)}^2 = \sum_{Q\in\DD}\|D_Q f\|_{L^2(\mu)}^2.$$

\vv
\subsection{About the Riesz transform}

In the following lemma we collect a pair of useful estimates about the Riesz transform.

\begin{lemma}\label{lemcomp}
Let $Q,R\in\DD$ with $Q\subset R$, and $x,y\in Q$. Then
\begin{equation}\label{eqfacc00}
|\RR(\chi_{R\setminus Q}\mu)(x)|\lesssim \sum_{P\in\DD:Q\subset P\subset R} \frac{\mu(P)}{\ell(P)^s}
\end{equation}
and
\begin{equation}\label{eqfacc11}
|\RR(\chi_{R\setminus Q}\mu)(x)- \RR(\chi_{R\setminus Q}\mu)(y)| \lesssim \frac{|x-y|}{\ell(Q)}\,p(Q,R).
\end{equation}
Also,
\begin{equation}\label{eqfacc12}
\bigl|\RR(\chi_{R\setminus Q}\mu)(x)- \bigl (m_Q(\RR\mu) - m_R(\RR\mu)\bigr)\bigr|\lesssim p(Q,R)  + 
p(R).
\end{equation}
\end{lemma}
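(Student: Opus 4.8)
The plan is to prove the three estimates in turn, the first two being relatively direct and the third being a consequence of the first two together with a telescoping/averaging argument.

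For \rf{eqfacc00}, I would write $R\setminus Q = \bigcup_{j} (P_{j-1}\setminus P_j)$ where $Q = P_m \subset P_{m-1}\subset \cdots \subset P_0 = R$ is the chain of cubes in $\DD$ joining $Q$ to $R$, so that $P_{j-1}\setminus P_j$ is the union of the children of $P_{j-1}$ other than $P_j$ (here I use that $\CH$ partitions $P_{j-1}$ up to the set $E\setminus E_k$ of $\mu$-measure zero). For $x\in Q$ and $y\in P_{j-1}\setminus P_j$, the separation condition \rf{eqsepara} together with \rf{eq000} gives $|x-y|\gtrsim \ell(P_{j-1})$, so $|K(x-y)|\lesssim \ell(P_{j-1})^{-s}$, and hence $|\RR(\chi_{P_{j-1}\setminus P_j}\mu)(x)|\lesssim \mu(P_{j-1})\,\ell(P_{j-1})^{-s}$. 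Summing over $j$ and absorbing the endpoint terms into the stated sum gives \rf{eqfacc00}. For \rf{eqfacc11}, I would use the standard smoothness (mean value) estimate for the Riesz kernel: $|K(x-z) - K(y-z)|\lesssim |x-y|\,|x-z|^{-s-1}$ when $|x-z|\approx |y-z|\gg |x-y|$. Applying this on each annular piece $P_{j-1}\setminus P_j$ with $|x-z|\gtrsim\ell(P_{j-1})\geq \ell(Q)\geq|x-y|$, one gets a bound $\frac{|x-y|}{\ell(Q)}\sum_j \frac{\ell(Q)}{\ell(P_{j-1})}\,\frac{\mu(P_{j-1})}{\ell(P_{j-1})^s}$, and recognizing the sum as (a tail of) $p(Q,R)$ yields \rf{eqfacc11}.

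For \rf{eqfacc12}, write $\RR\mu = \RR(\chi_Q\mu) + \RR(\chi_{R\setminus Q}\mu) + \RR(\chi_{\R^d\setminus R}\mu)$. The middle term, by \rf{eqfacc11}, has oscillation on $Q$ at most $\lesssim p(Q,R)$, so $m_Q(\RR(\chi_{R\setminus Q}\mu))$ equals $\RR(\chi_{R\setminus Q}\mu)(x)$ up to an error $\lesssim p(Q,R)$; for the same reason $m_R(\RR(\chi_{R\setminus Q}\mu)) = 0$ up to controlled terms since $\chi_{R\setminus Q}\mu$ restricted away from $R$ is empty — more carefully, $m_R(\RR(\chi_{R\setminus Q}\mu))$ will be handled by the analogue of \rf{eqfacc00} applied with $R$ in place of $Q$. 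The key remaining point is that the "far" part contributes equally, up to $p(R)$, to $m_Q(\RR\mu)$ and $m_R(\RR\mu)$: indeed $\RR(\chi_{\R^d\setminus R}\mu)$ has oscillation $\lesssim p(R)$ on all of $R$ (again by the smoothness estimate, exactly as in \rf{eqfacc11} but now with $R$ as the inner cube and the sum running over ancestors of $R$, which is bounded by $p(R)$), so $m_Q(\RR(\chi_{\R^d\setminus R}\mu)) - m_R(\RR(\chi_{\R^d\setminus R}\mu))$ is $\lesssim p(R)$ and cancels in the difference. Similarly the self term $\RR(\chi_Q\mu)$: its $m_Q$ average is what appears, and we need that $\chi_Q\mu$ contributes negligibly to $m_R(\RR\mu)$ beyond what is absorbed — here one uses \rf{eqfacc00}-type bounds to see $|\RR(\chi_Q\mu)|$ averaged over $R\setminus Q$ is $\lesssim \Theta(Q)\,\ell(Q)^s \cdot (\cdots) \lesssim p(Q,R)$. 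Collecting all the error terms gives the bound $\lesssim p(Q,R) + p(R)$, and since $p(Q,R)\le p(Q)\le$ (constant times) $p(R)+\dots$ — actually $p(Q,R)$ need not be dominated by $p(R)$, so both terms genuinely appear, matching the statement.

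The main obstacle I anticipate is the bookkeeping in \rf{eqfacc12}: one must carefully track which average ($m_Q$ versus $m_R$) each of the three pieces of $\RR\mu$ is being tested against, and verify that every cross term (far part averaged over $Q$ vs $R$; self part $\RR(\chi_Q\mu)$ tested over $R$; intermediate part tested over both) is either exactly cancelling or bounded by $p(Q,R)$ or $p(R)$. The analytic inputs are all just the size bound $|K(x-y)|\lesssim|x-y|^{-s}$ and the gradient bound $|\nabla K(x-y)|\lesssim |x-y|^{-s-1}$ combined with the geometric separation \rf{eqsepara}; there is nothing subtle there. The subtlety is purely organizational, and I would handle it by reducing everything to two observations: (i) for any cube $P$ and any $x,y\in P$, $|\RR(\chi_{\R^d\setminus P}\mu)(x) - \RR(\chi_{\R^d\setminus P}\mu)(y)|\lesssim \frac{|x-y|}{\ell(P)}\,p(P)$ (a slight extension of \rf{eqfacc11}), and (ii) for $x\in P$, $|\RR(\chi_P\mu)(x)|$ averaged in $x$ over any fixed child or over $P$ itself is controlled — actually this last point is where one should be a little careful, but since here $\RR(\chi_Q\mu)$ only needs to be tested against $m_R$, i.e. averaged over $R\setminus Q$ where $|x-y|\gtrsim\ell(Q)$ for $y\in Q$, the size bound \rf{eqfacc00} suffices and no average of a singular integral over its own support is needed.
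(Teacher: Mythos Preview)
Your arguments for \rf{eqfacc00} and \rf{eqfacc11} are correct and are exactly what the paper has in mind when it says these follow by ``very standard methods'' using the separation property.

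For \rf{eqfacc12}, however, there is a genuine gap. In your three-piece decomposition $\RR\mu = \RR(\chi_Q\mu) + \RR(\chi_{R\setminus Q}\mu) + \RR(\chi_{R^c}\mu)$, the terms you never actually control are $m_Q(\RR(\chi_Q\mu))$ and (the $Q$-portion of) $m_R(\RR(\chi_R\mu))$. You write that ``$\RR(\chi_Q\mu)$ only needs to be tested against $m_R$, i.e.\ averaged over $R\setminus Q$'', but $m_R$ averages over all of $R$, including $Q$; and you leave $m_Q(\RR(\chi_Q\mu))$ entirely unaddressed. These are precisely averages of a singular integral over its own support, which you acknowledge as the danger and then dismiss. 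There is no size bound available for $\RR(\chi_Q\mu)$ on $Q$ from the hypotheses of the lemma alone, so the bookkeeping as written does not close. Likewise, your handling of $m_R(\RR(\chi_{R\setminus Q}\mu))$ via ``the analogue of \rf{eqfacc00} with $R$ in place of $Q$'' does not make sense: that would bound $\RR(\chi_{R'\setminus R}\mu)$ for some larger $R'$, which is irrelevant.

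The missing ingredient is the antisymmetry of the Riesz kernel, which gives $m_Q(\RR(\chi_Q\mu))=0$ and $m_R(\RR(\chi_R\mu))=0$ immediately. This is how the paper proceeds: one writes
\[
m_Q(\RR\mu) - m_R(\RR\mu) = m_Q(\RR(\chi_{Q^c}\mu)) - m_R(\RR(\chi_{R^c}\mu))
= m_Q(\RR(\chi_{R\setminus Q}\mu)) + \bigl[m_Q(\RR(\chi_{R^c}\mu)) - m_R(\RR(\chi_{R^c}\mu))\bigr],
\]
and then the first term differs from $\RR(\chi_{R\setminus Q}\mu)(x)$ by $\lesssim p(Q,R)$ via \rf{eqfacc11}, while the bracket is $\lesssim p(R)$ by the same oscillation estimate applied on $R$. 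Once you invoke antisymmetry your organizational worries disappear and the argument becomes a two-line computation.
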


\begin{proof}
The first and second inequalities follow by very standard methods, taking into account the separation property \rf{eqsepara}. Regarding \rf{eqfacc12}, by the antisymmetry of the Riesz transform, we have\begin{align*}
m_Q(\RR\mu) - m_R(\RR\mu) & = m_Q(\RR(\chi_{Q^c}\mu)) - m_R(\RR(\chi_{R^c}\mu))\\
& =  m_Q(\RR(\chi_{R\setminus Q})) + m_Q(\RR(\chi_{R^c}\mu))  - m_R(\RR(\chi_{R^c}\mu)).
\end{align*}
From the estimate \rf{eqfacc11}, we deduce that for all $x'\in Q\subset R$, $y'\in R$,
$$|\RR(\chi_{R^c}\mu)(x')- \RR(\chi_{R^c}\mu)(y')| \lesssim p(R).$$
Averaging, we get
$$|m_Q(\RR(\chi_{R^c}\mu))  - m_R(\RR(\chi_{R^c}\mu))|\lesssim p(R).$$
Analogously, for $x\in Q$, we have
$$\bigl|\RR(\chi_{R\setminus Q})(x) - m_Q(\RR(\chi_{R\setminus Q})\bigr|\lesssim p(Q,R).$$
Therefore,
\begin{align*}
\bigl|\RR(\chi_{R\setminus Q})(x)- \bigl (m_Q(R\mu) - m_R(R\mu)\bigr)\bigr| & \leq
\bigl|\RR(\chi_{R\setminus Q})(x)- m_Q(\RR(\chi_{R\setminus Q}))\bigr| \\&\quad +
\bigl|m_Q(\RR(\chi_{R^c}\mu))  - m_R(\RR(\chi_{R^c}\mu))\bigr|\\
& \lesssim p(Q,R)  + p(R).
\end{align*}
\end{proof}
\vv


\subsection{The  operators}

Let $\Phi:\R^d\to[0,\infty)$ be a $1$-Lipschitz function.
Below we will need to work with the suppressed kernel
\begin{equation}\label{eqsuppressed}
K_\Phi(x,y) = \frac{x-y}{\bigl (|x-y|^2+\Phi(x)\Phi(y)\bigr)^{(s+1)/2}}
\end{equation}
and the associated operator 
$$\RR_\Phi\nu(x) =\int K_\Phi(x,y)\,d\nu(y),$$
for a signed measure $\nu$ in $\R^d$.
This kernel (or a variant of this) appeared for the first
time in the work of Nazarov, Treil and Volberg in connection with Vitushkin's conjecture (see
\cite{Vo}). 
For $f\in L^1_{loc}(\mu)$, one denotes $\RR_{\Phi,\mu} f = \RR_\Phi (f\,\mu)$.

If $\ve\approx\Phi(x)$, then it follows that, for any signed measure $\nu$ in $\R^d$,
\begin{equation}
\label{e.compsup''}
\bigl|\RR_{\ve}\nu(x) - \RR_{\Phi}\nu(x)\bigr|\lesssim  \sup_{r> \Phi(x)}\frac{|\nu|(B(x,r))}{r^s}.
\end{equation}
See Lemmas 5.4 and 5.5 in \cite{To}, for example. 

The following result is an easy consequence of a $Tb$ theorem of Nazarov, Treil and Volberg.
See Chapter 5 of \cite{To}, for example.

\begin{theorem}\label{teontv}
Let $\mu$ be a Radon measure in $\R^d$ and let $\Phi:\R^d\to[0,\infty)$ be a $1$-Lipschitz function. Suppose that
\begin{itemize}
\item[(a)] $\mu(B(x,r))\leq c_0\,r^s$ for all $r\geq \Phi(x)$, and
\item[(b)] $\sup_{\ve>\Phi(x)}|\RR_\ve\mu(x)|\leq c_1$.
\end{itemize}
Then $\RR_{\Phi,\mu}$ is bounded in $L^p(\mu)$, for $1<p<\infty$, with a bound on its norm depending only on $p$, $c_0$ and
$c_1$. In particular, $\RR_\mu$ is bounded in $L^p(\mu)$ on the set $\{x:\Phi(x)=0\}$.
\end{theorem}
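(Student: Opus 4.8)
The statement to prove is Theorem~\ref{teontv}, which asserts that the suppressed operator $\RR_{\Phi,\mu}$ is bounded in $L^p(\mu)$ under the growth condition (a) and the pointwise bound (b) on the truncated Riesz transforms. Since the paper explicitly advertises this as ``an easy consequence of a $Tb$ theorem of Nazarov, Treil and Volberg,'' my plan is to deduce it from that machinery rather than to reprove a $Tb$ theorem from scratch.

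\textbf{Approach.} The plan is to verify that the triple $(\mu, K_\Phi, \RR_{\Phi,\mu})$ satisfies the hypotheses of the non-homogeneous $Tb$ theorem for Calder\'on--Zygmund operators (in the form appearing in \cite{To}, Chapter~5), with the function $b\equiv 1$. First I would check that $K_\Phi$ is an antisymmetric Calder\'on--Zygmund kernel of homogeneity $s$ \emph{with respect to the measure $\mu$} in the relevant sense: from the definition \rf{eqsuppressed} one has $|K_\Phi(x,y)|\lesssim |x-y|^{-s}$, and the smoothness estimate $|K_\Phi(x,y)-K_\Phi(x',y)|\lesssim |x-x'|^\eta\,|x-y|^{-s-\eta}$ for $|x-x'|\le |x-y|/2$ follows by a routine differentiation of the denominator, using that $\Phi$ is $1$-Lipschitz so that the term $\Phi(x)\Phi(y)$ only helps (it makes the denominator larger and its variation is controlled). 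Next, I would use hypothesis (a) to get the growth bound that the $Tb$ theorem requires: although (a) is only stated for $r\ge \Phi(x)$, the suppression built into $K_\Phi$ means that for $r<\Phi(x)$ the kernel $K_\Phi(x,\cdot)$ is essentially constant of size $\Phi(x)^{-s}$ on $B(x,r)$, so the operator never ``sees'' scales below $\Phi(x)$; equivalently, one replaces $\mu$ by its restriction to suitable scales or checks directly that the relevant maximal operators are controlled. Then I would verify the testing/accretivity conditions: that $\RR_{\Phi,\mu}1$ and $\RR_{\Phi,\mu}^*1 = -\RR_{\Phi,\mu}1$ (by antisymmetry) lie in $BMO(\mu)$, and that $1$ is accretive. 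The $BMO$ bound is where hypothesis (b) enters: using \rf{e.compsup''} with $\ve\approx \Phi(x)$ one has $|\RR_\Phi\mu(x)-\RR_\ve\mu(x)|\lesssim \sup_{r>\Phi(x)}\mu(B(x,r))/r^s\lesssim c_0$ by (a), and $|\RR_\ve\mu(x)|\le c_1$ by (b), so $\RR_{\Phi,\mu}1 = \RR_\Phi\mu$ is in fact bounded, hence certainly in $BMO(\mu)$, with norm $\lesssim c_0+c_1$; and the weak boundedness property for $\RR_{\Phi,\mu}$ follows similarly from the pointwise bound on a single (suppressed) truncation together with the growth condition.

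\textbf{Main obstacle.} The delicate point is the interplay between the scale $\Phi(x)$ and the growth hypothesis: the $Tb$ theorem of Nazarov--Treil--Volberg is usually stated for operators with a genuine CZ kernel and a measure with uniform growth $\mu(B(x,r))\lesssim r^s$ at \emph{all} scales, whereas here growth is only assumed above the variable scale $\Phi(x)$, and the kernel is correspondingly suppressed only at those small scales. So the real work is to package $(\mu,K_\Phi)$ so that the hypotheses of the cited $Tb$ theorem apply verbatim — i.e. to check that the suppression exactly compensates for the failure of growth below $\Phi(x)$, making $\RR_{\Phi,\mu}$ behave like a CZ operator with respect to an auxiliary uniformly $s$-growing measure (or, as in \cite{To}, directly verifying the suppressed-kernel version of the theorem). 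Once that bookkeeping is done, the conclusion for $1<p<\infty$ is the standard $Tb \Rightarrow L^p$ extrapolation, and the last sentence of the statement (boundedness of $\RR_\mu$ on $\{\Phi=0\}$) follows because on that set $K_\Phi = K^s$ and $\mu(B(x,r))\le c_0 r^s$ for all $r>0$, so one may apply the result with $\Phi\equiv 0$ or simply restrict.

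\textbf{Summary of steps, in order.} (1) Record the size and smoothness estimates for $K_\Phi$, using that $\Phi$ is $1$-Lipschitz. (2) Reduce the growth hypothesis (a) to the form needed by the $Tb$ theorem, exploiting suppression below scale $\Phi(x)$. (3) Use \rf{e.compsup''} and hypotheses (a)--(b) to show $\RR_\Phi\mu = \RR_{\Phi,\mu}1$ is bounded, hence in $BMO(\mu)$; by antisymmetry the same holds for the adjoint. (4) Verify the weak boundedness property of $\RR_{\Phi,\mu}$ from (a)--(b). (5) Invoke the non-homogeneous $Tb$ theorem (with $b\equiv 1$) to conclude $L^2(\mu)$ boundedness, and then $L^p(\mu)$ boundedness for $1<p<\infty$ by the usual Calder\'on--Zygmund extrapolation. (6) Deduce the final assertion on $\{x:\Phi(x)=0\}$.
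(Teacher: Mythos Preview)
The paper does not prove Theorem~\ref{teontv}; it states it as a known result and refers the reader to Chapter~5 of \cite{To}, where the suppressed-kernel version of the Nazarov--Treil--Volberg $Tb$ theorem is presented. Your sketch is a reasonable outline of how one extracts the statement from that machinery, so in substance you are doing exactly what the paper suggests.

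One small correction to step~(6): you cannot ``apply the result with $\Phi\equiv 0$,'' since the hypotheses (a)--(b) are only assumed for the given $\Phi$. The last sentence of the theorem follows instead from the observation that $K_\Phi(x,y)=K^s(x-y)$ whenever $\Phi(x)=0$ (or $\Phi(y)=0$), so on the set $\{\Phi=0\}$ the suppressed operator $\RR_{\Phi,\mu}$ coincides with $\RR_\mu$, and the $L^p$ bound for the former transfers directly.
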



\section{The corona decomposition}\label{seccorona}

Recall that the starting cube  from the construction of $E$ is denoted by $Q^0$.
Below we choose constants $B\gg M\gg1\gg \delta_0$. For convenience we assume $B$ to be a power of $2$.  

Given a cube $Q$, we define its dyadic density $\Theta_{d}(Q)$,
\begin{equation*}
\Theta_{d}(Q):= 2^{j},
\end{equation*}
where $j\in\Z$ is such that $ 2^{j}\leq \Theta(Q) < 2^{j+1}$.

Given a cube $R\in\DD$, we define families $HD_0(R)$, $LD_0(R)$ and $BR_0(R)$ of cubes from $\DD$ as follows:
\begin{itemize}
\item We say that a cube $Q\subset\DD$  belongs to $HD_0(R)$ if it is contained in $R$, $\Theta_{d}(Q)=B\Theta_{d}(R)$ and has maximal side length. Recall that $B$ is a power of $2$.
\item A cube $Q\subset\DD$  belongs to $LD_0(R)$ if it is contained in $R$, $\Theta(Q)\leq \delta_0\,\Theta(R)$,
it is not contained in any cube from $HD_0(R)$, and has maximal side length.
\item A cube $Q\subset\DD$  belongs to $BR_0(R)$ if it is contained in $R$, it satisfies
$$|m_Q \RR\mu - m_R \RR\mu|\geq M\,\bigr(\Theta(R) + p(Q)\bigl),$$
it is not contained in any cube from $HD_0(R)\cup LD_0(R)$, and has maximal side length.
\end{itemize}

We consider a ``doubling constant'' $c_{db}>10$ in \rf{eqdob}. Then we denote by $HD(R)$, $LD(R)$ and $BR(R)$
the families of maximal, and thus disjoint, $p$-doubling cubes (with constant $c_{db}$) which are contained in $HD_0(R)$, $LD_0(R)$ and $BR_0(R)$, respectively.
We denote 
$$\sss_0(R)= HD_0(R)\cup LD_0(R)\cup BR_0(R)$$
and
$$\sss(R)= HD(R)\cup LD(R)\cup BR(R).$$
For $Q\in\sss_0(R)$, 
let $J_Q$ be the family of cubes from $\DD$ which are contained in $Q$ and are not contained in any cube from $\sss(R)$. Notice that, by Lemma \ref{lemdob},
\begin{equation}\label{eqsigma1}
\sigma(J_Q)\leq c\,p(Q)^2\,\mu(Q).
\end{equation}

For $k\geq 1$, we define $\sss^k(R)$ inductively: we set $\sss^1(R)=\sss(R)$, and for $k>1$
a cube belongs to $\sss^k(R)$ if it belongs to $\sss(Q)$ for some $Q\in\sss^{k-1}(R)$. 
Now we construct the family $\ttt\subset\DD$ as follows:
$$\ttt = \{Q_0\} \bigcup_{k\geq1} \sss^k(Q_0).$$

Given a cube $R\in\ttt$, we denote by $\tree(R)$ (and $\tree_0(R)$) the family of cubes contained in $R$ which are not contained in
any cube from $\sss(R)$ (and $\sss_0(R)$, respectively).  
Observe that the cubes from $\sss(R)$ do not belong to $\tree(R)$.
Notice also that 
$$\DD = \bigcup_{R\in\ttt} \tree(R).$$
Moreover, the union is disjoint.

The following lemma is an easy consequence of our construction.

\begin{lemma}\label{lemfac1}
For every $R\in\ttt$, we have:
\begin{itemize}
\item[(a)] $R$ is $p$-doubling (with constant $c_{db}$).
\item[(b)] Every cube $Q\in\tree(R)$ satisfies
$\Theta(Q)\leq 2B\,\Theta(R)$ and $p(Q)\leq (2B + c_{db})\Theta(R)$.

\item[(c)] Every cube $Q\in\tree_0(R)$ satisfies
\begin{equation}\label{eqc391}
|m_R \RR\mu - m_Q \RR\mu|\leq M\,\bigr(\Theta(R) + p(Q)\bigl),
\end{equation}
and
\begin{equation}\label{eqc392}
\bigl|\RR(\chi_{R\setminus Q} \mu)(x)\bigr| \leq 2 M \,\bigl (\Theta(R)+ p(Q)\bigr) \qquad \mbox{for all $x\in Q$.}
\end{equation}

\end{itemize}
\end{lemma}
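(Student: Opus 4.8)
The three statements are bookkeeping consequences of the stopping time construction of Section~\ref{seccorona}, and I would check them in turn. \emph{For (a):} if $R=Q_0$ there is no $P\in\DD$ with $P\supsetneq R$, so $p(Q_0)=\Theta(Q_0)$ and $Q_0$ is $p$-doubling trivially; if $R\ne Q_0$, then by definition of $\ttt$ we have $R\in\sss(R')=HD(R')\cup LD(R')\cup BR(R')$ for some $R'\in\ttt$, and every cube in these three families is $p$-doubling by construction.

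\emph{For (b),} I would first record that $\tree(R)$ is closed upwards inside $R$: if $Q\in\tree(R)$ and $Q\subseteq P\subseteq R$, then $P\in\tree(R)$, since otherwise $Q$ would lie inside a cube of $\sss(R)$. Fix $Q\in\tree(R)$. If $Q\in\tree_0(R)$, then no cube $P$ with $Q\subseteq P\subseteq R$ is contained in a cube of $HD_0(R)$ (such containment would place $Q$ inside $\sss_0(R)$), so the $HD_0$-stopping rule keeps the dyadic density strictly below level $B\,\Theta_d(R)$ along that chain, whence $\Theta(Q)<2\,\Theta_d(Q)<2B\,\Theta_d(R)\le 2B\,\Theta(R)$. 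If instead $Q\in\tree(R)\setminus\tree_0(R)$, let $Q^*$ be the minimal cube of $\sss_0(R)$ containing $Q$. No cube on the chain from $Q$ up to $Q^*$, with $Q^*$ itself excluded, is $p$-doubling: a $p$-doubling cube there would be contained in a maximal $p$-doubling subcube of $Q^*$, which belongs to $\sss(R)$ and would again place $Q$ inside $\sss(R)$. Hence Lemma~\ref{lemdob0}, applied to that chain, gives $\Theta(Q)\lesssim p(Q^*)$ (the degenerate case $Q^*=Q$ being handled directly from the density condition defining $\sss_0(R)$), and it remains to bound $p(Q^*)$: in the splitting $p(Q^*)=p(Q^*,R)+\tfrac{\ell(Q^*)}{\ell(R)}\bigl(p(R)-\Theta(R)\bigr)$, the tail is at most $p(R)\le c_{db}\,\Theta(R)$ by part~(a), while $p(Q^*,R)$ is controlled by the geometric decay of $\ell(Q^*)/\ell(P)$ from \eqref{eq000}, the bound $\Theta_d(P)<B\,\Theta_d(R)$ for cubes $P$ strictly between $Q^*$ and $R$, and the defining density condition at $Q^*$ itself. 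This yields $\Theta(Q)\le 2B\,\Theta(R)$. The bound $p(Q)\le(2B+c_{db})\,\Theta(R)$ then follows from the same splitting applied to $Q$: by upward closedness $\Theta(P)\le 2B\,\Theta(R)$ for every $P$ with $Q\subseteq P\subseteq R$, so $p(Q,R)\le 2B\,\Theta(R)$, while the tail is again $\le p(R)\le c_{db}\,\Theta(R)$.

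\emph{For (c):} if \eqref{eqc391} failed for some $Q\in\tree_0(R)$, then $Q$ would satisfy the inequality defining $BR_0(R)$; since $Q\in\tree_0(R)$, no cube $P$ with $Q\subseteq P\subseteq R$ is contained in a cube of $HD_0(R)\cup LD_0(R)$, so the largest cube $P$ with $Q\subseteq P\subseteq R$ that satisfies that inequality meets all the requirements to belong to $BR_0(R)\subset\sss_0(R)$, placing $Q$ inside an $\sss_0(R)$ cube --- a contradiction. For \eqref{eqc392}, apply \eqref{eqfacc12} of Lemma~\ref{lemcomp}: $\bigl|\RR(\chi_{R\setminus Q}\mu)(x)\bigr|\le \bigl|m_Q(\RR\mu)-m_R(\RR\mu)\bigr|+C\bigl(p(Q,R)+p(R)\bigr)$; bound the first term by \eqref{eqc391}, use $p(Q,R)\le p(Q)$ and $p(R)\le c_{db}\,\Theta(R)$ from part~(a), and recall that $M$ was chosen large compared with $C$ and $c_{db}$, so the whole right-hand side is $\le 2M\,\bigl(\Theta(R)+p(Q)\bigr)$.

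\emph{Main obstacle.} Parts~(a) and~(c) come almost for free from the stopping rules and Lemma~\ref{lemcomp}. The delicate point is the density estimate in~(b) for cubes of $\tree(R)\setminus\tree_0(R)$: one must route through a stopping cube $Q^*\in\sss_0(R)$ that is itself non-doubling, control the long chain of non-doubling cubes below it via Lemma~\ref{lemdob0}, and then bound $p(Q^*)$ --- and that last step genuinely relies on part~(a) to absorb the contribution of the ancestors of $R$.
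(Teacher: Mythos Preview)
Your arguments for (a) and (c) are correct and essentially identical to the paper's. For the $p(Q)$ bound in (b), your splitting is exactly what the paper does. The one place where you diverge is the density bound $\Theta(Q)\le 2B\,\Theta(R)$ for $Q\in\tree(R)\setminus\tree_0(R)$.

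The paper argues by contradiction: assuming $\Theta(Q)>2B\,\Theta(R)$, it takes the \emph{largest} cube $Q'$ with $Q\subset Q'\subset Q_0$ (where $Q_0\in\sss_0(R)$ contains $Q$) satisfying $\Theta(Q')\ge 2B\,\Theta(R)$, and shows directly that
\[
p(Q')\le \Theta(Q')+\sum_{Q'\subsetneq P\subset R}\frac{\ell(Q')}{\ell(P)}\Theta(P)+p(R)
\le \Bigl(1+c_2+\frac{c_{db}}{B}\Bigr)\Theta(Q')\le c_{db}\,\Theta(Q'),
\]
so that $Q'$ is $p$-doubling and hence lies in (or under) some cube of $\sss(R)$, contradicting $Q\in\tree(R)$. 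Your route instead goes forward through Lemma~\ref{lemdob0}: since the whole chain from $Q$ up to $Q^*$ is non-$p$-doubling, $\Theta(Q)\le p(Q^*)$, and then you bound $p(Q^*)$. Both approaches exploit the same underlying fact --- cubes in the buffer between $\sss_0(R)$ and $\sss(R)$ are non-doubling --- and both are valid. What the paper's contradiction buys is the exact constant $2B$ in the statement; your estimate gives only $\Theta(Q)\le C(c_{db})\,B\,\Theta(R)$, since $p(Q^*)\le \Theta(Q^*)+c\,B\,\Theta(R)+c_{db}\,\Theta(R)$ and $\Theta(Q^*)$ can itself be as large as $2B\,\Theta(R)$. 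This is a quantitative imprecision rather than a gap: nothing downstream needs the sharp $2B$, only $\lesssim B$.

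One small slip: you write ``minimal cube of $\sss_0(R)$ containing $Q$''. The cubes of $\sss_0(R)$ need not be pairwise disjoint (e.g.\ an $HD_0$ cube could sit inside an $LD_0$ cube), and for your argument you want the one whose strict ancestors lie in $\tree_0(R)$ --- that is the \emph{maximal} $\sss_0(R)$ cube containing $Q$, not the minimal one.
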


\begin{proof}
The statement (a) is an immediate consequence of the definition above.

Let us turn our attention to (b). The estimate $\Theta(Q)\leq 2B\,\Theta(R)$
is clear if $Q\in\tree_0(R)$. Otherwise, let $Q_0\in\sss_0(R)$ such that
$Q\subset Q_0$. Reasoning by contradiction let us suppose that $\Theta(Q)> 2B\,\Theta(R)$, and let $Q'\in\DD$ be a cube
such that $Q\subset Q'\subset Q_0$ with $\Theta(Q')\geq 2B\,\Theta(R)$ with maximal side length.
Then, $\Theta(P)\leq 2B\,\Theta(R)$ for any $P\in \tree(R)$ such that $Q'\subset P$. 
\begin{align*}
p(Q') & =
\Theta(Q') + \sum_{P:Q'\subsetneq P\subset R} \frac{\ell(Q)}{\ell(P)}\,\Theta(P) 
+  \sum_{P\supsetneq R} \frac{\ell(Q)}{\ell(P)}\,\Theta(P) \\
& \leq \Theta(Q') + c_2\,B\,\Theta(R) + p(R) \\
& \leq \Theta(Q') + c_2\,\Theta(Q') + c_{db}\,\Theta(R)\leq \bigl (1 + c_2 + \frac{c_{db}}B\bigr)\Theta(Q').
\end{align*}
Since $B>2$, if we choose $c_{db}>2(1+c_1)$, we get
$$p(Q')\leq c_{db}\,\Theta(Q').$$
That is, $Q'$ is $p$-doubling, which implies either that $Q'\in\sss(R)$ or there exists another
cube $Q''\in\sss(R)$ which contains $Q'$. This contradicts the fact that
$Q\in\tree(R)$.

To prove that $p(Q)\leq (2B+c_{db})\Theta(R)$ we write
\begin{align}\label{eqalg17}
p(Q) & =
 \sum_{P:Q\subset P\subset R} \frac{\ell(Q)}{\ell(P)}\,\Theta(P) 
+  \sum_{P\supsetneq R} \frac{\ell(Q)}{\ell(P)}\,\Theta(P) \\
& \leq 2B\Theta(R)  + p(R) \leq 2B\Theta(R) + c_{db}\,\Theta(R).\nonumber
\end{align}

The estimate \rf{eqc391} in (c) is a direct consequence of the construction of $\tree_0(R)$. On the other hand, \rf{eqc392} follows from \rf{eqc391} and the inequality \rf{eqfacc12}:
\begin{align*}
\bigl|\RR(\chi_{R\setminus Q}\mu)(x)\bigr| & \leq 
\bigl|m_Q(\RR\mu) - m_R(\RR\mu)\bigr| + p(Q,R)  + 
p(R)\\
&\leq M\,\bigr(\Theta(R) + p(Q)\bigl) + p(Q)+ c_{db}\,\Theta(R) \leq 2M\,\bigr(\Theta(R) + p(Q)\bigl),
\end{align*}
assuming that $M\geq c_{db}$.
\end{proof}

\begin{lemma}\label{lemt0}
Let $R\in\ttt$. We have
\begin{enumerate}[(a)]
\item If $Q\in HD_0(R)$, then $Q$ is $p$-doubling with constant $c_{db}$. Thus $HD(R)=HD_0(R)$.

\item If $Q\in BR(R)$, then
$$|m_Q \RR\mu - m_R \RR\mu|\geq \frac{M}2\,\Theta(R),$$
assuming $M\geq c\,c_{db}$.

\item If $Q\in LD(R)$, then
$$p(Q) \leq c\,\delta_0^{\frac1{s+1}}B^{\frac s{s+1}}\,\Theta(R).$$
\end{enumerate}
\end{lemma}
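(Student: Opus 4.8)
The plan is to prove the three items separately, with item (c) being the genuinely substantive one and the other two being more or less bookkeeping with the definitions.

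For item (a): recall that $Q\in HD_0(R)$ means $\Theta_d(Q)=B\,\Theta_d(R)$, hence $\Theta(Q)\geq B\,\Theta_d(R)\geq \tfrac B2\,\Theta(R)$. On the other hand, for any $P$ with $Q\subsetneq P\subset R$ we have $\Theta_d(P)<B\,\Theta_d(R)$ by maximality of the side length of $Q$ (a strictly larger cube containing $Q$ cannot have reached dyadic density $B\,\Theta_d(R)$, since $Q$ was chosen with maximal side length among those that do), so $\Theta(P)\leq B\,\Theta_d(R)\leq B\,\Theta(R)$. Splitting $p(Q)$ as $\Theta(Q)+\sum_{Q\subsetneq P\subset R}\frac{\ell(Q)}{\ell(P)}\Theta(P)+\sum_{P\supsetneq R}\frac{\ell(Q)}{\ell(P)}\Theta(P)$, the middle sum is $\lesssim B\,\Theta(R)\lesssim \Theta(Q)$ using $\sum_{P\supset Q}\ell(Q)/\ell(P)\lesssim 1$ (geometric, by \rf{eq000}), and the last sum is $\frac{\ell(Q)}{\ell(R)}\,p(R)\leq p(R)\leq c_{db}\,\Theta(R)\lesssim \Theta(Q)$ since $R$ is $p$-doubling by Lemma \ref{lemfac1}(a). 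Choosing $c_{db}$ large enough gives $p(Q)\leq c_{db}\,\Theta(Q)$, i.e. $Q$ is $p$-doubling, so $HD(R)=HD_0(R)$.

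For item (b): if $Q\in BR(R)$ then $Q\in BR_0(R)$, so $|m_Q\RR\mu - m_R\RR\mu|\geq M(\Theta(R)+p(Q))\geq M\,\Theta(R)$, which already gives more than claimed; the factor $1/2$ and the hypothesis $M\geq c\,c_{db}$ are presumably there to absorb the passage from the cube in $BR_0(R)$ to a $p$-doubling descendant in $BR(R)$ — one uses \rf{eqfacc11} or \rf{eqfacc12} to compare $m_Q\RR\mu$ for $Q\in BR(R)$ with $m_{Q'}\RR\mu$ for the ancestor $Q'\in BR_0(R)$ containing it, the error being $\lesssim p(Q',R)+p(R)\lesssim c_{db}\,\Theta(R)$ (here $Q'$ lies in $\tree(R)$-type cubes with controlled $p$), and then $M\geq c\,c_{db}$ lets us keep at least $\tfrac M2\,\Theta(R)$.

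Item (c) is the main point and the main obstacle. Here $Q\in LD(R)$ is a $p$-doubling descendant of a cube $Q_0\in LD_0(R)$, so $\Theta(Q)\leq \delta_0\,\Theta(R)$ at the level $Q_0$, and by $p$-doubling of $Q$ we need to bound $p(Q)$. The exponent $\tfrac1{s+1}$ and the $B^{s/(s+1)}$ factor strongly suggest an interpolation/optimization between two competing bounds: on one hand $\Theta$ is small ($\lesssim \delta_0\Theta(R)$) on the low-density cube, on the other hand any cube lying between $Q$ and $R$ that is \emph{not} inside an $HD_0(R)$ cube has $\Theta\leq B\,\Theta(R)$, while $p(R)\leq c_{db}\Theta(R)$. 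The natural argument: write $p(Q)=\sum_{P:Q\subset P\subset R}\frac{\ell(Q)}{\ell(P)}\Theta(P)+\frac{\ell(Q)}{\ell(R)}p(R)$. For $P$ ranging over ancestors of $Q$ up to some intermediate scale, $\Theta(P)$ is controlled by the low-density condition combined with the growth rate \rf{eq000}: if $\mu(Q)\leq \delta_0\Theta(R)\ell(Q_0)^s$ and $\ell(P)\geq \ell(Q)$, then $\Theta(P)=\mu(P)/\ell(P)^s$; but $\mu(P)\leq$ (number of generations) times... — more precisely one should use that on the chain from $Q$ up to $R$, the cube $Q_0\in LD_0(R)$ forces $\mu(Q_0)\leq \delta_0\Theta(R)\ell(Q_0)^s$, and for $P\supset Q_0$ we use $\Theta(P)\leq B\Theta(R)$, while for $P\subset Q_0$ we use $\mu(P)\leq \mu(Q_0)$ hence $\Theta(P)\leq \delta_0\Theta(R)(\ell(Q_0)/\ell(P))^s$. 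Then $\sum_{P: Q\subset P\subset Q_0}\frac{\ell(Q)}{\ell(P)}\Theta(P)\leq \delta_0\Theta(R)\sum_P \frac{\ell(Q)}{\ell(P)}\Big(\frac{\ell(Q_0)}{\ell(P)}\Big)^s$, which \emph{grows} as $P$ shrinks toward $Q$; balancing the number of scales $n$ between $Q$ and $Q_0$ — the small-scale sum is roughly $\delta_0\Theta(R)\cdot(\ell(Q_0)/\ell(Q))^s\cdot 3^{n}\cdot 3^{-n}$-type, competing against the $HD$-type bound $B\Theta(R)\ell(Q)/\ell(Q_0)$ for the part of the sum over $Q_0\subsetneq P\subset R$ — and the two balance exactly when $(\ell(Q_0)/\ell(Q))\sim(B/\delta_0)^{1/(s+1)}$, yielding $p(Q)\lesssim \delta_0^{1/(s+1)}B^{s/(s+1)}\Theta(R)$. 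The delicate part is setting up this optimization cleanly: identifying the correct intermediate cube, handling the constraint that $Q$ is $p$-doubling (which is what lets $\Theta(Q)$ be comparable to $p(Q)$ and closes the argument), and checking that the geometric-series tails from \rf{eq000} do not spoil the exponent. I expect this balancing computation, together with correctly bootstrapping through the $p$-doubling hypothesis, to be where essentially all the work lies.
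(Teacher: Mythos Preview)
Your treatment of (a) is correct and matches the paper's.

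For (b) you have the right shape, but two points need tightening. First, $Q\in BR(R)$ does \emph{not} mean $Q\in BR_0(R)$; it means $Q$ is a maximal $p$-doubling cube inside some $Q_0\in BR_0(R)$, so the passage you describe is not optional. Second, your error bound $|m_Q\RR\mu-m_{Q_0}\RR\mu|\lesssim p(Q_0,R)+p(R)$ is not what \rf{eqfacc12} gives: applied with $R$ replaced by $Q_0$ it yields a term $p(Q,Q_0)+p(Q_0)$ together with $|\RR(\chi_{Q_0\setminus Q}\mu)|\lesssim\sum_{Q\subset P\subset Q_0}\Theta(P)$ from \rf{eqfacc00}. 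Controlling this last sum (and $p(Q,Q_0)$) by $p(Q_0)$ requires the non-doubling decay \rf{eqcad35} along the chain from $Q_0$ down to $Q$, which you do not mention. The paper then absorbs the error $Cp(Q_0)$ into the $Mp(Q_0)$ term of the $BR_0$ inequality, which is why only $M$ larger than an absolute constant is needed; your route (absorbing into $M\Theta(R)$) would need $M\gtrsim B$.

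For (c) there is a genuine gap. Your two competing estimates are correct: writing $t=\ell(Q_0)/\ell(Q)$ with $Q_0\in LD_0(R)$ the ancestor, the part of $p(Q)$ over $P\subset Q_0$ is $\lesssim\delta_0\,t^s\,\Theta(R)$ and the part over $P\supsetneq Q_0$ is $\lesssim (B/t)\,\Theta(R)$. You then say these ``balance'' at $t\sim(B/\delta_0)^{1/(s+1)}$, giving the stated bound. But $t$ is \emph{not} a free parameter you get to optimize: it is fixed by $Q$. If $t$ is far from the balance point, your direct estimate is useless (it can be as large as $B\Theta(R)$). The vague references to ``identifying the correct intermediate cube'' and ``bootstrapping through the $p$-doubling hypothesis'' do not supply the missing mechanism.

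The paper's device is this: set $\tau=\delta_0^{1/(s+1)}B^{s/(s+1)}$ and suppose $\Theta(Q)\geq\tau\Theta(R)$. Let $Q'$ be the \emph{largest} cube with $Q\subset Q'\subset Q_0$ and $\Theta(Q')\geq\tau\Theta(R)$; then $\Theta(Q')\approx\tau\Theta(R)$. Your two estimates, applied to $Q'$ rather than $Q$, give $p(Q')\lesssim\Theta(Q')+(\ell(Q')/\ell(Q_0))B\Theta(R)$; since $\mu(Q')\leq\mu(Q_0)$ forces $(\ell(Q')/\ell(Q_0))^s\lesssim\delta_0/\tau$, the choice of $\tau$ makes the second term $\lesssim\Theta(Q')$ as well. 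Hence $Q'$ is $p$-doubling. Now the \emph{maximality} of $Q$ among $p$-doubling descendants of $Q_0$ forces $Q=Q'$, so $\Theta(Q)\approx\tau\Theta(R)$ and $p(Q)\leq c_{db}\Theta(Q)\lesssim\tau\Theta(R)$. (If instead $\Theta(Q)<\tau\Theta(R)$ the conclusion is immediate from $p$-doubling.) So the balancing is not an optimization over scales of a fixed sum; it is the choice of a density threshold at which one can \emph{prove} $p$-doubling, and maximality then pins $Q$ at that threshold.
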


We will assume that $\delta_0\ll B^{-1}$, so that 
\begin{equation}\label{eqld41}
p(Q) \leq \delta_0^{\frac1{s+2}}\Theta(R)\leq \frac1B \Theta(R)\quad \mbox{ if $Q\in LD(R)$.}
\end{equation}

\vv
\begin{rem}
We will assume that $M=C\, B$, where $C$ is some absolute constant such that the statement in (b) 
of the preceding lemma holds.
\end{rem}

\begin{proof}[Proof of Lemma \ref{lemt0}]
The proof of (a) is already implicit in the proof of part (b) from Lemma \ref{lemfac1}. 
Indeed, similarly to \rf{eqalg17},
\begin{align*}
p(Q)  &=
 \sum_{P:Q\subset P\subset R} \frac{\ell(Q)}{\ell(P)}\,\Theta(P) 
+  \sum_{P\supsetneq R} \frac{\ell(Q)}{\ell(P)}\,\Theta(P) \\
& \leq c_2 \max_{P:Q\subset P\subset R}\Theta(P)  + p(R) \leq 
c_2\,\Theta(Q) + c_{db}\,\Theta(R)\\&\leq (c_2+ c_{db}\,B^{-1})\Theta(Q) \leq c_{db}\,\Theta(Q),\nonumber
\end{align*}
assuming $c_{db}\geq2c_2$ and $B$ big enough in the last inequality. 

For the proof of (b), we know that there exists $Q_{0}\in BR_{0}(R)$ such that $Q\subset Q_{0}$. The following estimate follow from the definition of $ BR_{0}(R)$:
\begin{align*}
|m_Q \RR\mu - m_R \RR\mu| & \geq  |m_{Q_{0}} \RR\mu - m_R \RR\mu|- |m_Q \RR\mu - m_{Q_{0}} \RR\mu|\\
& \geq M\,\bigr(\Theta(R) + p(Q_{0})\bigl) -|m_Q \RR\mu - m_{Q_{0}} \RR\mu|.
\end{align*}
So it suffices to prove $|m_Q \RR\mu - m_{Q_{0}} \RR\mu|\lesssim p(Q_{0})$, since we will choose $M \gg1$. From \rf{eqfacc00}, \rf{eqfacc12} and \rf{eqcad35}, we get
\begin{align*}
|m_Q \RR\mu - m_{Q_{0}} \RR\mu|& \leq |m_Q \RR(\chi_{Q_{0}\setminus Q}\mu)| + |m_Q \RR(\chi_{Q_{0}^{c}}\mu)  - m_{Q_{0}} \RR(\chi_{Q_{0}^{c}}\mu)|\\
& \lesssim \sum_{\substack{P\in \mathcal D \\ Q\subsetneq P \subset Q_{0}}}\frac{\mu(P)}{\ell(P)^{s}} + p(Q_0)
\\
& \lesssim \sum_{j=1}^{\infty} 2^{-j/2}p(Q_{0}) \lesssim p(Q_{0}),
\end{align*}
as wished.

Let us see (c). Let $Q_0$ be the cube from $LD_0(R)$ that contains $Q$. Recall that $\Theta(Q_0)\leq
\delta_0\,\Theta(R)$.
Suppose that
$$\Theta(Q) \geq \tau\,\Theta(R),$$
for some constant $\tau\gg\delta_0$ to be determined below. Consider the largest cube $Q'\in\DD$
with $Q\subset Q'\subset Q_0$ such that $\Theta(Q')\geq\tau\,\Theta(R)$. Then it is immediate
to check that
\begin{equation}\label{eqpp99}
\Theta(Q')\approx\tau\,\Theta(R).
\end{equation} 
Indeed, it is enough to show that $\Theta(Q')\lesssim\tau\,\Theta(R)$, since the opposite inequality is clear from the choice on $Q'$. To prove this, consider the parent $Q''$ of $Q'$. From the fact that
$\ell(Q'')\approx\ell(Q')$ we get
$$\Theta(Q') = \frac{\mu(Q')}{\ell(Q')^s}\leq \frac{\mu(Q'')}{\ell(Q')^s} \approx \Theta(Q'')<\tau\,\Theta(R),$$
as wished. 

We are going to show that $p(Q')\leq c\,\Theta(Q')$. To this end we write
\begin{equation}\label{eqpp100}
p(Q') \leq p(Q',Q_0) + \frac{\ell(Q')}{\ell(Q_0)}\,p(Q_0).
\end{equation}
Since $\Theta(P)\leq \Theta(Q')$ for all $P$ with
$Q'\subset P\subset Q_0$, it follows that
\begin{equation}\label{eqpp101} 
p(Q',Q_0)\lesssim \Theta(Q').
\end{equation}
On the other hand, using the fact that $\mu(Q')\leq \mu(Q_0)$ we derive 
$$\Theta(Q')\leq\frac{\ell(Q_0)^s}{\ell(Q')^s}\,\Theta(Q_0) \leq \delta_0 \,\frac{\ell(Q_0)^s}{\ell(Q')^s}\,\Theta(R),$$
and so, taking int account that
$\Theta(Q')\approx\tau\,\Theta(R)$ we deduce that
$\dfrac{\ell(Q')^s}{\ell(Q_0)^s} \lesssim \dfrac{\delta_0}\tau.$
Therefore, recalling that $p(Q_0)\lesssim B\,\Theta(R)$, we get
$$\frac{\ell(Q')}{\ell(Q_0)}\,p(Q_0) \lesssim\frac{\delta_0^{1/s}}{\tau^{1/s}}\,B\,\Theta(R).$$
Together with \rf{eqpp100} and \rf{eqpp101}, this gives that
$$p(Q')\lesssim \Theta(Q') + \frac{\delta_0^{1/s}}{\tau^{1/s}}\,B\,\Theta(R).$$
Choosing $\tau = \delta_0^{1/(s+1)}\,B^{s/(s+1)}$, we get
$p(Q')\leq c\,\Theta(Q')$,
as wished. So if the constant $c_{db}$ has been taken big enough, $Q'$ is a $p$-doubling cube and
then, by construction, $Q=Q'$, and so the statement (c) follows from \rf{eqpp99}.
\end{proof}


\section{Controlling the size of the trees}

The objective of this section consists in proving the following.

\begin{lemma} \label{lemdif1}
For every $R\in\ttt$, we have
\begin{equation}
\sum_{Q\in\tree_0(R)}\mu(Q)\leq c(B,\delta_0,M)\,\mu(R).\label{eqdif1}
\end{equation}
As a consequence,
\begin{equation}\label{eqdif2}
\sigma(\tree(R))\leq c'(B,\delta_0,M)\,\Theta(R)^2\,\mu(R).
\end{equation}
\end{lemma}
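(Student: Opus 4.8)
The plan is to prove \rf{eqdif1} by controlling separately the "mass" that leaves the tree through each of the three families $HD(R)$, $LD(R)$, $BR(R)$, and then observe that $\mu(R) = \mu\bigl(\bigcup_{Q\in\sss(R)}Q\bigr) + \mu\bigl(E\cap\bigcap_k E_k \text{ still in }\tree(R)\bigr)$; more precisely, since the cubes of $\tree_0(R)$ that are not ancestors of cubes in $\sss_0(R)$ contribute only finitely many generations of geometrically decaying mass, the bulk of $\sum_{Q\in\tree_0(R)}\mu(Q)$ is governed by $\sum_{Q\in\sss_0(R)}\mu(Q)\cdot(\text{number of generations above }Q)$, which in turn reduces (via \rf{eq000}, so that $\ell$ decreases geometrically and hence the number of generations between $R$ and a stopping cube is at most $C\log(\ell(R)/\ell(Q))$, but more usefully because each generation's total mass is $\le \mu(R)$) to bounding $N\,\mu(R)$ where $N$ is an a priori bound on the number of generations. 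That crude bound is not enough, so the real point is: one shows $\sum_{Q\in\sss_0(R)}\mu(Q) \le (1-\eta)\mu(R)$ is \emph{not} what we want — instead I would use the standard stopping-time packing estimate: iterate $\sss_0$ and show the total mass of all descendants at "stopping depth $k$" decays, OR (cleaner) directly bound $\sum_{Q\in\tree_0(R)}\mu(Q)$ by summing over generations $j$ the quantity $\sum_{Q\in\tree_0(R)\cap\DD_j(R)}\mu(Q)\le\mu(R)$ and showing that $\tree_0(R)$ has boundedly many nonempty generations — which is \emph{false} in general. So the correct route is the one used in \cite{T}: show that for a cube $Q\in\tree_0(R)$ deep enough, a fixed proportion of its mass must have been removed, giving geometric decay of $\sum_{Q\in\DD_j(R)\cap\tree_0(R)}\mu(Q)$ in $j$.

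Concretely, the key step is a \textbf{growth/decay dichotomy}: I claim there is an integer $m=m(B,\delta_0,M)$ and $\theta=\theta(B,\delta_0,M)<1$ such that for every $Q\in\tree_0(R)$,
\begin{equation*}
\sum_{\substack{P\in\DD_m(Q)\\ P\in\tree_0(R)}}\mu(P)\le \theta\,\mu(Q).
\end{equation*}
Granting this, iterating gives $\sum_{Q\in\DD_{km}(R)\cap\tree_0(R)}\mu(Q)\le\theta^k\mu(R)$, and summing the (at most $m$) intermediate generations in each block yields $\sum_{Q\in\tree_0(R)}\mu(Q)\le \dfrac{m}{1-\theta}\,\mu(R)$, which is \rf{eqdif1} with $c(B,\delta_0,M)=m/(1-\theta)$. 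To prove the dichotomy, fix $Q\in\tree_0(R)$ and look at its descendants $P\in\DD_m(Q)$ that are still in $\tree_0(R)$: for such $P$, none of the cubes between $P$ and $R$ lies in $\sss_0(R)$, so in particular $P\notin HD_0(R)\cup LD_0(R)\cup BR_0(R)$ and the same for all intermediate cubes. The $HD_0$ failure forces $\Theta_d(P')<B\,\Theta_d(R)$ for all $P\subset P'\subset R$, equivalently $\Theta(P')<2B\,\Theta(R)$; the $LD_0$ failure (for cubes not inside an $HD_0$ cube) forces $\Theta(P')>\delta_0\Theta(R)$; so along the whole chain from $R$ down to $P$ the density stays in the fixed band $(\delta_0,2B)\cdot\Theta(R)$. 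Now use the separation \rf{eqsepara}/\rf{eq000}: the number of children of any cube is bounded by some $N_0=N_0(d,c_{sep})$, and a cube with $N_0$ children each of comparable size and density close to the parent's... this is where one runs the packing argument — since at each generation the density can change by at most a bounded factor and stays in a bounded band, only boundedly many generations can pass before \emph{either} an $HD_0$, $LD_0$, or $BR_0$ stopping cube is encountered \emph{or} a definite fraction of mass has concentrated/dispersed. The cleanest incarnation: among the $\le N_0^m$ cubes $P\in\DD_m(Q)$, those with $\Theta(P)\ge \tfrac12\Theta(Q)$ (say) have total mass $\ge$ some fraction, but the density band argument combined with the $BR_0$-avoidance (which controls $|m_P\RR\mu-m_R\RR\mu|$) limits how many generations this persists.

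I expect the \textbf{main obstacle} to be making the dichotomy quantitative with the right dependence on $B,\delta_0,M$ — in particular extracting geometric mass decay purely from "no stopping cube yet." The honest mechanism (as in \cite{T}) is: if $Q\in\tree_0(R)$ and $Q$ has more than one child, then by \rf{eqsepara} and the density band, passing to children the density is multiplied by a factor in a fixed compact subinterval of $(0,\infty)$ bounded away from $1$ on the "non-trivial" side, so after $m$ generations a cube that kept most of its mass in one child would have had its density leave the band $(\delta_0,2B)\Theta(R)$, forcing a stopping cube — contradiction; hence the mass must genuinely split, giving $\theta<1$. The second statement \rf{eqdif2} is then immediate from \rf{eqdif1} together with Lemma \ref{lemfac1}(b): every $Q\in\tree(R)$ has $\Theta(Q)\le 2B\,\Theta(R)$, so
\begin{equation*}
\sigma(\tree(R))=\sum_{Q\in\tree(R)}\Theta(Q)^2\mu(Q)\le (2B)^2\Theta(R)^2\sum_{Q\in\tree(R)}\mu(Q)\le (2B)^2\Theta(R)^2\sum_{Q\in\tree_0(R)}\mu(Q)\le c'(B,\delta_0,M)\,\Theta(R)^2\mu(R),
\end{equation*}
using $\tree(R)\subset\tree_0(R)$ and \rf{eqdif1}.
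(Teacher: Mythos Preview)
Your overall architecture is right and matches the paper: reduce \rf{eqdif1} to a dichotomy of the form ``for every $Q\in\tree_0(R)$ a fixed fraction of $\mu(Q)$ lies in stopping cubes within $m$ generations,'' then iterate to get geometric decay. The paper states this as Lemma~\ref{lemtouch} and derives \rf{eqdif1} exactly as you do. Your derivation of \rf{eqdif2} is also essentially correct, except that the inclusion goes the other way: $\tree_0(R)\subset\tree(R)$, not $\tree(R)\subset\tree_0(R)$ (stopping cubes in $\sss(R)$ sit \emph{inside} cubes of $\sss_0(R)$). The extra cubes in $\tree(R)\setminus\tree_0(R)$ are handled by \rf{eqsigma1}, since each chain from a $\sss_0(R)$ cube down to the corresponding $\sss(R)$ cube consists of non-$p$-doubling cubes.

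The real gap is in your proposed mechanism for the dichotomy. The density-band argument you sketch cannot work: there is nothing preventing the density from staying \emph{exactly} in the band $(\delta_0,2B)\Theta(R)$ for arbitrarily many generations. Concretely, if each cube splits into roughly $c^s$ children of side length $\ell(Q)/c$ with equal measure, then $\Theta$ is constant along the entire tree and neither $HD_0$ nor $LD_0$ ever fires. Your claim that ``the density is multiplied by a factor \ldots\ bounded away from $1$'' is simply false in this situation, and ``the mass must genuinely split'' gives $\sum_{P\in\CH(Q)}\mu(P)=\mu(Q)$, not $\le\theta\mu(Q)$. So nothing decays.

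What actually forces the decay is the $BR_0$ condition, and proving that $BR_0$ must fire within boundedly many generations is the entire content of Section~4. The paper's argument (Lemmas~\ref{lembola}--\ref{lemrieszmax}) is a touching-point argument: one finds a ball disjoint from the set $F_N(R')$ of non-stopped descendants whose boundary touches $F_N(R')$, and shows via a cone estimate (Lemma~\ref{lemcon}, which crucially uses $s>d-1$) that at the touching point the Riesz transform $\RR(\chi_{F_{N_0}(R')\setminus P}\mu)$ is large --- of order $N\Theta(R)$ after $N$ generations. Combined with an $L^2$ bound on $\RR(\chi_{R'\setminus F_{N_0}(R')}\mu)$ obtained from suppressed kernels (Lemma~\ref{lemrieszmax}), this forces $|m_P\RR\mu-m_R\RR\mu|$ to exceed the $BR_0$ threshold on a set of definite measure. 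You gestured at the $BR_0$ condition once but then abandoned it for the density argument; that is exactly the missing idea.
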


This result is an easy consequence of the next one.

\begin{lemma}\label{lemtouch}
There are constants $\delta,\eta>0$ depending on $B,M,\delta_0$ such the following holds. Given $R\in\ttt$, for any $R'\in\tree(R)$, 
let $\SSS(R')$ denote the subfamily of the cubes $Q\in\sss_0(R)$ which are contained in $R'$ and satisfy
$\ell(Q)\geq\delta\,\ell(R')$. Then, $\SSS(R')$ is non-empty and
$$\mu\biggl (\,\bigcup_{Q\in \SSS(R')}  Q\biggr)\geq \eta\,\mu(R').$$
\end{lemma}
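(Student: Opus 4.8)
The statement is essentially a quantitative nondegeneracy result: inside any cube $R'\in\tree(R)$, a definite proportion of the mass of $R'$ is recovered by stopping cubes from $\sss_0(R)$ of comparable size. The plan is to argue by contradiction, assuming that for small $\delta,\eta$ there is a counterexample cube $R'\in\tree(R)$. The main point is that being in $\tree(R)$ forces $R'$ (and all its descendants down to scale $\delta\ell(R')$ which are still in $\tree_0(R)$) to have controlled density $\Theta\approx\Theta(R)$ from below and above: the upper bound $\Theta(Q)\lesssim B\,\Theta(R)$ comes from Lemma~\ref{lemfac1}(b), and a lower bound must come from the fact that if $\Theta$ dropped by a factor $\delta_0$ we would have hit an $LD_0(R)$ cube, i.e. entered $\SSS(R')$. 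So on the region $G=R'\setminus\bigcup_{Q\in\SSS(R')}Q$ — which carries at least $(1-\eta)\mu(R')$ of the mass if the conclusion fails — every cube of $\DD$ of side length $\geq\delta\ell(R')$ containing a point of $G$ has density in the band $[\,\delta_0\Theta(R),\,2B\Theta(R)\,]$, and moreover (by the $BR_0$ stopping condition not having triggered) the oscillation of $\RR\mu$ is controlled: $|m_Q\RR\mu-m_{R'}\RR\mu|\lesssim M(\Theta(R)+p(Q))\lesssim MB\,\Theta(R)$.

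\textbf{Key steps.} First I would record that for every $Q\in\tree_0(R)$ with $Q\subsetneq R'$ and $\ell(Q)\geq\delta\ell(R')$, either $Q$ already lies above some cube of $\SSS(R')$ — in which case $Q$'s mass is accounted for — or $Q$ itself satisfies the non-stopping inequalities, and in particular $\Theta(Q)>\delta_0\Theta(R)$. Summing over a suitable generation (say the first generation of cubes of size roughly $\delta\ell(R')$), this shows that the mass lying outside $\bigcup_{\SSS(R')}Q$ is supported on $\gtrsim(\delta\ell(R'))^{-?}$-many cubes each of density $\gtrsim\delta_0\Theta(R)$ and of comparable small side length; but then I want to play this off against the Riesz transform bound. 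The cleanest route: take a point $x_0\in G$ and a small scale $\rho\approx\delta\ell(R')$; compute $\RR(\chi_{R'\setminus B(x_0,\rho)}\mu)(x_0)$ two ways. On one hand it is controlled by $2M(\Theta(R)+p(Q))\lesssim MB\,\Theta(R)$ using \rf{eqc392} of Lemma~\ref{lemfac1}(c) (or \rf{eqfacc00}). On the other hand, if the mass of $G$ is large and spread over cubes of density $\gtrsim\delta_0\Theta(R)$ filling up a big portion of $R'$, one gets a lower bound for some suitable average or for the $L^2(\mu\rest G)$ norm of $\RR\mu$ that is forced to be large — contradicting the tree bound unless $\eta$ is small or $\delta$ is small in a quantitative way compatible with the claim. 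I expect the correct formalization is to argue that if $\mu(G)\geq(1-\eta)\mu(R')$ with $\eta$ small, then $G$ together with the density band makes $R'$ look like an AD-regular piece at scale between $\delta\ell(R')$ and $\ell(R')$, and then invoke a known lower bound (touching point / maximum principle flavour, as announced in Section~4 of the paper) to produce a point where $|\RR\mu|\gtrsim (\text{something})\,\Theta(R)$ with a constant that beats $MB$ once $\delta$ is chosen small depending on $B,M,\delta_0$ — contradiction.

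\textbf{Main obstacle.} The delicate part is the lower bound for the Riesz transform on the "good" region $G$: one must rule out cancellation. Since $s$ is fractional and $d-1<s<d$, the natural tool is the maximum principle for $\RR^s$ mentioned in the introduction — this is presumably why the paper restricts to this range — applied to the potential of $\mu\rest(R'\cap G)$ (or of $\mu\rest R'$ minus the stopping cubes) to locate a point, likely on $\supp\mu$ near $G$, where the (suppressed) Riesz transform is comparable to the density. Making this rigorous requires: (i) setting up the suppressed kernel $\RR_\Phi$ with $\Phi$ essentially the distance to a scale $\approx\delta\ell(R')$, so that Theorem~\ref{teontv} and the comparison \rf{e.compsup''} apply; (ii) using the fact that $E$ is uniformly disconnected so that at scale $\delta\ell(R')$ the set genuinely splits, which is what lets a touching-point argument find the extremal point; and (iii) tracking all constants so the final contradiction is clean. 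I would expect to borrow the variational/touching-point mechanism from \cite{ENV12} and \cite{NToV} essentially verbatim, adapted to the tree $\tree(R)$, and the bulk of the real work is checking that the density band $[\delta_0\Theta(R),2B\Theta(R)]$ plus the $\RR\mu$-oscillation control \rf{eqc391} are exactly the hypotheses that machine needs. The reduction of Lemma~\ref{lemdif1} from Lemma~\ref{lemtouch} is then routine: iterate, at each generation a fixed proportion $\eta$ of the mass is peeled off into $\sss_0(R)$-cubes (hence out of $\tree_0(R)$), giving geometric decay of $\sum_{Q\in\tree_0(R)}\mu(Q)$, and the $\sigma$-bound \rf{eqdif2} follows from \rf{eqdif1} together with the density bound $\Theta(Q)\lesssim B\,\Theta(R)$ on $\tree(R)$.
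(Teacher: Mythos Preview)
Your high-level strategy is right --- contradiction via a large value of $\RR\mu$ that beats the oscillation bound \rf{eqc391} --- and you correctly isolate the density band $[\delta_0\Theta(R),2B\Theta(R)]$ on the non-stopped region. But the proposal has a genuine gap at the step you yourself flag as the main obstacle: you never identify the mechanism that actually produces the lower bound on $|\RR\mu|$, and the mechanism you gesture toward is the wrong one.

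You invoke the maximum principle (Lemma~\ref{lemmaxprin}) and the variational argument from \cite{ENV12}, \cite{NToV}. Those tools live in Section~7 of the paper and operate on an approximating measure $\eta$ via a functional minimization; they are not what drives Lemma~\ref{lemtouch}. The paper's proof is instead a direct \emph{geometric} touching-point argument: one finds an open ball $B_{R'}\subset\tfrac{10}{9}R'$ disjoint from $F_N(R')$ whose boundary touches $F_N(R')$ (Lemma~\ref{lembola}), and at a touching point $x$ one takes the tangent hyperplane $H$ and decomposes $\R^d$ into a narrow cone $X_\alpha$ around $H$ and the two complementary half-cones $V_\alpha^\pm$. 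The half-cone on the ball side carries no $\mu$-mass, so the $d$-th component of $\RR(\chi_{F_{N_0}(R')\setminus P}\mu)(x)$ has a sign on $V_\alpha^+$ and is bounded below by a sum of $\approx N$ terms each $\gtrsim\delta_0\Theta(R)$ coming from the density band. The cone $X_\alpha$ contributes at most $CB\alpha^{s+1-d}\Theta(R)\cdot N$ (Lemma~\ref{lemcon}); this is where $s>d-1$ enters, and it is a measure-geometric estimate, not a potential-theoretic maximum principle. Choosing $\alpha$ small gives $|\RR(\chi_{F_{N_0}(R')\setminus P}\mu)(x)|\gtrsim C(B,\delta_0)N\Theta(R)$ (Lemma~\ref{lemtouchingpoint}).

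A second gap: a single touching point yields a single cube $P$ with large Riesz transform; this is not enough to capture a definite proportion of $\mu(R')$. The paper iterates the touching-ball construction at scales $\ell(R'), 3^{-N}\ell(R'),\ldots$ inside the cubes not yet covered (Lemma~\ref{lemcub2}), accumulating a family $\TT(R')$ with $\mu(\bigcup\TT(R'))\geq\tfrac{3}{20}\mu(R')$. Finally one must subtract the contribution of $\mu\rest(R'\setminus F_{N_0}(R'))$; this is controlled by the suppressed-kernel $L^2$ bound of Lemma~\ref{lemrieszmax}, after which the contradiction with \rf{eqc391} is clean. Your sketch mentions suppressed kernels but assigns them the wrong role (producing the lower bound rather than cleaning up the upper bound on the complementary piece).
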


\begin{proof}[Proof of Lemma \ref{lemdif1} using Lemma \ref{lemtouch}]
The estimate \rf{eqdif1} follows by applying the preceding result iteratively. Indeed,
for each $k\geq0$ denote by $\AZ_k$ the collection of the cubes $R'\in\tree_0(R)$ such that
$$\delta^{k+1}\ell(R)< \ell(R') \leq \delta^{k}\,\ell(R)$$
and have maximal side length. Notice that this is a pairwise disjoint family.
From Lemma \ref{lemtouch} it follows that for, any $R'\in\AZ_k$,
$$\mu\biggl (R'\cap \bigcup_{Q\in\AZ_{k+2}:\,Q\subset R'}Q\biggr) \leq (1-\eta)\,\mu(R').$$
Summing over all $R'\in\AZ_k$ we obtain
$$\mu\biggl (\bigcup_{Q\in\AZ_{k+2}}Q\biggr) \leq (1-\eta)\,\mu\biggl (\bigcup_{Q\in\AZ_{k}}Q\biggr) .$$
By iterating this estimate we get
$$\mu\biggl (\bigcup_{Q\in\AZ_{k}}Q\biggr)\leq (1-\eta)^{(k-1)/2}\,\mu(R),$$
and thus
$$\sum_{Q\in\tree_0(R)}\mu(Q)\leq c(\delta)\,\sum_{k\geq0} \sum_{Q\in\AZ_k}\mu(Q)  
\leq c(\delta)\,\sum_{k\geq0}(1-\eta)^{(k-1)/2}\mu(R) \leq c(\delta,\eta)\,\mu(R),$$
which yields \rf{eqdif1}.

The inequality \rf{eqdif2} follows from \rf{eqsigma1}, Lemma \ref{lemfac1} (b) and the fact that
$$\sigma(\tree_0(R))\leq B^2\,\Theta(R)^2\sum_{Q\in\tree_0(R)}\mu(Q)\leq B^2c(B,M,m_0)\,\Theta(R)^2\,\mu(R).$$
\end{proof}

\vv

To prove Lemma \ref{lemtouch} we need to introduce some notation. In the situation of the lemma, given $Q\in
\tree_0(R)$ and an integer $N>1$, we denote 
$$\FF_N(Q)=\tree_0(R)\cap\DD_N(Q)$$ and we consider
the set
$$F_N(Q)= \bigcup_{P\in \FF_N(Q)}P.$$

Also, we consider the following families of cubes, for $N_0\geq N>1$,
$$I_{N_0}(Q) = \biggl\{P\in\tree_0(R):\, P\subset Q, \,\mu(P\cap F_{N_0}(Q))\leq \frac14 \mu(P)\biggr\}$$
and 
$$\wt \FF_{N,N_0}(Q) = \Bigl\{P\in\FF_N(Q):\nexists P'\in I_{N_0}(Q) \text{ such that }P'\supset P\Bigr\}.$$
Then we denote
$$\wt F_{N,N_0}(Q)= \bigcup_{P\in\wt\FF_{N,N_0}(Q)} P = F_N(Q) \setminus \bigcup_{P\in I_{N_0}(Q)}P .$$

\vv
\begin{lemma}\label{lemfnt}
Let $N_0\geq N>1$ be integers. Let $Q\in\tree_0(R)$ be such that $\mu(F_{N_0}(Q))\geq\frac12\,\mu(Q)$.
Then, 
$$\mu(\wt F_{N,N_0}(Q))\geq \mu(\wt F_{N_0,N_0}(Q))\geq\frac14 \,\mu(Q).$$
\end{lemma}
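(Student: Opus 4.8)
The plan is to establish the two inequalities of the statement separately, starting from the right-hand one, which carries the real content. Recall that by definition $\wt F_{N_0,N_0}(Q) = F_{N_0}(Q)\setminus\bigcup_{P\in I_{N_0}(Q)}P$, so in view of the hypothesis $\mu(F_{N_0}(Q))\geq\frac12\mu(Q)$ it suffices to bound the mass of $F_{N_0}(Q)$ that is discarded, namely to show $\mu\bigl(F_{N_0}(Q)\cap\bigcup_{P\in I_{N_0}(Q)}P\bigr)\leq\frac14\mu(Q)$. For this I would pass to the family $I^*_{N_0}(Q)$ of maximal cubes of $I_{N_0}(Q)$ (these exist because every cube of $I_{N_0}(Q)$ lies a finite number of generations below $Q$); they are pairwise disjoint, contained in $Q$, and their union coincides with $\bigcup_{P\in I_{N_0}(Q)}P$. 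Using the very definition of $I_{N_0}(Q)$, i.e. $\mu(P\cap F_{N_0}(Q))\leq\frac14\mu(P)$ for $P\in I_{N_0}(Q)$, together with disjointness,
\[
\mu\Bigl(F_{N_0}(Q)\cap\bigcup_{P\in I_{N_0}(Q)}P\Bigr)=\sum_{P\in I^*_{N_0}(Q)}\mu(P\cap F_{N_0}(Q))\leq\frac14\sum_{P\in I^*_{N_0}(Q)}\mu(P)\leq\frac14\mu(Q).
\]
Subtracting from $\mu(F_{N_0}(Q))\geq\frac12\mu(Q)$ yields $\mu(\wt F_{N_0,N_0}(Q))\geq\frac14\mu(Q)$.

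For the inequality $\mu(\wt F_{N,N_0}(Q))\geq\mu(\wt F_{N_0,N_0}(Q))$ I would prove the set inclusion $\wt F_{N_0,N_0}(Q)\subset\wt F_{N,N_0}(Q)$. The first observation is that $\tree_0(R)$ is stable under passing to ancestors within $R$: if $P\in\tree_0(R)$ and $P\subset P'\subset R$, then $P'$ cannot be contained in any cube of $\sss_0(R)$ (otherwise $P$ would be too), hence $P'\in\tree_0(R)$. Now take $P\in\wt\FF_{N_0,N_0}(Q)$ and let $P'$ be its ancestor of generation $N$ below $Q$ (here $N\leq N_0$ is used). By the previous remark $P'\in\tree_0(R)\cap\DD_N(Q)=\FF_N(Q)$, and $P'$ is not contained in any cube $P''\in I_{N_0}(Q)$, since $P\subset P'\subset P''$ would contradict $P\in\wt\FF_{N_0,N_0}(Q)$. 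Therefore $P'\in\wt\FF_{N,N_0}(Q)$ and $P\subset P'\subset\wt F_{N,N_0}(Q)$; taking the union over $P$ gives the inclusion.

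The argument is quite short, and no single step is a genuine obstacle; if anything, the only point demanding a little care is the combinatorial bookkeeping in the monotonicity step — namely recognizing the upward hereditary property of $\tree_0(R)$ and checking that it forces the level-$N$ ancestors of cubes in $\wt\FF_{N_0,N_0}(Q)$ to remain ``good'', i.e. to lie in $\wt\FF_{N,N_0}(Q)$. Everything else reduces to disjointness of maximal cubes and the defining inequalities.
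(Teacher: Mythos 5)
Your proof is correct and follows essentially the same route as the paper: the key inequality $\mu(\wt F_{N_0,N_0}(Q))\geq\frac14\mu(Q)$ is obtained, exactly as in the paper, by passing to the maximal (hence disjoint) cubes of $I_{N_0}(Q)$ and applying the defining bound $\mu(P\cap F_{N_0}(Q))\leq\frac14\mu(P)$. The first inequality, which the paper dismisses as trivial (it follows directly from $F_{N_0}(Q)\subset F_N(Q)$ in the set-difference description of $\wt F_{N,N_0}(Q)$), is spelled out by you via the upward-hereditary property of $\tree_0(R)$; that argument is also fine.
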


\begin{proof}
The first inequality is trivial. To prove the second one, denote by
$\wt I_{N_0}(Q)$ the subfamily of maximal cubes from $I_{N_0}(Q)$. 
Notice that
$$F_{N_0}(Q)\setminus\wt F_{N_0,N_0}(Q)\subset \bigcup_{P\in \wt I_{N_0}(Q)} P.$$
Since the cubes from $\wt I_{N_0}(Q)$ are disjoint,
we have
$$\mu\bigl (F_{N_0}(Q)\setminus\wt F_{N_0,N_0}(Q)\bigr) \leq \sum_{P\in \wt I_{N_0}(Q)} \mu(F_{N_0}(Q)\cap P)\
\leq \frac14 \sum_{P\in \wt I_{N_0}(Q)} \mu(P)
\leq \frac14\,\mu(Q).$$
Therefore, 
$$\mu(\wt F_{N_0,N_0}(Q)) = \mu(F_{N_0}(Q)) -  \mu\bigl (F_{N_0}(Q)\setminus\wt F_{N_0,N_0}(Q)\bigr) \geq
\frac12 \,\mu(Q) - \frac14\,\mu(Q) = \frac14\,\mu(Q).$$
\end{proof}

\vv
\begin{lemma}\label{lembola}
Let $N\geq 1$. Let $R'\in\tree_0(R)$ be such that $F_N(R')\neq\varnothing$. Then there exists an open ball $B_{R'}\subset\frac{10}9R'$ which satisfies
$$B_{R'}\cap F_N(R')=\varnothing\quad\mbox{ and }\quad \partial B_{R'}\cap F_N(R')\neq\varnothing.$$
\end{lemma}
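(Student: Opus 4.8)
\textbf{Proof plan for Lemma \ref{lembola}.}

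The plan is to produce the ball $B_{R'}$ by starting with a small ball centered somewhere in $R'$ but far from $F_N(R')$, and then growing it (increasing its radius while keeping its center fixed, or sliding the center outward) until it first touches $F_N(R')$. First I would exploit the geometry of the Cantor construction: the cubes in $\FF_N(R')$ are descendants of $R'$ of generation exactly $N$, hence have side length at most $3^{-N}\ell(R')$ and, crucially, are separated from each other and contained in a controlled portion of $R'$. In particular, by the separation condition \rf{eqsepara} and \rf{eq000}, each child of $R'$ contains exactly one ``cluster'' of these generation-$N$ cubes, and distinct children are separated by at least $c_{sep}\ell(R')$. So there is a point $z$ — for instance a point in $R'$ lying at definite distance (say $\geq \tfrac12 c_{sep}\ell(R')$, or near the ``gap'' between two children if there are several, or just outside the single child if there is only one) — such that $\dist(z,F_N(R'))>0$.

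Next I would take the ball $B(z,\rho)$ with $\rho = \dist(z,F_N(R'))$. By construction this ball is disjoint from $F_N(R')$ (it is the open ball, and $F_N(R')$ is compact so the infimum is attained), so $\partial B(z,\rho)\cap F_N(R')\neq\varnothing$, and $B(z,\rho)\cap F_N(R')=\varnothing$. The only remaining point is to check the containment $B(z,\rho)\subset \tfrac{10}{9}R'$. For this I would choose $z$ carefully: take $z\in R'$ itself (so $z$ is within $\ell(R')$ of any point of $R'$), and observe that $\rho = \dist(z,F_N(R'))\leq \diam(R') = \ell(R')$ since $F_N(R')\subset R'$ is nonempty; hence every point of $B(z,\rho)$ is within distance $\rho\leq\ell(R')$ of $z\in R'$, which is well inside $\tfrac{10}{9}R'$ provided the enlargement factor absorbs this — here one should instead pick $z$ so that $\rho$ is genuinely small, e.g. $\rho \lesssim \ell(R')$ with a constant comfortably below $\tfrac19$; concretely, choosing $z$ to be (a point of $R'$ near) the boundary of one of the children $Q^{k+1}_j\in\CH(R')$ but in the gap toward a sibling or the exterior, one gets $\dist(z, Q)\leq \diam(Q)\lesssim 3^{-N}\ell(R')\ll \tfrac19\ell(R')$ for the nearest $Q\in\FF_N(R')$ contained in that child, so $\rho\ll\tfrac19\ell(R')$ and $B(z,\rho)\subset\tfrac{10}{9}R'$ trivially.

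The main obstacle — really the only non-routine point — is handling the degenerate case where $R'$ has a single child, and that child a single child, and so on, so that $F_N(R')$ is a single generation-$N$ cube $Q$ with $\ell(Q)$ possibly as large as $3^{-1}\cdots$ down to $8^{-N}\ell(R')$ but located anywhere inside $R'$; then one must verify that there is still room in $R'\setminus Q$ (more precisely in $\tfrac{10}{9}R'$) to place the center $z$ with $\dist(z,Q)$ small relative to $\ell(R')$. This is guaranteed by \rf{eq000}, which forces $\ell(Q)\leq 3^{-1}\ell(R')$ even for $N=1$, so $Q$ occupies a definite proper sub-portion of $R'$; picking $z$ just outside $Q$ (at distance $\sim \dist(Q,\partial R')$ or $\sim 3^{-N}\ell(R')$, whichever analysis is cleaner) and taking $\rho=\dist(z,F_N(R'))$ yields a ball inside $\tfrac{10}{9}R'$ meeting $F_N(R')$ exactly on its boundary. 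I would then note that the argument is uniform in $R'$ and $N$ since all the geometric constants ($c_{sep}$, the ratios $1/8$, $1/3$, the bound on the number of children) are fixed.
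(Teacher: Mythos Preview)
Your overall strategy --- produce a ball disjoint from $F_N(R')$ whose boundary touches $F_N(R')$ --- is exactly the paper's, but your execution of the containment $B_{R'}\subset\tfrac{10}{9}R'$ has a gap. You place $z$ near the boundary of a child $Q_1\in\CH(R')$ and claim that $\rho=\dist(z,F_N(R'))\le\diam(Q)\lesssim 3^{-N}\ell(R')$ for the nearest $Q\in\FF_N(R')$ inside $Q_1$. This is false: the cubes of $\FF_N(R')$ lying in $Q_1$ are $(N-1)$-th generation descendants of $Q_1$, and nothing in the construction (neither \rf{eq000} nor \rf{eqsepara}) forces them to be near $\partial Q_1$; they may sit deep inside $Q_1$, at distance comparable to $\ell(Q_1)$ from its boundary. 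Hence $\rho$ can be as large as $c\,\ell(Q_1)\le\tfrac13\ell(R')$, which exceeds $\tfrac19\ell(R')$ and breaks the containment.

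The paper avoids this by reversing the roles of center and radius. It first fixes a ball $B_0$ of \emph{small} radius $r\le\ell(R')/20$ sitting in the gap $R'\cap\bigl((1+\tau)Q_1\setminus Q_1\bigr)$ (so $B_0\cap F_N(R')=\varnothing$ automatically), and then slides the center along the segment toward the nearest point $x_1\in F_N(R')$ until the boundary first touches. The resulting ball $B_{R'}$ still has radius $r$, and its center lies at distance $r$ from the touching point in $F_N(R')\subset R'$, so every point of $B_{R'}$ is within $2r\le\ell(R')/10<\tfrac19\ell(R')$ of $R'$. Your fixed-center variant can be repaired by choosing $z$ at a prescribed small distance (say $\ell(R')/20$) from $F_N(R')$ itself rather than from a child of $R'$ --- but that is essentially the sliding argument in disguise. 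Note also that the paper's construction delivers $r(B_{R'})\approx\ell(R')$ with a definite lower bound; this is not stated in the lemma but is invoked later in Lemma~\ref{lemcub2}, and your approach as written would not provide it.
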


\begin{proof}
Let $Q_i$, $i=1,\ldots,m$ be the sons of $R'$. By the separation condition \rf{eqsepara}, we know
that there exists some constant $\tau>0$ such that
$$\dist\biggl (Q_1,\bigcup_{i=2}^m Q_i\biggr) \geq \tau\,\ell(Q_1).$$
Thus 
$(1+\tau)Q_1 \setminus Q_1$ does not intersect any cube from $\DD(R')$ different from $R'$. So 
we may take an open ball $B_0$ contained in $R'\cap \bigl ((1+\tau)Q_1 \setminus Q_1\bigr)$
with $r(B_0) \approx \ell(Q_1)\approx \ell(R')$, which clearly 
satisfies $B_0\cap F_N(R')=\varnothing$.

Let $x_1$ be a point from $F_N(R')$ which is closest to the center $x_0$ of $B_0$. We move $B_0$ keeping its center
in the segment $[x_0,x_1]$ till its boundary touches $F_N(R')$, and then we call it $B_{R'}$. Assuming $r\leq \ell(R')/20$, it follows that $B_{R'}\subset 
\frac{10}9R'$.
\end{proof}

\vv

Given a hyperplane $H\subset \R^d$, $x\in H$, $r>0$ and $0<\alpha\leq1/2$, we consider the cone
$$X(x,H,\alpha) = \bigl\{y\in \R^d:\, \dist(y,H)\leq \alpha\,|x-y|\bigr\}.$$
For $0\leq r_1< r_2$, we also set
$$X(x,H,\alpha,r_1,r_2) = \bigl\{y\in \R^d:\, \dist(y,H)\leq \alpha\,|x-y|,\,r_1\leq|x-y|\leq r_2\bigr\}.$$

\begin{lemma}\label{lemcon}
Let $R'\in\tree_0(R)$, $P\in\wt F_N(R')$, and $x\in P$. Given $0<\alpha\leq1/2$, there exists
$m_0=m_0(\alpha)\geq 1$ such that for all $r\geq \ell(P)$ and $m\geq m_0$,
$$\mu(X(x,H,\alpha,\tfrac1{10}\ell(P),r) \cap F_{mN}(R'))\leq c_3\,B\,\alpha^{s+1-d}\,r^s.$$
\end{lemma}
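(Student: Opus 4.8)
The plan is to estimate the $\mu$-measure of the portion of a cone lying inside $F_{mN}(R')$ by decomposing it into annular pieces and using the density bound $\Theta(Q)\le 2B\,\Theta(R)$ valid for cubes in $\tree_0(R)$. First I would fix the hyperplane $H$, the point $x\in P$, and $\alpha$, and split the cone annulus $X(x,H,\alpha,\tfrac1{10}\ell(P),r)$ into dyadic shells $A_j=\{y:\,2^{j}\tfrac1{10}\ell(P)\le|x-y|<2^{j+1}\tfrac1{10}\ell(P)\}$ for $j=0,1,\dots,J$ where $2^J\tfrac1{10}\ell(P)\approx r$. The key geometric observation is that the slab $X(x,H,\alpha)\cap A_j$ has the form of a flat box of dimensions $\sim 2^j\ell(P)$ in the $d-1$ directions along $H$ and $\sim \alpha\, 2^j\ell(P)$ in the transverse direction, so its Lebesgue measure is $\sim \alpha\,(2^j\ell(P))^d$. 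I would then cover this slab by $\sim \alpha^{-(d-1)}\cdot\alpha = \alpha\cdot\alpha^{-d}\cdot$(something)\,\dots\ more precisely by roughly $\alpha^{1-d}$ many cubes of $\DD$ of side length comparable to $\alpha\,2^j\ell(P)$ that meet $F_{mN}(R')$; here one needs $mN$ large enough (depending on $\alpha$) that such cubes actually exist in the construction at the relevant scale, which is where $m_0=m_0(\alpha)$ enters.

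Next, for each such covering cube $Q$ one has, since $Q\subset R'\subset R$ with $Q\in\tree_0(R)$ (or is contained in a cube of $\tree_0(R)$ of comparable size, using Lemma \ref{lemfac1}(b)), the bound $\mu(Q)\le \Theta(Q)\,\ell(Q)^s\le 2B\,\Theta(R)\,\ell(Q)^s$. Wait — more carefully, since we only know $\sup_Q\Theta(Q)\le C$ in general, but within $\tree_0(R)$ we have the sharper $\Theta(Q)\le 2B\,\Theta(R)$; normalizing so that $\Theta(R)\approx 1$ (or carrying the factor $\Theta(R)$), we get $\mu(Q)\lesssim B\,(\alpha\,2^j\ell(P))^s$. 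Summing over the $\lesssim \alpha^{1-d}$ covering cubes in shell $A_j$ gives
\begin{equation*}
\mu\bigl(X(x,H,\alpha)\cap A_j\cap F_{mN}(R')\bigr)\lesssim B\,\alpha^{1-d}\,(\alpha\,2^j\ell(P))^s = B\,\alpha^{s+1-d}\,(2^j\ell(P))^s.
\end{equation*}
Then I would sum the geometric series over $j=0,\dots,J$: since $s>0$, $\sum_j (2^j\ell(P))^s \lesssim (2^J\ell(P))^s\approx r^s$, which yields the desired bound $\mu(X(x,H,\alpha,\tfrac1{10}\ell(P),r)\cap F_{mN}(R'))\le c_3\,B\,\alpha^{s+1-d}\,r^s$.

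The main obstacle I expect is the covering step: one must verify that the flat slab in each annulus can genuinely be covered by $\lesssim \alpha^{1-d}$ cubes of $\DD$ meeting $F_{mN}(R')$ of the right side length, and this forces the threshold $m\ge m_0(\alpha)$ — the parameter $m$ must be large enough that the generation $mN$ cubes inside $R'$ have side lengths small compared to $\alpha\,2^j\ell(P)$ uniformly over the relevant shells, using $8^{-mN}\le \ell(Q)/\ell(R')\le 3^{-mN}$ for $Q\in\DD_{mN}(R')$. A secondary technical point is keeping track of the normalization by $\Theta(R)$: the cleanest route is to absorb it by working with the rescaled density, or simply to note that every cube entering the estimate lies in $\tree_0(R)$ (or is covered by one of comparable size, via Lemma \ref{lemfac1}(b)) so that the factor $B\,\Theta(R)$ appears uniformly, and the kernel/homogeneity bookkeeping then reduces to the elementary geometric-series summation above. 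Since $s>0$ the series converges at the outer scale $r$, and since we only use $s<d$ implicitly through $\alpha^{s+1-d}$ being the natural exponent (negative when $s<d-1$, which is exactly the regime where the cone is "thin"), no further restriction on $s$ is needed here.
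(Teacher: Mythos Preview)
Your proposal is correct and follows essentially the same strategy as the paper's proof: decompose the truncated cone into annular shells, cover each shell's slab by roughly $\alpha^{1-d}$ (or $\alpha^{2-d}$) pieces of diameter comparable to the cone's transverse thickness, use the density bound $\Theta(Q)\lesssim B\,\Theta(R)$ for cubes in $\tree_0(R)$ on each piece, and sum a geometric series. The only cosmetic difference is that you use dyadic annuli of relative thickness $1$ (each covered by $\sim\alpha^{1-d}$ cubes of side $\sim\alpha\,2^j\ell(P)$), whereas the paper uses thinner $(1-\alpha)$-adic annuli of relative thickness $\alpha$ (each covered by $\sim\alpha^{2-d}$ balls of radius $\alpha r_k$, with the extra factor of $\alpha^{-1}$ recovered when summing over $k$); both routes yield $B\,\Theta(R)\,\alpha^{s+1-d}r^s$. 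Your identification of the role of $m_0(\alpha)$---ensuring that the generation-$mN$ cubes are small enough relative to $\alpha\,2^j\ell(P)$ so that any covering ball meeting $F_{mN}(R')$ sits inside an ancestor $Q'\in\tree_0(R)$ of comparable size---is exactly how the paper uses it.
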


\begin{proof}We assume for simplicity that $x=0$. It is clear that we may also assume that $r\leq\diam R'$.
For $k\geq0$, let $r_k = (1-\alpha)^k\,r$ and consider the closed annulus
$$A_k = A(0,r_{k+1},r_k),$$
so that
$$X(0,H,\alpha,r_{k+1},r_k) = A_k \cap X(0,H,\alpha,0,r).$$

Let $n_0$ be the largest integer such that $r_{n_0}\leq \tfrac1{10}\ell(P)$.
We set
$$\mu\bigl(X(0,H,\alpha,\tfrac1{10}\ell(P),r) \cap F_{mN}(R')\bigr)\leq   \sum_{k=0}^{n_0-1} 
\mu\bigl(X(0,H,\alpha,r_{k+1},r_k)\cap F_{mN}(R')\bigr).$$
We are going to estimate each summand on the right side. To this end, consider the 
$(d-1)$-dimensional annulus $A_k\cap H$. This can be covered by
a family of closed balls $B_1^k,\ldots,B_{n_k}^k$ of radius $\alpha\,r_k$ centered in $A_k\cap H$ with
$$n_k\approx \frac1{\alpha^{d-2}}.$$
 To prove this notice that
the $(d-1)$-dimensional volume of $A_k\cap H$ is comparable to
$$\bigl(r_k-r_{k+1}\bigr)\,r_k^{d-2} = \alpha\,r_k^{d-1},$$
while the one of $B_i^k\cap H$ is $c\,\bigl(\alpha\,r_k\bigr)^{d-1}$ for each $i$. So
$$n_k\approx \frac{\alpha\,r_k^{d-1}}{\bigl(\alpha\,r_k\bigr)^{d-1}} = \frac1{\alpha^{d-2}}.$$

We claim now that 
\begin{equation}\label{eqbik10}
X(0,H,\alpha,r_{k+1},r_k) \subset \bigcup_{i=1}^{n_k} 3B_i^k.
\end{equation}
Indeed, given $y\in X(0,H,\alpha,r_{k+1},r_k)$, let $y'$ be its orthogonal projection on $H$,
so that $|y-y'|=\dist(y,H)$.
Take now 
$$y'' = \frac{|y|}{|y'|}\,y'.$$
Observe that $y''\in H$ and $|y''|=|y|$, and so
$y''\in H\cap X(0,H,\alpha,r_{k+1},r_k)$. Also, we have
$$|y'-y''| =  |y'|\,\biggl(1-\frac{|y|}{|y'|}\biggr) = \bigl| |y'|- |y|\bigr| \leq
|y'-y|.$$
Therefore,
$$|y-y''|\leq |y-y'| + |y'-y''|\leq 2|y-y'|=2\dist(y,H)\leq2\alpha r_k.$$
As a consequence, if $y''\in B_i^k$, then $\dist(y,B_i^k)\leq 2\,r(B_i^k)$ and so
$y\in 3B_i^k$, which proves our claim.

Next we are going to check that
$$
\mu\bigl(3B_i^k\cap F_{mN}(R')\bigr)\leq c\,B\,\Theta(R)\,r(B_i^k)^s = c\,B\,\Theta(R)\,\alpha^s\,
(1-\alpha)^{sk}\,r^s.
$$
for all $0\leq k\leq n_0$ and
$1\leq i \leq n_k$.
To this end notice that for these indices $k,i$,
\begin{equation}\label{eqmes19}
r(B_i^k) = \alpha\,r_k\geq \alpha\,r_{n_0} \geq \alpha\,(1-\alpha)\,r_{n_0-1}\geq
\frac\alpha{20}\,\ell(P)\geq \frac\alpha{20}\,8^{-N}\,\ell(R'),
\end{equation}
recalling that $P\in\FF_N(R')$ in the last inequality.
On the other hand, if $\mu\bigl(3B_i^k\cap F_{mN}(R')\bigr)\neq0$, then there exists some cube $Q\in
F_{mN}(R')$ which intersects $3B_i^k$ and we have
\begin{equation}\label{eqmes20}
\ell(Q)\leq 2^{-mN}\ell(R').
\end{equation}
If $m_0$ is big enough and $m\geq m_0$, from \rf{eqmes19} and \rf{eqmes20} we infer that
$\ell(Q)\leq r(B_i^k)$. Thus there exists some ancestor $Q'$ of $Q$ such that $3B_i^k\subset (1+c_{sep})Q'$ which
satisfies $\ell(Q')\approx r(B_i^k)$. In particular, either $Q'\in\tree_0(R)$ or $Q'$ contains $R$ and
$\ell(Q')\approx\ell(R)$. In any case (using that $R$ is p-doubling in the second one), we deduce
that $\mu(Q')\leq c\,B\,\Theta(R)\,\ell(Q')^s$. So, by the separation property \rf{eqsepara}
$$\mu\bigl(3B_i^k\cap F_{mN}(R')\bigr)\leq \mu((1+c_{sep})Q')= \mu(Q') \lesssim B\,\Theta(R)\,\ell(Q')^s\approx B\,\Theta(R)\,r(B_i^k)^s,$$
as wished.

From the latter estimate we infer that $\mu\bigl(3B_i^k\cap F_{mN}(R')\bigr)\lesssim\,B\,\Theta(R)\,\alpha^s\,
(1-\alpha)^{sk}\,r^s$ for all $i,k$.
Together with \rf{eqbik10} and the fact that $n_k\approx\alpha^{2-d}$, this implies that
$$\mu\bigl(X(0,H,\alpha,r_{k+1},r_k)\cap F_{mN}(R')\bigr)\lesssim B\,\Theta(R)\alpha^{2+s-d}\,(1-\alpha)^{sk}\,r^s.$$
As a consequence,
\begin{align*}
\mu\bigl(X(0,H,\alpha,\tfrac1{10}\ell(P),r) \cap F_{mN}(R')\bigr)&\lesssim  B\,\Theta(R)\,r^s\,\alpha^{2+s-d}\sum_{k=0}^{n_0-1}(1-\alpha)^{sk}\\
& \leq \frac{B\,\Theta(R)\,r^s\,\alpha^{2+s-d}}{1-(1-\alpha)^s}
\approx B\,\Theta(R)\alpha^{1+s-d}\,r^s.
\end{align*}
\end{proof}

\vv
\begin{lemma}\label{lemtouchingpoint}
Suppose that $N$ and $m_0$ are big enough, depending on $\delta_0$ and $B$, and let $N_0$ be such that $N_0\geq m_0\,N$.
Given $R'\in\tree_0(R)$, let $B_{R'}$ be an open ball centered at some point from $R'$ with 
$r(B_{R'})\geq c^{-1}\ell(R')$
such that
$$B_{R'}\cap F_N(R')=\varnothing \quad\mbox{and}\quad \partial B_{R'}\cap \wt F_{N,N_0}(R')\neq\varnothing.$$
Let $P\in \wt\FF_{N,N_0}(R')$ be such that $P\cap \partial B_{R'}\neq \varnothing$.
We have
\begin{equation}\label{eqtou3}
|\RR(\chi_{F_{N_0}(R')\setminus P}\mu)(x)|\geq C_1(B, \delta_{0})\, \Theta(R)\,N\quad\mbox{for all $x\in P$.}
\end{equation}
\end{lemma}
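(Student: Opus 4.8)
The plan is to exploit the geometry near the touching point: the ball $B_{R'}$ avoids $F_N(R')$ but is tangent to it at a point of $P$, which forces all the relevant mass of $F_{N_0}(R')$ to lie outside $B_{R'}$, hence roughly on ``one side'' of the tangent hyperplane through the touching point. A lower bound for the Riesz transform will then follow by estimating $\RR(\chi_{F_{N_0}(R')\setminus P}\mu)(x)$ through a positive component: the kernel $K^s(x-y)$, when integrated against a measure concentrated in a half-space-like region on one side of $x$, has a component in the outward normal direction that does not cancel, and its size is controlled from below by $\Theta(R)\cdot(\text{number of scales between }\ell(P)\text{ and }r(B_{R'}))\approx \Theta(R)\,N$ once we know the density on the annuli around $x$ is bounded \emph{below} on a fixed fraction of each dyadic annulus.

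\textbf{Step 1: set up coordinates and the half-space.} Fix $x\in P\cap\partial B_{R'}$ (a.e.\ point of $P$ works after routine modifications using \eqref{eq000}--\eqref{eqsepara}, since $\ell(P)\approx 8^{-N}\ell(R')$ is tiny compared to $r(B_{R'})$). Let $H$ be the hyperplane through the touching point orthogonal to the radius of $B_{R'}$ at that point, and let $\nu_0$ be the unit outward normal (pointing away from the center of $B_{R'}$). Since $B_{R'}$ is an open ball tangent to $H$ and $F_N(R')\cap B_{R'}=\varnothing$, the set $F_{N_0}(R')$, except for a thin cone region of aperture $\alpha$ around $H$, lies in the half-space $\{y:\langle y-x,\nu_0\rangle\geq 0\}$ at distance comparable to the distance from $x$, up to the curvature error of $\partial B_{R'}$ which is harmless at scales $\leq r(B_{R'})$.

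\textbf{Step 2: split off the bad cone.} Write $F_{N_0}(R')\setminus P$ as the part inside $X(x,H,\alpha,\tfrac1{10}\ell(P),r(B_{R'}))$ plus the part outside. By Lemma~\ref{lemcon}, the cone part has $\mu$-mass on each dyadic annulus bounded by $c\,B\,\alpha^{1+s-d}\Theta(R)\,r^s$, so its contribution to $|\RR(\chi\mu)(x)|$ is at most $c\,B\,\alpha^{1+s-d}\Theta(R)\cdot(\#\text{scales})$; choosing $\alpha$ a small power of a small constant makes $\alpha^{1+s-d}$ a small number \emph{relative to the main term}, provided we also show the main term has the full $N$ scales of growth. (This is the step where $d-1<s<d$ enters, so that $1+s-d>0$ and $\alpha^{1+s-d}\to0$ as $\alpha\to0$.)

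\textbf{Step 3: lower bound from the complementary region.} On the complement of the cone, all of $F_{N_0}(R')$ sits on one side of $H$ and at a positive angle from it, so $\langle K^s(x-y),\nu_0\rangle \geq c\,|x-y|^{-s}$ there. Now I need a lower bound on the $\mu$-mass of $F_{N_0}(R')\setminus(\text{cone}\cup P)$ in each dyadic annulus $A(x,2^{-j-1}r_0,2^{-j}r_0)$ for $\ell(P)\lesssim 2^{-j}r_0\lesssim r(B_{R'})$ — this is where I use the hypothesis $\mu(F_{N_0}(R'))\geq$ fixed fraction of $\mu(R')$ and, crucially, the construction of $\wt\FF_{N,N_0}$: the defining property $\mu(P'\cap F_{N_0}(Q))>\tfrac14\mu(P')$ for every $P'\in\tree_0(R)$ with $P\subset P'$ forces $F_{N_0}(R')$ to fill up a definite fraction of mass at \emph{every} intermediate scale between $P$ and $R'$. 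Combined with the lower density control (cubes in $\tree_0(R)$ that are not low-density, plus $p$-doubling of $R$), this gives $\mu(F_{N_0}(R')\cap A_j)\gtrsim c(B,\delta_0)\,\Theta(R)\,(2^{-j}r_0)^s$ on a positive fraction of the scales $j$. Summing $\langle K^s,\nu_0\rangle$ over those scales yields $|\RR(\chi_{F_{N_0}(R')\setminus P}\mu)(x)|\geq |\langle\RR(\dots)(x),\nu_0\rangle|\geq c(B,\delta_0)\,\Theta(R)\,N - (\text{cone error}) \geq C_1(B,\delta_0)\,\Theta(R)\,N$.

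\textbf{The main obstacle} I expect is Step 3: extracting the \emph{lower} density bound $\mu(F_{N_0}(R')\cap A_j)\gtrsim \Theta(R)(2^{-j}r_0)^s$ on a definite proportion of the $N$ scales. One has no uniform lower density hypothesis on $\mu$, so this has to be squeezed out of the combinatorics of $\wt\FF_{N,N_0}$ (the $\tfrac14$-mass condition propagating down from $R'$ to scale $\ell(P)$) together with the fact that cubes in $\tree_0(R)$ have $\Theta(Q)\geq\delta_0\Theta(R)$ unless they were cut for low density — and those low-density descendants carry little mass, so they can be discarded. Making this dichotomy quantitative, uniformly across scales, and reconciling it with the ball-tangency geometry, is the delicate heart of the argument; the choice of the parameters $\delta$, $\eta$ in Lemma~\ref{lemtouch} and of $\alpha$, $m_0$, $N$ here is dictated by balancing the cone error of Step 2 against this lower bound.
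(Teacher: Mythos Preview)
Your overall strategy matches the paper's: set up the tangent hyperplane $H$ at the touching point, split off the thin cone around $H$ via Lemma~\ref{lemcon}, and bound the normal component $\langle \RR(\cdot),\nu_0\rangle$ from below using that, within a ball $B_0$ of radius $\approx r(B_{R'})$, all the $F_{N_0}$-mass lies on one side of $H$, with the $\wt\FF_{N,N_0}$ property supplying the mass lower bound at every intermediate scale.

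There is, however, a real quantitative gap in the way your Step~2 and Step~3 balance. In Step~3 the correct kernel bound outside the $\alpha$-cone is $|\langle K^s(x-y),\nu_0\rangle|\geq \alpha\,|x-y|^{-s}$, not an absolute constant: the coefficient is precisely the aperture $\alpha$. Hence your main term is of order $\alpha\,c(B,\delta_0)\,\Theta(R)\,N$. Against this, in Step~2 you bound the \emph{full modulus} $|\RR(\chi_{\text{cone}}\mu)(x)|$ by $c\,B\,\alpha^{1+s-d}\,\Theta(R)\,N$. The ratio cone/main is $\alpha^{s-d}$, and since $s<d$ this \emph{blows up} as $\alpha\to0$: you cannot absorb the cone error. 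The fix (and what the paper does) is to bound only the $\nu_0$-component of the cone contribution: inside $X_\alpha$ one has $|\langle K^s(x-y),\nu_0\rangle|\leq \alpha\,|x-y|^{-s}$, so the cone contributes at most $c\,B\,\alpha^{2+s-d}\,\Theta(R)\,N$; now the ratio is $\alpha^{1+s-d}\to0$ and the argument closes. Concretely, the paper writes both the main term (I) and the cone term (II) with the same integrand $\alpha/|x-y|^s$.

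Two smaller points. First, your cone--complement decomposition does not isolate the region $|x-y|\gtrsim r(B_{R'})$; there the ball tangency gives no one-sidedness, so you must split it off separately---its Riesz contribution is $O(B\,\Theta(R))$ by the density bound and is absorbed once $N$ is large. Second, obtaining the lower mass bound ``per dyadic annulus around $x$'' is awkward because a single annulus may have its $F_{N_0}$-mass concentrated near the center; the paper avoids this by summing over \emph{ancestor packets} $P^{k_0(k+1)}\setminus P^{k_0k}$, using the $\wt\FF$ property $\mu(P^{k_0(k+1)}\cap F_{N_0})>\tfrac14\mu(P^{k_0(k+1)})\geq\tfrac14\,\delta_0\,\Theta(R)\,\ell(P^{k_0(k+1)})^s$ and choosing $k_0=k_0(B,\delta_0)$ large enough that $\mu(P^{k_0k})\leq\tfrac12\,\mu(P^{k_0(k+1)}\cap F_{N_0})$. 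This is your ``positive fraction of scales'' made precise.
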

 
\begin{proof}
Using Lemma \ref{lemcomp}, it is easy to check that it is enough to prove the estimate \rf{eqtou3}
for $x\in P\cap \partial B_{R'}$.

Let $\alpha$ be a parameter to be chosen later in the proof. Let $H$ be the
hyperplane which is tangent to $\partial B_{R'}$ at $x$. For simplicity we assume that
$H=\{x\in\R^d:x_d=0\}$ and we suppose that $B_{R'}$ is contained in the half plane $\{y\in\R^d:y_d\leq0\}$.
Consider the cone
$X_\alpha = X(x,H,\alpha)$ and let $V_\alpha = \R^d\setminus X_\alpha$. Also set
$$V_\alpha^+ = \{y\in\R^d:y_d>\alpha\,|x-y|\},\qquad V_\alpha^- =  \{y\in\R^d:y_d<-\alpha\,|x-y|\}
,$$
so that $V_\alpha = V_\alpha^+\cup V_\alpha^-$.
Let $B_0$ be an open ball centered at $x$ such that $B_0\cap V_\alpha^-\subset B_{R'}$ with radius comparable to 
$r(B_{R'})$ with some constant depending on $\alpha$. See Figure~\ref{f.uno}.

\begin{figure}
\begin{tikzpicture}[line cap=round,line join=round,>=triangle 45,x=1.0cm,y=1.0cm]
\clip(-6.72,-3.94) rectangle (9.12,3.48);
\draw(0,0) circle (1.42cm);
\draw (-2,-1)-- (0,0);
\draw (0,0)-- (2,1);
\draw (-2,1)-- (0,0);
\draw (0,0)-- (2,-1);
\draw(0,-1.59) circle (1.59cm);
\draw (-0.18,-0.1) node[anchor=north west] {$x$};
\draw (-2.52,0)-- (2.58,0);
\draw (2.12,0.5) node[anchor=north west] {$\mathcal H$};
\draw (1.5,0.5) node[anchor=north west] {$\alpha$};
\draw (-2.42,0.82) node[anchor=north west] {$X_{\alpha}$};
\draw (0,-0.44) node[anchor=north west] {$ V_\alpha^-$};
\draw (0,1.18) node[anchor=north west] {$ V_\alpha^+$};
\draw (-1.16,2.0) node[anchor=north west] {$B_0$};
\draw (-1.56,-2.8) node[anchor=north west] {$B_{R'}$};
\begin{scriptsize}
\draw [fill=uuuuuu] (0,0) circle (1.5pt);
\draw [fill=uuuuuu] (-1.27,-0.64) circle (1.5pt);
\draw [fill=uuuuuu] (1.27,-0.64) circle (1.5pt);
\end{scriptsize}
\end{tikzpicture}
\caption{The hyperplane $H$, the balls $B_{R'}$, $B_0$, the cone $X_\alpha$, 
and the regions $V_\alpha^+$ and $V_\alpha^-$.}
\label{f.uno} 
\end{figure}
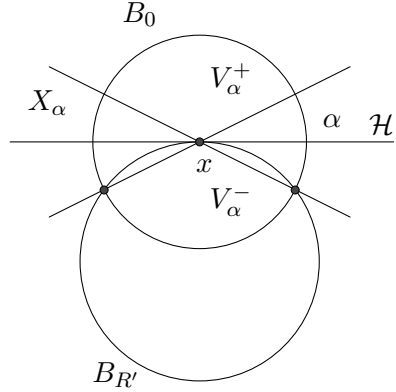

Write
\begin{equation*}
|\RR(\chi_{F_{N_0}(R')\setminus P}\mu)(x)|\geq |\RR(\chi_{(B_0\cap F_{N_0}(R')) \setminus P}\mu)(x)|-|\RR(\chi_{F_{N_0}(R')\setminus B_0}\mu)(x)|
\end{equation*}
We can bound the second term on the right hand side using the size condition on the fractional Riesz transform: 
\begin{equation}\label{e.outside}
|\RR(\chi_{F_{N_0}(R')\setminus B_0}\mu)(x)|\leq \frac{\mu(R')}{r(B_0)^{s}} \leq
 c(\alpha) \frac{\mu(R')}{\ell(R')^{s}} \leq c(\alpha) B\,\Theta(R).
\end{equation}

We focus on the term $|\RR(\chi_{(B_0\cap F_{N_0}(R'))\setminus P}\mu)(x)|$. We will use that $(B_0\cap F_{N_0}(R'))\cap  V_\alpha^+ =\varnothing$, by the construction of $B_0$, and that $x_{d}-y_{d}>0$
in $V_\alpha^-$. Denoting by $\RR^d$ the $d$-th component of $\RR$, we have
\begin{align*}
\RR^d(\chi_{B_0\setminus P}\mu)(x) & = \RR^{d}(\chi_{(B_0\cap F_{N_0}(R')\setminus P)\cap V_\alpha^+}\mu)(x) + \RR^{d}(\chi_{(B_0\cap F_{N_0}(R')\setminus P)\cap V_{\alpha}^-}\mu)(x) \\&\quad+\RR^{d}(\chi_{(B_0\cap F_{N_0}(R')\setminus P)\cap X_{\alpha}}\mu)(x) \\
&  \geq \RR^{d}(\chi_{(B_0\cap F_{N_0}(R')\setminus P)\cap V_{\alpha}^-}\mu)(x) -
|\RR^{d}(\chi_{(B_0\cap F_{N_0}(R')\setminus P)\cap X_{\alpha}}\mu)(x)|
\\
&\geq \int_{B_0\cap F_{N_0}(R')\cap V_\alpha^-\setminus P}\!\frac{x_{d}-y_{d}}{|x-y|^{s+1}}d\mu(y)- \!\!\int_{B_0\cap F_{N_0}(R')\cap X_{\alpha}\setminus P}\!
\frac{|x_{d}-y_{d}|}{|x-y|^{s+1}}d\mu(y)\\
&\geq \int_{B_0\cap F_{N_0}(R')\cap V_{\alpha}^-\setminus P}\frac{\alpha}{|x-y|^s}d\mu(y)- \int_{B_0\cap F_{N_0}(R')\cap X_{\alpha}\setminus P}\frac{\alpha}{|x-y|^s}d\mu(y)\\
&=\int_{B_0\cap F_{N_0}(R')\setminus P}\frac{\alpha}{|x-y|^s}d\mu(y)- 2\int_{B_0\cap F_{N_0}(R')\cap X_{\alpha}
\setminus P}\frac{\alpha}{|x-y|^s}d\mu(y)\\
& = (I)-(II).
\end{align*}

We will study each of $(I)$ and $(II)$ in turn. First, we will bound $(II)$ from above. To this end let $P_0\in\DD$ be the smallest cube such that $1.1P_0$ contains $B_0$. 
It is clear that 
$\ell(P_0)\approx r(B_0)$. Given any $Q\in\DD$, $Q^1$ stands for the parent of $Q$.
Using Lemma \ref{lemcon} then we get
\begin{align*}
(II) & \leq 2\sum_{Q:P\subset Q\subsetneq P_0}\int_{( Q^1\setminus Q)\cap F_{N_0}(R')\cap X_\alpha}
\frac{\alpha}{|x-y|^s}d\mu(y)\\
& \leq c\sum_{Q:P\subset Q\subsetneq P_0}
\frac{\alpha\,\mu\bigl(( Q^1\setminus Q)\cap F_{N_0}(R')\cap X_\alpha\bigr)}{\ell(Q)^s}\nonumber
\\ 
& \leq c
\sum_{Q:P\subset Q\subsetneq P_0} \frac{B\,\Theta(R)\,\alpha^{2+s-d}\ell( Q^1)^s}{\ell(Q)^s}
\nonumber\\
&\approx B\,\Theta(R)\,\alpha^{2+s-d}\,n_0,\nonumber
\end{align*}
where $n_0= \#\{Q\in\DD:P\subset Q\subsetneq P_0\}$.
Notice that in the third inequality we assumed $m_0(\alpha)$ big enough, so that the assumptions of Lemma \ref{lemcon} are satisfied.

We now look for a lower bound for $(I)$. First we fix a integer $k_{0}$ to be chosen later in the proof, depending on $B$ and $\delta_0$. For $j\geq1$, we denote by $P^{j}$, the $j$-th ancestor of $P$ in the Cantor construction.
Then we have
\begin{align}\label{eqI}
(I) & \geq  \sum_{k=1}^{n_1}\int_{(P^{k_{0}(k+1)}\setminus P^{k_{0}k})
\cap F_{N_0}(R')}
\frac{\alpha}{|x-y|^s}d\mu(y)\\
& \geq c(k_0)^{-1} \sum_{k=1}^{n_1}\frac{\alpha\,\mu\bigl(P^{k_{0}(k+1)}
\cap F_{N_0}(R')\setminus P^{k_{0}k}\bigr)}{\ell(P^{k_{0}(k+1)})^s},\nonumber
\end{align}
where $n_1$ is the largest integer such that $P^{k_{0}(n_1+1)}\subset B_0$. 

To estimate $\mu\bigl(P^{k_{0}(k+1)}
\cap F_{N_0}(R')\setminus P^{k_{0}k}\bigr)$ from below notice that $P^{k_{0}(k+1)}\not\in
I_{N_0}(R')$, because otherwise $P\not\in \wt \FF_{N,N_0}(R')$. This tells us that
$$\mu\bigl(P^{k_{0}(k+1)}\cap F_{N_0}(R')\bigr)>\frac14\,\mu\bigl(P^{k_{0}(k+1)}\bigr)
\geq \frac14\,\delta_0\,\Theta(R)\,\ell(P^{k_{0}(k+1)}\bigr)^s.$$
Taking into account that 
$$\mu\bigl(P^{k_{0}k}\bigr)\leq B\Theta(R)\,\ell(P^{k_{0}k}\bigr)^s
\leq B\Theta(R)\,2^{-k_0s}\,\ell(P^{k_{0}(k+1)}\bigr)^s,$$
we deduce that for $k_0$ big enough, depending on $B$ and $\delta_0$,
$$\mu\bigl(P^{k_{0}(k+1)}\cap F_{N_0}(R')\bigr)\geq 2\,\mu\bigl(P^{k_{0}k}\bigr),$$
and thus
$$\mu\bigl(P^{k_{0}(k+1)}
\cap F_{N_0}(R')\setminus P^{k_{0}k}\bigr)\geq \frac12\,\mu\bigl(P^{k_{0}(k+1)}\cap F_{N_0}(R')\bigr)\geq \frac18\,\delta_0\,\Theta(R)\,\ell(P^{k_{0}(k+1)}\bigr)^s.$$
Plugging this estimate into \rf{eqI} we obtain
$$(I)\geq c(k_0,\delta_0)^{-1}\,\alpha\,\Theta(R)\,n_1.$$
Taking into account that $n_1\approx n_0/k_0$, we get
$$(I)\geq c(B,\delta_0)^{-1}\,\alpha\,\Theta(R)\,n_0.$$
Together with the upper bound we found for (II), this gives
$$\RR^d(\chi_{B_0\setminus P}\mu)(x)\geq \bigl[
c(B,\delta_0)^{-1}\,\alpha\,- c\,B\,\alpha^{2+s-d}\bigr]\Theta(R)\,n_0.$$
For $\alpha$ small enough depending on $B$ and $\delta_0$, the first factor on the right side is 
positive, and then we get
$\RR^d(\chi_{B_0\setminus P}\mu)(x)\geq c(\delta_0,B)^{-1}\Theta(R)n_0$. 
This implies \rf{eqtou3}, taking into account that
$n_0$ is comparable to $N$ (with some constant that may depend on $\alpha$, and so on $\delta_0$ and $B$).
\end{proof}

\vv

\begin{lemma}\label{lemcub2}
Let $M'>M$ be some large constant.
 Suppose that $N_0$ is a sufficiently large integer, depending on $B$ and $M'$. 
 For all $R'\in\tree_0(R)$, one of the two following conditions holds:
 \begin{itemize}
 \item[(a)] The union of the cubes from $\sss_0(R)$ which belong to $\bigcup_{j=1}^{N_0+1} \DD_j(R')$
  has $\mu$-measure larger that $\mu(R')/4$.
 
 \item[(b)] There exists a family of pairwise disjoint cubes $\TT(R')\subset \bigcup_{j=1}^{N_0+1} \wt \FF_{j,N_0}(R')$ with
 \begin{equation}\label{eqrest53}
\mu\biggl (\,\bigcup_{P\in\TT(R')} P\biggr)\geq \frac3{20}\,\mu(R')
\end{equation}
 such that, for each $P\in\TT(R')$,
  \begin{equation*}
 |\RR(\chi_{F_{N_0}(R')\setminus P}\mu)(x)|\geq M'\, \Theta(R)\quad\mbox{ for all $x\in P$.}
 \end{equation*}
 \end{itemize}
\end{lemma}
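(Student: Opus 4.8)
The plan is to prove this by a dichotomy argument driven by Lemma~\ref{lemfnt} and Lemma~\ref{lemtouchingpoint}. Suppose (a) fails for a given $R'\in\tree_0(R)$. Then the union of the cubes from $\sss_0(R)$ lying in $\bigcup_{j=1}^{N_0+1}\DD_j(R')$ has $\mu$-measure at most $\mu(R')/4$; since every cube of $\DD_{N_0+1}(R')$ that is \emph{not} contained in $\tree_0(R)$ is contained in some cube of $\sss_0(R)$ from a generation $\le N_0+1$, this forces $\mu(F_{N_0}(R'))\ge \tfrac34\mu(R')\ge\tfrac12\mu(R')$. Thus Lemma~\ref{lemfnt} applies with $Q=R'$, $N=N_0$, giving $\mu(\wt F_{N_0,N_0}(R'))\ge\tfrac14\mu(R')$; more generally $\mu(\wt F_{j,N_0}(R'))\ge\tfrac14\mu(R')$ for every $1\le j\le N_0$. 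We fix $N$ and $m_0$ large (depending on $B$, and on $M'$ through the constant $C_1(B,\delta_0)$ of Lemma~\ref{lemtouchingpoint}) so that $C_1(B,\delta_0)\,N\ge M'$, and we take $N_0\ge m_0 N$ as in Lemma~\ref{lemtouchingpoint}.

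Next I would extract the ball. By Lemma~\ref{lembola} applied to $R'$ with $N=N_0$ (note $F_{N_0}(R')\ne\varnothing$ since $\mu(F_{N_0}(R'))>0$), there is an open ball $B_{R'}\subset\frac{10}{9}R'$ with $r(B_{R'})\approx\ell(R')$, $B_{R'}\cap F_{N_0}(R')=\varnothing$, and $\partial B_{R'}\cap F_{N_0}(R')\ne\varnothing$. The point touching the boundary lies in some $P\in\FF_{N_0}(R')$; I need it to lie in a cube of $\wt\FF_{j,N_0}(R')$ for some $j$, which is where the set $\wt F_{j,N_0}(R')$ of ``good'' cubes (those with $\mu(\,\cdot\,\cap F_{N_0})$ not too small on all ancestors) enters. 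The clean way is: shrink $B_{R'}$ slightly or, rather, observe that the set $\bigcup_j\wt F_{j,N_0}(R')$ covers a definite $\mu$-portion of $F_{N_0}(R')$, and carry out the touching-ball construction relative to $\wt F_{N,N_0}(R')$ directly, so that $\partial B_{R'}\cap\wt F_{N,N_0}(R')\ne\varnothing$ as required in the hypothesis of Lemma~\ref{lemtouchingpoint}. Then for the cube $P\in\wt\FF_{N,N_0}(R')$ meeting $\partial B_{R'}$, Lemma~\ref{lemtouchingpoint} yields $|\RR(\chi_{F_{N_0}(R')\setminus P}\mu)(x)|\ge C_1(B,\delta_0)\,\Theta(R)\,N\ge M'\,\Theta(R)$ for all $x\in P$.

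One touching cube is not enough to reach the measure bound \rf{eqrest53}, so the core of the argument is a stopping-time / exhaustion procedure. I would run the touching-ball construction not just once but repeatedly: having produced $P_1$, remove a neighbourhood of $P_1$ (say all cubes of $\wt\FF_{N,N_0}(R')$ within the relevant scale) from consideration, and if the remaining part of $\bigcup_j\wt F_{j,N_0}(R')$ still has comparable $\mu$-measure, repeat to get $P_2$, and so on. Equivalently, one covers $\wt F_{N,N_0}(R')$ (which has measure $\ge\tfrac14\mu(R')$) by a Vitali-type disjoint subfamily of the touching cubes: for every point of $\wt F_{N,N_0}(R')$ that is a nearest point of some suitable ball to a complementary region, the touching-point estimate holds, and a covering argument produces the pairwise disjoint family $\TT(R')\subset\bigcup_{j=1}^{N_0+1}\wt\FF_{j,N_0}(R')$ whose union captures a fixed fraction — I expect one loses a factor from the covering and from passing $\tfrac14$ through it, landing at $\tfrac3{20}$. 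The main obstacle is precisely this covering step: making sure that the lower bound \rf{eqtou3}, which Lemma~\ref{lemtouchingpoint} gives only for the \emph{one} cube touching \emph{one} specific ball, can be arranged simultaneously for a disjoint family whose union has measure $\gtrsim\mu(R')$, while keeping the balls $B_{R'}$ of each piece inside $\frac{10}{9}R'$ and keeping $F_{N_0}(R')\setminus P$ essentially all of $F_{N_0}(R')$ so the estimate is uniform in $P$. The scale separations built into $N\ll m_0 N\le N_0$ are exactly what make this geometric bookkeeping go through.
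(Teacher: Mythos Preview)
Your overall plan---assume (a) fails, deduce $\mu(F_{N_0}(R'))\ge\tfrac34\mu(R')$, then combine the touching-ball Lemma~\ref{lembola} with Lemma~\ref{lemtouchingpoint} and iterate---is the right shape, and you correctly identify that one touching cube is not enough. But the exhaustion you sketch has a genuine gap, and the paper's construction is different in an essential way.

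Your single-scale iteration (``find $P_1$, remove a neighbourhood, find $P_2$, \ldots'' or ``Vitali-type cover of $\wt F_{N,N_0}(R')$ by touching cubes'') does not work as stated. Each application of Lemma~\ref{lembola} inside $R'$ yields exactly \emph{one} cube $P\in\FF_N(R')$, of measure possibly as small as $\delta_0\,8^{-Ns}\Theta(R)\ell(R')^s$; there is no mechanism guaranteeing that successive touching points are far apart or that their union accumulates measure. The set of touching points of balls is a set of isolated boundary points, not something a covering lemma can inflate to a set of positive $\mu$-fraction. You also note but do not resolve the mismatch that Lemma~\ref{lembola} touches $F_N(R')$, not $\wt F_{N,N_0}(R')$, while Lemma~\ref{lemtouchingpoint} needs the latter.

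The paper's argument is multi-scale. One takes $N_0=2nN$ with $n\ge m_0$ and $C_1(B,\delta_0)N\ge 3M'$, and iterates across the scales $N,2N,\ldots,nN$: at stage $j+1$, for every $Q\in\FF_{jN}(R')$ meeting $F_{N_0}(R')$ and not already captured, apply Lemma~\ref{lembola} \emph{inside $Q$} to get a touching cube $P_Q\in\FF_N(Q)\subset\FF_{(j+1)N}(R')$. Since $\mu(P_Q)\ge\delta\,\mu(Q)$ with $\delta=\delta(N,B,\delta_0)$, the union $\TT_{aux}(R')$ of these cubes over all $j\le n$ satisfies $\mu(\bigcup\TT_{aux})\ge\frac{\delta n}{1+\delta n}\mu(F_{N_0}(R'))\ge\tfrac9{10}\mu(F_{N_0}(R'))$ for $n$ large. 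One then passes to the subfamily $\TT_0(R')\subset\TT_{aux}(R')$ of cubes containing some cube of $\wt\FF_{N_0,N_0}(R')$; these automatically lie in $\wt\FF_{jN,N_0}(R')$, so Lemma~\ref{lemtouchingpoint} applies, and Lemma~\ref{lemfnt} gives $\mu(\bigcup\TT_0)\ge\tfrac3{20}\mu(F_{N_0}(R'))$. Finally---and this is another step absent from your sketch---Lemma~\ref{lemtouchingpoint} applied inside the parent $P'\in\FF_{(j-1)N}(R')$ gives only the \emph{local} estimate $|\RR(\chi_{P'\cap F_{N_0}(R')\setminus P}\mu)|\ge 3M'\Theta(R)$ on $P$. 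A telescoping dichotomy converts this to the global one: either $|\RR(\chi_{F_{N_0}(R')\setminus P'}\mu)|\le 2M'\Theta(R)$ on $P'$, in which case subtraction puts $P$ in $\TT(R')$; or else that quantity exceeds $2M'\Theta(R)$ at some point of $P'$, and a $p(P')$-oscillation bound then puts $P'$ itself in $\TT(R')$. Either way $P\subset\bigcup\TT(R')$, which gives \rf{eqrest53}.
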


\begin{proof}
Suppose that (a) does not hold. This means that
$$\mu\bigl (F_{N_0}(R')\bigr) \geq \frac34\,\mu(R').$$
To show that (b) holds we choose $N_0=2n\, N$, where $n$ and $N$ are some big integers to be fixed below. In particular, we will require $n\geq m_0$, where $m_0=m_0(B,\delta_0)$ is as in 
Lemma \ref{lemtouchingpoint}. Also we will assume that $N$ is big enough so that the same lemma holds and, moreover,
$$C_1(B, \delta_{0})\,N \geq 3M',$$
where $C_1(B,\delta_0)$ is the constant in \rf{eqtou3}.

We define $\TT(R')$ as the subfamily of cubes $P$ from $\bigcup_{j=1}^{N_0+1} \wt \FF_{j,N_0}(R')$ such that
$$|\RR(\chi_{F_{N_0}(R')\setminus P}\mu)(x)|\geq M'\, \Theta(R)\quad\mbox{ for all $x\in P$}$$
and moreover have maximal side length.
To prove \rf{eqrest53}
we define an auxiliary family $\TT_{aux}(R')$ of pairwise disjoint cubes from $\bigcup_{j=1}^{N_0} \wt \FF_{j,N_0}(R')$
by a repeated application of a touching point argument, as follows. We will inductively construct
families $\TT_{aux}^j(R')$, $j=1,\ldots,n$. In the case $j=1$, by Lemma \ref{lembola},
there exists a ball $B_{R'}$ contained in
$\frac{10}9R'$, with radius $\geq c^{-1}\,\ell(R')$ which satisfies
$$B_{R'}\cap F_N(R')=\varnothing\quad\mbox{ and }\quad \partial B_{R'}\cap F_N(R')\neq\varnothing.$$
Let $P\in \FF_{N}(R')$  be such that $\partial B_{R'}\cap P\neq\varnothing$. 
We set
$$\TT_{aux}^1(R') = \{P\}.$$

Assume now that $\TT_{aux}^1(R'),\ldots,\TT_{aux}^{j}(R')$ have already been defined and
let us construct $\TT_{aux}^{j+1}(R')$. 
For each $Q\in F_{jN}(R')$ which intersects $F_{N_0}(R')$ and is not contained in any cube from 
$\TT_{aux}^1(R')\cup\ldots\cup\TT_{aux}^{j}(R')$, consider a ball $B_Q$ 
 contained in
$\frac{10}9Q$, with radius $\geq c^{-1}\,\ell(Q)$ which satisfies
$$B_Q\cap F_N(Q)=\varnothing\quad\mbox{ and }\quad \partial B_Q\cap F_N(Q)\neq\varnothing.$$
Let $P_Q\in \FF_N(Q)\subset \FF_{(j+1)N}(R')$ be such that $\partial B_Q\cap P_Q\neq\varnothing$. 
We set
$$\TT_{aux}^{j+1}(R') = \{P_Q\}_Q,$$
where $Q$ ranges over all the cubes described above.
Notice that the cubes from $\TT_{aux}(R')= \TT_{aux}^1(R')\cup\ldots\cup\TT_{aux}^{n}(R')$ are pairwise disjoint,
by construction.

We claim that if $m_0$ is sufficiently big, then
\begin{equation}\label{eqrest55}
\mu\biggl (\bigcup_{P\in\TT_{aux}(R')} P\biggr)\geq \frac9{10}\,\mu(F_{N_0}(R')).
\end{equation}
To check this, observe first that there is some constant $\delta>0$ depending on $N$, $B$ and $\delta_{0}$ such that
$$\mu(P_Q)\geq \delta\mu(Q),$$
for $Q$ and $P_Q$ as in the previous paragraph. Indeed, $\ell(P_Q)\geq 8^{-N}\ell(Q)$, and thus
$$\mu(P_Q)\geq \delta_{0}\,\ell(P_Q)^s \Theta(R)\geq 8^{-Ns}\delta_{0}\,\ell(Q)^s \,\Theta(R)\geq 
8^{-Ns}\delta_{0}B^{-1}\,\mu(Q) =:\delta\,\mu(Q).$$
Then, from the above construction, it turns out that
$$\mu\biggl (\,\bigcup_{P\in\TT_{aux}^j(R')}P\biggr) \geq \delta\,\biggl[\mu(F_{N_0}(R')) - 
\mu\biggl (\,\bigcup_{k=1}^{j-1}\bigcup_{P\in\TT_{aux}^k(R')}P\biggr)\biggr].$$
Summing this estimate over $1\leq j\leq n$, we obtain
\begin{align*}
\mu\biggl (\bigcup_{P\in\TT_{aux}(R')}P\biggr) & \geq \delta n\,\mu(F_{N_0}(R')) - 
\delta \sum_{j=1}^n\mu\biggl (\bigcup_{k=1}^{j-1}\bigcup_{P\in\TT_{aux}^k(R')}P\biggr)\\
& \geq \delta n\,\mu(F_{N_0}(R')) - 
\delta n \,\mu\biggl (\bigcup_{P\in\TT_{aux}(R')}P\biggr).
\end{align*}
Thus,
$$\mu\biggl (\bigcup_{P\in\TT_{aux}(R')}P\biggr)  \geq \frac{\delta n}{1+\delta n}\,\mu(F_{N_0}(R')).$$
So, if $n$ is big enough, the claim \rf{eqrest55} follows.

To prove \rf{eqrest53}, denote now by $\TT_0(R')$ the subfamily of the cubes $Q\in\TT_{aux}(R')$ which contain some cube
$P\in\wt \FF_{N_0,N_0}(R')$. 
Notice that such cubes $Q$ belong to $\wt \FF_{jN,N_0}(R')$ for some $j\in[1,n]$.
So if $Q\in \TT_{aux}(R')\setminus \TT_0(R')$, 
then 
$$\supp\mu\cap Q\subset F_{N_0}(R')\setminus \wt F_{N_0,N_0}(R').$$
Together with Lemma \ref{lemfnt} this yields
\begin{align*}
\mu\biggl (\,\bigcup_{Q\in \TT_{aux}(R')\setminus \TT_0(R')}Q\biggr) & \leq 
\mu\bigl (F_{N_0}(R')\setminus \wt F_{N_0,N_0}(R')\bigr) \\
&\leq \mu(F_{N_0}(R')) - \frac14 \mu(R')\leq \frac34\,\mu(F_{N_0}(R')).
\end{align*}
Thus,
\begin{align*}
\mu\biggl (\,\bigcup_{Q\in \TT_0(R')}Q\biggr) & = \mu\biggl (\bigcup_{Q\in\TT_{aux}(R')} Q\biggr)
- \mu\biggl (\,\bigcup_{Q\in \TT_{aux}(R')\setminus \TT_0(R')}Q\biggr)\\
& \geq 
\frac9{10}\,\mu(F_{N_0}(R'))- \frac34\,\mu(F_{N_0}(R')) = \frac3{20}\,\mu(F_{N_0}(R')).
\end{align*}

On the other hand,  from the construction above and Lemma \ref{lemtouchingpoint}, it turns out that
for each $P\in\TT_0(R')\cap \FF_{jN}(R')$, with $1\leq j\leq n$, there exists some cube $P'\in\tree_0(R')\cap\FF_{(j-1)N}(R')$ with $P\subset P'\subset R'$
such that
$$|\RR(\chi_{P'\cap F_{N_0}(R')\setminus P}\mu)(x)|\geq 3M'\, \Theta(R)\quad\mbox{ for all $x\in P$.}$$

Indeed, notice that $P\in\wt\FF_{N,N_0-(j-1)N}(P')$ because $P\in\TT_0(R')$ and we have
$$N_0-(j-1)N\geq N_0-nN=nN\geq m_0\,N,$$ so that the assumptions in Lemma \ref{lemtouchingpoint} hold.

We distinguish now two cases. First, if 
$$|\RR(\chi_{F_{N_0}(R')\setminus P'}\mu)(x)|\leq 2M'\, \Theta(R)\quad\mbox{ for all $x\in P'$,}$$
then
\begin{align*}
|\RR(\chi_{F_{N_0}(R')\setminus P}\mu)(x)| & \geq 
|\RR(\chi_{P'\cap F_{N_0}(R')\setminus P}\mu)(x)| - |\RR(\chi_{F_{N_0}(R')\setminus P'}\mu)(x)|\\
& \geq
 M'\, \Theta(R)\quad\mbox{ for all $x\in P$,}
\end{align*}
so that $P$ belongs to $\TT(R')$.
In the second case, if 
$$|\RR(\chi_{F_{N_0}(R')\setminus P'}\mu)(x)|\geq 2M'\, \Theta(R)\quad\mbox{ for some $x\in
P'$,}$$
then we deduce that, for all $y\in P'$,
$$|\RR(\chi_{F_{N_0}(R')\setminus P'}\mu)(y)|\geq 2M'\, \Theta(R) - c\,p(P')
\geq 2M'\, \Theta(R) - C(B)\,\Theta(R) \geq M'\,\Theta(R),$$
assuming that $M'$ is big enough.
So $P'\in\TT(R')$, and thus $P\subset \bigcup_{Q\in\TT(R')} Q$.
Then we infer that
$$\mu\biggl (\,\bigcup_{P\in\TT(R')} P\biggr)\geq \mu\biggl (\,\bigcup_{P\in\TT_0(R')} P\biggr)\geq \frac3{20}\,\mu(R').$$
\end{proof}

\vv
\begin{lemma}\label{lemrieszmax}
We have
$$\|\RR(\chi_{R'\setminus F_{N_0}(R')}\mu)\|_{L^2(\mu\rest F_{N_0}(R'))} \leq C(B,M)\,\Theta(R)\,\mu(R')^{1/2}.$$
\end{lemma}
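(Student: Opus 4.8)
The plan is to establish the stronger \emph{pointwise} bound
$$|\RR(\chi_{R'\setminus F_{N_0}(R')}\mu)(x)|\le C(B,M)\,\Theta(R)\qquad\text{for all }x\in F_{N_0}(R'),$$
and then conclude by integration: squaring this and integrating over $F_{N_0}(R')$ with respect to $\mu$, and using $\mu(F_{N_0}(R'))\le\mu(R')$, gives the asserted $L^2$ estimate at once. No cancellation of the Riesz kernel will be needed; the estimate only exploits the uniform disconnectedness of $E$ (through the separation condition \rf{eqsepara}) together with the density bounds for tree cubes. (One could also start from the decomposition $\RR(\chi_{R'\setminus F_{N_0}(R')}\mu)(x)=\RR(\chi_{R\setminus P}\mu)(x)-\RR(\chi_{R\setminus R'}\mu)(x)-\RR(\chi_{F_{N_0}(R')\setminus P}\mu)(x)$ and bound the first two terms by \rf{eqc392} and Lemma~\ref{lemfac1}(b); but the third term is handled by exactly the same annular estimate below, so the direct argument is cleaner.)

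Here $R'\in\tree_0(R)$ and $R\in\ttt$. Fix $x\in F_{N_0}(R')$ and let $P\in\FF_{N_0}(R')$ be the cube containing $x$; since $P\in\DD_{N_0}(R')$ is an $N_0$-th generation descendant of $R'$, its chain of ancestors inside $R'$ is $P=S_0\subsetneq S_1\subsetneq\cdots\subsetneq S_{N_0}=R'$, where $S_k$ is the $k$-th ancestor of $P$. Because $P\subset F_{N_0}(R')$ we have the set inclusion $R'\setminus F_{N_0}(R')\subset R'\setminus P=\bigsqcup_{k=1}^{N_0}(S_k\setminus S_{k-1})$. The geometric observation is that if $y\in(S_k\setminus S_{k-1})\cap\supp\mu$, then $y$ lies in a child of $S_k$ distinct from $S_{k-1}$ (the child containing $P$, hence $x$), so \rf{eqsepara} yields $|x-y|\ge c_{sep}\,\ell(S_k)$. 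Splitting the integral over these annular pieces,
$$|\RR(\chi_{R'\setminus F_{N_0}(R')}\mu)(x)|\le\int_{R'\setminus F_{N_0}(R')}\frac{d\mu(y)}{|x-y|^s}\le\sum_{k=1}^{N_0}\frac{\mu(S_k)}{(c_{sep}\,\ell(S_k))^s}=c_{sep}^{-s}\sum_{k=1}^{N_0}\Theta(S_k).$$

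To finish I would note that every $S_k$ is a tree cube: $P\in\FF_{N_0}(R')\subset\tree_0(R)\subset\tree(R)$, and $\tree(R)$ is closed under passing to ancestors contained in $R$ (were some $S_k$ contained in a cube of $\sss(R)$, so would be $P$). Hence Lemma~\ref{lemfac1}(b) gives $\Theta(S_k)\le 2B\,\Theta(R)$ for every $k$, so $|\RR(\chi_{R'\setminus F_{N_0}(R')}\mu)(x)|\le 2c_{sep}^{-s}N_0\,B\,\Theta(R)$; since $N_0$ (and $M'$) have been fixed as functions of $B$ and $M$, the right-hand side is of the form $C(B,M)\,\Theta(R)$, which proves the pointwise bound and hence the lemma. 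There is essentially no analytic obstacle here: the two points that deserve a line of care are recognizing that the harmless factor $N_0$ may simply be absorbed into the constant $C(B,M)$ — so that no smoothness/cancellation argument for $\RR$ is required — and checking that the entire ancestor chain $S_0,\dots,S_{N_0}$ consists of tree cubes, so that the density bound of Lemma~\ref{lemfac1}(b) is available.
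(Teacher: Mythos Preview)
Your pointwise bound $|\RR(\chi_{R'\setminus F_{N_0}(R')}\mu)(x)|\le 2c_{sep}^{-s}N_0 B\,\Theta(R)$ is correct, but the constant is $C(B)\cdot N_0$, not $C(B,M)$ --- and this dependence on $N_0$ is fatal. Your claim that ``$N_0$ (and $M'$) have been fixed as functions of $B$ and $M$'' is false at this stage of the argument: $M'$ is a free parameter throughout Lemmas~\ref{lemcub2} and~\ref{lemrieszmax}, and is only pinned down in the proof of Lemma~\ref{lemtouch}. There one needs $M'-c_5-cB\ge 3M$, where $c_5$ is the constant from Lemma~\ref{lemrieszmax}, and \emph{then} $M'$ is fed into Lemma~\ref{lemcub2} to produce $N_0$. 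But in that lemma $N_0=2nN$ with $N\ge 3M'/C_1(B,\delta_0)$ (and $n$ depending on $N$ as well), so $N_0$ grows at least linearly in $M'$. Your $c_5\approx N_0 B$ therefore grows with $M'$, and the inequality $M'\ge c_5+cB+3M$ has no solution. The circularity cannot be broken by reordering the choices.

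The paper's proof is designed precisely to give a bound \emph{uniform in $N_0$}. It invokes the Nazarov--Treil--Volberg suppressed-kernel $Tb$ theorem (Theorem~\ref{teontv}) with $\Phi(x)=\dist(x,H^c)$, $H=\bigcup_{Q\in\sss_0(R)}(1+\tfrac14 c_{sep})Q$: the growth condition and the uniform bound $\sup_{\ve>\Phi(x)}|\RR_\ve(\mu\rest R)(x)|\le C(B,M)\Theta(R)$ follow from the tree structure and \rf{eqc392}, and yield $\|\RR_{\Phi,\mu\rest R}\|_{L^2\to L^2}\le C(B,M)\Theta(R)$. Since points $x\in F_{N_0}(R')$ and $y\in\supp\mu\cap(R'\setminus F_{N_0}(R'))$ are separated by a stopping cube, the suppressed and unsuppressed kernels agree up to a harmless error, and the $L^2$ estimate follows with a constant independent of $N_0$. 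This uniformity is the whole point; your annular sum, which accumulates one term per generation, cannot deliver it.
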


\begin{proof} We will use the technique of the suppressed kernels of Nazarov, Treil and Volberg. 
Consider the set 
$$H = \bigcup_{Q\in\sss_0(R)} (1+\tfrac14c_{sep})Q.$$
For $x\in\R^d$, denote $\Phi(x) = \dist(x,H^c)$. Observe that $\Phi$ is a $1$-Lipschitz function, and
 if $x\in Q\in\sss_0(R)$, then $\Phi(x)\approx\ell(Q)$. Recall that every $Q'\in\tree_0(R)$
 satisfies $\Theta(Q')\leq B\Theta(R)$. Then it easily follows that
 all the balls $B(x,r)$ with $x\in R$, $\Phi(x)\leq r\leq\ell(R)$, such that
$$\mu(B(x,r))> C_3\,B\,\Theta(R)\,r^s$$
are contained in $H$ if $C_3$ is some sufficiently big constant.

For $x, y\in\R^d$ we consider the kernel $K_\Phi(x,y)$ defined in \rf{eqsuppressed} and, 
given the measure $\sigma =\mu\rest R$,  we consider the operator $\RR_{\Phi,\sigma}$.
By \rf{eqc392} we know that given $x\in R$, for all $Q'\in\tree_0(R)\cup \sss_0(R)$ such that $x\in Q'$,  $\bigl|\RR(\chi_{R\setminus Q'} \mu)(x)\bigr| \leq C(B,M) \,\Theta(R)$.
Using the separation condition \rf{eqsepara}, then we infer that
$$|\RR_{\ve} \sigma(x)| \leq C(B,M)\,\Theta(R) \quad\mbox{for $x\in Q'\in\sss_0(R)$ and for all $\ve>c^{-1}\ell(Q)$}.$$  And also
$$|\RR_{\ve} \sigma(x)| \leq C(B,M)\,\Theta(R) \quad\mbox{for $x\in Q'\in R\setminus\sss_0(R)$ and for all $\ve>0$}.$$

Notice that if $x$ belongs to some stopping cube $Q$ from $\sss_0(R)$, then
$\Phi(x)\approx \ell(Q)$. 
As a consequence, it follows that
$$\sup_{\ve\geq \Phi(x)}|\RR_\ve \sigma(x)| \leq C(B,M)\,\Theta(R) \quad\mbox{for $x\in R$.}$$ 
By Theorem \ref{teontv} we deduce that
$\RR_{\Phi,\sigma}$ is bounded in $L^2(\sigma)$, 
 with its norm not exceeding $C(B,M)\,\Theta(R)$. Therefore,
\begin{equation}\label{eqfaj21}
\|\RR_{\Phi,\sigma}\chi_{R'\setminus F_N(R')} \|_{L^2(\sigma\rest F_N(R'))}
\leq c(B,M)\,\Theta(R)\,\mu(R')^{1/2}.
\end{equation}

To estimate $\|\RR(\chi_{R'\setminus F_{N_0}(R')}\sigma)\|_{L^2(\sigma\rest F_{N_0}(R'))}$, observe that
if $y\in \supp\,\sigma\cap R'\setminus F_{N_0}(R')$ and $x\in \supp\,\sigma\cap F_{N_0}(R')$, then $y$ belongs to some cube $Q \in\sss_0(R)$, 
while $x\not\in Q$. Then it follows that
$$|x-y|\gtrsim \bigl (\Phi(x) + \Phi(y)\bigr).$$
So, 
\begin{align*}
|\RR(\chi_{R'\setminus F_{N_0}(R')}\sigma)(x)|&\leq |\RR_{\Phi,\sigma}(\chi_{R'\setminus F_{N_0}(R')})(x)|
+ c\,\sup_{r\geq \Phi(x)}\frac{\mu(B(x,r)\cap F_{N_0}(R'))}{r^s}\\
&\leq |\RR_{\Phi,\sigma}(\chi_{R'\setminus F_{N_0}(R')})(x)| + C(B)\,\Theta(R).
\end{align*}
Together with \rf{eqfaj21} this implies that
$$\|\RR(\chi_{R'\setminus F_N(R')}\sigma)\|_{L^2(\sigma\rest F_N(R'))} \leq C(B,M)\,\Theta(R)\,\mu(R')^{1/2}.$$
\end{proof}

\vv
\begin{proof}[\bf Proof of Lemma \ref{lemtouch}]
If the condition (a) from Lemma \ref{lemcub2} holds we are done. Otherwise, we consider the 
family $\TT(R')$ in the statement (b) of that lemma and we set
$$G=\bigcup_{P\in\TT(R')} P.$$
By Lemma \ref{lemrieszmax}, there exists some constant $c_5=c_5(B,M)$ such that the set
$$V=\bigl\{x\in F_{N_0}(R'):|\RR(\chi_{R'\setminus F_{N_0}(R')}\mu)(x)|>c_5\Theta(R)\bigr\}$$
satisfies 
$$\mu(V)\leq \frac18\mu(R').$$
Then, 
$$\mu(G\setminus V)\geq \biggl (\frac14 - \frac1{8}\biggr)\mu(R')= \frac1{8}\,\mu(R').$$
Moreover, for $x\in G\setminus V$, if we denote by $P_x$ the cube from $\TT(R')$ that contains $x$,
$$|\RR(\chi_{R'\setminus P_x}\mu)(x)| \geq 
|\RR(\chi_{F_{N_0}(R')\setminus P_x}\mu)(x)| - \RR(\chi_{R'\setminus F_{N_0}(R')}\mu)(x)
\geq (M'-c_3)
\, \Theta(R).$$
From Lemma \ref{lemcomp} we deduce that
\begin{align*}
|m_{P_x} \RR\mu - m_{R'}\RR\mu| & \geq |\RR(\chi_{R'\setminus P_x}\mu)(x)| - c\,p(P_x) - 
c\,p(R') \\
& \geq (M' - c_5- c\,B)\,\Theta(R)\geq 3M\,\Theta(R),
\end{align*}
assuming $M'$ big enough in the last inequality.
However, since $R',P_x\in\tree_0(R)$, we have
$$|m_{P_x} \RR\mu - m_{R'}\RR\mu|\leq |m_{P_x} \RR\mu - m_{R}\RR\mu| + 
|m_{R} \RR\mu - m_{R'}\RR\mu| \leq 2M\Theta(R),$$
which is a contradiction. Thus (a) from Lemma \ref{lemcub2} holds.
\end{proof}


\section{Types of trees}

Recall that given a collection of cubes $\AZ\subset\DD$, $\sigma(\AZ)$ stands for
$$\sigma(\AZ) = \sum_{Q\in\AZ}\Theta(Q)^2\mu(Q).$$
So Theorem \ref{teopri} consists in proving that
$$\|\RR \mu\|_{L^2(\mu)}^2 \approx  \sigma(\DD),$$
under the assumption that
$$\sup_{Q\in \DD} \Theta(Q)\lesssim1.$$

For the proof we need to consider different types of trees. We say that $\TT$ is a simple tree if it is of the form $\TT=\tree(R)$, with $R\in \ttt$, (defined in Section \ref{seccorona}). 
Given a small constant $\delta_W>0$, we say that a simple tree $\TT=\tree(R)$, $R\in\ttt$, is of type
$W$ (wonderful) if 
\begin{equation*}
\mu\biggl (\,\bigcup_{Q\in BR(R)} Q\biggr)\geq \delta_W\,\mu(R).
\end{equation*}

If $\TT$ is not of type $W$, we say that it is of type $NW$.

Let $A>10$ to be fixed below. For the reader's convenience, let us remark that we will choose
$$\delta_0,\delta_{W}\ll 1\ll M\ll A\ll B.$$ 
For instance, we may choose $A=B^{1/100}$.
We say that a simple tree $\TT$ is of type $I\sigma$ (increasing $\sigma$) if it is type $NW$ and
$$\sigma(\sss(R))>A\,\sigma(\{R\}).$$
We say that $\TT$ is of type $D\sigma$ (decreasing $\sigma$) if it is of type $NW$ and
$$\sigma(\sss(R))< A^{-1}\,\sigma(\{R\}).$$
On the other hand, we say that $\TT$ is of type $S\sigma$ (stable $\sigma$) if it is of type $NW$ and
$$A^{-1}\,\sigma(\{R\})\leq\sigma(\sss(R))\leq A\,\sigma(\{R\}).$$


Our analysis is going to need more complex trees, in fact, we will use three families of trees: the family of maximal decreasing trees $MDec$, the family of large wonderful trees denoted by $LW$ and the family of tame increasing trees $TInc$. Let us  describe these three families in turn.

\vv
{\bf(1)} Given a $D\sigma$ simple tree $\TT=\tree(R)$ we construct an $MDec-enlarged$ tree $\widetilde{\TT}$ iteratively as follows. We set 
\begin{equation*}
\widetilde{\TT}_{1}:= \TT \cup \bigcup_{\substack{P \in \sss(\TT)\\ \tree(P) \text{ is } D\sigma }} \tree(P).
\end{equation*}
We also define the stopping collection of the new tree as
\begin{equation*}
\sss(\widetilde{\TT}_{1}):= \{P\in\sss(\TT):\text{$\tree(P)$ is not $D\sigma$}\}
\cup \!\!\bigcup_{\substack{P \in \sss(\TT)\\ \tree(P) \text{ is } D\sigma }}\!\! \sss(P),
\end{equation*}
where $\sss(\TT)\equiv\sss(R)$.
In general, for an integer $k\geq 2$, we define 
$$
\widetilde{\TT}_{k}:=\widetilde{\TT}_{k-1}\cup \bigcup_{\substack{P \in \sss(\widetilde{\TT}_{k-1})\\ \tree(P) \text{ is } D\sigma }} \tree(P),
$$
and 
$$
\sss(\widetilde{\TT}_{k}):=
\{P\in\sss(\widetilde{\TT}_{k-1}):\text{$\tree(P)$ is not $D\sigma$}\}
\cup  \!\!\bigcup_{\substack{P \in \sss(\widetilde{\TT}_{k-1})\\ \tree(P) \text{ is } D\sigma }} \!\!\sss(P).
$$

Our $MDec-enlarged$ tree $\widetilde{\TT}$ will be the maximal  union of  $D\sigma$ trees, that is, 
\begin{equation*}
\widetilde{\TT}:=\bigcup_{k\geq 1} \widetilde{\TT}_{k}
\end{equation*}



\vv

{\bf(2)} Given a simple tree $\TT=\tree(R)$ that is either $I\sigma$ or $S\sigma$, the following algorithm will return the desired  $LW$ or $TInc$ tree. We first define the tree $\wt \TT_{2}$ as
\begin{equation*}
\wt\TT_{2}:= \TT \cup \bigcup_{P\in HD(R)}\tree(P).
\end{equation*}
We define the stopping collection of the new tree $\wt\TT_{2}$ as 
$$\sss(\wt\TT_{2}):=\bigl(\sss(\TT)\setminus HD(R)\bigr) \cup \bigcup_{P\in HD(R)} \sss(P).$$ The collections of HD, LD and BR cubes are, respectively,
 $$HD_{2}:=HD(\wt\TT_{2}):= \bigcup_{P\in HD(R)} HD(P),$$
$$LD_{2}:=\bigcup_{P\in HD(R)} LD(P) \quad \text{ and }\quad BR_{2}:=\bigcup_{P\in HD(R)} BR(P).$$


In general, for an integer $k\geq 2$, if $\wt\TT_{k-1}$ has already been defined
and moreover the following two conditions hold:
$$\sigma(\sss(\wt\TT_{k}))> A \sigma(\sss(\wt\TT_{k-1}))
\quad\text{ and }\quad
\mu(BR_{k})\leq \delta_{W}' \mu(HD_{k-1}),$$
then we define 
$$
\widetilde{\TT}_{k}:=\widetilde{\TT}_{k-1}\cup \bigcup_{P\in HD_{k-1}} \tree(P),
$$
and 
\begin{align*}
\sss(\widetilde{\TT}_{k})& :=
\{P\in\sss(\widetilde{\TT}_{k-1})\setminus HD_{k-1}\}
\cup  \!\!\bigcup_{P\in HD_{k-1}} \!\!\sss(P).
\end{align*}
Moreover, we denote
$$HD_{k}:=\!\bigcup_{P\in HD_{k-1}} \!\!HD(P), \quad  LD_{k}:=\!\bigcup_{P\in HD_{k-1}}\!\!LD(P)\quad \text{ and }\quad BR_{k}:=\!\bigcup_{P\in HD_{k-1}}\!\! BR(P).$$

We stop the algorithm when we reach one of the following conditions for some $k=N_0$: 
\begin{enumerate}
\item[(a)] If \begin{equation} \label{e.stoplw} \mu(BR_{N_{0}})> \delta_{W}' \mu(HD_{N_{0}-1}) \end{equation} for certain $N_{0}\geq 2$, in which case we say that $\wt\TT_{N_{0}}$  belongs to the $LW$ family and we say that $N_{0}$ is its order.

\item[(b)] Or if 
\begin{equation}\label{e.tame}
\sigma(\sss(\wt\TT_{N_{0}}))\leq A \sigma(\sss(\wt\TT_{N_{0}-1}))
\end{equation}
 for certain $N_{0}\geq 2$. The boundedness of densities ensures that such condition must be reached at some $N_{0}$. Then $\wt\TT_{N_{0}}$ belongs to the $TInc$ family and we say that $N_{0}$ is its order.

\end{enumerate}

\vv

The starting cube $R$ in the construction of a tree $\wt\TT$ of type $MDec$, $TInc$, $LW$, or $W$ is
called {\em root of $\wt\TT$}, and we write $R=\roo(\wt\TT)$.

We say that the trees of type $MDec$, $TInc$, $LW$, and $W$ are {\em maximal trees}. 
Note that the trees of type $W$ are simple. Maximal trees of type $MDec$ can be simple or non-simple. On the other hand, trees of type $TInc$ and $LW$ are non-simple.


\subsection{Estimates for the trees of type $W$ and $LW$}

Throughout this section we will use that the dyadic density $\Theta_{d}(Q)$ is constant for all cubes $Q\in HD_{k}$ for a fixed $k\in\mathbb N$ and we will denote such constant by $\Theta(HD_{k})$. In order to prove the desired estimates for trees of type $LW$ and $TInc$ we need the following lemma:

\begin{lemma} \label{l.sigmatinc}
Let $\wt \TT \in TInc\cup LW$ be of order $N_{0}$. Then
$$\sigma(\wt \TT)\leq C(A,B, \delta_{0}, \delta_{W})  \sigma(HD_{N_{0}-1}).$$
\end{lemma}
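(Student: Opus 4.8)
The plan is to unwind the definition of $\wt\TT=\wt\TT_{N_0}$ into layers indexed by the families $HD_k$, and then extract geometric growth of $\sigma(HD_k)$ from the iteration conditions. Set $HD_0:=\{\roo(\wt\TT)\}=\{R\}$. By the construction in item (2) of the section, $\wt\TT_{N_0}$ is precisely the union of the simple trees $\tree(P)$ with $P$ ranging over $\bigcup_{k=0}^{N_0-1}HD_k\subset\ttt$, and this union is disjoint because distinct roots in $\ttt$ yield disjoint simple trees. Since each such $P$ is $p$-doubling, Lemma~\ref{lemdif1} (inequality~\rf{eqdif2}) gives $\sigma(\tree(P))\leq c'(B,\delta_0,M)\,\Theta(P)^2\mu(P)$, so, abbreviating $\sigma(HD_k):=\sum_{P\in HD_k}\Theta(P)^2\mu(P)$ (and likewise $\sigma(LD_k),\sigma(BR_k)$),
$$\sigma(\wt\TT)\leq c'(B,\delta_0,M)\sum_{k=0}^{N_0-1}\sigma(HD_k).$$
It therefore suffices to prove $\sum_{k=0}^{N_0-1}\sigma(HD_k)\leq 4A\,\sigma(HD_{N_0-1})$, recalling $N_0\geq 2$.

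The second ingredient is a transfer identity. Using $\sss(\wt\TT_k)=\bigl(\sss(\wt\TT_{k-1})\setminus HD_{k-1}\bigr)\cup\bigcup_{P\in HD_{k-1}}\sss(P)$, the inclusion $HD_{k-1}\subset\sss(\wt\TT_{k-1})$, and $\sss(P)=HD(P)\sqcup LD(P)\sqcup BR(P)$ with $\bigsqcup_{P\in HD_{k-1}}HD(P)=HD_k$ (and similarly for $LD_k$, $BR_k$), one obtains for $2\leq k\leq N_0$, with the convention $\wt\TT_1:=\TT$,
$$\sigma(\sss(\wt\TT_k))=\sigma(\sss(\wt\TT_{k-1}))-\sigma(HD_{k-1})+\sigma(HD_k)+\sigma(LD_k)+\sigma(BR_k).$$
Next I would show the $LD$ and $BR$ terms are negligible compared with $\sigma(HD_{k-1})$. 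For $LD_k$: by~\rf{eqld41}, $\Theta(Q)\leq p(Q)\leq B^{-1}\Theta(P)$ for $Q\in LD(P)$, hence $\sigma(LD_k)\leq B^{-2}\sigma(HD_{k-1})$. For $BR_k$: one first checks $\Theta(Q)\leq c_2 B\,\Theta(P)$ for $Q\in BR(P)$ — the cubes between a $BR_0(P)$-cube $Q_0$ and $P$ lie in $\tree_0(P)$, so Lemma~\ref{lemfac1}(b) bounds their density by $2B\Theta(P)$, giving $p(Q_0)\lesssim B\Theta(P)$, and then Lemma~\ref{lemdob0} applied along the non-$p$-doubling chain from $Q$ up to $Q_0$, together with~\rf{eq000}, yields the claim. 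Since the dyadic density is constant on $HD_{k-1}$ (so $\Theta(P)\approx\Theta(HD_{k-1})$ there), this converts the iteration bound $\mu(BR_k)\leq\delta_W'\mu(HD_{k-1})$ into $\sigma(BR_k)\lesssim B^2\delta_W'\,\sigma(HD_{k-1})$. Choosing $\delta_W'$ small enough in terms of $A$ and $B$, both errors are $\leq\tfrac{A}{4}\sigma(HD_{k-1})$.

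Now the geometric growth. For $2\leq k\leq N_0-1$ the continuation conditions held at stage $k$, so $\sigma(\sss(\wt\TT_k))>A\,\sigma(\sss(\wt\TT_{k-1}))\geq A\,\sigma(HD_{k-1})$; feeding this and the error bounds into the transfer identity gives $\sigma(HD_k)+\sigma(LD_k)+\sigma(BR_k)>A\,\sigma(HD_{k-1})$, hence $\sigma(HD_k)>\tfrac{A}{2}\sigma(HD_{k-1})$, i.e.\ $\sigma(HD_{k-1})\leq\tfrac{2}{A}\sigma(HD_k)$. The first step ($k=0\to1$) is handled separately: $\sigma(\sss(R))=\sigma(HD_1)+\sigma(LD_1)+\sigma(BR_1)$, and since $\TT=\tree(R)$ is $I\sigma$ or $S\sigma$ one has $\sigma(\sss(R))\geq A^{-1}\sigma(\{R\})=A^{-1}\sigma(HD_0)$, while $\sigma(LD_1)\leq B^{-2}\sigma(HD_0)$ and — because $\TT$ is $NW$, so $\mu\bigl(\bigcup_{Q\in BR(R)}Q\bigr)<\delta_W\mu(R)$, together with $\Theta\leq c_2B\,\Theta(R)$ on $BR(R)$ — $\sigma(BR_1)\lesssim B^2\delta_W\,\sigma(HD_0)$; for $\delta_W$ small this gives $\sigma(HD_1)\geq\tfrac1{2A}\sigma(HD_0)$, i.e.\ $\sigma(HD_0)\leq 2A\,\sigma(HD_1)$. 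Iterating $\sigma(HD_{k-1})\leq\tfrac{2}{A}\sigma(HD_k)$ down from $k=N_0-1$ and using $A\geq 10$,
$$\sum_{k=0}^{N_0-1}\sigma(HD_k)\leq\Bigl(2A\,(2/A)^{N_0-2}+\sum_{m\geq0}(2/A)^m\Bigr)\sigma(HD_{N_0-1})\leq 4A\,\sigma(HD_{N_0-1}),$$
which, combined with the first paragraph, gives $\sigma(\wt\TT)\leq C(A,B,\delta_0,\delta_W)\,\sigma(HD_{N_0-1})$.

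The step I expect to be the main obstacle is the control of $\sigma(BR_k)$: it needs both the pointwise density bound $\Theta\leq c_2B\,\Theta(P)$ on $BR$-cubes (which has to be extracted from the definitions of $BR_0$, $\tree_0$ and $p$-doubling via Lemmas~\ref{lemdob0} and~\ref{lemfac1}) and the constancy of the dyadic density across $HD_{k-1}$, since only then does the \emph{aggregate} hypothesis $\mu(BR_k)\leq\delta_W'\mu(HD_{k-1})$ turn into a bound on $\sigma(BR_k)$; one also has to make sure $\delta_W$ and $\delta_W'$ have been chosen small relative to $A$ and $B$. Everything else reduces to summing a geometric series.
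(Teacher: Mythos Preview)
Your proof is correct and follows essentially the same approach as the paper's: reduce to $\sum_{k=0}^{N_0-1}\sigma(HD_k)$ via Lemma~\ref{lemdif1}, then extract geometric growth $\sigma(HD_{k-1})\leq \tfrac{2}{A}\sigma(HD_k)$ for $2\leq k\leq N_0-1$ (and $\sigma(HD_0)\leq 2A\,\sigma(HD_1)$) from the continuation condition $\sigma(\sss(\wt\TT_k))>A\,\sigma(\sss(\wt\TT_{k-1}))$ together with the smallness of $\sigma(LD_k)$ and $\sigma(BR_k)$. Your treatment is in fact slightly more careful than the paper's in two places: you distinguish $\delta_W$ (from the $NW$ property, used for $BR_1$) from $\delta_W'$ (from the iteration, used for $BR_k$ with $k\geq2$), and you justify the density bound $\Theta(Q)\lesssim B\,\Theta(P)$ for $Q\in BR(P)$ explicitly, whereas the paper just asserts $\sigma(BR_k)\leq 4B^2\Theta(HD_{k-1})^2\mu(BR_k)$.
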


\begin{proof}
First, we observe that by Lemma \ref{lemdif1}, 
$$\sigma(\wt \TT) \leq C(A,B, \delta_{0}) \sum_{k=0}^{N_{0}-1}\,\sigma(HD_{k}),$$
 where, by convenience, we set $HD_{0}:=R$, where $R$ is the root of $\wt \TT$. 

The lemma follows now trivially by the geometrically increasing nature of $\sigma(HD_{k})$. By iteration, it is enough to prove that for $2\leq k\leq N_{0}-1$
\begin{equation}
\label{e.dechd}
\sigma(HD_{k-1})\leq \frac2A\, \sigma(HD_{k}),
\end{equation}
recalling that $2A^{-1} <1$. Notice also that for $k=1$
\begin{equation} \label{e.uglycontrol}
\sigma(R)\leq 2A\sigma(HD_{1}).
\end{equation}

Since $2\leq k\leq N_{0}-1$, by the definition of $TInc$ and $LW$ tree $\wt \TT$, we have that $\sigma(\sss(\wt \TT_{k}))>A\,\sigma(\sss(\wt \TT_{k-1}))$. Thus
\begin{align}\label{eqfa520}
\sigma(HD_{k})+ \sigma(BR_{k}) +\sigma(LD_{k}) &= \sigma(\sss(\widetilde{\TT}_{k})\setminus
\sss(\widetilde{\TT}_{k-1}))\\
&\geq (A-1)\,\sigma(\sss(\widetilde{\TT}_{k-1}))\geq (A-1)\,\sigma(HD_{k-1}).\nonumber
\end{align}
On the other hand for $k=1$, $\wt \TT_{1}$ is of either type $I\sigma$ or $S\sigma$, then we have
\begin{equation}\label{e.hd1}
\sigma(\sss(\widetilde{\TT}_{1}))\geq A^{-1}\sigma(R).
\end{equation}
We also have that for all $k\geq 1$
$$
\sigma(LD_{k})\leq \delta_{0}^{2}\,2\Theta(HD_{k-1})^{2}\,\mu(LD_{k})\leq 2 \delta_{0}^{2}\sigma(HD_{k-1})
$$
and using the fact that all the $BR$ cubes in our  tree $\wt \TT$ have little mass, we obtain
$$
\sigma(BR_{k}) \leq 4B^{2}\Theta(HD_{k-1})^{2}\mu(BR_{k}) \leq 4\delta_{W}B^{2}\sigma(HD_{k-1}),
$$
assuming $\delta_{W}\ll (2B)^{-2}$. So $\sigma(BR_{k}) +\sigma(LD_{k})\ll\sigma(HD_{k-1})$.
If moreover we assume $\delta_{W}\ll (2B)^{-2}A^{-1}$ and $\delta_{0}\ll A^{-1}$, together with \rf{eqfa520} and \rf{e.hd1}, this implies that for all $k\geq 1$
\begin{equation}\label{eqqq234}
\sigma(BR_{k}) +\sigma(LD_{k})\leq \frac1{100} \sigma(HD_{k})
\end{equation}
assuming $A$ big enough, and again by \rf{eqfa520} and \rf{e.hd1}, one deduces 
\rf{e.dechd} and \rf{e.uglycontrol}. This concludes the proof.
\end{proof}


\vv
We are now ready to estimate $W$ and $LW$ trees.

\begin{lemma}\label{lemtw}
If $\TT$ is of type $W$ or $LW$, then
$$\sigma(\TT)\leq C(A,B,\delta_0,M,\delta_W)\, \sum_{Q\in\TT}\|D_Q(\RR \mu)\|_{L^2(\mu)}^2.$$
\end{lemma}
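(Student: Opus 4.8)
The plan is to reduce the claim to a lower bound for the $L^2(\mu)$-oscillation of $\RR\mu$ coming from the $BR$ cubes, which is precisely where the stopping condition for $W$/$LW$ trees provides us with a fixed proportion of mass. First I would treat the type $W$ case, which is essentially the core of the argument, and then bootstrap to $LW$ by summing over the layers $HD_k$. For a $W$ tree $\TT=\tree(R)$, recall that $\sum_{Q\in BR(R)}\mu(Q)\ge\delta_W\,\mu(R)$, and every $Q\in BR(R)$ satisfies $|m_Q\RR\mu - m_R\RR\mu|\ge \tfrac M2\,\Theta(R)$ by Lemma \ref{lemt0}(b). The standard way to convert such a "jump" in the means of $\RR\mu$ between $R$ and its stopping descendants into a sum of $\|D_Q(\RR\mu)\|_{L^2(\mu)}^2$ is the telescoping identity: for $Q\in BR(R)$, write $m_Q\RR\mu - m_R\RR\mu = \sum_{P:Q\subset P\subsetneq R} D_P(\RR\mu)(x_Q)$ for $x_Q\in Q$ (or, more cleanly, use orthogonality of the $D_P$). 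A clean route is to introduce the martingale-type function $g=\sum_{P\in\tree(R)} D_P(\RR\mu)$ and note that on each $Q\in BR(R)$ its average $m_Q g$ differs from $m_Q\RR\mu - m_R\RR\mu$ by a controlled error (using that $p(Q)\lesssim B\,\Theta(R)$ on $\tree(R)$, Lemma \ref{lemfac1}(b), and the comparison estimates of Lemma \ref{lemcomp}), so that $|m_Q g|\gtrsim M\,\Theta(R)$ for all $Q\in BR(R)$ once $M\gg B$. Then
\begin{align*}
\sum_{P\in\tree(R)}\|D_P(\RR\mu)\|_{L^2(\mu)}^2 = \|g\|_{L^2(\mu)}^2 \ge \sum_{Q\in BR(R)} |m_Q g|^2\,\mu(Q) \gtrsim M^2\,\Theta(R)^2\sum_{Q\in BR(R)}\mu(Q)\ge M^2\,\delta_W\,\Theta(R)^2\,\mu(R).
\end{align*}
Since $\sigma(\TT)\le c(B,\delta_0,M)\,\Theta(R)^2\,\mu(R)$ by Lemma \ref{lemdif1}, combining the two displays gives $\sigma(\TT)\le C(A,B,\delta_0,M,\delta_W)\sum_{Q\in\TT}\|D_Q(\RR\mu)\|_{L^2(\mu)}^2$ as desired.

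For an $LW$ tree $\wt\TT$ of order $N_0$, I would run the same argument on each family $HD_k$, $0\le k\le N_0-1$. The stopping condition \rf{e.stoplw} gives $\mu(BR_{N_0})>\delta_W'\,\mu(HD_{N_0-1})$, and at each intermediate layer the $BR_k$ cubes are $BR$ cubes of individual simple trees rooted at cubes of $HD_{k-1}$, whose dyadic density is the fixed value $\Theta(HD_{k-1})$. Applying the $W$-type estimate to the simple subtrees $\tree(P)$, $P\in HD_{N_0-1}$, whose $BR$ cubes make up $BR_{N_0}$, yields
\begin{align*}
\sum_{P\in HD_{N_0-1}}\ \sum_{Q\in\tree(P)}\|D_Q(\RR\mu)\|_{L^2(\mu)}^2 \gtrsim M^2\,\Theta(HD_{N_0-1})^2\,\mu(BR_{N_0})\gtrsim M^2\,\delta_W'\,\Theta(HD_{N_0-1})^2\,\mu(HD_{N_0-1})\approx \sigma(HD_{N_0-1}).
\end{align*}
By Lemma \ref{l.sigmatinc}, $\sigma(\wt\TT)\le C\,\sigma(HD_{N_0-1})$, and since the subtrees $\tree(P)$ above are contained in $\wt\TT$ and mutually disjoint, the orthogonality $\sum_{Q\in\DD}\|D_Q f\|_{L^2(\mu)}^2=\|f\|_{L^2(\mu)}^2$ lets us bound the left-hand side by $\sum_{Q\in\wt\TT}\|D_Q(\RR\mu)\|_{L^2(\mu)}^2$, finishing the case.

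I expect the main obstacle to be the error analysis in passing from $m_Q\RR\mu - m_R\RR\mu$ to $m_Q g$ (equivalently, controlling $\RR(\chi_{R^c}\mu)$ on $R$ and the tails $\RR(\chi_{R\setminus Q}\mu)$ vs.\ the partial sums of $D_P(\RR\mu)$). Here one must be careful that the error is $O(p(Q)+p(R))=O(B\,\Theta(R))$ on $\tree(R)$, which is dominated by the main term $M\,\Theta(R)$ precisely because we chose $M=CB\gg B$; this is exactly the reason the constants were ordered $M\gg B$ and, ultimately, $\delta_W,\delta_0\ll1\ll M\ll A\ll B$ — so one should double-check that the resulting constant $C(A,B,\delta_0,M,\delta_W)$ genuinely absorbs all the losses and, in the $LW$ case, does not degrade with the order $N_0$ (it does not, since $N_0$ enters only through the geometric sum already controlled in Lemma \ref{l.sigmatinc}). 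A secondary point to handle carefully is that $D_Q(\RR\mu)$ is defined via $\RR\mu$, which is only an $L^2(\mu)$ function a priori; but since we work with truncations and the estimates are uniform, this causes no real difficulty.
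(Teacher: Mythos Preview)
Your proposal is correct and follows essentially the same route as the paper: reduce via Lemma~\ref{l.sigmatinc} to $\sigma(HD_{N_0-1})$, then use the jump $|m_Q\RR\mu-m_R\RR\mu|\ge \tfrac M2\Theta(R)$ from Lemma~\ref{lemt0}(b) on the $BR$ cubes together with the telescoping/orthogonality of the $D_P$'s.

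One simplification you should make: the ``error analysis'' you anticipate between $m_Qg$ and $m_Q\RR\mu-m_R\RR\mu$ is a phantom. For $x\in Q\in\sss(R)$ the cubes $P\in\tree(R)$ containing $x$ are exactly the chain $Q\subsetneq P\subset R$, so by telescoping
\[
g(x)=\sum_{P\in\tree(R)}D_P(\RR\mu)(x)=m_Q(\RR\mu)-m_R(\RR\mu)
\]
\emph{identically} on $Q$; there is no error term, no appeal to Lemma~\ref{lemcomp}, and no need for a relation like ``$M\gg B$'' (note also that the paper's ordering is $M\ll A\ll B$, not $M\gg B$). With this observation your displayed chain
\[
\sum_{P\in\tree(R)}\|D_P(\RR\mu)\|_{L^2(\mu)}^2=\|g\|_{L^2(\mu)}^2\ge \sum_{Q\in BR(R)}|m_Q\RR\mu-m_R\RR\mu|^2\mu(Q)\ge \frac{M^2}{4}\Theta(R)^2\!\!\sum_{Q\in BR(R)}\mu(Q)
\]
is exact and the rest of your argument (both in the $W$ and $LW$ cases) goes through unchanged. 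In fact your use of $g$ is slightly cleaner than the paper's formulation, since it sidesteps any concern about counting a cube $P$ in more than one chain $\{P:Q\subsetneq P\subset R\}$ when summing over $Q\in BR(R)$.
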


\begin{proof}
We prove the case of a $LW$ tree of order $N_{0}\geq 1$, as the case of a $W$ follows with $N_{0}=1$.

We claim that it is enough to prove the following estimate:
\begin{equation}\label{e.wond}
\sum_{R\in HD_{N_{0}-1}}\sum_{Q\in BR(R)}\sum_{Q\subsetneq P \subset R} \|D_P(\RR \mu)\|_{L^2(\mu)}^2 \geq C(A,B, \delta_0,M, \delta_{W}) \sigma(HD_{N_{0}-1}).
\end{equation}
In fact, from \eqref{e.wond} and Lemma \ref{l.sigmatinc}  we see that
\begin{align*}
\sum_{Q\in\TT}\|D_Q(\RR \mu)\|_{L^2(\mu)}^2 &\geq \sum_{R\in HD_{N_{0}-1}}\sum_{Q\in BR(R)}\sum_{Q\subsetneq P \subset R}  \|D_P(\RR \mu)\|_{L^2(\mu)}^2\\
& \geq  C(A, B,\delta_0, M, \delta_{W})\sigma(HD_{N_{0}-1}) \geq \widetilde{C}(A, B, M, \delta_{0}, \delta_{W}) \sigma(\TT),
\end{align*}
which gives exactly the desired estimate.

Let us proceed with the proof of \eqref{e.wond}: 
$$\sum_{R\in HD_{N_{0}-1}}\sum_{Q\in BR(R)}\sum_{Q\subsetneq P \subset R}  \|D_P(\RR \mu)\|_{L^2(\mu)}^2  = \sum_{R\in HD_{N_{0}-1}}\sum_{Q\in BR(R)} \|  \sum_{Q\subsetneq P \subset R} D_P(\RR \mu)\|_{L^2(\mu)}^2.$$
For $R,Q$ as in the last sum we have
$$ \Bigl|\chi_Q\sum_{Q\subsetneq P \subset R} D_P(\RR \mu)\Bigr|
= \bigl|m_{Q}(\RR \mu) - m_{R}(\RR \mu)\bigr|
\geq \frac M2\,\Theta(R),
$$
by Lemma \ref{lemt0} (b). So we get
\begin{align*}
\sum_{R\in HD_{N_{0}-1}}\sum_{Q\in BR(R)}\sum_{Q\subsetneq P \subset R}  \|D_P(\RR \mu)\|_{L^2(\mu)}^2 
& \geq \sum_{R\in HD_{N_{0}-1}}\sum_{Q\in BR(R)}  \frac{M^2}{4} \Theta(R)^2 \mu(Q)\\
& \gtrsim  \sum_{R\in HD_{N_{0}-1}} \delta_{W}\frac{M^2}{4} \Theta(HD_{N_{0}-1})^2 \mu(R) \\
&\gtrsim \delta_{W}\frac{M^2}{4} \sigma(HD_{N_{0}-1}),
\end{align*}
using the condition \eqref{e.stoplw} in the second inequality.
\end{proof}

\vv


\subsection{Initial estimates for $TInc$ trees}

 We want to prove the following.

\begin{lemma}\label{lemtame}
Let $\TT$ be a $TInc$  tree. Then
$$\sigma(\TT)\leq C(A,B,M,\delta_{0}, \delta_{W}) \sum_{Q\in\TT}\|D_Q(\RR \mu)\|_{L^2(\mu)}^2.$$
\end{lemma}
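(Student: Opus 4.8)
The plan is to reduce the estimate for a $TInc$ tree to the estimates already available for simpler pieces, by exploiting the structure of the $TInc$-enlargement. Recall that a $TInc$ tree $\wt\TT$ of order $N_0$ is built by successively adding $\tree(P)$ for $P\in HD_{k-1}$, stopping when $\sigma(\sss(\wt\TT_{N_0}))\leq A\,\sigma(\sss(\wt\TT_{N_0-1}))$, and that along the way all the intermediate $BR$-families have small mass (the enlargement was not stopped by condition (a)). The key structural input is Lemma \ref{l.sigmatinc}, which tells us $\sigma(\wt\TT)\leq C\,\sigma(HD_{N_0-1})$, so it suffices to bound $\sigma(HD_{N_0-1})$ by $\sum_{Q\in\wt\TT}\|D_Q(\RR\mu)\|_{L^2(\mu)}^2$ with the appropriate constant.

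First I would decompose $HD_{N_0-1}$ according to the simple trees $\tree(R)$, $R\in\wt\TT\cap\ttt$, that make up the enlarged tree, and observe that each cube in $HD_{N_0-1}$ is an $HD$ cube of exactly one such $R$, with $\Theta(Q)\approx B\,\Theta(R)$. For the $HD$ cubes the density jumps by a definite factor $B$, so one expects a genuine ``coercivity'' of the Riesz transform: passing from the mean of $\RR\mu$ on $R$ to its mean on a very high density cube $Q\in HD(R)$ forces the martingale differences $D_P(\RR\mu)$ for $Q\subsetneq P\subset R$ to be large in an $L^2$ sense. This is where I expect the bulk of the real work to lie. The natural route is, for each $R$ and each $Q\in HD(R)$, to compare $m_Q(\RR\mu)$ and $m_R(\RR\mu)$: one would like to show that this difference, or some suitable proxy for it such as $\Theta(Q)=B\,\Theta(R)$, is controlled by $\sum_{Q\subsetneq P\subset R}\|D_P(\RR\mu)\|_{L^2(\mu)}^2/\mu(Q)$ up to additive errors coming from $LD$ and $BR$ cubes and from the ``long-range'' part $p(Q)+p(R)\lesssim B\,\Theta(R)$, which is too weak on its own. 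So the coercivity cannot come from the telescoping identity alone; it must come from a variational/maximum-principle argument on the high-density cubes — precisely the mechanism advertised in the introduction (the adaptation of the argument of \cite{ENV12}, \cite{NToV}), using that $s$ is fractional and the suppressed-kernel machinery of Theorem \ref{teontv}.

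Concretely, I would proceed as follows. Fix $R\in\wt\TT\cap\ttt$ and the family $HD(R)$. Using \rf{eqfacc12} and Lemma \ref{lemfac1}(c), write $\chi_Q\sum_{Q\subsetneq P\subset R}D_P(\RR\mu) = m_Q(\RR\mu)-m_R(\RR\mu)$, so that
\begin{equation*}
\sum_{Q\in HD(R)}\sum_{Q\subsetneq P\subset R}\|D_P(\RR\mu)\|_{L^2(\mu)}^2 = \sum_{Q\in HD(R)}\bigl|m_Q(\RR\mu)-m_R(\RR\mu)\bigr|^2\,\mu(Q).
\end{equation*}
Then the task is to bound $\sigma(HD(R)) = \sum_{Q\in HD(R)}\Theta(Q)^2\mu(Q) \approx B^2\,\Theta(R)^2\,\mu\bigl(\bigcup_{Q\in HD(R)}Q\bigr)$ by the right-hand side above (times a constant depending on the parameters), plus a small multiple of $\sigma(HD(R))$ itself that can be absorbed, plus terms controlled by $\sigma(LD_k)+\sigma(BR_k)$, which by \rf{eqqq234} are negligible compared to $\sigma(HD_k)$. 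Summing over $k$ and over $R$ and invoking Lemma \ref{l.sigmatinc} then yields the claim. The hard part will be establishing this local coercivity estimate relating $B^2\Theta(R)^2\mu(\bigcup HD(R))$ to $\sum_{Q\in HD(R)}|m_Q(\RR\mu)-m_R(\RR\mu)|^2\mu(Q)$; I anticipate this is exactly what the Fourier analysis of Section 6 and the variational argument of Section 7 are designed to supply, so in practice this lemma will be deferred: here one sets up the reduction and states the coercivity inequality as the goal, with the remaining routine steps (telescoping, absorbing the small $LD$/$BR$ contributions via \rf{eqqq234}, summing the geometric series via \rf{e.dechd}, and applying Lemma \ref{l.sigmatinc}) carried out as above.
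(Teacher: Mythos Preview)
Your high-level strategy is right: reduce via Lemma \ref{l.sigmatinc} to $\sigma(HD_{N_0-1})$, extract a coercivity estimate from the variational machinery, and convert it into a lower bound on $\sum_Q\|D_Q(\RR\mu)\|^2$ by telescoping martingale differences. But the specific reduction you propose is one level short, and this is a genuine gap.

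You expect that for $Q\in HD(R)$ the mean difference $|m_Q(\RR\mu)-m_R(\RR\mu)|$ should be forced to be of size $\gtrsim B\,\Theta(R)$. There is no reason for this: membership in $HD(R)$ records a density jump, not a jump in the mean of $\RR\mu$. Indeed, by construction the parent of $Q$ lies in $\tree_0(R)$ and satisfies \rf{eqc391}, so one only knows $|m_Q(\RR\mu)-m_R(\RR\mu)|\lesssim M\,B\,\Theta(R)$; no lower bound is available at this level. The paper does \emph{not} prove such a one-level coercivity.

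What the paper does instead is pass to a \emph{two-level} structure. It selects (via Lemma \ref{l.redtolast}) a good subfamily $\GG\subset HD_{N_0-2}$, and for each $R\in\GG$ the subtree $\TT(R)$, running from $R$ through $HD_{N_0-1}$ down to the stopping cubes at level $N_0$, is a \emph{tractable tree} (Definition \ref{d.tract}). The coercivity is then Lemma \ref{lemtrac}: $\sigma(\{R\})\lesssim\sum_{Q\in\TT(R)}\|D_Q(\RR\mu)\|^2$. In its proof the large mean differences $|m_P(\RR\mu)-m_R(\RR\mu)|\gtrsim C(A,B)\Theta(R)$ are obtained for cubes $P$ in a good subset of $\sss_2(\TT)$, i.e.\ at the \emph{second} stopping level, not at $HD_1$.

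The need for two levels is visible in Lemma \ref{lem22}. The variational bound of Lemma \ref{lemvar1} gives $\|\RR\eta+f\|_{L^r(\eta)}^r\gtrsim\Theta(HD_1)^r\eta(HD_1)$ for the approximating measure $\eta$ supported on $\sss_2(\TT)$; to transfer this to $\mu$ one must beat the error terms $p(P)+q(P,\TT)$ for $P\in\sss_2(\TT)$. The required gain (a factor $A$, obtained either directly via \rf{eqcase1} or via a change of exponent using $B^{2-r}A^{-1}\geq A$) comes precisely from the tractable conditions \rf{e.1stop}--\rf{e.2stop} relating the two levels. With your one-level setup the errors $p(Q)\approx\Theta(Q)\approx B\,\Theta(R)$ for $Q\in HD(R)$ would be the same size as the coercivity target, and nothing could be absorbed.

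A minor point: your displayed identity is not correct as written, since the left side over-counts $\|D_P(\RR\mu)\|^2$ once for each $Q\in HD(R)$ contained in $P$. The inequality one actually uses is
\[
\sum_{P\in \sss_2(\TT)}\bigl|m_P(\RR\mu)-m_R(\RR\mu)\bigr|^2\,\mu(P)
\;\leq\;\sum_{Q\in\TT}\|D_Q(\RR\mu)\|_{L^2(\mu)}^2,
\]
obtained by writing the left side as $\int\bigl|\sum_{Q\in\TT}D_Q(\RR\mu)\bigr|^2 d\mu$ restricted to $\bigcup_{P\in\sss_2(\TT)}P$ and using orthogonality.
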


The proof requires a deep analysis that will be mostly carried in section 7. We prove some preliminary estimates below. Let us consider a $TInc$ tree $\TT$ of order $N_0$. By \eqref{e.dechd} it follows that
$$\sigma(HD_{N_0-1})\geq \frac{A}{2} \,\sigma(HD_{N_0-2})\qquad \mbox{for $N_0>2$.}.$$
Taking also \rf{e.uglycontrol} into account, identifying $HD_{N_0-2}$ with the root of $\TT$ when $N_0=2$, we deduce that
\begin{equation}\label{eqdsa28}
\sigma(HD_{N_0-1})\geq \frac{1}{2A} \,\sigma(HD_{N_0-2}) \qquad \mbox{for $N_0\geq2$.}\end{equation}
Using also \rf{eqqq234} and the stopping condition \eqref{e.tame} we get
\begin{equation}\label{eqdsa29}
\sigma(HD_{N_0-1})\geq \frac12\,\sigma(\sss(\wt\TT_{N_0-1})) \geq \frac1{2A}\,
\bigl (\sigma(HD_{N_0}) + \sigma(BR_{N_0}) + \sigma(LD_{N_0})\bigr).
\end{equation}

Let $\GG$ be the collection of those cubes  $R\in HD_{N_0-2}$ such that
\begin{equation}\label{eqdefg22}
\sum_{\substack{P\in HD_{N_0-1}:\\P\subset R}} \sigma(P)\geq 
\max\Biggl(\frac{1}{20A}\,\sigma(R)\;,\;\;
\frac1{10A} 
\sum_{\substack{L\in HD_{N_0}\cup BR_{N_0}\cup LD_{N_0}:\\L\subset R}} \sigma(L)\Biggr).
\end{equation}

\begin{lemma}
\label{l.redtolast}
Let $\TT$ be a $TInc$  tree and $\GG$ as above. We have
$$\sigma(\GG)\geq C(B)\,\sigma(HD_{N_0-1}).$$
\end{lemma}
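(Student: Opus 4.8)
The plan is to show that the "good" cubes $R \in HD_{N_0-2}$ carry almost all of the $\sigma$-mass of $HD_{N_0-1}$, by bounding the contribution of the "bad" cubes—those in $HD_{N_0-2}$ that fail at least one of the two inequalities in \rf{eqdefg22}. First I would set up the accounting: every cube $P \in HD_{N_0-1}$ is contained in a unique $R \in HD_{N_0-2}$ (by the nested structure of the $HD_k$ families), and similarly every $L \in HD_{N_0} \cup BR_{N_0} \cup LD_{N_0}$ lies in a unique $R \in HD_{N_0-2}$. So $\sigma(HD_{N_0-1}) = \sum_{R \in HD_{N_0-2}} \sum_{P \in HD_{N_0-1}: P\subset R} \sigma(P)$, and I want to show the sum restricted to $R \in \GG$ is at least a fixed fraction of the whole.

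Next I would split $HD_{N_0-2} \setminus \GG$ into two subfamilies according to which of the two defining conditions fails. Call $\GG_1$ the cubes where $\sum_{P \in HD_{N_0-1}: P\subset R}\sigma(P) < \frac1{20A}\sigma(R)$, and $\GG_2$ the cubes (not already in $\GG_1$) where $\sum_{P\subset R}\sigma(P) < \frac1{10A}\sum_{L\subset R}\sigma(L)$. Summing over $\GG_1$ and using \rf{eqdsa28} (which says $\sigma(HD_{N_0-1}) \geq \frac1{2A}\sigma(HD_{N_0-2}) \geq \frac1{2A}\sigma(\GG_1)$, so $\sigma(\GG_1) \leq 2A\,\sigma(HD_{N_0-1})$—wait, this goes the wrong way), I would instead argue directly: $\sum_{R \in \GG_1}\sum_{P\subset R}\sigma(P) < \frac1{20A}\sum_{R\in\GG_1}\sigma(R) \leq \frac1{20A}\sigma(HD_{N_0-2}) \leq \frac1{10}\sigma(HD_{N_0-1})$, where the last step uses \rf{eqdsa28} in the form $\sigma(HD_{N_0-2}) \leq 2A\,\sigma(HD_{N_0-1})$. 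Similarly, for $\GG_2$ I would use \rf{eqdsa29}, which bounds $\sigma(HD_{N_0}) + \sigma(BR_{N_0}) + \sigma(LD_{N_0}) \leq 2A\,\sigma(HD_{N_0-1})$: then $\sum_{R\in\GG_2}\sum_{P\subset R}\sigma(P) < \frac1{10A}\sum_{R\in\GG_2}\sum_{L\subset R}\sigma(L) \leq \frac1{10A}\bigl(\sigma(HD_{N_0}) + \sigma(BR_{N_0}) + \sigma(LD_{N_0})\bigr) \leq \frac15\,\sigma(HD_{N_0-1})$.

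Combining, the total $\sigma$-mass of $HD_{N_0-1}$ contributed by cubes sitting below $HD_{N_0-2}\setminus\GG$ is at most $(\frac1{10} + \frac15)\sigma(HD_{N_0-1})$, leaving at least $\frac{7}{10}\sigma(HD_{N_0-1})$ for cubes below $\GG$; that is, $\sum_{P \in HD_{N_0-1}:\,P\subset R \text{ for some }R\in\GG}\sigma(P) \geq \frac7{10}\sigma(HD_{N_0-1})$. The final step is to pass from this to a lower bound on $\sigma(\GG)$ itself: by the defining inequality \rf{eqdefg22}, for each $R \in \GG$ we have $\sum_{P\subset R}\sigma(P) \geq \frac1{20A}\sigma(R)$, but we actually need the reverse direction, so instead I would use Lemma \ref{lemfac1}(b) together with the fact that the $HD_{N_0-1}$ cubes below $R$ have density comparable to $B\,\Theta(R)$ — each $P \in HD_{N_0-1}$ has $\Theta(P) \approx B\,\Theta(R)$ by the definition of $HD_0$, hence $\sigma(P) = \Theta(P)^2\mu(P) \approx B^2\Theta(R)^2\mu(P)$, and summing, $\sum_{P \subset R}\sigma(P) \lesssim B^2\Theta(R)^2\mu(R) = B^2\sigma(R)$; therefore $\sigma(\GG) \gtrsim B^{-2}\sum_{R\in\GG}\sum_{P\subset R}\sigma(P) \geq \frac{7}{10}B^{-2}\sigma(HD_{N_0-1})$, which is the claim with $C(B) \approx B^{-2}$.

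The main obstacle I anticipate is getting the direction of the inequalities in \rf{eqdsa28} and \rf{eqdsa29} right and making sure the constants actually close — in particular verifying that the $\GG_2$ estimate genuinely uses only the portion of $\sigma(HD_{N_0}) + \sigma(BR_{N_0}) + \sigma(LD_{N_0})$ sitting below $\GG_2$, which is bounded by the whole by positivity, and then controlled via \rf{eqdsa29}. I should also double-check that $\GG_1$ and $\GG_2$ as defined are disjoint and exhaust $HD_{N_0-2}\setminus\GG$, and that the final conversion from "mass of $HD_{N_0-1}$ below $\GG$" to "$\sigma(\GG)$" does not lose a factor depending on $A$, $M$, $\delta_0$ or $\delta_W$ — only on $B$, as the statement demands; this relies crucially on the two-sided density comparison $\Theta(P) \approx B\,\Theta(R)$ for $P \in HD_{N_0-1}$ under $R \in HD_{N_0-2}$.
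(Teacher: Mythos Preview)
Your proposal is correct and follows essentially the same approach as the paper: the paper likewise splits $HD_{N_0-2}\setminus\GG$ into the two failure families (called $\BB_1,\BB_2$ there), bounds their contributions to $\sigma(HD_{N_0-1})$ by $\tfrac1{10}\sigma(HD_{N_0-1})$ and $\tfrac15\sigma(HD_{N_0-1})$ using \rf{eqdsa28} and \rf{eqdsa29} respectively, and then converts the remaining $\tfrac7{10}\sigma(HD_{N_0-1})$ into a lower bound on $\sigma(\GG)$ via the inequality $\sum_{P\in HD_{N_0-1}:P\subset R}\sigma(P)\leq c\,B^2\sigma(R)$, exactly as you outline.
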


\begin{proof}
Let $\BB = HD_{N_0-2}\setminus \GG$. Let $\BB_1$ the collection of cubes  $R\in HD_{N_0-2}$ such that
$$\frac{1}{20A}\,\sigma(R) > \sum_{\substack{P\in HD_{N_0-1}:\\P\subset R}} \sigma(P)$$
and set $\BB_2=\BB\setminus \BB_1$. Thus the cubes from $\BB_2$ belong to $HD_{N_0-2}$ 
and satisfy
$$\sum_{\substack{P\in HD_{N_0-1}:\\P\subset R}} \sigma(P) < \frac1{10A} 
\sum_{\substack{L\in HD_{N_0}\cup BR_{N_0}\cup LD_{N_0}:\\L\subset R}} \sigma(L).$$

From the definition of $\BB_1$ and \rf{eqdsa28} we get
$$\sum_{R\in\BB_1} \sum_{\substack{P\in HD_{N_0-1}:\\P\subset R}} \sigma(P)
\leq \frac1{20A}\,\sum_{R\in\BB_1}\sigma(R)\leq \frac 1{20A} \,\sigma(HD_{N_0-2})  
\leq \frac 1{10}\, \sigma(HD_{N_0-1}).
$$
On the other hand, from the definition of $\BB_2$ and \rf{eqdsa29},
\begin{align*}
\sum_{R\in\BB_2} \sum_{\substack{P\in HD_{N_0-1}:\\P\subset R}} \sigma(P)
& \leq \frac{1}{10A} \sum_{R\in\BB_2}\,\,
\sum_{\substack{L\in HD_{N_0}\cup BR_{N_0}\cup LD_{N_0}:\\P\subset R}} \sigma(L) \\
& \leq \frac{1}{10A} \,\bigl (\sigma(HD_{N_0}) + \sigma(BR_{N_0})+ \sigma(LD_{N_0})\bigr)
\leq \frac{1}{5}\,\sigma(HD_{N_0-1}).
\end{align*}
Therefore,
$$\sum_{R\in\BB} \sum_{\substack{P\in HD_{N_0-1}:\\P\subset R}} \sigma(P)
\leq \frac{3}{10}\,\sigma(HD_{N_0-1}),$$
and so
$$\sum_{R\in\GG} \sum_{\substack{P\in HD_{N_0-1}:\\P\subset R}} \sigma(P)
\geq \frac{7}{10}\,\sigma(HD_{N_0-1}).$$
Observe now that each $R\in HD_{N_0-2}$ (in particular each $R\in\GG$) satisfies
$$\sum_{\substack{P\in HD_{N_0-1}:\\P\subset R}} \sigma(P)\leq c\,B^2\,\sigma(R).$$
So we deduce
$$\sigma(\GG) =\sum_{R\in\GG} \sigma(R) \geq \frac1{B^2}\sum_{R\in\GG} \,\sum_{\substack{P\in HD_{N_0-1}:\\P\subset R}}\! \sigma(P) \geq \frac{7c^{-1}}{10B^2}\,\sigma(HD_{N_0-1}).$$
\end{proof}

\vv
For the proof of Lemma \ref{lemtame}, notice that by Lemmas \ref{l.sigmatinc} and \ref{l.redtolast} we have
\begin{align*}
\sigma(\TT) &\leq C(A,B)\,\sigma(HD_{N_0-1})\leq \wt C (A,B)\,\sigma(\GG).
\end{align*}
We will show below that
\begin{equation} \label{e.redtolast}
\sigma(\GG)\leq  C(A,B, M, \delta_{0}, \delta_{W}) \sum_{R\in \GG} \sum_{P\in\TT(R)} \|D_P(\RR \mu)\|_{L^2(\mu)}^2, 
\end{equation}
where $\TT(R)$ is the tree formed by the cubes from $\TT$ which are contained in $R$.

Observe that the proof of Lemma \ref{lemtame} is complete if we show \rf{e.redtolast}.  However the arguments involved are very delicate and so it will be
addressed in Section 7. We introduce some necessary notation next.

We are going to define a tractable tree. The reader may think of it as a $TInc$ tree of order $2$ that involves different constants in the stopping conditions.

\begin{definition}
\label{d.tract}
Let $R\in\ttt$ be a cube. We say that $\TT$ is a tractable tree with root $R$ if it is a collection 
of cubes of the form
\begin{equation*}
\TT:= \tree(R)\cup \bigcup_{P\in HD(R)} \tree(P)
\end{equation*}
satisfying the following conditions:
\begin{align} \label{e.1stop}
\frac{1}{20A}\sigma(R)\leq & \sigma(HD_{1}),\\ \label{e.2stop}
\sigma(HD_{2}\cup BR_{2} \cup LD_{2})\leq &10A\sigma(HD_{1}),\\ \label{e.sbr}
\sigma(BR_{1})\leq  \delta_{W} \sigma(R)\; \text{ and } &\;\sigma(BR_{2})\leq  \delta_{W} \sigma(HD_{1}),
\end{align}
where the notation above is analogous to the one for $TInc$ trees, that is, $HD_{1}=HD(R)$, $LD_{1}=LD(R)$ and $BR_1=BR(R)$, $HD_{2}= \bigcup_{P\in HD(R)} HD(P)$, $LD_{2}=\bigcup_{P\in HD(R)} LD(P)$ and $BR_{2}=\bigcup_{P\in HD(R)} BR(P)$, $\sss_{1}(\TT)=HD_{1}\cup LD_{1}\cup BR_1$,  and $\sss_{2}(\TT)=LD_{1}\cup BR_1\cup HD_{2}\cup LD_{2}\cup BR_2$.
\end{definition}

\begin{rem}
Tractable trees are essentially $TInc$ trees of order 2, in particular they satisfy
\begin{align} \label{e.is}
\sigma(\sss_{1}(\TT)) & \gtrsim A^{-1}\sigma(R)\\ \label{e.ss}
\sigma(\sss_{2}(\TT)) & \lesssim A\sigma(\sss_{1}(\TT)).
\end{align} 

Notice that given a $TInc$ tree $\wt \TT$ and  $Q\in \GG$, then $\TT(Q)$ is a tractable tree. We will study tractable trees in Section 7.  

\end{rem}


\vv
\section{Some Fourier calculus and a maximum principle}

In this section we address some auxiliary questions which will be needed for the study of the tractable trees in the next section.

The Fourier transform of the kernel $\dfrac{x}{|x|^{s+1}}$ is
$c\,\dfrac\xi{|\xi|^{d-s+1}}$. Thus, if $\vphi$ is a $\CC^\infty$ function which is compactly supported in $\R^d$, the function $\psi:\R^d\to\R^d$ whose Fourier transform is
$$\wh \psi(\xi) = c\,|\xi|^{d-s-1}\,\xi\,\wh\vphi(\xi)$$
satisfies
$\RR(\psi\,d\LL^d)=\vphi$, where $c$ is some appropriate  
constant. Notice that
$$\RR(\psi\,\LL^d)(x)=\int \frac{x-y}{|x-y|^{s+1}}\cdot \psi(y)\,d\LL^d(y),$$
where the dot stands for the scalar product in $\R^d$. 

One can easily check that, although $\psi$ is not compactly supported, it is quite well localized, in the sense
that it satisfies
\begin{equation}\label{equas1}
|\psi(x)|\leq \frac{C_{\vphi,\alpha}}{1+|x|^{d+\alpha}},
\end{equation}
for $0\leq\alpha<d-s$.

In the next section we will need to consider a $C^\infty$ function $\vphi_0$ 
which equals $1$ on the ball $B(0,1)$ and vanishes out of $B(0,1.1)$, say.
Given a cube $Q\in\DD$ and some point $z_Q\in Q$ that will be fixed below
we denote $\vphi_Q = \vphi_0\circ T_Q$, where $T_Q$ is the affine map that
maps $B(0,1)$ to $B(x_Q,\frac1{10}c_{sep}\ell(Q))$. We also denote by $\psi_Q$ a vectorial function such that 
$\RR(\psi_Q\,\LL^d)=\vphi_Q$. That is,
$$\wh \psi_Q(\xi) = c\,|\xi|^{d-s-1}\,\xi\,\wh\vphi_Q(\xi).$$
It is straightforward to check that $\psi_Q = \ell(Q)^{s-d}\,\psi_0\circ T_Q$,
where $\psi_0$ is given by $\RR(\psi_0\,\LL^d)=\vphi_0$. 
Then it follows that
\begin{equation}\label{equas2}
\|\psi_Q\|_1= \ell(Q)^{s-d}\,\|\psi_0\circ T_Q\|_1 = C_{1}\,\ell(Q)^s,
\end{equation}
where $C_{1}=\|\psi_0\|_1$.  Also from \rf{equas1},
\begin{equation}\label{equas10}
|\psi_Q(x)|\leq \frac{C_{\vphi,\alpha}\,\ell(Q)^{s-d}}{1+\ell(Q)^{-d-\alpha}|x-z_Q|^{d+\alpha}} = 
\frac{C_{\vphi,\alpha}\,\ell(Q)^{s+\alpha}}{\ell(Q)^{d+\alpha}+|x-z_Q|^{d+\alpha}}
\end{equation}
for $0\leq\alpha<d-s$.

\begin{lemma}\label{lemmaxprin}
Let $\nu$ be a signed compactly supported Borel measure which is absolute continuous with respect to
Lebesgue measure, and suppose that its density is an $L^\infty$ function. Let $h:\R^d\to\R^d$ be an $L^\infty$
vector field. Let $1<r<\infty$ and $a>0$. If
$$|\RR\nu(x)|^r  + \RR^*(h\,\nu)(x) \leq a\qquad\mbox{for all $x\in\supp\nu$},$$
then the same inequality hods in all $\R^d$. That is,
\begin{equation}\label{eqaaa16}
|\RR\nu(x)|^r  + \RR^*(h\,\nu)(x) \leq a\qquad\mbox{for all $x\in\R^d$},
\end{equation}
where $\RR^*$ is the operator dual to $\RR$.
\end{lemma}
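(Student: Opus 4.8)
The plan is to run a maximum principle argument in the spirit of Nazarov--Treil--Volberg, combining the sub/superharmonicity properties of the quantities involved with the fact that $\RR\nu$ and $\RR^*(h\nu)$ are continuous off $\supp\nu$ (indeed, real-analytic, being convolutions of a smooth-off-the-origin kernel with an $L^\infty$ compactly supported density). First I would record the regularity: since $\nu$ has bounded density with compact support, $u := \RR\nu$ is a $C^\infty$ vector field on $\R^d\setminus\supp\nu$ whose components decay like $|x|^{-s}$ at infinity, and likewise $v := \RR^*(h\nu)$ is a $C^\infty$ scalar function there, decaying at infinity (here one uses $|h|\le \|h\|_\infty$ and compact support of $\nu$). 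Hence the function $\Psi(x) := |u(x)|^r + v(x) - a$ is continuous on all of $\R^d$, smooth on the open set $\Omega := \R^d\setminus\supp\nu$, and $\Psi\to -a<0$ at infinity; by hypothesis $\Psi\le 0$ on $\supp\nu$. The goal \rf{eqaaa16} is exactly $\Psi\le 0$ on $\Omega$.

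Next I would argue by contradiction: suppose $\sup_\Omega \Psi = \Psi(x_0) =: m > 0$ for some $x_0\in\Omega$ (the sup is attained because $\Psi<0$ near $\supp\nu$ and at infinity, so the superlevel set $\{\Psi\ge m/2\}$ is a compact subset of $\Omega$). At the interior maximum $x_0$ one has $\nabla\Psi(x_0)=0$ and $\Delta\Psi(x_0)\le 0$. The point of the computation is to show instead that $\Delta\Psi(x_0) > 0$, giving a contradiction. Away from $\supp\nu$, each component of $u$ is a convolution of $\nu$ (a finite measure with bounded density) against a kernel whose components are, up to constants, second derivatives of the fundamental-type kernel $|x|^{-(s-1)}$ (recall $\RR^s$ has kernel $x/|x|^{s+1} = c\,\nabla(|x|^{-(s-1)})$ for $s\ne 1$, with the analogous logarithmic statement absorbed into constants); similarly $v$ is built from $\nabla$ of the same kernel tested against $h\nu$. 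The relevant fact — this is the maximum principle that the paper flags as requiring $d-1<s<d$ — is that for $d-1<s<d$ the function $|x|^{-(s-1)}$ has exponent $s-1\in(d-2,d-1)$, so that $-\Delta(|x|^{-(s-1)}) = c_{d,s}\,|x|^{-(s+1)}$ with $c_{d,s}>0$ (the constant is $(s-1)(d-2-(s-1))=(s-1)(d-s-1)$, which is positive precisely in this range; for $s$ just above $d-1$ one must instead use the renormalized statement, but the sign persists). Thus on $\Omega$ the components of $u$ and the function $v$ satisfy pointwise identities expressing $\Delta u$ and $\Delta v$ as convolutions of $\nu$ against a \emph{positive} kernel, with the correct signs to make $\Delta(|u|^r)$ and $\Delta v$ controllable.

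Then I would assemble the contradiction. Write $\Delta(|u|^r) = r|u|^{r-2}\big(u\cdot \Delta u + (r-1)\,|\nabla|u||^2\big)$-type expansion (more precisely, $\Delta(|u|^r) = \sum_k \Delta(u_k^2)^{r/2}$ handled componentwise via the chain rule, valid where $u\ne 0$; the case $u(x_0)=0$ is easier since then $|u|^r$ has a minimum-type behavior and $v$ alone must carry the contradiction). Using that at $x_0$ we have $\nabla\Psi(x_0)=0$, i.e. $\nabla(|u|^r)(x_0) = -\nabla v(x_0)$, one converts the gradient terms into quantities controlled by the same convolutions. The key algebraic input, borrowed from \cite{ENV12} and \cite{NToV}, is an inequality of the form: for the suppressed/raw kernels in question, the combination $\Delta(|u|^r) + \Delta v$ evaluated against the positive kernel $|x-y|^{-(s+1)}\,d\nu(y)$ is \emph{strictly positive} at any point where $\Psi>0$, because the hypothesis $|u(x_0)|^r + v(x_0) = m + a > a$ forces, via the pointwise kernel identities, that the integrand $\big(|u(x_0)|^r + (\text{something}) \big)$ minus the "good" part stays positive; in the ENV/NToV scheme this is exactly where the exponent $r$ (here playing the role of their exponent, adapted to our setting) and the positivity $c_{d,s}>0$ conspire. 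So $\Delta\Psi(x_0)>0$, contradicting $\Delta\Psi(x_0)\le 0$. Hence $m\le 0$, which is \rf{eqaaa16}.

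\textbf{The main obstacle} I anticipate is making the convolution identities for $\Delta u$ and $\Delta v$ on $\Omega$ rigorous and tracking the signs: one must justify differentiating under the integral sign (fine, since $\nu$ has bounded compactly supported density and $x_0\notin\supp\nu$, so all kernels are smooth near $x_0$), and then verify that the distributional identity $-\Delta(c\,|x|^{-(s-1)}) = c_{d,s}|x|^{-(s+1)}$ with $c_{d,s}>0$ holds in the regime $d-1<s<d$ — this is the precise place where the restriction on $s$ enters and where, for $s$ near the endpoint $d-1$, one needs the renormalized form of the identity. The second delicate point is the case $u(x_0)=0$, where $|u|^r$ is not smooth; there one should note that $x\mapsto |u(x)|^r$ still has a (degenerate) local behavior forcing $\Delta$ of it to be $\ge 0$ in the appropriate sense at the max, so the strict positivity of $\Delta v(x_0)$ — which follows from $v(x_0) = m+a>0$ together with the sign of the kernel for $v$ — already yields the contradiction. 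Everything else (continuity, decay at infinity, attainment of the sup) is routine.
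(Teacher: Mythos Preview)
Your approach has a genuine gap. First, the sign computation is wrong: for $d-1<s<d$ one has $d-s-1\in(-1,0)$, so $(s-1)(d-s-1)<0$, not $>0$; equivalently $\Delta(|x|^{-(s-1)})=(s-1)(s+1-d)|x|^{-(s+1)}>0$, i.e.\ $|x|^{-(s-1)}$ is \emph{sub}harmonic off the origin in this range. More seriously, even with the sign corrected the ordinary-Laplacian scheme you sketch cannot close. Writing $u=\RR\nu$ and $v=\RR^*(h\nu)$, on $\Omega=\R^d\setminus\supp\nu$ one computes $\Delta u=(s+1)(s+1-d)\,\RR^{s+2}\nu$ and $\Delta v=(s+1)(s+1-d)\,\RR^{*,s+2}(h\nu)$: these are again signed Riesz-type quantities with no pointwise sign (they depend on the signed $\nu$ and on $h$), so $\Delta\Psi(x_0)=\Delta(|u|^r)(x_0)+\Delta v(x_0)$ has no reason to be strictly positive at the maximum. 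The ``key algebraic input from ENV/NToV'' you invoke is never stated, and no inequality of the form you describe is available there; the first-order condition $\nabla\Psi(x_0)=0$ does not couple $\Delta(|u|^r)$ and $\Delta v$ in a way that forces a sign.

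The paper's argument is quite different and sidesteps the nonlinearity entirely by \emph{linearizing}. Using the Legendre-type identity $\tfrac1r|y|^r=\sup_{\lambda\ge0,\,|e|=1}\bigl(\lambda\langle e,y\rangle-\tfrac1{r'}\lambda^{r'}\bigr)$ together with $\langle e,\RR\nu\rangle=-\RR^*(e\nu)$, one rewrites
\[
|\RR\nu(x)|^r+\RR^*(h\nu)(x)=\sup_{\lambda,e}\Bigl(\RR^*\bigl((h-r\lambda e)\nu\bigr)(x)-\tfrac{r}{r'}\lambda^{r'}\Bigr).
\]
For each fixed $(\lambda,e)$ the function on the right is a constant plus $\RR^*(f\nu)$ with $f\in L^\infty$, and the ENV maximum principle asserts that such a function attains its positive maximum on $\supp\nu$; hence so does their supremum. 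This reduces the lemma to the purely linear case $\RR^*(f\nu)$ --- which in \cite{ENV12} is itself \emph{not} proved via the ordinary Laplacian. If you want a direct PDE route, the relevant operator is the nonlocal one annihilating $\RR^*(f\nu)$ off $\supp\nu$, not $\Delta$; but even then the nonlinear term $|u|^r$ would need separate handling, which is precisely what the linearization avoids.
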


Almost the same arguments as the ones in \cite[Section 17]{ENV12} can be applied to prove this lemma. For the sake of completeness we give the detailed proof below.

\begin{proof}
We will use the fact that for any $L^\infty$ vector field $f$, the maximum of $\RR^*(f\,\nu)$ is attained
in $\supp \nu$ if $\sup_{x\in\R^d}\RR^*(f\,\nu)>0$. 
The fact that the maximum exists in this case is due to the fact that $\RR^*(f\,\nu)$ is
continuous in $\R^d$ and vanishes at $\infty$. On the other hand, in \cite{ENV12} it is shown
that the maximum is attained on $\supp\nu$ if $f\,\nu$ has a $\CC^\infty$ density with respect to the Lebesgue measure. However the same holds if the density is just $L^\infty$ and compactly supported. This can be deduced from a limiting argument that we omit.

Let us turn our attention to $|\RR\nu|^r  + \RR^*(h\,\nu)$ now. Again this is a continuous function
vanishing at $\infty$ and thus the maximum is attained if its supremum in $\R^d$ is positive.
 To show that this is attained in $\supp \nu$
we linearize the problem in the following way.
First notice that for any $y\in\R^d$ and $1<r<\infty$,
$$\frac1r \,|y|^r = \sup_{\lambda\geq0,|e|=1} \lambda\,\langle e,y\rangle - \frac1{r'}\,\lambda^{r'},$$
where $r'=r/(r-1)$. This follows either from elementary methods or from the fact that 
$\frac1{r'}\,\lambda^{r'}$ is the Legendre transform of $F(t)=\frac1r \,t^r$. Then we have
\begin{align*}
|\RR\nu(x)|^r  + \RR^*(h\,\nu)(x)& = \sup_{\lambda\geq0,|e|=1} r\,\lambda\,\langle e,\RR\nu(x)\rangle - \frac r{r'}\,\lambda^{r'} +  \RR^*(h\,\nu)(x)\\ & 
= \sup_{\lambda\geq0,|e|=1}  -\RR^*(r\,\lambda\,e\,\nu)(x) - \frac r{r'}\,\lambda^{r'} +  \RR^*(h\,\nu)(x)\\
& = \sup_{\lambda\geq0,|e|=1}  \RR^*(h\,\nu -r\,\lambda\,e\,\nu)(x) - \frac r{r'}\,\lambda^{r'}. 
\end{align*}
As mentioned above, for each $\lambda>0$ and each $e$ with $|e|=1$, the maximum of 
$\RR^*(h\,\nu -r\,\lambda\,e\,\nu)$ is attained in $\supp\nu$ if $\sup_{x\in\R^d}\RR^*(h\,\nu -r\,\lambda\,e\,\nu)(x)>0$. Thus the maximum of $|\RR\nu|^r  + \RR^*(h\,\nu)$ is attained in $\supp\nu$ whenever
\begin{equation}\label{eqaaa17}
\sup_{x\in\R^d}|\RR\nu(x)|^r  + \RR^*(h\,\nu)(x)>0.
\end{equation} 
The lemma is an immediate consequence of this statement. Indeed, notice that when proving \rf{eqaaa16} one can assume that \rf{eqaaa17} holds,  since $a>0$.
\end{proof}


\vv
\section{The tractable trees}

This section is devoted to the proof of the key estimate \eqref{e.redtolast}. In fact, this is a consequence of the following lemma.

\begin{lemma} \label{l.tract}\label{lemtrac}
Let $\TT$ be a tractable tree and $R$ its root. Then
$$\sigma(\{R\})\leq C(A,B,M,\delta)\, \sum_{Q\in\TT}\|D_Q(\RR \mu)\|_{L^2(\mu)}^2.$$
\end{lemma}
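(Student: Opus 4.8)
The plan is to exploit the variational/maximum-principle machinery adapted from \cite{ENV12}, \cite{NToV} to convert the hypothesis that the Riesz transform has small $L^2(\mu)$-oscillation (i.e.\ that $\sum_{Q\in\TT}\|D_Q(\RR\mu)\|_{L^2(\mu)}^2$ is small) into a contradiction with the structure of a tractable tree, in which a nontrivial portion of the mass sits in $HD$ cubes whose density is a factor $B$ larger than $\Theta(R)$. I would argue by contradiction: suppose $\sum_{Q\in\TT}\|D_Q(\RR\mu)\|_{L^2(\mu)}^2 \leq \varepsilon\,\sigma(\{R\})$ for a small $\varepsilon=\varepsilon(A,B,M,\delta)$ to be chosen. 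Using the identity $\|f\|_{L^2(\mu)}^2=\sum_Q\|D_Qf\|_{L^2(\mu)}^2$ together with Lemma~\ref{lemcomp} (in particular \rf{eqfacc12}) to pass between the martingale differences of $\RR\mu$ and the truncated Riesz transforms $\RR(\chi_{R\setminus Q}\mu)$, this smallness forces $\RR\mu$ to be nearly constant, in an $L^2(\mu)$-averaged sense, along chains of cubes from the root down into the tree — and in particular $\RR_\varepsilon\mu(x)$ stays within $O(\sqrt\varepsilon)\,\Theta(R)$-ish of a fixed vector on a large-measure subset.

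The heart of the matter is then a \emph{suppressed kernel} construction. As in the proof of Lemma~\ref{lemrieszmax}, I would introduce the $1$-Lipschitz function $\Phi$ adapted to the stopping cubes $\sss_2(\TT)$ (so that $\Phi(x)\approx\ell(Q)$ for $x\in Q\in\sss_2(\TT)$ and $\Phi=0$ on the part of $E$ interior to the tree), so that $\RR_{\Phi,\sigma}$ with $\sigma=\mu\rest R$ is bounded in $L^2(\sigma)$ by Theorem~\ref{teontv} with norm $\lesssim C(B,M)\,\Theta(R)$, and by \eqref{e.compsup''} it differs from $\RR_\varepsilon\mu$ by $O(B\,\Theta(R))$ on $\supp\mu$. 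Next I would build a test measure: something like $\nu = \sigma - c\sum_{Q\in HD_1}\,\alpha_Q\,\psi_Q\,\LL^d$, or rather a comparison of the potentials of $\mu\rest(\text{root region})$ against those of $\mu\rest(\bigcup HD)$ — the point of the Fourier calculus of Section~6 is that $\psi_Q$ with $\RR(\psi_Q\LL^d)=\varphi_Q$ has $\|\psi_Q\|_1\approx\ell(Q)^s$ and good decay \rf{equas10}, so it is a legitimate "unit-density" competitor supported essentially on $Q$ at the scale $\ell(Q)$. I would then apply the maximum principle Lemma~\ref{lemmaxprin} with $r=3/2$ (matching the exponent $\tfrac23(d-s)$ in the statement of Theorem~\ref{teopri} and the corollary) to the measure $\nu$ built from $\sigma$ and a suitable sign/vector field $h$, concluding that the bound $|\RR\nu(x)|^{3/2}+\RR^*(h\nu)(x)\leq a$ valid on $\supp\nu$ propagates to all of $\R^d$. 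Evaluating this propagated inequality at a cleverly chosen point (a "touching point" on the boundary of a ball $B_{R'}$ as furnished by Lemma~\ref{lembola}, or at the center of an $HD$ cube) and comparing with the explicit large lower bound one gets there — because $HD$ cubes carry density $\approx B\,\Theta(R)$, their Riesz potential near them is genuinely large — yields the required lower bound $\sigma(\{R\})\lesssim C(A,B,M,\delta)\sum_{Q\in\TT}\|D_Q(\RR\mu)\|_{L^2(\mu)}^2$.

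Concretely I would organize the computation in these steps: (1) translate the $D_Q$-energy smallness into pointwise/averaged closeness of $\RR_\varepsilon\mu$ to a constant on a set of $\mu$-measure $\geq(1-\varepsilon')\mu(R)$, via \rf{eqfacc11}, \rf{eqfacc12} and Chebyshev; (2) set up $\Phi$, $\RR_{\Phi,\sigma}$, invoke Theorem~\ref{teontv} and \eqref{e.compsup''}; (3) fix the exponent $r=3/2$ and construct the competitor signed measure $\nu$ and vector field $h$ so that the left side of Lemma~\ref{lemmaxprin} is controlled on $\supp\nu$ by the energy term plus an $O(1)$-times $\Theta(R)^{\text{(power)}}$ error; (4) run the maximum principle; (5) evaluate at a touching point / $HD$-cube center and extract a contradiction unless $\sigma(\{R\})\lesssim \varepsilon^{-1}\cdot(\text{energy})$, i.e.\ the claimed inequality with $C(A,B,M,\delta)$ absorbing all the constants. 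The main obstacle, I expect, is step~(3)–(4): choosing the test measure $\nu$ and the field $h$ so that the maximum principle is actually applicable (one needs $\nu$ absolutely continuous with bounded density and compact support, which is why the smooth $\psi_Q$ from Section~6 are essential) and so that the inequality it propagates is \emph{sharp enough} at the evaluation point to beat the main term $\sigma(\{R\})$ — this is exactly where the non-integrality $d-1<s<d$ enters, since the maximum principle of Lemma~\ref{lemmaxprin} is what fails for $s\leq d-1$, and where the precise bookkeeping of the constants $\delta_0,\delta_W,M,A,B$ (with $\delta_0,\delta_W\ll1\ll M\ll A\ll B$) has to be done carefully so that the error terms are genuinely dominated.
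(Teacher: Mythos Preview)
Your proposal has the right toolbox (the approximating absolutely continuous measure, the $\psi_Q$'s from Section~6, the maximum principle of Lemma~\ref{lemmaxprin}) but assembles it in a way that does not actually close. Two concrete gaps:

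\emph{First, the ``evaluate at a touching point / $HD$-cube center'' step is not how the contradiction is obtained.} The touching-point machinery is used only in Section~4 to bound the sizes of the simple trees; it plays no role here. In the paper's argument one instead \emph{integrates} the inequality coming out of the maximum principle against the nonnegative bump $\wt\psi\,d\LL^d$; the point is the duality identity $\int \RR\nu\cdot\psi\,d\LL^d = -\int \RR^*(\psi\,\LL^d)\,d\nu = \int \vphi\,d\nu$, and since the competitor $\nu$ is forced to have the same mass on $H$ as $\eta$, this last integral is $\approx \Theta(H)\,\eta(H)$ from below. Pointwise evaluation at a single point cannot see this constraint.

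\emph{Second, and more seriously, the variational minimization is missing.} You propose to manufacture a single test measure $\nu$ and a field $h$ and feed them to Lemma~\ref{lemmaxprin}. But the inequality that one needs on $\supp\nu$ --- namely \rf{eqclam43}, of the form $|\RR\nu+f|^r + r\,\RR^*((\RR\nu+f)|\RR\nu+f|^{r-2}\nu)\le C\lambda^{1/r'}\Theta(H)^r$ --- is not something one can write down for an ad hoc $\nu$; it is the \emph{Euler--Lagrange inequality} for the minimizer of the functional $F(\nu)=\int|\RR\nu+f|^r\,d\nu + \lambda\|a\|_\infty\Theta(H)^r\eta(H)$ over $\nu=a\eta$ with $\nu(H)=\eta(H)$. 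The existence of a minimizer, the first-variation computation (perturbing by $-t\chi_B\nu + t\,\tfrac{\nu(B\cap H)}{\nu(H)}\chi_H\nu$), and the auxiliary estimates $\int|\RR(\chi_H\nu)|^r\,d\nu\lesssim\Theta(H)^r\nu(H)$ and $\int|\RR(\wt\psi\,\LL^d)|^r\,d\nu\lesssim\Theta(H)^r\eta(H)$ are the real content; without them you cannot get a bound on $\supp\nu$ that is \emph{small} (of size $\lambda^{1/r'}$) rather than merely $O(1)$, and the maximum principle then gives you nothing useful.

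Also note the logical flow: the paper does \emph{not} start by assuming the $D_Q$-energy is small. It first proves (Lemma~\ref{lemvar1}) that $\|\RR\eta+f\|_{L^r(\eta)}^r\gtrsim\Theta(HD_1)^r\eta(HD_1)$ unconditionally, via the variational/maximum-principle contradiction just described; then (Lemma~\ref{lem22}) extracts a good set $G\subset\bigcup_{P\in\sss_2(\TT)}P$ of measure $\gtrsim_{A,B,M}\mu(R)$ on which $|\RR\eta+f|$ is pointwise large; then transfers from $\eta$ to $\mu$ using the $p(Q,R)$ and $q(Q,\TT)$ coefficients (Lemmas~\ref{lemdifer}, \ref{lemdifer22}); and only at the very end recognizes $m_P(\RR\mu)-m_R(\RR\mu)=\sum_{Q\in\TT}D_Q(\RR\mu)(x)$ on $P$ to pass to the $D_Q$-energy. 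The $\eta\to\mu$ comparison step is nontrivial and is precisely why the $q(Q,\TT)$ coefficients and the $\|q\|_{L^r}\lesssim\|p\|_{L^r}$ lemma are there; your outline skips it entirely. Finally, the choice of exponent is not simply $r=3/2$: the argument runs with $r=2$ when $\sigma(\sss_2)\le A^{-1}\sigma(\sss_1)$ and with $r=3/2$ otherwise, the latter being needed to gain the factor $B^{2-r}$ in \rf{eqali2} that beats $A$.
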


Before turning to the arguments for the proof of the preceding result we show how 
Lemma  \ref{lemtame} follows from this.

\vv
\begin{proof}[\bf Proof of Lemma  \ref{lemtame} using Lemma \ref{l.tract}]
Recall that we have to prove that, if  $\TT$ be a $TInc$  tree, then
$$\sigma(\TT)\leq C(A,B,M,\delta_{0}, \delta_{W}') \sum_{Q\in\TT}\|D_Q(\RR \mu)\|_{L^2(\mu)}^2.$$

By Lemmas \ref{l.sigmatinc} and \ref{l.redtolast} we have
\begin{align*}
\sigma(\TT) &\leq C(A,B)\,\sigma(HD_{N_0-1})\leq C(A,B)\,\sigma(\GG).
\end{align*}
where $\GG$ is the family of cubes from $HD_{N_0-2}$ defined in \rf{eqdefg22}.
For $R\in\GG$, it turns out that $\TT(R)$, the family of cubes from $\TT$ contained in $R$, is
a tractable tree. Thus, by Lemma \ref{l.tract} we have
\begin{align*}
\sigma(\GG)= \sum_{R\in\GG}\sigma(R) 
&\leq C(A,B,M,\delta)\, \sum_{R\in\GG}\sum_{Q\in\TT(R)}\|D_Q(\RR \mu)\|_{L^2(\mu)}^2\\
&\leq C(A,B,M,\delta)\, \sum_{Q\in\TT}\|D_Q(\RR \mu)\|_{L^2(\mu)}^2.
\end{align*}
\end{proof}

\vv

We introduce now some additional notation. As in section 5, we use the notation $\Theta(HD_{k})$ for the dyadic density $\Theta_{d}(Q)$ constant for all cubes $Q\in HD_{k}$, $k\in \mathbb N$. We consider the following approximating measure for $\mu\rest R$:
\begin{equation}
\label{e.approxm}
\eta = \sum_{P\in\sss_2(\TT)} \frac{\mu(P)}{\LL^d(P)}\,
\LL^d\rest P.
\end{equation}
Recall that, by assumption, $\LL^d(P)\approx\ell(P)^d$.
In order to compare Riesz transforms with respect to $\mu$ and $\eta$, it is convenient to introduce some
coefficients $q(\cdot)$: for $Q\in\TT$, we set
$$q(Q,\TT)=\sum_{P\in\sss_{2}(\TT)} \frac{\ell(P)\,\mu(P)}{D(P,Q)^{s+1}},$$
where
$$D(P,Q) = \ell(P)+\dist(P,Q) + \ell(Q).$$
Notice that the coefficients $q(Q,\TT)$ depend on $Q$ and on the cubes from $\sss_{2}(\TT)$.

\begin{lemma}\label{lemdifer}
For every $Q\in\TT$ and $x,x'\in Q$,
$$|\RR (\chi_{R\setminus Q} \,\mu)(x) - \RR (\chi_{R\setminus Q} \,\eta)(x')|\lesssim p(Q,R)+ q(Q,\TT).$$
\end{lemma}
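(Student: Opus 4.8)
The plan is to reduce to the case $x=x'$ and then to exploit that, by construction, $\eta$ carries exactly the same mass as $\mu$ on every cube of $\sss_2(\TT)$. Since $x,x'\in Q\subset R$ and $|x-x'|\le\ell(Q)$, the Lipschitz estimate \rf{eqfacc11} gives $|\RR(\chi_{R\setminus Q}\mu)(x)-\RR(\chi_{R\setminus Q}\mu)(x')|\lesssim p(Q,R)$, so it suffices to bound $|\RR(\chi_{R\setminus Q}\mu)(x)-\RR(\chi_{R\setminus Q}\eta)(x)|$ for a single $x\in Q$. Next I would decompose both measures along the stopping cubes. Any two cubes of $\DD$ are nested or disjoint, and since $Q\in\TT$ no cube of $\sss_2(\TT)$ contains $Q$; hence every $P\in\sss_2(\TT)$ satisfies $P\subset Q$ or $P\cap Q=\varnothing$. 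As $\supp\eta\subset\bigcup_{P\in\sss_2(\TT)}P\subset R$, this gives $\chi_{R\setminus Q}\eta=\sum_{P\in\sss_2(\TT),\,P\cap Q=\varnothing}\eta\rest P$, while $\chi_{R\setminus Q}\mu=\sum_{P\in\sss_2(\TT),\,P\cap Q=\varnothing}\mu\rest P+\mu\rest Z$, with $Z=(R\setminus Q)\setminus\bigcup_{P\in\sss_2(\TT)}P$.

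The heart of the proof is the cancellation in the remaining sum. Fix $P\in\sss_2(\TT)$ with $P\cap Q=\varnothing$ and a point $z_P\in P$. By definition of $\eta$ we have $\eta(P)=\frac{\mu(P)}{\LL^d(P)}\,\LL^d(P)=\mu(P)$, so $\mu\rest P-\eta\rest P$ has zero total mass and
$$\RR(\mu\rest P-\eta\rest P)(x)=\int_P\bigl(K(x-y)-K(x-z_P)\bigr)\,d(\mu-\eta)(y).$$
The separation condition \rf{eqsepara} forces $\dist(P,Q)\gtrsim\ell(P)+\ell(Q)$, hence $|x-y|\approx D(P,Q)$ for $y\in P$ (with a constant depending on $c_{sep}$). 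For the cubes $P$ with $\dist(P,Q)\ge\Lambda\,\ell(P)$, $\Lambda$ a large absolute constant, the segment $[y,z_P]$ stays at distance $\gtrsim D(P,Q)$ from $x$, whence $|K(x-y)-K(x-z_P)|\lesssim|y-z_P|/|x-y|^{s+1}\lesssim\ell(P)/D(P,Q)^{s+1}$ and therefore $|\RR(\mu\rest P-\eta\rest P)(x)|\lesssim\ell(P)\mu(P)/D(P,Q)^{s+1}$. For the boundedly many (per scale) cubes $P$ with $\dist(P,Q)<\Lambda\,\ell(P)$ one has $D(P,Q)\approx\ell(P)$ and the crude bound $|\RR(\mu\rest P)(x)|+|\RR(\eta\rest P)(x)|\lesssim\mu(P)/\dist(x,P)^s\approx\Theta(P)\approx\ell(P)\mu(P)/D(P,Q)^{s+1}$ yields the same conclusion. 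Summing,
$$\sum_{\substack{P\in\sss_2(\TT)\\P\cap Q=\varnothing}}\bigl|\RR(\mu\rest P-\eta\rest P)(x)\bigr|\lesssim\sum_{P\in\sss_2(\TT)}\frac{\ell(P)\,\mu(P)}{D(P,Q)^{s+1}}=q(Q,\TT).$$

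It remains to treat $\RR(\mu\rest Z)(x)$. The set $Z$ consists of the points of $R\setminus Q$ whose branch of ancestral cubes never enters a cube of $\sss_2(\TT)$, and after the standard reduction to Cantor constructions of finite depth, with all constants uniform in the depth, one has $\mu(Z)=0$, so this term contributes nothing. Combining the three steps gives $|\RR(\chi_{R\setminus Q}\mu)(x)-\RR(\chi_{R\setminus Q}\eta)(x)|\lesssim q(Q,\TT)$, and together with the first step the lemma follows. The step I expect to be the most delicate is precisely the control of $\mu\rest Z$, that is, justifying that the part of $\mu$ lying outside every stopping cube is negligible (or else absorbing it into $p(Q,R)$); the cancellation estimate itself is routine once the mass identity $\eta(P)=\mu(P)$ is noticed and the cubes $P$ close to $Q$ are handled separately.
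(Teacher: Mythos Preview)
Your proof is correct and follows essentially the same approach as the paper's: reduce to a single point via \eqref{eqfacc11}, then use the mass identity $\eta(P)=\mu(P)$ on each $P\in\sss_2(\TT)$ to replace $K^s(x-y)$ by $K^s(x-z_P)$ and bound the increment by $\ell(P)/D(P,Q)^{s+1}$. Your near/far dichotomy is a bit more explicit than the paper's one-line kernel estimate (and the ``boundedly many per scale'' remark is unnecessary, since the crude bound already produces the summand of $q(Q,\TT)$), and you flag the residual set $Z$ that the paper passes over silently; neither difference changes the argument.
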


\begin{proof}
For each $P\in\sss_{2}(\TT)$ let $z_P\in P$ be some fixed point. Taking into account that $\mu(P)=\eta(P)$ for
each such cube, for $x\in Q$, we get
\begin{align*}
\RR (\chi_{R\setminus Q} \,\mu)(x) - \RR &(\chi_{R\setminus Q} \,\eta)(x) \\
& =
\sum_{\substack{P\in\sss_{2}(\TT):\,P\not\subset Q}}\left( \int_P K^s(x-y)\,d\mu(y) - 
 \int_P K^s(x-y)\,d\eta(y)\right) \\
&=\sum_{\substack{P\in\sss_{2}(\TT):\, P\not\subset Q}}
\int_P \bigl[K^s(x-y) - K^s(x-z_P)\bigr]\,d\mu(y)\\ 
&\quad  -
\sum_{\substack{P\in\sss_{2}(\TT):\, P\not\subset Q}}\int_P \bigl[K^s(x-y) - K^s(x-z_P)\bigr]\,d\eta(y).
\end{align*}
For $x$ and $y$ as in the preceding integrals we have
$$\bigl|K^s(x-y) - K^s(x-z_P)\bigr|\lesssim \frac{\ell(P)}{D(P,Q)^{s+1}}.$$
Then we deduce that
$$|\RR (\chi_{R\setminus Q} \,\mu)(x) - \RR (\chi_{R\setminus Q} \,\eta)(x)|\lesssim q(Q,\TT).$$
The lemma follows by combining this estimate with the fact that for $x,x'\in P$,
$$|\RR (\chi_{R\setminus Q} \,\mu)(x) - \RR (\chi_{R\setminus Q} \,\mu)(x')|\lesssim p(Q,R).$$
\end{proof}

\vv

\begin{lemma}\label{lemdifer22}
For $1\leq r<\infty$, we have
$$\sum_{P\in\sss_{2}(\TT)} q(P,\TT)^r \mu(P)\lesssim c_{6}\,\sum_{P\in\sss_{2}(\TT)}p(P,R)^r\,\mu(P).$$
\end{lemma}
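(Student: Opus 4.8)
The plan is to expand $q(P,\TT)^r$ using the definition $q(P,\TT)=\sum_{P'\in\sss_2(\TT)}\ell(P')\mu(P')/D(P',P)^{s+1}$ and then sum over $P$. The key point is that the kernel $\ell(P')/D(P',P)^{s+1}$ behaves like an $s$-dimensional Riesz-type kernel adapted to the scales of the cubes in $\sss_2(\TT)$, and the measure $\sum_{P}\mu(P)\,\delta_P$ on $\sss_2(\TT)$ is, in an appropriate sense, $s$-dimensional with controlled density (by Lemma \ref{lemfac1}(b), together with \rf{eqld41} and Lemma \ref{lemt0}, the densities $\Theta(P')$ for $P'\in\sss_2(\TT)$ are all comparable to $\Theta(HD_1)$ or controlled by it, and in any case by $p(P',R)$). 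First I would record the elementary geometric fact that for any fixed $P\in\sss_2(\TT)$ and any scale $t>0$,
$$\sum_{\substack{P'\in\sss_2(\TT)\\ D(P',P)\le t}}\mu(P')\lesssim c_6\,\Theta(HD_1)\,t^s,$$
which follows from the separation condition \rf{eqsepara} and the density bound: the cubes $P'\in\sss_2(\TT)$ with $D(P',P)\le t$ are disjoint, of side length $\lesssim t$, contained in a ball of radius $\approx t$, and carry mass $\lesssim\Theta(HD_1)\ell(P')^s$ each, so a standard Carleson-packing/summation-by-parts argument over dyadic annuli gives the bound.

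Next I would use this to estimate $q(P,\TT)$ by a dyadic-annular decomposition: splitting $\sss_2(\TT)$ according to $D(P',P)\approx 2^k$ and using the above packing bound, one gets $q(P,\TT)\lesssim c_6\,\Theta(HD_1)$ — but this crude bound loses the comparison with $p(P,R)$ that we actually want. So instead I would keep things weighted: for the case $r=1$ the statement is essentially a duality/Fubini computation,
$$\sum_{P\in\sss_2(\TT)}q(P,\TT)\,\mu(P)=\sum_{P,P'\in\sss_2(\TT)}\frac{\ell(P')\mu(P')}{D(P',P)^{s+1}}\,\mu(P),$$
and by symmetry of $D(\cdot,\cdot)$ one rewrites this as $\sum_{P'}\mu(P')\sum_P \ell(P')\mu(P)/D(P',P)^{s+1}$, i.e. the sum $\sum_{P'}q(P',\TT)\mu(P')$ again, so one reduces to bounding each $q(P',\TT)$ by $p(P',R)$ up to the packing constant; this last step uses that $\Theta(HD_1)\lesssim p(P',R)$ for $P'\in\sss_2(\TT)$ (via Lemma \ref{lemfac1}(b) and Lemma \ref{lemt0}, since the $HD$-cubes have density $\approx B\Theta(R)\lesssim\Theta(HD_1)$ and every $P'\in\sss_2(\TT)$ has $p(P',R)\gtrsim$ its own density which is $\gtrsim\Theta(HD_1)$ except in the $LD$ case where the bound must be checked directly). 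For general $1\le r<\infty$ I would apply the same idea after using the dyadic-annuli decomposition together with Hölder's inequality (or Jensen, since $\sum_k 2^{-\varepsilon|k|}\approx 1$ allows one to pull the $r$-th power inside the sum at the cost of a constant): write $q(P,\TT)=\sum_k A_k(P)$ with $A_k(P)\approx 2^{-ks}\sum_{D(P',P)\approx 2^k}\ell(P')\mu(P')2^{-k}\cdot 2^{k(d-?)}$... more cleanly, use $q(P,\TT)^r\le \big(\sum_k w_k\big)^{r-1}\sum_k w_k^{-(r-1)}A_k(P)^r$ with a geometric weight $w_k$, and then sum in $P$ using the packing bound in each annulus.

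The main obstacle I anticipate is making the reduction $q(P',\TT)\lesssim c_6$ (or the weighted version) genuinely yield the factor $p(P',R)^r$ on the right rather than $\Theta(HD_1)^r$: one must be careful that the cubes $P'\in LD_1\cup LD_2$ have small density and hence small $p(P',R)$, yet they also carry little mass, so their contribution to both sides is negligible; handling this case distinction ($HD$, $BR$, $LD$ constituents of $\sss_2(\TT)$ separately) and absorbing the $LD$ and $BR$ pieces using \rf{eqld41} and the smallness of $\mu(BR_i)$ is where the bookkeeping will be delicate. A secondary technical point is controlling the kernel difference when $P$ and $P'$ are at comparable scales but disjoint — here \rf{eqsepara} guarantees $D(P',P)\gtrsim\ell(P)+\ell(P')$, which keeps the kernel from blowing up and legitimizes treating it as an honest $s$-dimensional kernel throughout.
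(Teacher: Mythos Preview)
Your proposal has a genuine gap, and the approach diverges from the paper's in an important way.

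First, a correction to your $r=1$ sketch: the symmetry you invoke is not there. After Fubini you get
\[
\sum_{P'}\mu(P')\,\ell(P')\sum_{P}\frac{\mu(P)}{D(P',P)^{s+1}},
\]
which is \emph{not} $\sum_{P'}q(P',\TT)\mu(P')$ because the kernel carries $\ell(P')$, not $\ell(P)$. What is true, and what you should use instead, is that the inner sum $\ell(P')\sum_P \mu(P)/D(P',P)^{s+1}$ is directly bounded by $p(P',R)$: group the cubes $P$ by the smallest ancestor $Q$ of $P'$ containing them, so that $D(P',P)\approx\ell(Q)$ and $\sum_{P\subset Q}\mu(P)\le\mu(Q)$, and then the sum over ancestors telescopes to $\sum_{Q\supset P'}\ell(P')\mu(Q)/\ell(Q)^{s+1}=p(P',R)$. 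This is the correct $r=1$ argument, and it is the core computation in the paper's proof as well.

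Second, your plan for $r>1$ via crude bounds $q(P,\TT)\lesssim\Theta(HD_1)$ followed by case analysis on $HD$/$LD$/$BR$ cubes would force the constant $c_6$ to depend on $B,\delta_0,\delta_W$. The paper explicitly requires (and uses later) that this lemma holds with a constant independent of the tree parameters. Your worry about the $LD$ cubes is therefore a symptom of the wrong route, not a bookkeeping nuisance.

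The missing idea is a good-$\lambda$ inequality. The paper sets $p(x)=p(P_x,R)$, $q(x)=q(P_x,\TT)$, takes $\Omega_\lambda=\{q>\lambda\}$, and Whitney-decomposes it. For $x$ in a Whitney cube $L$ and a nearby point $x'\in\partial\Omega_\lambda$, the \emph{nonlocal} part of $q(x)$ (the sum over $P'\not\subset 3L$) is comparable to the same sum at $x'$, hence $\le C\lambda$; this is because $D(P_x,P')\approx D(P_{x'},P')$ when $P'$ is far from $L$. So $\{q>C\lambda\}$ is captured by the local parts, and Chebyshev plus the $r=1$-type Fubini step above gives $\mu(\{q>C\lambda\})\lesssim\lambda^{-1}\int_{\Omega_\lambda}p\,d\mu$. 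Integrating in $\lambda$ yields $\|q\|_{L^r(\mu)}\lesssim\|p\|_{L^r(\mu)}$ with an absolute constant, and no case analysis on cube types is needed.
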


\begin{proof}
Let us consider the functions $p$ and $q$ defined on $R$ by $p(x):=p(P_{x})$ and $q(x):=q(P_{x}, \TT)$, where $P_{x}$ is the cube in $\sss_{2}(\TT)$ that contains $x$. We want to prove $\|q\|_{L^{r}}\leq c \|p\|_{L^{r}}$. Let us fix $\lambda>0$ and consider the set $\Omega_{\lambda}:=\{x\in \R^d: \, q(x)>\lambda \}$. Take a Whitney decomposition of $\Omega_{\lambda}$ into a family of ``true cubes'' $W(\Omega_{\lambda})$. In particular, we assume that the cubes $L\in W(\Omega_{\lambda})$ satisfy
$$\dist(3L,\R^d\setminus \Omega_\lambda)\approx\ell(L).$$

 We claim that there exists an absolute constant $C>1$ such that
\begin{equation} \label{e.goodl}
\mu(\{x\in \R^d: \,\, q(x)>C\lambda \}) \leq \sum_{L\in W(\Omega_{\lambda})} \mu\Bigl(\Bigl\{x\in L: \,\, \sum_{\substack{Q\in \sss_{2}(\TT)\\  Q\subset 3L}}\frac{\mu(Q)\ell(Q)}{D(P_{x},Q)^{s+1}}>\lambda \Bigl\}\Bigr).
\end{equation}
We first show that the lemma follows from \eqref{e.goodl}. By Chebychev, we deduce
\begin{align*}
\mu(\{x\in \R^d: \,\, q(x)>C\lambda \}) & \leq \sum_{L\in W(\Omega_{\lambda})} \frac{1}{\lambda} \int_{L}  \sum_{\substack{Q\in \sss_{2}(\TT)\\  Q\subset 3L}} \frac{\mu(Q)\ell(Q)}{D(P_{x},Q)^{s+1}} d\mu(x)\\
&= \sum_{L\in W(\Omega_{\lambda})} \frac{1}{\lambda} \int_{L}\int_{3L} \frac{\ell(P_{y})}{D(P_{x},P_{y})^{s+1}}d\mu(y) d\mu(x).
\end{align*}
By Fubini, the last double integral is bounded by
\begin{align*}
\int_{3L} &\ell(P_{y})\int_{L} \frac{d\mu(x)}{\left( \ell(P_{y})+|x-y|\right)^{s+1}}\,d\mu(y)\\
&\lesssim  \int_{3L} \ell(P_{y}) \sum_{\substack{Q\in \mathcal D\\ Q\supseteq P_{y}}}\left(\int_{ Q^1\setminus Q}  \frac{d\mu(x)}{\left(\ell(P_{y})+|x-y| \right)^{s+1}}+ \int_{P_{y}}  \frac{d\mu(x)}{\left(\ell(P_{y})+|x-y| \right)^{s+1}}\right)\,d\mu(y)\\
& \lesssim \int_{3L} \sum_{\substack{Q\in \mathcal D\\ Q\supseteq P_{y}}} \frac{\ell(P_{y})\mu(Q)}{\ell(Q)^{s+1}}\,d\mu(y) \lesssim\int_{3L} p(y)d\mu(y),
\end{align*}
where $Q^1$ stands for the parent cube of $Q$.
Thus we get
$$\mu(\{x\in \R^d: \,\, q(x)>C\lambda \}) 
\lesssim \frac{1}{\lambda} \int_{\Omega_{\lambda}}p(y)d\mu(y).$$
We use this estimate to get the desired result:
\begin{align*}
\|q\|_{L_{r}}^{r} & \lesssim \int_{0}^{\infty} \lambda^{r-1} \mu(\{x\in \R^d: \,\, q(x)>C\lambda \}) d\lambda\\
 & \lesssim \int_{0}^{\infty} \lambda^{r-2} \int_{\Omega_{\lambda}}p(x) d\mu(x) d\lambda\\
 & = \int p(x) \int_{0}^{q(x)} \lambda^{r-2}d\lambda d \mu(x)\\
  & \approx \int p(x) q(x)^{r-1}d\mu(x)\\
  &\leq \left( \int p(x)^{r}d\mu(x)\right)^{1/r}\left( \int q(x)^{r'(r-1)} d\mu(x) \right)^{1/r'}= \|p\|_{L_{r}}\|q\|_{L_{r}}^{r/r'},
\end{align*}
where $r'$ is the dual exponent of $r$. So we obtain $\|q\|_{L_{r}}\lesssim \|p\|_{L_{r}}$, as wished. 

We are only left with the proof of \eqref{e.goodl}.  For $x\in \Omega_{\lambda}$, let $L \in W(\Omega_{\lambda})$ be such that $x\in L$. Take $x'\in \partial \Omega_{\lambda}$
such that $|x-x'|\approx \ell(L)$. As $x'\not\in\Omega_\lambda$,
\begin{equation*} 
q(x')=\sum_{Q\in \sss_{2}\TT} \frac{ \mu(Q)\ell(Q)}{D(P_{x'}, Q)^{s+1}}\leq \lambda.
\end{equation*}
We split $q(x)$ in two summands, the local part and the non-local one: 
$$
q(x)= \sum_{\substack{Q\in \sss_{2}\TT\\ Q\subset 3L}} \frac{ \mu(Q)\ell(Q)}{D(P_{x}, Q)^{s+1}} + \sum_{\substack{Q\in \sss_{2}\TT\\ Q\not\subset 3L}} \frac{ \mu(Q)\ell(Q)}{D(P_{x}, Q)^{s+1}}.
$$
Observe that the local part is the one that appears on the right hand side of \rf{e.goodl}.
Thus to prove the claim it is enough to show that 
\begin{equation} \label{e.lclaim}
 \sum_{\substack{Q\in \sss_{2}\TT\\ Q\not\subset 3L}} \frac{ \mu(Q)\ell(Q)}{D(P_{x}, Q)^{s+1}}\leq  C_{nl}\sum_{\substack{Q\in \sss_{2}\TT\\ Q\not\subset 3L}} \frac{ \mu(Q)\ell(Q)}{D(P_{x'}, Q)^{s+1}}, 
\end{equation}
where $C_{nl}$ is some constant depending at most on $d$, $s$, and $c_{sep}$. We will then take $C:= C_{nl} +1$ in \rf{e.goodl}.

To prove \rf{e.lclaim},
observe that the numerators are the same in both sides. So it suffices to show that 
\begin{equation}\label{eqdd392}
D(P_{x'},Q)\leq C\, D(P_{x},Q) \quad \mbox{ when $Q\not\subset 3L$.}
\end{equation}
 First notice that, by the separation condition \eqref{eqsepara}, if $P_{x'}\neq Q$, then 
 $\ell(P_{x'})\leq c\dist(P_{x'},Q)$. If $P_{x'}= Q$, then obviously $\ell(P_{x'})= \ell(Q)$.
 So in any case
 $$\ell(P_{x'})\leq \ell(Q) + c\dist(P_{x'},Q).$$
Then to obtain \rf{eqdd392}, it is enough to prove 
\begin{equation}
\label{e.copyq}
\dist(P_{x'},Q)\leq c(\ell(P_{x})+\ell(Q)+ \dist(P_{x},Q))
\end{equation}
Since
$$\dist(P_{x'},Q)\leq \dist(P_{x'},P_{x}) + \dist(P_{x},Q)+ \ell(P_x),$$
it suffices to see that
$\dist(P_{x'},P_x)\leq c(\ell(P_{x})+\ell(Q)+ \dist(P_{x},Q))$.
This follows from
the Whitney condition and the fact that $Q\not\subset 3L$:
$$
\dist(P_{x'},P_{x})\leq |x-x'|\lesssim \ell(L) \lesssim \dist(P_{x},Q) + \ell(Q) + \ell(P_x).
$$
So holds \eqref{e.copyq} holds. This completes the proof of  \eqref{e.lclaim} and the claim \eqref{e.goodl}.
\end{proof}

Observe that the estimates obtained in Lemmas \ref{lemdifer} and \ref{lemdifer22} do not depend on the constants $A$, $B$, $M$, $\delta_0$ or $\delta_W$. In fact, they do not depend 
on the precise construction of the tree $\TT$.

\vv

\vv

\begin{lemma}\label{lemboundeta}
For every $k\geq0$, let $HD_1^k$ be the collection of cubes $Q\in\TT$ which satisfy
$\Theta(Q)\geq 2^k\Theta(HD_1)$ and have maximal side length. Denote
$$H_k=\bigcup_{Q\in HD_1^{k}} Q \qquad \text{and}\qquad \wt H_k=\bigcup_{Q\in HD_1^{k}} 
(1+\tfrac1{10}c_{sep})\,Q, $$
and let $\Phi(x)=\dist(x, \wt  H_k ^{c})$. Set $\Theta(HD_1^k) = 2^k\Theta(HD_1)$. Then, for $1<r<\infty$,
$\RR_{\Phi,\eta}$ is bounded in $L^r(\eta)$ with norm not exceeding $c(r,M)\,\Theta(HD_1^k)$.
\end{lemma}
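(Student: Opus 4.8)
The plan is to derive the lemma from the $Tb$-type Theorem~\ref{teontv}, applied to the measure $\eta$ and the $1$-Lipschitz function $\Phi$, after factoring out the scalar $\Theta(HD_1^k)$. Write $\eta=\Theta(HD_1^k)\,\eta'$. Since $\RR_{\Phi,\eta}f=\Theta(HD_1^k)\,\RR_{\Phi,\eta'}f$ and $\|g\|_{L^r(\eta)}=\Theta(HD_1^k)^{1/r}\|g\|_{L^r(\eta')}$, a bound $\|\RR_{\Phi,\eta'}\|_{L^r(\eta')\to L^r(\eta')}\le c(r,M)$ is exactly equivalent to the assertion of the lemma. Hence it suffices to verify that $\eta'$ satisfies hypotheses (a) and (b) of Theorem~\ref{teontv} with constants depending only on $M$ (and $d,s,c_{sep}$), i.e.\ that
\[
\eta(B(x,r))\le c_0\,\Theta(HD_1^k)\,r^s\qquad\text{for all }x\in\R^d,\ r\ge\Phi(x),
\]
and
\[
\sup_{\ve>\Phi(x)}\bigl|\RR_\ve\eta(x)\bigr|\le c_1\,\Theta(HD_1^k)\qquad\text{for all }x\in\R^d .
\]

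The geometric fact that makes both estimates work is the following: every cube $P\in\sss_2(\TT)$ with $\Theta(P)>\Theta(HD_1^k)=2^k\Theta(HD_1)$ is contained in some cube $Q\in HD_1^k$. Indeed, $HD_1^k$ consists of the maximal cubes of $\TT$ with density at least $2^k\Theta(HD_1)$, and since the cubes of $\sss_2(\TT)$ do not belong to $\TT$, such a $P$ is a strict descendant of some $Q\in HD_1^k$; thus $P\subset Q\subset\wt H_k$. Moreover, for every $z\in Q$ one has $B\bigl(z,\tfrac1{10}c_{sep}\ell(Q)\bigr)\subset(1+\tfrac1{10}c_{sep})Q\subset\wt H_k$, so $\Phi(z)\ge\tfrac1{10}c_{sep}\ell(Q)$. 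In other words, all the ``high density'' part of $\eta$ is absorbed into the region $\wt H_k$, where $\Phi$ is comparable to the side lengths of the relevant cubes, whereas the remaining cubes $P\in\sss_2(\TT)$ carry density $\le\Theta(HD_1^k)$. Using this together with the separation property \rf{eqsepara} (which forces disjoint cubes of $\DD$ to be separated by a gap comparable to their sizes) and the normalization $\LL^d(P)\approx\ell(P)^d$, a covering argument in the spirit of the proof of Lemma~\ref{lemrieszmax} shows that every ball $B(x,r)$ with $\eta(B(x,r))>C(M)\,\Theta(HD_1^k)\,r^s$ must be contained in $\wt H_k$, which is incompatible with $r\ge\Phi(x)=\dist(x,\wt H_k^{\,c})$; this gives the growth estimate (a).

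For the bound on $\RR_\ve\eta$, fix $x\in\supp\eta$, so $x\in P_x$ for some $P_x\in\sss_2(\TT)$, and $\ve>\Phi(x)$, and split $\RR_\ve\eta(x)=\RR_\ve(\chi_{P_x}\eta)(x)+\RR_\ve(\chi_{R\setminus P_x}\eta)(x)$. The local term is estimated by the density of $\eta$ near $x$: if $\Theta(P_x)\le\Theta(HD_1^k)$ then $|\RR_\ve(\chi_{P_x}\eta)(x)|\lesssim\Theta(P_x)\le\Theta(HD_1^k)$, while if $\Theta(P_x)>\Theta(HD_1^k)$ then $P_x$ is hidden in a cube $Q\in HD_1^k$ and the bound $\ve>\Phi(x)\ge\tfrac1{10}c_{sep}\ell(Q)$, together with the fact that $P_x$ is a proper descendant of $Q$, keeps this contribution under control. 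For the far term one compares with the Riesz transform of $\mu$: by Lemma~\ref{lemdifer}, $\bigl|\RR(\chi_{R\setminus P_x}\eta)(x)-\RR(\chi_{R\setminus P_x}\mu)(x')\bigr|\lesssim p(P_x,R)+q(P_x,\TT)$ for a convenient $x'\in P_x$, and Lemma~\ref{lemfac1}(c), applied in the basic corona tree to which $P_x$ belongs, gives $|\RR(\chi_{R\setminus P_x}\mu)(x')|\lesssim M\bigl(\Theta(R)+p(P_x)\bigr)\lesssim_M\Theta(HD_1)\le\Theta(HD_1^k)$. Here $p(P_x,R)\lesssim\Theta(HD_1)$ by Lemma~\ref{lemfac1}(b) and the geometric decay of the weights $\ell(P_x)/\ell(P)$; the term $q(P_x,\TT)$ is handled using the pairwise disjointness and separation of the cubes of $\sss_2(\TT)$, the normalization $\LL^d(P)\approx\ell(P)^d$, and again the fact that the high-density cubes of $\sss_2(\TT)$ live inside $\wt H_k$, where $\Phi$ is of the order of their side lengths. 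Finally, the discrepancy between $\RR_\ve$ and the untruncated $\RR$ on $R\setminus P_x$ is $\lesssim\sup_{\rho>\Phi(x)}\eta(B(x,\rho))/\rho^s\lesssim_M\Theta(HD_1^k)$ by the growth estimate (a) already established (see \rf{e.compsup''}). Collecting the three contributions yields (b), and then Theorem~\ref{teontv} finishes the proof.

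The hard part is the verification of (b), and within it the uniform control of the local term when $\Theta(P_x)$ is large and of the coefficient $q(P_x,\TT)$: the averaged estimate of Lemma~\ref{lemdifer22} is not enough here, and one must exploit that every cube of $\sss_2(\TT)$ with density exceeding $\Theta(HD_1^k)$ is swallowed by $\wt H_k$, where the suppression function $\Phi$ is comparable to its side length, together with the structural properties of the tractable tree — the $BR$ cubes carrying little mass and Lemma~\ref{lemdob0} forcing geometric decay of densities along chains of non-$p$-doubling cubes. A secondary difficulty is keeping the constant in the growth estimate (a) proportional to $\Theta(HD_1^k)$ rather than to the larger quantity $\sup_{Q\in\TT}\Theta(Q)$, which again relies on the same ``hiding'' of the high-density cubes inside $\wt H_k$.
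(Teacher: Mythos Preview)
Your plan of applying Theorem~\ref{teontv} directly to the measure $\eta$ has a genuine gap: hypothesis (b), the uniform pointwise bound $\sup_{\ve>\Phi(x)}|\RR_\ve\eta(x)|\le c_1\Theta(HD_1^k)$, cannot be verified in the way you outline, and in fact fails in general.

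Your argument for (b) reduces to a pointwise bound on $q(P_x,\TT)$, which you acknowledge is the hard part; but the pointwise bound simply does not hold. To see why, organize the sum defining $q(P_x,\TT)$ according to the ancestors $Q_j$ of $P_x$: for $P\in\sss_2(\TT)$ contained in $Q_j\setminus Q_{j-1}$ one has $D(P,P_x)\approx\ell(Q_j)$, and using only disjointness and $\Theta(P)\lesssim\Theta(HD_1^k)$ one gets
\[
\sum_{P\subset Q_j\setminus Q_{j-1}}\frac{\ell(P)\,\mu(P)}{D(P,P_x)^{s+1}}\lesssim\frac{\Theta(HD_1^k)}{\ell(Q_j)^{s+1}}\sum_P\ell(P)^{s+1}\lesssim\Theta(HD_1^k),
\]
with equality (up to constants) when there is a single stopping cube $P\subset Q_j\setminus Q_{j-1}$ of side $\approx\ell(Q_j)$. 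Nothing in the construction prevents this from happening at every generation between $P_x$ and $R$, and then $q(P_x,\TT)\gtrsim n\,\Theta(HD_1^k)$ with $n$ the number of generations, which is unbounded. Worse, this is not just a crude estimate: the actual difference $\RR(\chi_{R\setminus P_x}\mu)(x)-\RR(\chi_{R\setminus P_x}\eta)(x)$ can be that large, since each such $P$ contributes a term of size $\approx\Theta(P)$ with no mechanism for cancellation. This is precisely why Lemma~\ref{lemdifer22} only proves an $L^r$ inequality $\|q\|_{L^r}\lesssim\|p\|_{L^r}$ via a good-$\lambda$ argument rather than a pointwise one.

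The paper avoids this obstruction by a different maneuver. It introduces an auxiliary Lipschitz function $\Psi\ge\Phi$ that suppresses not only at the scale of the $HD_1^k$ cubes but also at the scale of \emph{every} cube in $\sss_2(\TT)$. Theorem~\ref{teontv} is then applied to $\mu$ (not $\eta$) with this $\Psi$: hypothesis (b) for $\mu$ follows directly from the corona stopping conditions, exactly as in Lemma~\ref{lemrieszmax}. The passage from $\RR_{\Psi,\mu}$ to $\RR_{\Psi,\eta}$ is carried out at the $L^2$ level by an approximation argument (the estimate \rf{eqdah3} and its dual), comparing $\RR_\Psi(f\eta)$ with $\RR_\Psi(\tilde f\mu)$; crucially, the extra suppression $\Psi(y)\gtrsim\ell(P)$ on each $P\in\sss_2(\TT)$ is what makes the kernel difference summable, and the resulting bound is an $L^2$ bound, never a pointwise one. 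Only at the end does one pass from $\RR_\Psi$ back to $\RR_\Phi$, which is easy since $\eta$ is uniformly distributed on each $P$. Your growth argument for (a) is essentially fine, but the strategy for (b) needs to be replaced by this $L^2$-level transfer through $\mu$.
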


\begin{proof}
For $1<r<\infty$, the boundedness of $\RR_{\Phi,\eta}$ in $L^r(\eta)$ with norm $\lesssim \Theta(HD_1^k)$
follows from the boundedness in $L^2(\eta)$. This is shown in \cite{Vo}
(see also Lemma 5.27 of \cite{To}). So we only have to deal with the case $r=2$.

Denote
$$F = \wt H_k\cup \bigcup_{P\in\sss_2(\TT)}(1+\tfrac1{10}c_{sep})\,P,$$
and consider the auxiliary function 
$$\Psi(x)= \dist(x,F^c).$$
Notice that all the cubes $Q\in\DD$ such that $\Theta(Q)\geq 2^k\Theta(HD_1)$ are contained in $F$.
Then,
arguing as in Lemma \ref{lemrieszmax}, 
it follows that $\RR_{\Psi,\mu}$ is bounded in $L^2(\mu\rest R)$
with norm bounded by $C\Theta(HD_1^k)$, by an application of Theorem \ref{teontv}.

By approximation, we are going to show that $\RR_{\Psi,\eta}$ is bounded in $L^2(\eta)$, with its norm
not exceeding $C\Theta(HD_1^k)$. The first step consists in showing that 
$\RR_{\Psi,\eta}$ is bounded from $L^2(\eta)$ to $L^2(\mu\rest R)$ with norm not exceeding $c\,\Theta(HD_1^k)$.	 
To this end, we consider the auxiliary tree $\TT''\subset\TT$ whose stopping cubes are given by the family of maximal
cubes from $HD_1^{k}\cup\sss_{2}(\TT)$, which we denote by $\sss(\TT'')$. That is, $\TT''$ is obtained from $\TT$ 
after removing all the cubes from $\DD$ contained in $HD_1^k$. 
Given $f\in L^2(\eta)$, we consider the function
$\wt f\in L^2(\mu\rest R)$ which vanishes out of $R$ and is constantly equal to $\int_{P}f d\eta/\mu(P)$ on each $P\in
\sss(\TT'')$. So notice that 
$$\int_P f\,d\eta = \int_P \wt f\,d\mu\qquad\mbox{for all $P\in\sss(\TT'').$}$$
Since $\|\wt f\|_{L^2(\mu)}\leq \|f\|_{L^2(\eta)}$, to prove the boundedness of $\RR_{\Psi,\eta}$ from $L^2(\eta)$ to $L^2(\mu\rest R)$ with norm not exceeding $c\,\Theta(HD_1^k)$, it is enough to show that
\begin{equation}\label{eqdah3}
\int_R |\RR_\Psi(f\,\eta) - \RR_\Psi(\wt f\,\mu)|^2\,d\mu\leq C\,\Theta(HD_1^k)^2\,\|f\|_{L^2(\eta)}^2.
\end{equation}
Now we operate as in Lemma \ref{lemdifer}, and then for each $x\in Q\in\sss(\TT'')$, taking into account that $\int_P f\,d\eta= \int_P f\,d\mu$, we get
\begin{align*}
\RR_\Psi(f\,\eta)(x) - \RR_\Psi(\wt f\,\mu)(x) &
 =
\sum_{P\in\sss(\TT'')}\left( \int_P K_\Psi(x,y)\bigl[f(y)\,d\eta(y) - \wt f(y)\,d\mu(y)\bigr]\right) \\
&=\sum_{P\in\sss_{2}(\TT'')}
\int_P \bigl[K_\Psi(x,y) - K_\Psi(x,z_P)\bigr]f(y)\,d\eta(y)\\ 
&\quad  - 
\sum_{P\in\sss_{2}(\TT'')}
\int_P \bigl[K_\Psi(x,y) - K_\Psi(x,z_P)\bigr]\wt f(y)\,d\mu(y).
\end{align*}
Since $\Psi(y)\gtrsim\ell(P)$ for every $y\in P$, we have
$$\bigl|K_\Psi(x,y) - K_\Psi(x,z_P)\bigr|\lesssim \frac{\ell(P)}{|x-z_P|^{s+1} + \ell(P)^{s+1}}.$$
Then we deduce that
\begin{align*}
|\RR_\Psi(f\,\eta)(x) - & \RR_\Psi(\wt f\,\mu)(x)| \\
& \lesssim \sum_{P\in\sss_{2}(\TT'')}
\int_P \frac{\ell(P)}{|x-z_P|^{s+1} + \ell(P)^{s+1}}\,\bigl[|f(y)|\,d\eta(y) + |\wt f(y)|\,d\mu(y)\bigr]\\
& \lesssim \sum_{P\in\sss_{2}(\TT'')}
\int_P \frac{\ell(P)}{|x-z_P|^{s+1} + \ell(P)^{s+1}}\, |f(y)|\,d\eta(y).
\end{align*}
To estimate the last integral we argue by duality. Given $g\in L^{p'}(\eta)$,
\begin{align}\label{eqdafgq1}
\int & \sum_{P\in\sss_{2}(\TT)}
\int_P \frac{\ell(P)}{|x-z_P|^{s+1} + \ell(P)^{s+1}}\, |f(y)|\,d\eta(y)\,|g(x)|\,d\eta(x) \\
& = 
\sum_{P\in\sss_{2}(\TT)}
\int_P |f(y)|\left(\int \frac{\ell(P)}{|x-z_P|^{s+1} + \ell(P)^{s+1}}\, |g(x)|d\eta(x)\right)d\eta(y).
\nonumber
\end{align}
Using the fact that $\eta(\lambda P)\leq \mu(\lambda P \cap R)\leq C\,\Theta(HD_1^k)\,\ell(\lambda P)^s$ for every $\lambda\geq 1$, by splitting the domain of integration into annuli we infer that
\begin{equation}\label{eqmumu2}
\int \frac{\ell(P)}{|x-z_P|^{s+1} + \ell(P)^{s+1}}\, |g(x)|d\eta(x)\lesssim\Theta(HD_1^k)\,
\inf_{z\in P} M_\eta g(z),
\end{equation}
where $M_\eta$ stands for the centered Hardy-Littlewood operator with respect to $\eta$. So the left
side of \rf{eqdafgq1} is bounded by
$$C\,\Theta(HD_1^k)\sum_{P\in\sss_{2}(\TT)}
\int_P | f(y)|\, M_\eta g(y)\,d\eta(y) \lesssim \Theta(HD_1^k)\, \|f\|_{L^2(\eta)}\,\|g\|_{L^2(\eta)}.$$
Thus
\rf{eqdah3} is proved.

By duality we deduce now that $\RR_{\Psi,\mu}$ is bounded from $L^2(\mu)$ to $L^2(\eta)$, with norm not 
exceeding $C\,\Theta(HD_1^k)$. To prove the boundedness of $\RR_{\Psi,\eta}$ in $L^2(\eta)$, we consider again
$f$ and $\wt f$ as above. 
We claim that
$$
\int_R |\RR_\Psi(f\,\eta) - \RR_\Psi(\wt f\,\mu)|^2\,d\eta\leq C\,\Theta(HD_1^k)^2\,\|f\|_{L^2(\eta)}^2.
$$
This  follows by almost the same arguments as the ones above for the proof of \rf{eqdah3}. The details are left for
the reader.

Now we turn our attention to the operator $\RR_\Phi$. Given $x\in\supp\eta$, notice that $\Phi(x)\neq\Psi(x)$ only for
the points $x\in\sss_2(\TT)\setminus H_k$. Indeed, for such points $\Phi(x)=0$ while
$\Psi(x)=\ell(P_x)$, where $P_x$ is the cube from $\sss_{2}(\TT)$ which contains $x$.
Now we just write$$
|\RR_{\eta}f(x)|\leq |\RR_{c_{sep}\ell(P_x)/10,\eta}f(x)| + \int_{|x-y|\leq c_{sep}\ell(P_{x})/10}\frac{1}{|x-y|^{s}}|f(y)|d\eta(y).
$$
The first term on the right is controlled by $\RR_\Psi f(x)$ and an error in terms of $\Theta(HD_1^k)$, using \eqref{e.compsup}. The last one also can be easily controlled since the separation condition \eqref{eqsepara} gives that all $y$ such that $|x-y|\leq \frac1{10}\,c_{sep}\ell(P_{x})$ satisfies $y\in P_{x}$. On $P_{x}$ the measure $\eta$ is uniformly distributed with respect to Lebesgue and one can use standard analysis to controll it by $M_{\eta}f(x)\Theta(P_{x})\leq B^{2} \Theta(R)\, M_{\eta}f(x)$.
\end{proof}
\vv

\begin{lemma}
\label{l.t1eta}
Let $\TT$ be a tractable tree with roor $R$ and $\eta$ be the measure defined in \eqref{e.approxm}. Then for $1<r<\infty$ $\RR_{\eta}$ is bounded in $L^{r}(\eta)$ with norm not exceeding $C_2(B, M)\Theta(R)$. 
\end{lemma}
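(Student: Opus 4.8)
The plan is to obtain the $L^{r}(\eta)$-boundedness of $\RR_{\eta}$ from the $Tb$-theorem of Nazarov--Treil--Volberg in the form of Theorem \ref{teontv}, applied with the trivial suppression $\Phi\equiv 0$: with that choice the conclusion of the theorem is exactly that $\RR_{\eta}$ is bounded in $L^{r}(\eta)$ for every $1<r<\infty$, with norm controlled by the two structural constants appearing in its hypotheses. So it suffices to check, for the measure $\eta$, the $s$-growth condition and the pointwise bound on the truncations $\RR_{\ve}\eta$ on $\supp\eta$. The first of these, $\eta(B(x,r))\le C(B)\,\Theta(R)\,r^{s}$ for all $x\in\R^{d}$ and $r>0$, is elementary: on each $P\in\sss_2(\TT)$ the density of $\eta$ equals $\mu(P)/\LL^{d}(P)\approx\Theta(P)\,\ell(P)^{s-d}$, and by Lemma \ref{lemfac1}(b) (applied with root $R$, and with each root $P'\in HD(R)$) every cube of $\TT\cup\sss_2(\TT)$ has density $\lesssim B^{2}\Theta(R)$; this settles the scales $r\le\ell(P_{x})$, where $P_{x}$ denotes the cube of $\sss_2(\TT)$ containing $x$, while for larger $r$ one combines the total mass bound $\eta(\R^{d})\le\mu(R)\le\Theta(R)\,\ell(R)^{s}$ with the $s$-growth of $\mu$ at scales $\le\ell(R)$, which again follows from the density bounds of Lemma \ref{lemfac1}(b) together with the totally disconnected structure of $E$.

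\emph{The $T1$ estimate} is the heart of the matter: $\sup_{\ve>0}|\RR_{\ve}\eta(x)|\le C(B,M)\,\Theta(R)$ for $x\in\supp\eta$. Fix $x\in P_{x}\in\sss_2(\TT)$. The contribution of the ball of radius $\min(\ve,\ell(P_{x}))$ to $\RR_{\ve}\eta(x)$ is a truncated Riesz transform of the Lebesgue-proportional measure $\eta\rest P_{x}$, hence bounded by $\tfrac{\mu(P_{x})}{\LL^{d}(P_{x})}\int_{P_{x}}|x-y|^{-s}\,dy\lesssim\Theta(P_{x})\lesssim B^{2}\Theta(R)$, using $s<d$. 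Up to an error $\lesssim B^{2}\Theta(R)$ coming from the $s$-growth of $\eta$ (cf.\ \rf{e.compsup''}), the remaining part equals $\RR(\chi_{R\setminus Q}\,\eta)(x)$, where $Q$ is the ancestor of $P_{x}$ in $\TT$ with $\ell(Q)\approx\max(\ve,\ell(P_{x}))$. Now I would pass to $\mu$: by Lemma \ref{lemdifer}, $|\RR(\chi_{R\setminus Q}\,\eta)(x)-\RR(\chi_{R\setminus Q}\,\mu)(x)|\lesssim p(Q,R)+q(Q,\TT)$, where $p(Q,R)\lesssim B^{2}\Theta(R)$ by Lemma \ref{lemfac1}(b) and the $q$-term is controlled through Lemma \ref{lemdifer22}; finally $|\RR(\chi_{R\setminus Q}\,\mu)(x)|\lesssim C(B,M)\Theta(R)$ by \rf{eqc392} of Lemma \ref{lemfac1}(c) when $Q\in\tree_0(R)$, and in general after peeling off the part of $\mu$ lying between $Q$ and the smallest cube of $\tree_0(R)$ containing it, which is estimated by Lemmas \ref{lemcomp}, \ref{lemdob0} and \ref{lemt0}. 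With this pointwise bound in hand, Theorem \ref{teontv} with $\Phi\equiv 0$ (and $c_{0}\approx B^{2}\Theta(R)$, $c_{1}\approx C(B,M)\Theta(R)$) yields the lemma.

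The step I expect to be the main obstacle is precisely this pointwise control of the oscillation of $\RR_{\eta}$: a crude estimate of $q(Q,\TT)$ via the $s$-growth of $\mu$ only gives a bound that grows logarithmically in $\ell(R)/\ell(Q)$, so one genuinely has to exploit the cancellation inherited from $\mu$ — the fact that $\mu-\eta$ has vanishing $\mu$-mean on every cube of $\sss_2(\TT)$ — rather than estimate $\RR(\chi_{R\setminus Q}\,\eta)$ and $\RR(\chi_{R\setminus Q}\,\mu)$ in isolation. An alternative that sidesteps the pointwise estimate is to split $\supp\eta$ according to the level $k$ of the innermost cube of $HD_{1}^{k}$ containing the relevant stopping cube; on the $k$-th piece one has $\Phi_{k+1}\equiv 0$, so $\RR_{\eta}$ agrees there with the suppressed operator $\RR_{\Phi_{k+1},\eta}$ bounded by Lemma \ref{lemboundeta}, and since the density bounds force only $\lesssim\log B$ nonempty levels, summing the $L^{r}$ estimates over the levels again gives the claimed bound.
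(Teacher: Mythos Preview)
Your main route --- verifying the hypotheses of Theorem \ref{teontv} for $\eta$ with $\Phi\equiv 0$ --- founders on exactly the point you identify. The comparison in Lemma \ref{lemdifer} leaves the error $q(Q,\TT)$, and this quantity is \emph{not} uniformly bounded in $Q$: for each dyadic scale $2^{j}\ell(Q)\lesssim\ell(R)$ the stopping cubes $P$ with $D(P,Q)\approx 2^{j}\ell(Q)$ together contribute $\lesssim B^{2}\Theta(R)$, and summing over the $\approx\log(\ell(R)/\ell(Q))$ scales produces the logarithm you anticipate. Lemma \ref{lemdifer22} only yields $L^{r}$ control of $q$ over the stopping cubes, which cannot be fed into the pointwise hypothesis (b) of Theorem \ref{teontv}. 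So the direct $T1$ argument for $\eta$ does not close as written.

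The paper takes a much shorter path: it simply observes that the lemma is the special case of Lemma \ref{lemboundeta} obtained by choosing a single $k$ large enough (the paper takes $2B<2^{k}\le 4B$) that no cube of $\TT$ has density $\ge 2^{k}\Theta(HD_{1})$. Then $HD_{1}^{k}=\varnothing$, hence $\wt H_{k}=\varnothing$, $\Phi\equiv 0$, and $\RR_{\Phi,\eta}=\RR_{\eta}$; the norm bound $c(r,M)\,\Theta(HD_{1}^{k})\lesssim C(B,M)\,\Theta(R)$ is then immediate. Your alternative is headed in the right direction --- it does invoke Lemma \ref{lemboundeta} --- but the split into $\lesssim\log B$ density levels is unnecessary: one sufficiently large $k$ already makes the suppression vanish everywhere on $\supp\eta$. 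The substantive work (transferring boundedness from $\mu$ to $\eta$) is done once inside the proof of Lemma \ref{lemboundeta}, and crucially it is carried out at the $L^{2}$ level via the operator-difference estimate \rf{eqdah3}, not pointwise; that is precisely what sidesteps the logarithmic loss in $q$.
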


\begin{proof}
This result can be considered just as a particular case of Lemma \ref{lemboundeta}.
Indeed, take $k\in\Z$ such that $2B< 2^k\leq4B$. Then it follows that the set $H_k$ in that lemma is empty, and so $\Phi(x)=0$. Thus $\RR_\eta$ is bounded in $L^r(\eta)$ with norm not exceeding
$c(M,r)\,\Theta(HD_1^k)\lesssim C_2(M,B)\,\Theta(R)$.
\end{proof}

\vv

\begin{lemma}\label{lemvar1}
Let $\TT$ be as above and set 
\begin{equation}\label{eqdeff}
f(x) = \RR(\chi_{R^c}\mu)(x) - m_R(\RR \mu).
\end{equation}
For every $1<r<\infty$ there exists some constant $c_r>0$ such that
$$\|\RR\eta +f\|_{L^r(\eta)}^r \geq c_r\,\Theta(HD_1)^r\,\eta(HD_1).$$
\end{lemma}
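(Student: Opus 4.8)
The plan is to argue by contradiction with a variational argument in the spirit of \cite{ENV12} and \cite{NToV}, whose driving forces are the maximum principle (Lemma~\ref{lemmaxprin}) and a testing identity produced by the Fourier calculus of this section.

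\emph{A testing identity.} For each $P\in\sss_2(\TT)$ I would fix the point $z_P$ entering the definition of $\vphi_P$ and $\psi_P$ so that $\LL^d\bigl(P\cap B(z_P,\tfrac1{10}c_{sep}\ell(P))\bigr)\gtrsim\ell(P)^d$; such a point exists because $\LL^d(P)\approx\ell(P)^d$. Since $\eta\rest P=\tfrac{\mu(P)}{\LL^d(P)}\,\LL^d\rest P$ and, by \eqref{eqsepara}, $\supp\vphi_P$ meets no cube of $\sss_2(\TT)$ other than $P$, this choice forces $\int\vphi_P\,d\eta\gtrsim\mu(P)$. On the other hand, using the antisymmetry of $\RR$, the identity $\RR(\psi_P\,\LL^d)=\vphi_P$ (which is supported in $R$, hence pairs to zero against $\chi_{R^c}\mu$), and the vanishing $\int\psi_P\,d\LL^d=\wh\psi_P(0)=0$ — valid because $|\xi|^{d-s-1}\xi\to0$ as $\xi\to0$ for $s<d$, so that the constant $m_R(\RR\mu)$ in $f$ is killed — one gets
\[
\int\bigl(\RR\eta+f\bigr)\cdot\psi_P\,d\LL^d=-\int\vphi_P\,d\eta .
\]
Summing over those $P\in\sss_2(\TT)$ lying in $\bigcup_{Q\in HD_1}Q$ with suitable signs $\ve_P\in\{\pm1\}$ then yields $\bigl|\int(\RR\eta+f)\cdot\sum_P\ve_P\psi_P\,d\LL^d\bigr|=\sum_P\int\vphi_P\,d\eta\gtrsim\sum_P\mu(P)=\eta(HD_1)$, while $\|\psi_P\|_1\approx\ell(P)^s$ by \eqref{equas2} and the $\psi_P$ are well localized by \eqref{equas10}.

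\emph{Turning this into an $L^r(\eta)$ bound.} Suppose $\|\RR\eta+f\|_{L^r(\eta)}^r\le\ve_0\,\Theta(HD_1)^r\,\eta(HD_1)$ with $\ve_0$ small. By Chebyshev the set $\{|\RR\eta+f|>\lambda\,\Theta(HD_1)\}$ has $\eta$-measure $\le\ve_0\lambda^{-r}\eta(HD_1)$; I would discard from $\eta$ the few cubes of $\sss_2(\TT)$ carrying an appreciable portion of their mass there, producing $\eta'$ (absolutely continuous, bounded density) for which $|\RR\eta'+f|\lesssim\lambda\,\Theta(HD_1)$ on $\supp\eta'$, up to an error small in $L^r(\eta)$ that is controlled via the $L^r$-boundedness of $\RR_\eta$ (Lemma~\ref{l.t1eta}) and Lemma~\ref{lemdifer}. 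After a harmless regularization of $\mu$ (so that $f$ too is the Riesz transform of an absolutely continuous measure), Lemma~\ref{lemmaxprin}, applied with the exponent $r$, would propagate the bound $|\RR\eta'+f|\lesssim\lambda\,\Theta(HD_1)$ from $\supp\eta'$ to all of $\R^d$. Feeding this into the identity above gives $\eta(HD_1)\lesssim\lambda\,\Theta(HD_1)\sum_P\|\psi_P\|_1\approx\lambda\,\Theta(HD_1)\sum_P\mu(P)/\Theta(P)$; since the cubes $P$ carrying most of $\eta(HD_1)$ have $\Theta(P)\gtrsim\Theta(HD_1)$ — the low-density ones being absorbed, through \eqref{eqld41}, into the external field contributed by the surrounding $HD_1$-cube — the right-hand side is $\lesssim\lambda\,\eta(HD_1)$, contradicting the first step once $\lambda$ is a small absolute constant and $\ve_0$ is chosen accordingly. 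This contradiction produces the constant $c_r$.

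\emph{Main obstacle.} The subtle point is precisely this last passage, from the $L^\infty$-type maximum principle to an $L^r(\eta)$ hypothesis: the argument has to be organized as a good-$\lambda$/stopping-time splitting — equivalently, via minimization of a functional of the type used in \cite{ENV12} — with careful bookkeeping of (i) the mass removed to form $\eta'$, where Lemma~\ref{l.t1eta} is essential; (ii) the errors $p(Q,R)+q(Q,\TT)$ incurred when passing from $\mu$ to $\eta$, controlled on average by Lemmas~\ref{lemdifer} and~\ref{lemdifer22}; and (iii) the low-density cubes of $\sss_2(\TT)$, whose share of $\|\RR\eta+f\|_{L^r(\eta)}$ must be retrieved from the oscillation of the Riesz transform of the high-density part sitting in the same $HD_1$-cube rather than from their own small density. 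All of these are of lower order than the main term $\Theta(HD_1)$: indeed $m_Rf=0$ and, since $f$ has oscillation $\lesssim p(R)$ on $R$ by the estimates in Lemma~\ref{lemcomp} together with the $p$-doubling of $R$, one has $\|f\|_{L^\infty(R)}\lesssim c_{db}\,\Theta(R)\ll\Theta(HD_1)$, so the errors can be absorbed.
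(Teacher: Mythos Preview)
Your overall shape—contradiction, testing via the Fourier identity $\RR(\psi\,\LL^d)=\vphi$, propagation by the maximum principle—is the paper's, but two of your implementation choices create genuine gaps.

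\textbf{Wrong scale for the test functions.} You place the bumps $\psi_P$ at $P\in\sss_2(\TT)$. Then $\|\psi_P\|_1\approx\ell(P)^s=\mu(P)/\Theta(P)$, and for $P\in LD_2$ one has $\Theta(P)\ll\Theta(HD_1)$ (cf.\ Lemma~\ref{lemt0}(c)), so $\sum_P\|\psi_P\|_1$ is \emph{not} comparable to $\eta(HD_1)/\Theta(HD_1)$; your proposed fix (``absorbed into the external field of the surrounding $HD_1$-cube'') is not an argument—those terms enter the upper bound via $\|\psi\|_1$, not via the Riesz transform of any piece of the measure. The paper instead tests at the level of the cubes $Q\in HD_1^{k_0}$ (a density stratum chosen to maximize $\sigma_r$), with $z_Q$ picked so that $\eta\bigl(B(z_Q,\tfrac1{10}c_{sep}\ell(Q))\bigr)\approx\eta(Q)$, and weights by $\Theta(Q)$: with $\psi=\sum_{Q\in HD_1^{k_0}}\Theta(Q)\,\psi_Q$ one gets $\|\psi\|_1\lesssim\sum_Q\Theta(Q)\ell(Q)^s=\eta(H_{k_0})$ while $\int\vphi\,d\nu\approx\Theta(H)\,\eta(H)$ for every admissible $\nu$. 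The density weight and the choice of scale are exactly what makes the two sides match.

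\textbf{Chebyshev plus discarding does not reach the maximum principle.} After you remove bad cubes to form $\eta'$, you know $|\RR\eta+f|\lesssim\lambda\Theta(HD_1)$ on $\supp\eta'$, but not $|\RR\eta'+f|$: the correction $\RR(\eta-\eta')$ is only controlled in $L^r$ via Lemma~\ref{l.t1eta}, and one more Chebyshev pass would force another discard, changing the transform again. The paper avoids this loop by a genuine variational argument: minimize
\[
F(\nu)=\int|\RR\nu+f|^r\,d\nu+\lambda\,\|a\|_{L^\infty(\eta)}\,\Theta(H)^r\eta(H)
\]
over $\nu=a\eta$ with $\nu(H)=\eta(H)$. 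The $L^\infty$ penalty pins $\|a\|_\infty\le3$, and the first-variation inequality at the minimizer reads, on $\supp\nu$,
\[
|\RR\nu+f|^r+r\,\RR^*\!\bigl((\RR\nu+f)\,|\RR\nu+f|^{r-2}\,\nu\bigr)\le C\,\lambda^{1/r'}\Theta(H)^r,
\]
which—after absorbing the small $|f|^r\lesssim B^{-r}\Theta(H)^r$—has precisely the form $|\RR\nu|^r+\RR^*(h\nu)\le a$ to which Lemma~\ref{lemmaxprin} applies. Integrating the propagated inequality against $\wt\psi\,d\LL^d$ then requires a further nontrivial bound $\int|\RR(\wt\psi\,\LL^d)|^r\,d\nu\lesssim\Theta(H)^r\eta(H)$ (handled via the suppressed operator of Lemma~\ref{lemboundeta}); neither this step nor the variational derivation appears in your sketch.
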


\begin{proof}
For a collection of cubes $\AZ\subset\DD$ and $1<r<\infty$, we denote
$$\sigma_r(\AZ) = \sum_{P\in\AZ} \Theta(P)^r\,\mu(P),$$
so that $\sigma(\AZ)\equiv \sigma_2(\AZ)$.

For every $k\geq0$, recall $HD_1^k$ is the collection of cubes $Q\in\TT$ which satisfy
$\Theta(Q)\geq 2^k\Theta(HD_1)$ and have maximal side length (this ensures that $\Theta(Q)\approx 2^k\Theta(HD_1)$) and 
$H_k=\bigcup_{Q\in HD_1^{k}} Q.$ Observe that $H_{k+1}\subset H_k$.

Let $k_0\geq1$ be such that $\sigma_r(HD_1^{k_0})$ is maximal. For $l\geq1$, we have
$$2^{(k_0+l)r} \Theta(HD_{1})^r \eta(H_{k_0+l}) \approx 
\sigma_r(HD_1^{k_0+l})\leq \sigma_r(HD_1^{k_0}) \approx 2^{k_0r} \Theta(HD_{1})^r \eta(H_{k_0}).$$
Thus there exists some fixed $l$ (absolute constant) such that
$$\eta(H_{k_0+l})\leq \frac12\,\eta(H_{k_0}).$$
We consider the set
$$H= H_{k_0}\setminus H_{k_0+l},$$
so that $\eta(H)\approx\eta(H_{k_0})$. We also denote $\Theta(H)=2^{k_0}\Theta(HD_1)$.
Notice that
\begin{equation}\label{eqdif422}
\sigma_r(HD_1)\leq \sigma_r(HD_1^{k_0}) \approx \Theta(H)^r\,\eta(H).
\end{equation}

Let $z_Q\in Q$ be such that 
$$\eta\bigl(B(z_Q,\tfrac1{10}c_{sep}\ell(Q))\bigr)\approx \eta(Q),$$
with the comparability constant depending on $c_{sep}$. Consider the functions $\vphi_Q$ and
$\psi_Q$ defined just above \rf{equas1}. Observe that $\supp\vphi_Q\subset
\bar B(z_Q,0.11c_{sep}\ell(Q))$.
In the arguments below we will need to work with the functions
$$\vphi = \sum_{Q\in HD_1^{k_0}}\Theta(Q)\,\vphi_Q$$
and
$$\psi = \sum_{Q\in HD_1^{k_0}}\Theta(Q)\,\psi_Q,\qquad \wt \psi = \sum_{Q\in HD_1^{k_0}}\Theta(Q)\,|\psi_Q|,$$
where $\vphi_Q$ and $\psi_Q$ have been defined in the preceding section.
$\RR(\psi\,d\LL^d)=\vphi$ and, by \rf{equas2}, 
\begin{equation}\label{eqnormpsi}
\|\psi\|_1\leq\|\wt \psi\|_1\leq\!\sum_{Q\in HD_1^{k_0}}\Theta(Q)\,\|\psi_Q\|_1 = C_1\!\sum_{Q\in HD_1^{k_0}}\Theta(Q)\,
\ell(Q)^s = C_1\,\eta(HD_1^{k_0})\approx \eta(H).
\end{equation}
Recalling \rf{eqdif422}, to prove the lemma it suffices to show that
$$\|\RR\eta +f\|_{L^r(\eta)}^r \geq c_r\,\Theta(H)^r\,\eta(H).$$
We will argue
by contradiction, following some ideas inspired by the techniques from \cite{ENV12}
and \cite{NToV}. So suppose that 
\begin{equation}\label{eqas12}
\|\RR\eta +f\|_{L^r(\eta)}^r \leq \lambda\,\Theta(H)^r\,\eta(H),
\end{equation}
where $\lambda>0$ is some small constant to be fixed below. Consider the measures of the form
$\nu=a\,\eta$, with $a\in L^\infty(\eta)$, $a\geq 0$, and let $F$ be the functional
\begin{equation}\label{e.functional}
F(\nu) = \int |\RR\nu + f|^r\,d\nu + \lambda\,\|a\|_{L^\infty(\eta)} \,\Theta(H)^r\,\eta(H).
\end{equation}
Let
$$m= \inf F(\nu),$$ where the infimum is taken over all the measures $\nu=a\,\eta$, with $a\in L^\infty(\eta)$,
such that $\nu(H) = \eta(H)$. We call such measures admissible.
Note that $\eta$ is admissible, and thus
\begin{equation}
\label{e.admis}
m \leq F(\eta) \leq 2\lambda\,\Theta(H)^r\,\eta(H),
\end{equation}
by the assumption \rf{eqas12}. This tells us that the infimum $m$ is attained over the collection of
measures $\nu=a\eta$ with $\|a\|_{L^\infty(\eta)}\leq 3$. In particular, by taking weak $*$ limits in $L^\infty(\eta)$, this guaranties
the existence of a minimizer. 

Let $\nu$ be an admissible measure such that $m=F(\nu)$. We claim that
\begin{equation}\label{eqclam43}
|\RR\nu(x) + f(x)|^r + r \,\RR^*\bigl ((\RR\nu + f) |\RR\nu + f|^{r-2} \nu\bigr)(x) \leq C\,\lambda^{1/r'}\,\Theta(H)^r \quad \text{ on $\supp\nu$.}
\end{equation}
Let us assume this for the moment.
Observe that, by Lemma \ref{lemcomp}, for all $x\in\supp\nu\subset R$,
$$|f(x)| = |\RR(\chi_{R^c}\mu)(x) - m_R(\RR \mu)|\lesssim p(R) \lesssim \Theta(R) = B^{-1}\,\Theta(HD_1)
\leq B^{-1}\,\Theta(H).$$
Then, using the inequality $|\alpha + \beta|^r\leq 2^{r-1}
\bigl (|\alpha|^r + |\beta|^r\bigr)$, we infer that
\begin{align*}
|\RR\nu(x)|^r & + 2^{r-1} r \,\RR^*\bigl ((\RR\nu + f) |\RR\nu + f|^{r-2} \nu\bigr)(x) \\
& \leq 2^{r-1}\bigl (|\RR\nu(x) + f(x)|^r + |f(x)|^r\bigr) + 2^{r-1} r \,\RR^*\bigl ((\RR\nu + f) |\RR\nu + f|^{r-2} \nu\bigr)(x)\\
& \leq 
2^{r-1}|f(x)|^r + C\,\lambda\,\Theta(H)^r\\&\leq C \bigl (B^{-r} + \lambda^{1/r'}\bigr)\,\Theta(H)^r
\end{align*}
for $x\in\supp\nu$.
It is easily checked that the function on the left side of this inequality is continuous. Appealing then to the maximum principle in Lemma \ref{lemmaxprin},
with $h=2^{r-1} r (\RR\nu + f) |\RR\nu + f|^{r-2}$,
we infer the estimate above also holds out of $\supp\nu$. Therefore, integrating against $\wt \psi\,d\LL^d$ (recall that $\wt\psi$ is non-negative), we get
\begin{equation}\label{eqpss3}
\int \!|\RR\nu|^r \wt\psi\,d\LL^d + 2^{r-1} r\! \int \!\RR^*\bigl ((\RR\nu + f) |\RR\nu + f|^{r-2} \nu\bigr)\,
\wt\psi\,d\LL^d\leq C\bigl(B^{-r} + \lambda^{1/r'}\bigr)\,\Theta(H)^r\,\|\wt \psi\|_1.
\end{equation}

Next we estimate the second integral on the left side of the preceding inequality, which we denote by $I$.  For that purpose we will also need the estimate
\begin{equation}\label{eqacpsi}
\int |\RR(\wt \psi\,d\LL^d)|^r\,d\nu\lesssim \Theta(H)^r\,\eta(H),
\end{equation}
that we will prove later. Using \eqref{eqacpsi} we have
\begin{align*}
|I|& = \left|\int (\RR\nu + f) |\RR\nu + f|^{r-2} \,
\RR(\wt\psi\,d\LL^d)\,d\nu\right| \\
&\leq \left(\int |\RR\nu + f|^r \,d\nu\right)^{\frac{r-1}r}
\left(\int |\RR(\wt\psi\,d\LL^d)|^r\,d\nu\right)^{\frac{1}r}\\
& \lesssim \bigl (\lambda\,\Theta(H)^r\,\eta(H)\bigr)^{1/r'} \bigl (\Theta(H)^r\,\eta(H)\bigr)^{1/r} = \lambda^{1/r'}\,\Theta(H)^r\,\eta(H),
\end{align*}
where $r'=r/(r-1)$.
From \rf{eqpss3}, the preceding estimate, and the fact that $\|\wt\psi\|_1\lesssim\eta(H)$, we deduce
that
\begin{equation}\label{eqcont4}
\int |\RR\nu|^r \wt\psi\,d\LL^d \lesssim \bigl (B^{-r} + \lambda^{1/r'}\bigr)\,\Theta(H)^r\,
\eta(H).
\end{equation}

To get a contradiction, note that by the construction of $\psi$ and $\vphi$, we have
$$\left|\int \RR\nu\,\psi\,d\LL^d\right| = \left|\int \RR^*(\psi\,d\LL^d)\,d\nu \right|=  
\int \vphi\,d\nu =
\sum_{Q\in HD_1^{k_0}} \Theta(Q)\,\nu(Q) \approx \Theta(H)\,\eta(H).$$
On other hand, since $|\psi|\leq\wt\psi$, by H\"older's inequality, \rf{eqcont4}, and \rf{eqnormpsi},
$$\left|\int \RR\nu\,\psi\,d\LL^d\right| \leq \left(\int |\RR\nu|^r \wt\psi\,d\LL^d \right)^{1/r}
\left(\int \wt\psi\,d\LL^d\right)^{1/r'} \lesssim \bigl (B^{-r} +  \lambda^{1/r'}\bigr)^{1/r}\,\Theta(H)
\,\eta(H),$$
which contradicts the previous estimate if $B$ is big enough and $\lambda$ small enough. So to finish the proof of the lemma it only remains to prove the estimates \rf{eqacpsi} and \rf{eqclam43}. This task is carried out below.
\end{proof}

\vv
\begin{proof}[\bf Proof of \rf{eqacpsi}]
We have to show that
$$\int |\RR(\wt\psi\,d\LL^d)|^r\,d\nu\lesssim \Theta(H)^r\,\eta(H).
$$
Recall that 
$$\wt\psi = \sum_{Q\in HD_1^{k_0}}\Theta(Q)\,|\psi_Q|.$$
We consider now the function
$$g=\sum_{Q\in HD_1^{k_0}} g_Q,$$
where $g_Q = c_Q\,\chi_Q$, with $c_Q=\Theta(Q)\int|\psi_Q|\,d\LL^d /\eta(Q)$, so that
$$\int g_Q\,d\eta = \Theta(Q)\int |\psi_Q|\,d\LL^d\qquad\mbox{for all $Q\in HD_1^{k_0}.$}$$
Recalling that $\|\Theta(Q)\psi_Q\|_1\lesssim \eta(Q)$, it follows that $|c_Q|\lesssim 1$ and
$\|g_Q\|_{L^1(\eta)}\lesssim \eta(Q)$.

The first step of our arguments consists in comparing $\RR(\wt\psi\,d\LL^d)(x)$ to 
$\RR_{\ve(x)}(g\,\eta)(x)$, with $\ve(x)=\ell(Q)$ if $x\in Q\in HD_1^{k_0}$, and $\ve(x)=0$ otherwise. We will prove that, for each $Q\in HD_1^{k_0}$,
 \begin{equation}\label{eqtech4}
|\RR(\Theta(Q)|\psi_Q|\,d\LL^d)(x) - \RR_{\ve(x)}(g_Q\,\eta)(x)|\lesssim \frac{\Theta(Q)\,\ell(Q)^{s+\beta}}{\dist(x,Q)^{s+\beta}+
\ell(Q)^{s+\beta}},
\end{equation}
 where $\beta$ is some positive absolute constant. The arguments to prove this estimate
 are similar to the ones in Lemma \ref{lemdifer} or \rf{eqdah3}, although the fact that $\psi_Q$ may be
 not compactly supported introduces some additional difficulties. So we will show the details. 
 
 Suppose first that $\dist(x,Q)\leq 2\,\ell(Q)=2\,\diam(Q)$. From \eqref{equas10}, we obtain

\begin{align*}
|\RR(\Theta(Q)|\psi_Q|\,d\LL^d)(x)| &\lesssim \Theta(Q)\int_{\dist(y,Q)|\leq 4\ell(Q)} \frac{\ell(Q)^{s-d}}{|x-y|^s}\,d\LL^d(y)\\
&\quad + \Theta(Q)\int_{\dist(y,Q)|> 4\ell(Q)} \frac{\ell(Q)^{s+\alpha}}{|x-y|^s\,|y-z_Q|^{d+\alpha}}\,d\LL^d(y)\\
& \lesssim \Theta(Q),
\end{align*}
taking into account that $|x-y|\approx |y-z_Q|>4\ell(Q)$ for the estimate of the last integral.
On the other hand, we also have
$$|\RR_{\ell(Q)}(g_Q\,\eta)(x)| \leq \int_{|y-x|>2\ell(Q)} \frac{1}{|x-y|^s}\,|g_Q(y)|\,d\eta(y)
\leq \frac{1}{\ell(Q)^s}\|g_Q\|_{L^1(\eta)}\lesssim \Theta(Q).$$
Thus \rf{eqtech4} holds when  $\dist(x,Q)\leq 2\,\ell(Q)$.
 
Suppose now that $\dist(x,Q)>2\,\ell(Q)$. For such points, $\RR_{\ell(Q)}(g_Q\,\eta)(x) = 
\RR(g_Q\,\eta)(x)$. Thus we can write
\begin{align*}
|\RR(\Theta(Q)|\psi_Q|\,&d\LL^d)(x) - \RR_{\ve(x)}(g_Q\,\eta)(x)|\\ & \leq
\int K^s(x-y) \bigl[\Theta(Q)|\psi_Q(y)|\,d\LL^d(y) - g_Q(y)\,d\eta(y)\bigr] \\
& = \int_{|y-z_Q|\geq 2|x-z_Q|}\ldots + \int_{\frac12|x-z_Q|<|y-z_Q|<2|x-z_Q|}\ldots + \int_{|y-z_Q|\leq \frac12|x-z_Q|}\ldots\\& =:I_1 + I_2 + I_3.
\end{align*}

To estimate $I_1$, notice that $|y-x|\approx|y-z_Q|$ in the domain of integration. 
Also $g_Q(y)=0$ because $|y-z_Q|\geq 2|x-z_Q|> 4\ell(Q)$.
So we have
$$|K^s(x-y)|\leq \frac1{|x-y|^s}\lesssim \frac1{|y-z_Q|^s}$$
and, by the estimate \rf{equas10},
\begin{align*}
|I_1| &\lesssim \Theta(Q)\int_{|y-z_Q|\geq 2|x-z_Q|}\frac1{|y-z_Q|^s}\,|\psi_Q(y)|\,d\LL^d(y)\\
&\lesssim \Theta(Q)\int_{|y-z_Q|\geq 2|x-z_Q|}\frac1{|y-z_Q|^s}\,\frac{\ell(Q)^{s+\alpha}}{|y-z_Q|^{d+\alpha}}\,d\LL^d(y)\lesssim \frac{\Theta(Q)\,\ell(Q)^{s+\alpha}}{|x-z_Q|^{s+\alpha}}.
\end{align*}

Let us turn our attention to $I_2$. Again in the domain of integration we have $g_Q(y)=0$ because
$|y-z_Q|\geq \frac12|x-z_Q|> \ell(Q)$.
By the estimate \rf{equas10} we get
\begin{align*}
|I_2| &\lesssim \Theta(Q)\int_{\frac12|x-z_Q|<|y-z_Q|<2|x-z_Q|}
\frac1{|x-y|^s}\,\frac{\ell(Q)^{s+\alpha}}{|y-z_Q|^{d+\alpha}}\,d\LL^d(y)\\
& \lesssim \frac{\Theta(Q)\,\ell(Q)^{s+\alpha}}{|x-z_Q|^{d+\alpha}}\int_{|y-z_Q|<2|x-z_Q|}
\frac1{|x-y|^s}\,d\LL^d(y)\lesssim \frac{\Theta(Q)\,\ell(Q)^{s+\alpha}}{|x-z_Q|^{s+\alpha}}.
\end{align*}

Finally we deal with $I_3$. We write it as follows:
\begin{align*}
I_3& = \int_{|y-z_Q|\leq \frac12|x-z_Q|} \bigl[K^s(x-y) - K^s(x-z_Q)\bigr]\, \bigl[\Theta(Q)|\psi_Q(y)|\,d\LL^d(y) - g_Q(y)\,d\eta(y)\bigr]\\
&\quad + K^s(x-z_Q)\int_{|y-z_Q|\leq \frac12|x-z_Q|}\bigl[\Theta(Q)\, |\psi_Q(y)|\,d\LL^d(y) - g_Q(y)\,d\eta(y)\bigr] \\
&=: I_3^a + I_3^b.
\end{align*}
To estimate $I_3^a$ we use the smoothness of the kernel $K^s$, and then we get
\begin{align}\label{eqi3a}
I_3^a&\lesssim
\int_{|y-z_Q|\leq \frac12|x-z_Q|} \frac{|y-z_Q|^\beta}{|x-z_Q|^{s+\beta}}\, \bigl[\Theta(Q)|\psi_Q(y)|\,d\LL^d(y)+ g_Q(y)\,d\eta(y)\bigr],
\end{align}
where $0<\beta\leq1$.
Using \eqref{equas10}, we obtain

\begin{align*}
\int_{|y-z_Q|\leq \frac12|x-z_Q|} &\frac{|y-z_Q|^\beta}{|x-z_Q|^{s+\beta}}\,\Theta(Q)|\psi_Q(y)|\,d\LL^d(y)\\
&\lesssim \frac{\Theta(Q)}{|x-z_Q|^{s+\beta}}
\int_{|y-z_Q|\leq \frac12|x-z_Q|}  \frac{\ell(Q)^{s+\alpha}\,|y-z_Q|^{\beta}}{\ell(Q)^{d+\alpha} +|y-z_Q|^{d+\alpha}}\,d\LL^d(y)
\end{align*}
The last integral is bounded by
\begin{align*}
C\int_{|y-z_Q|\leq \ell(Q)}  \frac{\ell(Q)^{s+\alpha+\beta}}{\ell(Q)^{d+\alpha}}\,d\LL^d(y)
+
C\int_{|y-z_Q|> \ell(Q)}  \frac{\ell(Q)^{s+\alpha}}{|y-z_Q|^{d+\alpha-\beta}}\,d\LL^d(y).
\end{align*}
Both integrals are bounded by $C\,\ell(Q)^{s+\beta}$, assuming that $\beta<\alpha$ for the last one
(for instance, we may choose $\beta=\alpha/2$).
Thus,
$$\int_{|y-z_Q|\leq \frac12|x-z_Q|} \frac{|y-z_Q|^\beta}{|x-z_Q|^{s+\beta}}\,\Theta(Q)|\psi_Q(y)|\,d\LL^d(y)
\lesssim \frac{\Theta(Q)\,\ell(Q)^{s+\beta}}{|x-z_Q|^{s+\beta}}.$$
 
It remains now to estimate the integral involving the term $g_Q(y)\,d\eta(y)$ in \rf{eqi3a}. We have
\begin{multline*}
\int_{|y-z_Q|\leq \frac12|x-z_Q|} \frac{|y-z_Q|^\beta}{|x-z_Q|^{s+\beta}}\, g_Q(y)\,d\eta(y)\\
\lesssim
\frac1{|x-z_Q|^{s+\beta}}
\int_Q \ell(Q)^{\beta}\,g_Q(y)\,d\eta(y)\lesssim \frac{\ell(Q)^{\beta}\eta(Q)}{|x-z_Q|^{s+\beta}}
=\frac{\Theta(Q)\,\ell(Q)^{s+\beta}}{|x-z_Q|^{s+\beta}}.
\end{multline*}

Concerning $I_3^b$, since $\int g_Q\,d\eta = \Theta(Q)\int |\psi_Q|\,d\LL^d$, we deduce that
\begin{align*}
I_3^b&= -K^s(x-z_Q)\int_{|y-z_Q|>\frac12|x-z_Q|}\Theta(Q)\, |\psi_Q(y)|\,d\LL^d(y) - g_Q(y)\,d\eta(y)\\
&
= -K^s(x-z_Q)\int_{|y-z_Q|> \frac12|x-z_Q|}\Theta(Q)\, |\psi_Q(y)|\,d\LL^d(y),
\end{align*}
taking into account that $g_Q(y)=0$ for $y$ in the integrals above.
So by \eqref{equas10} we get
\begin{align*}
I_3^b &\lesssim \frac{\Theta(Q)}{|x-z_Q|^s} \int_{|y-z_Q|> \frac12|x-z_Q|}
\frac{\ell(Q)^{s+\alpha}}{|y-z_Q|^{d+\alpha}}
\,d\LL^d(y)\lesssim \frac{\Theta(Q)\,\ell(Q)^{s+\alpha}}{|x-z_Q|^{s+\alpha}}.
\end{align*}

Gathering the estimates obtained for $I_1$, $I_2$ and $I_3$, since $|x-z_Q|\approx \dist(x,Q)+\ell(Q)$
(recall that $\dist(x,Q)>2\,\ell(Q)$), we derive
$$
|\RR(\Theta(Q)|\psi_Q|\,d\LL^d)(x) - \RR_{\ve(x)}(g_Q\,\eta)(x)|\lesssim \frac{\Theta(Q)\,\ell(Q)^{s+\beta}}{\dist(x,Q)^{s+\beta}+
\ell(Q)^{s+\beta}},$$
 as claimed.
So we infer that
\begin{multline*}
\int |\RR(\wt\psi\,d\LL^d)(x) - \RR_{\ve(x)}(g\,\eta)(x)|^r\,d\eta(x)
\lesssim \int  \biggl (\sum_{Q\in HD_1^{k_0}}\frac{\Theta(Q)\,\ell(Q)^{s+\beta}}{\dist(x,Q)^{s+\beta}+
\ell(Q)^{s+\beta}}\biggr)^r\,d\eta(x).
\end{multline*}
We estimate the last integral by duality. Consider a function $h\in L^{r'}(\eta)$. Then, 
\begin{multline}\label{eqmul429}
\int  \sum_{Q\in HD_1^{k_0}}\frac{\eta(Q)\,\ell(Q)^{\beta}}{\dist(x,Q)^{s+\beta}+
\ell(Q)^{s+\beta}}\,h(x)\,d\eta(x)\\= \sum_{Q\in HD_1^{k_0}} \eta(Q)\int  \frac{\ell(Q)^{\beta}}{\dist(x,Q)^{s+\beta}+
\ell(Q)^{s+\beta}}\,h(x)\,d\eta(x)
\end{multline}
For each $Q$,
 using the fact that $\eta(\lambda Q)\leq C\,\Theta(H)\,\ell(\lambda Q)^s$ for every $\lambda\geq 1$,
 as in \rf{eqmumu2} we obtain 
$$\int  \frac{\ell(Q)^{\beta}}{\dist(x,Q)^{s+\beta}+
\ell(Q)^{s+\beta}}\,h(x)\,d\eta(x)\lesssim C\,\Theta(H)\,\inf_{y\in Q} M_\eta h(y).$$
Therefore, the left side of \rf{eqmul429} does not exceed
\begin{align*}
C\Theta(H)\sum_{Q\in HD_1^{k_0}} \eta(Q)\,\inf_{y\in Q} M_\eta h(y) & \lesssim
\Theta(H) \int_{H_{k_0}}M_\eta h(y)\,d\eta(y) \\ 
& \lesssim \Theta(H)\,\eta(H_{k_0})^{1/r}\,\|h\|_{L^{r'}(\eta)}
\lesssim \Theta(H)\,\eta(H)^{1/r}\,\|h\|_{L^{r'}(\eta)}.
\end{align*}
So we deduce that
\begin{equation}\label{eqmca2}
\int |\RR(\wt\psi\,d\LL^d)(x) - \RR_{\ve(x)}(g\,\eta)(x)|^r\,d\eta(x)\\
\lesssim \Theta(H)^r\,\eta(H) = \Theta(H)^r\,\nu(H).
\end{equation}

The estimate \rf{eqacpsi} is a consequence of \rf{eqmca2} and the fact that
$$\int |\RR_{\ve(x)}(g\,\eta)(x)|^r\,d\eta(x)\\
\lesssim \Theta(H)^r\,\eta(H).$$
This inequality is easily derived from the $L^r(\eta)$ boundedness of the operator
$\RR_{\Phi,\eta}$,
where $\Phi$ is the function defined in Lemma \ref{lemboundeta} (with $k=k_0$).
Indeed, notice that $\ve(x)\approx\Phi(x)$ for all $x\in\supp\eta$. Then, from \rf{e.compsup''},
it follows that
\begin{equation}
\label{e.compsup}
|\RR_{\ve(x)}(g\,\eta)(x)|\lesssim  |\RR_{\Phi}(g\,\eta)(x)|+ \sup_{r\geq \Phi(x)}\frac{1}{r^s}\int_{B(x,r)} |g|\,d\eta\lesssim |\RR_{\Phi}(g\,\eta)(x)| + \Theta(H).
\end{equation}
 So we have
\begin{align*}
\int |\RR_{\ve(x)}(g\,\eta)(x)|^r\,d\eta(x)& \lesssim \|\RR_{\Phi}(g\,\eta)\|_{L^r(\eta)}^r + \Theta(H)^r\,\|g\|_{L^r(\eta)}^r\\ & \lesssim
\Theta(H)^r\,\|g\|_{L^r(\eta)}^r\lesssim \Theta(H)^r\,\eta(H_{k_0})\approx \Theta(H)^r\,\nu(H).
\end{align*}
\end{proof}

\vv
\begin{proof}[\bf Proof of \rf{eqclam43}]
Let $\nu=a\,\eta$ be a minimizing measure for $F(\nu)$ as in \eqref{e.functional}, so that, in particular,
$$F(\nu)\leq 2\lambda\,\Theta(H)^r\,\eta(H).$$
We have to show that
$$|\RR\nu(x) + f(x)|^r + r \,\RR^*\bigl ((\RR\nu + f) |\RR\nu + f|^{r-2} \nu\bigr)(x) \leq C\,\lambda^{1/r'}\,\Theta(H)^r \quad \text{ on $\supp\nu$.}$$
Let $x_0\in\supp\nu$ and
consider a ball $B=B(x_0,\rho)$, with $\rho>0$ small. For $0<t<1$ we construct a competing measure $\nu_t =
a_t\,\eta$, where $a_t$ is defined as follows:
$$a_t = (1-t\,\chi_B)a +t\,\frac{\nu(B\cap H)}{\nu(H)}\,\chi_{H}\,a.$$
Notice that, for each $0<t<1$, $a_t$ is a non-negative function such that $\nu_t(H)=\nu(H)$.

Taking into account that $$\|a_t\|_{L^\infty(\eta)}\leq \|a\|_{L^\infty(\eta)} +
t\,\frac{\nu(B\cap H)}{\nu(H)},$$
we deduce that
\begin{align*}
F(\nu_t) & = \int |\RR\nu_t + f|^r\,d\nu_{t} + \lambda\,\|a_t\|_{L^\infty(\eta)} \,\Theta(H)^r\,\eta(H)\\
& \leq \int |\RR\nu_t + f|^r\,d\nu_{t} + \lambda\,\biggl (\|a\|_{L^\infty(\eta)} 
+ t\,\frac{\nu(B\cap H)}{\nu(H)} \biggr) \,\Theta(H)^r\,\eta(H) =:\wt F(\nu_t).
\end{align*}
Since $\wt F(\nu_{0}) = F(\nu) \leq F(\nu_t)\leq \wt F(\nu_t)$ for $t\geq 0$, we infer that 
\begin{equation}\label{eqvar49}
\frac1{\nu(B)}\,\frac{d}{dt}\,\wt F(\nu_t)\biggr|_{t=0} \geq 0,
\end{equation}
with the derivative taken from the right. An easy computation gives
\begin{align*}
\frac{d}{dt}\,\wt F(\nu_t)\biggr|_{t=0} & = -\int_B |\RR\nu + f|^r\,d\nu+\frac{\nu(B\cap H)}{\nu(H)}\int_{H}|\RR\nu + f|^r\,d\nu \\
& \quad +
r \int (\RR\nu + f)\, |\RR\nu + f|^{r-2} \,\RR\biggl (-\chi_B \nu + \frac{\nu(B\cap H)}{\nu(H)}
\,\chi_{H}\nu\biggr)\,
d\nu \\
&\quad +  
\lambda\nu(B\cap H) \,\Theta(H)^r.
\end{align*}
So \rf{eqvar49} is equivalent to
\begin{multline}\label{eqmult82}
 \frac1{\nu(B)} \,
\int_B |\RR\nu + f|^r\,d\nu + \frac r{\nu(B)} \,
 \int (\RR\nu + f) |\RR\nu + f|^{r-2} \,\RR(\chi_B \nu)\,
d\nu \\
\leq \frac{\nu(B\cap H)}{\nu(B)\,\nu(H)} \biggl[\int_{H}|\RR\nu + f|^r\,d\nu+ r\int (\RR\nu + f)\, |\RR\nu + f|^{r-2} \,\RR( 
\chi_{H}\nu)
d\nu +
\lambda \,\Theta(H)^r\,\nu(H)\biggr].
\end{multline}

To estimate the right side of the preceding inequality, first we use \eqref{e.admis} to notice that
\begin{equation*}
\int_{H}|\RR\nu + f|^r\,d\nu\leq F(\nu) \lesssim \lambda \,\Theta(H)^r\,\nu(H).
\end{equation*}
We  set
\begin{equation}\label{eqak4}
\left|\int (\RR\nu + f)\, |\RR\nu + f|^{r-2} \,\RR( 
\chi_{H}\nu)\,d\nu\right|\leq \left( \int |\RR\nu + f|^r \,d\nu\right)^{1/r'}  \!\!
\left(\int |\RR(\chi_{H}\nu)|^r \,d\nu\right)^{1/r}.
\end{equation}
We claim now that
\begin{equation}\label{eqcla491}
\int |\RR(\chi_{H}\nu)|^r \,d\nu \leq C\,\Theta(H)^r\,\nu(H).
\end{equation}
Assuming this to hold, \rf{eqak4} tells us that
\begin{align*}
\left|\int (\RR\nu + f)\, |\RR\nu + f|^{r-2} \,\RR( 
\chi_{H}\nu)\,d\nu\right|& \lesssim 
F(\nu)^{1/r'} \bigl (\Theta(H)^r\,\nu(H)\bigr)^{1/r}\\& \lesssim
\lambda^{1/r'}\,\Theta(H)^r\,\nu(H),
\end{align*}
recalling \eqref{e.admis} for the last inequality.
Thus the right side of \rf{eqmult82} does not exceed
$$C(\lambda^{1/r'}\,\Theta(H)^r +  \lambda\,\Theta(H)^r)\lesssim  \lambda^{1/r'}\,\Theta(H)^r,$$
assuming that $\lambda<1$ for the last estimate. 

Let us turn our attention to the left side of \rf{eqmult82} now. We rewrite it as
$$ \frac1{\nu(B)} \,
\int_B |\RR\nu + f|^r\,d\nu + \frac r{\nu(B)} \,
 \int_B \RR^*\bigl ((\RR\nu + f) |\RR\nu + f|^{r-2} \,\nu\bigr)\,
d\nu.$$
Taking into account that the functions in the integrands are continuous on $\supp(\nu)$, 
letting the radius $\rho$ of $B$ tend to $0$, it turns out that the above expression converges to
$$|\RR\nu(x_0) + f(x_0)|^r +  r\RR^*\bigl ((\RR\nu + f) |\RR\nu + f|^{r-2} \,\nu\bigr)(x_0).$$
As a consequence, we  derive
$$|\RR\nu(x_0) + f(x_0)|^r +  r\RR^*\bigl ((\RR\nu + f) |\RR\nu + f|^{r-2} \,\nu\bigr)(x_0)
\lesssim  \lambda^{1/r'}\,\Theta(H)^r,$$
as wished.

It remains to prove \rf{eqcla491}. To this end, recall that
$H = H_{k_0}\setminus H_{k_0+l}$ 
for some $l\geq1$, and consider the enlarged set
$$\wt H_{k_0+l}=\bigcup_{Q\in HD_1^{k_0+l}} (1+\tfrac1{10}c_{sep})Q,$$
and the associated
$1$-Lipschitz function $\Phi(x) = \dist(x,\R^d\setminus\wt H_{k_0+l})$. By Lemma \ref{lemboundeta}, 
$\RR_{\Phi,\eta}$ is bounded in $L^r(\eta)$, with norm not greater than $C\,\Theta(HD_1^{k_0+l})\approx
\Theta(H)$. 
 Since $\nu=a\,\eta$ with $\|a\|_{L^\infty(\eta)}\leq3$,
we infer that $\RR_{\Phi,\nu}$ is bounded in $L^r(\nu)$, also with norm not greater than $C\,\Theta(H)$.
As $\Phi$ vanishes in $\R^d\setminus\wt H_{k_0+n}$, it follows that $K_\Phi(x,y)=(x-y)/|x-y|^{s+1}$ for $y\in \supp\nu\cap H\subset\R^d\setminus \wt H_{k_0+n}$ and thus
$$\RR(\chi_{H}\nu) = \RR_\Phi(\chi_{H}\nu).$$
Therefore,
$$\|\RR(\chi_{H}\nu)\|_{L^r(\nu)}^r = \|\RR_\Phi(\chi_{H}\nu)\|_{L^r(\nu)}^r\lesssim
\Theta(H)^r \,\nu(H).$$
\end{proof}

\vv

Let us remark that,
from the conditions \eqref{e.ss} and \eqref{e.sbr}, assuming also that the constant $\delta_W$ in \eqref{e.sbr} is small enough and following the proof of \eqref{e.dechd} we deduce that
\begin{equation}\label{equa22}
\sigma(HD_1)\geq \frac12\,\sigma(\sss_{1}(\TT)).
\end{equation}
This estimate will be used below.
\vv

\begin{lemma}\label{lem22}
Let $\TT$ be a tractable tree with root $R$ and $f$ as in \rf{eqdeff}. Then there exists some subset $G\subset \bigcup_{P\in\sss_{2}(\TT)}P$ satisfying
$$\eta(G)\geq C_6(A,B,M)\,\eta(R)$$
such that, for every $x\in G$,
\begin{equation}\label{eqda59}
|\RR\eta(x) + f(x)|\geq C_7 A^{1/r}\,\bigl (p(P_x) + q(P_x,\TT)\bigr) + C_8(A,B)\,\Theta(R),
\end{equation}
where $P_x$ stands for the cube from $\sss_{2}(\TT)$ that contains $x$.
\end{lemma}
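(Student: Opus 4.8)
The plan is to extract $G$ from the $L^r$-lower bound of Lemma \ref{lemvar1} by a distributional (Chebyshev-type) argument, after disposing of the region where the error terms $p(P_x)+q(P_x,\TT)$ are large. Two ingredients are needed. First, a pointwise upper bound on $N:=\bigcup_{P\in\sss_2(\TT)}P$ (on which $\eta$ is supported and $\eta(N)=\eta(R)$): for $x\in N$ let $P_x\in\sss_2(\TT)$ contain $x$ and let $Q$ be the parent of $P_x$, which lies in $\TT$ since the cubes of $\sss_2(\TT)$ are maximal stopping cubes of the two levels of $\TT$. Combining Lemma \ref{lemdifer} (applied with this $Q$), the estimate \rf{eqc392}, and $|f(x)|=|\RR(\chi_{R^c}\mu)(x)-m_R(\RR\mu)|\lesssim p(R)\lesssim\Theta(R)$ from Lemma \ref{lemcomp}, one obtains
$$|\RR\eta(x)+f(x)|\lesssim M\bigl(\Theta(R)+p(P_x)+q(P_x,\TT)\bigr)\qquad(x\in N).$$
Second, an $\ell^r$-control of the errors: by Lemma \ref{lemdifer22}, $\sum_{P\in\sss_2(\TT)}q(P,\TT)^r\mu(P)\lesssim\sum_{P\in\sss_2(\TT)}p(P,R)^r\mu(P)$; writing $p(P)=p(P,R)+\tfrac{\ell(P)}{\ell(R)}\,p(R)$ and using that $R$ is $p$-doubling,
$$S:=\sum_{P\in\sss_2(\TT)}\bigl(p(P)+q(P,\TT)\bigr)^r\mu(P)\lesssim\sum_{P\in\sss_2(\TT)}p(P,R)^r\mu(P)+\Theta(R)^r\mu(R).$$
A Carleson-type argument controls the remaining sum: Jensen's inequality for the geometrically decaying weights $\ell(P)/\ell(L)$ gives $p(P,R)^r\lesssim\sum_{P\subset L\subset R}\tfrac{\ell(P)}{\ell(L)}\Theta(L)^r$, and after interchanging the sums and using $\sum_{P\in\sss_2(\TT):\,P\subset L}\tfrac{\ell(P)}{\ell(L)}\mu(P)\le\mu(L)$ one is left with $\sum_L\Theta(L)^r\mu(L)$ over the cubes $L$ between $R$ and $\sss_2(\TT)$, i.e.\ over $\TT\cup\sss_2(\TT)$; the bound $\sum_{Q\in\tree(R)}\mu(Q)\lesssim c(B)\mu(R)$ of Lemma \ref{lemdif1} (the constant there depends only on $B$, as $M$ and $\delta_0$ do) handles the $\TT$-part with no dependence on $A$, and \rf{e.1stop}--\rf{e.sbr} handle the $\sss_2(\TT)$-part, so that $S\lesssim C(A,B)\,\Theta(R)^r\mu(R)$.

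With these in hand, put
$$G:=\bigl\{x\in N:\ |\RR\eta(x)+f(x)|\geq C_7\,A^{1/r}\bigl(p(P_x)+q(P_x,\TT)\bigr)+C_8(A,B)\,\Theta(R)\bigr\},$$
with $C_7$ absolute and $C_8(A,B)$ small, both to be fixed. On $N\setminus G$ the defining inequality fails, so
$$\int_{N\setminus G}|\RR\eta+f|^r\,d\eta\leq 2^{r-1}\bigl(C_7^r\,A\,S+C_8^r\,\Theta(R)^r\,\eta(R)\bigr),$$
and choosing $C_7$ and $C_8(A,B)$ small enough relative to the constants of Lemma \ref{lemvar1} and of the bound for $S$ — here the hierarchy $1\ll M\ll A\ll B$ is used — the right-hand side is $\leq\tfrac12\|\RR\eta+f\|_{L^r(\eta)}^r$. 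Hence, by Lemma \ref{lemvar1},
$$\int_G|\RR\eta+f|^r\,d\eta\geq\tfrac12\,\|\RR\eta+f\|_{L^r(\eta)}^r\geq\tfrac{c_r}{2}\,\Theta(HD_1)^r\,\eta(HD_1).$$

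To finish, note that on $G$ the defining inequality gives $p(P_x)+q(P_x,\TT)\leq C_7^{-1}A^{-1/r}\,|\RR\eta(x)+f(x)|$; substituting this into the pointwise upper bound and absorbing the $|\RR\eta(x)+f(x)|$ term on the right — possible precisely because $A^{1/r}\gg M$ — yields $|\RR\eta(x)+f(x)|\lesssim M\,\Theta(R)$ on $G$. Therefore $\tfrac{c_r}{2}\,\Theta(HD_1)^r\,\eta(HD_1)\lesssim M^r\,\Theta(R)^r\,\eta(G)$, and since $\Theta(HD_1)\approx B\,\Theta(R)$ and $\eta(HD_1)\gtrsim(AB^2)^{-1}\eta(R)$ by \rf{e.1stop}, this gives $\eta(G)\geq C_6(A,B,M)\,\eta(R)$. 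The step I expect to be the main obstacle is the mutual consistency of the two displayed inequalities: one must verify that for the chosen hierarchy of parameters the ``error mass'' $C_7^r A\,S$ produced by the region where $p(P_x)+q(P_x,\TT)$ is large is genuinely dominated by the $L^r$-lower bound of Lemma \ref{lemvar1}, which is exactly where the specific tractable-tree constraints \rf{e.1stop}--\rf{e.sbr} and the $A$-free part of the Carleson estimate for $\sum p(P,R)^r\mu(P)$ must be exploited carefully. A secondary technical point is establishing the pointwise comparison of $\RR\eta$ with $\RR\mu$ cleanly, handling that the cubes of $\sss_2(\TT)$ are stopping cubes, possibly below an $HD$-level, rather than cubes of $\TT$.
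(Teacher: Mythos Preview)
Your overall plan is right, and your endgame is actually a valid alternative to the paper's: the paper converts the lower bound $\int_G|\RR\eta+f|^r\gtrsim C(A,B)\,\Theta(R)^r\eta(R)$ into $\eta(G)\gtrsim\eta(R)$ via H\"older and the $L^{2r}(\eta)$-boundedness of $\RR_\eta$ (Lemma~\ref{l.t1eta}), whereas your pointwise-upper-bound-plus-absorption route avoids that $T1$-type input. One caveat: the pointwise constant is not $M$ but of order $MB^2$ --- for $P_x\in LD_2$, say, two nested applications of \rf{eqc392} each pick up a factor $B$ through $p(Q)\lesssim B\,\Theta(R')$ --- but since absorption only requires the coefficient in front of $p+q$ to be absolute (it is, from Lemma~\ref{lemdifer}), this merely worsens $C_6$ and does no harm.

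The genuine gap is exactly where you flag it, and your Carleson route cannot close it. The estimate $\sum_P p(P,R)^r\mu(P)\lesssim\sum_{L\in\TT\cup\sss_2}\Theta(L)^r\mu(L)$ is correct but drags in $\sigma_r(\TT)$, which carries the enormous constant $c(B,\delta_0,M)$ of Lemma~\ref{lemdif1}; since $\|\RR\eta+f\|_{L^r}^r$ can be as small as $B^{r-2}A^{-1}\Theta(R)^r\eta(R)$, you would need $C_7^r\,A\cdot c(B,\delta_0,M)\lesssim B^{r-2}A^{-1}$, which forces $C_7$ to depend on $A,B$. The Carleson detour is in fact unnecessary: every $P\in\sss_2(\TT)$ is $p$-doubling, so $p(P)\approx\Theta(P)$ and hence $S\approx\sigma_r(\sss_2(\TT))$ directly. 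What is really required is $A\,\sigma_r(\sss_2(\TT))\lesssim\sigma_r(HD_1)\approx\|\RR\eta+f\|_{L^r}^r$, and this forces one to take $r<2$ and exploit the density jump: from \rf{e.2stop} one has $\sigma_r(HD_2)\lesssim A\,B^{r-2}\,\sigma_r(HD_1)$, so with $r=3/2$ and $A\ll B^{1/4}$ one obtains $A\,\sigma_r(HD_2)\ll\sigma_r(HD_1)$, while the $LD$ and $BR$ contributions to $\sss_2$ are absorbed by choosing $\delta_0,\delta_W$ small enough. The paper organizes this as a case split (taking $r=2$ when $\sigma(\sss_2)\le A^{-1}\sigma(\sss_1)$ and $r=3/2$ otherwise), but the essential missing idea is the same: one must choose $r<2$ and cash in the $B^{2-r}$ gain from the $HD$-jump; no bound of the shape $S\lesssim C(B)\,\Theta(R)^r\mu(R)$ can make $C_7$ absolute.
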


\vv
\begin{proof}
We distinguish two cases. In the first one we assume that
\begin{equation}\label{eqcase1}
\sigma(\sss_{2}(\TT))\leq A^{-1}\,\sigma(\sss_{1}(\TT)).
\end{equation}
Then from Lemma \ref{lemvar1} and \rf{equa22} we infer that
\begin{equation}\label{equari2}
\|\RR\eta + f\|_{L^2(\eta)}^2 \gtrsim\,\Theta(HD_1)^2\,\eta(HD_1)\gtrsim\sigma(HD_1) \geq \frac12\,\sigma(\sss_1(\TT))
\geq \frac A2\,\sigma(\sss_{2}(\TT)).
\end{equation}

In the second case, \rf{eqcase1} does not hold. Then, taking also into account that $\TT$ is a tractable tree, we have
\begin{equation}\label{eqcase2}
A^{-1}\,\sigma(\sss_1(\TT))\leq \sigma(\sss_{2}(\TT))\lesssim A\,\sigma(\sss_1(\TT)).
\end{equation}
For a collection of cubes $\AZ\subset\DD$ and $1<r<\infty$, we denote
$$\sigma_r(\AZ) = \sum_{P\in\AZ} \Theta(P)^r\,\mu(P),$$
so that $\sigma(\AZ)\equiv \sigma_2(\AZ)$.
Our next objective consists in showing that for some $1<r<2$, the following holds:
\begin{equation}\label{equarir}
\|\RR\eta + f\|_{L^r(\eta)}^r \gtrsim\,A\,\sigma_r(\sss_{2}(\TT)).
\end{equation}
Notice that this estimate coincides with \rf{equari2} if we take $r=2$.

To prove \rf{equarir}, notice that, by Lemma \ref{lemvar1}, choosing $1<r<2$ and 
using \eqref{e.2stop}, we get
\begin{align}\label{eqali2}
\|\RR\eta + f\|_{L^{r}(\eta)}^{r} & \gtrsim\Theta(HD_1)^{r}\,\mu(HD_1) \\
& \gtrsim \Theta(HD_1)^{r-2}\,\sigma(HD_1) 
\nonumber\\
& \gtrsim A^{-1}\,\Theta(HD_1)^{r-2}\,\sigma(HD_2)\nonumber\\
& \gtrsim A^{-1}\,\left(\frac{\Theta(HD_2)}{\Theta(HD_1)}\right)^{2-r}\,\Theta(HD_2)^r\,\mu(HD_2)\nonumber\\
& = A^{-1}\,B^{2-r}\,\Theta(HD_2)^{r}\,\mu(HD_2).\nonumber
\end{align}
We claim now that
\begin{equation}\label{eqcla52}
\sigma_r(HD_2)\approx \sigma_r(\sss_{2}(\TT)).
\end{equation}
Observe that \rf{equarir} follows from the two preceding estimates if we assume $B$ big enough so that
$$A^{-1}\,B^{2-r} \geq A.$$
From now on we assume that $r=3/2$ if \rf{eqcase1} does not hold, say, and $r=2$ otherwise.

To prove the claim \rf{eqcla52} (only in the case $r=3/2$) we will show that
$$\sigma_r(LD_1) +  \sigma_r(LD_2) + \sigma_r(BR_1)+ \sigma_r(BR_2)\leq \sigma_r(HD_2).$$
Using  the left inequality in \rf{eqcase2} and \eqref{e.is} we obtain
\begin{equation} \label{e.h2below}
\sigma(HD_2)  \gtrsim \sigma(\sss_{2}(\TT))\geq A^{-1}\sigma(\sss_1(\TT))\gtrsim A^{-2}\,\sigma(\{R\}).
\end{equation}
Therefore, taking into account that $A\leq B$,
\begin{align*}
\sigma_r(HD_2) & =\Theta(HD_2)^{r-2}\,\sigma(HD_2)\gtrsim A^{-2}\,\Theta(HD_2)^{r-2}\,\sigma(\{R\})\\
& \gtrsim A^{-2}\,B^{2(r-2)}\,\Theta(R)^{r-2}\,\sigma(\{R\}) \geq \frac1{B^{6-2r}}\,\sigma_r(\{R\}).
\end{align*}
Also, from \rf{eqld41} and the preceding estimate
\begin{align*}
\sigma_r(LD_1) +  \sigma_r(LD_2) & 
\leq \delta_0^{\frac r{s+2}}\,\Theta(R)^r\,\mu(R) + \delta_0^{\frac r{s+2}}\,B^r\,\Theta(R)^r\,\mu(R) \\
& \lesssim  \delta_0^{\frac r{s+2}}\,B^r\,\sigma_r(\{R\})\\
& \lesssim \delta_0^{\frac r{s+2}}\,B^{r+6-2r}\,\sigma_r(HD_2)
.
\end{align*}
On the other hand,
\begin{align*}
 \sigma_r(BR_1)+ \sigma_r(BR_2) & \leq \Theta(HD_1)^{2-r} \,\sigma(BR_1) + 
\Theta(HD_2)^{2-r} \sigma(BR_2) \\
&\leq \Theta(HD_1)^{2-r}\delta_W B^{2}\sigma(R)+ \Theta(HD_2)^{2-r}\delta_W B^{4}\sigma(R)\\
&\lesssim 2 B^{2(2-r)}\delta_W B^{4}\Theta(R)^{2-r}\sigma(R)\\
&\lesssim \delta_W \,B^{4(2-r)+6}\,\sigma_r(HD_2).
\end{align*}
From the last estimates, assuming that $\delta_{0}$ and $\delta_W$ are small enough, the claim \rf{eqcla52} follows, and thus the proof of \rf{equarir} in the case $r=3/2$ is finished.

Next we will use that either for $r=2$ or $r=3/2$ \rf{equarir} holds.
First, note that from the first inequality in \rf{eqali2} one obtains
\begin{align}\label{eqali3}
\|\RR\eta + f\|_{L^{r}(\eta)}^{r} & \gtrsim \Theta(HD_1)^{r-2}\,\sigma(HD_1) 
\approx \Theta(HD_1)^{r-2}\,\sigma(\sss_1(\TT))\\ &\geq \Theta(HD_1)^{r-2}\,A^{-1} \,\sigma(\{R\})
\approx C(A,B,r)\,\sigma_r(\{R\}).\nonumber
\end{align}

\vv
Denote
$$\Theta(x) = \Theta(P_x),\qquad p(x)= p(P_x),\qquad q(x) = q(P_x,\TT),$$
where $P_x$ stands for the cube from $\sss_{2}(\TT)$ that contains $x$.
From Lemma \ref{lemdifer22} and the fact that $p(x)\approx\Theta(x)$, we get
$$\int q(x)^r\,d\eta(x) \leq c_6 \int p(x)^r\,d\eta(x) \leq c_7 \int \Theta(x)^r\,d\eta(x) = c_7\,\sigma_r(\sss_{2}(\TT)).$$
Then, by \rf{equarir} and \rf{eqali3}, we obtain
\begin{equation}\label{eqsaf3}
\|\RR\eta + f\|_{L^r(\eta)}^r \geq C_3 \,A\int \bigl (p(x) + q(x) + C_4(A,B,r)\,\Theta(R)
\bigr)^r\,d\eta(x),
\end{equation}
where $C_3$ is some absolute constant. We let $G$ be the subset of the points $x\in \bigcup_{P\in\sss_{2}(\TT)}P$ such that
$$|\RR\eta(x) + f(x)|\geq \left(\frac{C_3\,A}2\right)^{1/r}\bigl (p(P_x) + q(P_x,\TT)+ 
C_6(A,B,r) \Theta(R)\bigr).$$
Note that
$$\int_{G^c}|\RR\eta + f|^r\,d\eta \leq \frac{C_5\,A}2\, \int_R \bigl (p(P_x) + q(P_x,\TT)
+ C_4(A,B) \Theta(R)\bigr)^r \,d\eta.$$
Thus, from \rf{eqsaf3} we derive
\begin{align*}
\int_{G}|\RR\eta + f|^r\,d\eta & \geq \frac{C_3\,A}2\int_R \bigl (p(P_x) + q(P_x,\TT)
+ C_4(A,B,r) \Theta(R)\bigr)^r \,d\eta\\
&\geq C_5(A,B,r)\,\Theta(R)^r\,\eta(R).
\end{align*}
On the other hand, by H\"older's inequality,
$$\int_{G}|\RR\eta + f|^r\,d\eta\leq \left(\int|\RR\eta + f|^{2r}\,d\eta\right)^{1/2} \eta(G)^{1/2}.$$
By Lemma \ref{l.t1eta}, $\RR\eta$ is bounded in $L^{2r}(\eta)$ with norm not exceeding $C_2(B,M)\Theta(R)$. 
Recalling also that $|f(x)|\lesssim p(R)\lesssim \Theta(R)$ for all $x\in R$, it follows that
$$\int|\RR\eta + f|^{2r}\,d\eta\leq C_3(B,M)\Theta(R)^{2r}\,\eta(R).$$
By combining the last three estimates we obtain
$$\Theta(R)^r\,\eta(R) \leq C(A,B,M,r) \bigl (\Theta(R)^{2r}\,\eta(R)\bigr)^{1/2}\eta(G)^{1/2},$$
which gives $\eta(G)\geq C_{6}(A,B,M,r)\eta(R)$.
\end{proof}

\vv

\begin{proof}[\bf Proof of Lemma \ref{lemtrac}]
Let $G$ be the collection of the cubes $P\in\sss_{2}(\TT)$ such that there exists some $x\in P$
satisfying 
\begin{equation}\label{eqffi2}
|\RR\eta(x)+f(x)|\geq C_7 A^{1/r}\,\bigl (p(P) + q(P,\TT)\bigr) + C_8(A,B)\,\Theta(R),
\end{equation}
where $C_7,C_8$ are the constant implicit in \rf{eqda59}. By the preceding lemma,
$$\mu\biggl (\bigcup_{P\in G} P\biggr)= \eta\biggl (\bigcup_{P\in G} P\biggr)\geq C(A,B,M)\,\eta(R)
= C(A,B,M)\,\mu(R).$$
It is easily checked that for $x\in P\in G$,
$$|\RR (\chi_{P} \,\eta)(x)|\lesssim \Theta(P)\leq p(P).$$
Further, by Lemma \ref{lemdifer}, for all $x,y\in P$,
$$|\RR (\chi_{R\setminus P} \,\mu)(x) - \RR (\chi_{R\setminus P} \,\eta)(y)|\lesssim p(P)+ q(P,\TT).$$
Then, for all $y\in P$, by \rf{eqffi2} and the preceding estimate,
\begin{align*}
\bigl|\RR (\chi_{P^c} \,\mu)(y) -  m_R(\RR\mu&)\bigr|  =
|\RR (\chi_{R\setminus P} \,\mu)(y)+f(y)|\\
& \geq C_7 A^{1/r}\,\bigl [p(P) + q(P,\TT)\bigr] + C_8(A,B)\,\Theta(R)
- C\,\bigl (p(P) + q(P,\TT)\bigr)\\
& \geq \frac{C_7}2 A^{1/r}\,p(P)  + C_8(A,B)\,\Theta(R),
\end{align*}
assuming $A$ big enough.

Recall now that by Lemma \ref{lemcomp},
$$\bigl|\RR(\chi_{ P^c}\mu)(y)- m_P(\RR\mu)\bigr|\leq C_{9} p(P).
$$
Therefore, if $A$ is big enough again,
\begin{align*}
\bigl|m_P(\RR\mu)  -  m_R(\RR\mu)\bigr|
&\geq 
\frac{C_7}2 A^{1/r}\,p(P)  + C_8(A,B)\,\Theta(R) - C_{9}p(P) \\
& \geq 
 C_8(A,B)\,\Theta(R).
\end{align*}
Since, for $x\in P\in\sss_{2}(\TT)$,
$$\sum_{Q\in\TT} D_Q(\RR \mu)(x) = m_P(\RR\mu)  -  m_R(\RR\mu),$$
we infer that
\begin{align*}
\sum_{Q\in\TT} \|D_Q(\RR \mu)\|_{L^2(\mu)}^2 & \geq
\int_{G} \Bigl|\sum_{Q\in\TT} D_Q(\RR \mu)\bigr|^{2}\,d\mu \\
& \geq C_8(A,B)^2\,\Theta(R)^2\,\mu(G)
\geq C(A,B,M)\,\Theta(R)^2\,\mu(R).
\end{align*}
\end{proof}


\section{Proof of Main Theorem}

We now finish the proof of Theorem \ref{teopri}.
We need to prove $$ \displaystyle \sigma( \mathcal  D)\lesssim \sum_{Q\in \mathcal D} \|D_Q(\RR \mu)\|_{L^2(\mu)}^2 .$$ 
First note that by stopping time arguments we can make a partition of $\DD$ into disjoint maximal trees, i.e.\ trees of type $W$, $LW$, $MDec$ and $TInc$. So we can write
$$
 \sigma( \mathcal  D)=  \sigma( W)+  \sigma( LW)+ \sigma( MDec)+ \sigma( TInc),
$$
where, abusing notation, we have identified $W$, $LW$, $MDec$ and $TInc$ with the cubes contained
in the trees of type $W$, $LW$, $MDec$ and $TInc$, respectively.

We need the following series of lemmatas.

\begin{lemma}\label{l.sigmamdec}
Let $\TT_{0}$ be the maximal tree with root $Q^0$. Then we have
\begin{equation} \label{e.sigmadecro}
\sigma(MDec\setminus \TT_{0}) \leq C(A,B,M,\delta_{0})( \sigma( W)+ \sigma( LW)+ \sigma( TInc)) \quad \text{if  }  \TT_{0}\in MDec,
\end{equation}
and
\begin{equation} \label{e.sigmadec}
\sigma(MDec) \leq C(A,B,M,\delta_{0})( \sigma( W)+ \sigma( LW)+ \sigma( TInc)) \quad \text{if  }  \TT_{0}\notin MDec.
\end{equation}
\end{lemma}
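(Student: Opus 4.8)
The plan is to estimate the $\sigma$-mass of the family $MDec$ (or $MDec\setminus\TT_0$) by tracking where the mass ``escapes'' when we pass from an $MDec$-enlarged tree to its stopping cubes. The key structural fact is that each maximal $MDec$-enlarged tree $\widetilde\TT$ is built, by construction {\bf(1)}, as a maximal union of $D\sigma$ simple trees; its stopping collection $\sss(\widetilde\TT)$ consists of cubes $P$ such that $\tree(P)$ is \emph{not} $D\sigma$, i.e.\ $\tree(P)$ is either $W$, $LW$-initial (i.e.\ $I\sigma$ or $S\sigma$), in short the roots of $\sss(\widetilde\TT)$ are roots of trees of type $W$, $LW$, or $TInc$ (recall $MDec$-enlarged trees only absorb $D\sigma$ pieces, so the algorithm stops precisely at non-$D\sigma$ roots). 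First I would record the basic decay estimate for a single enlarged tree: since at each generation of the construction the $\sigma$-mass of the stopping cubes of a $D\sigma$ tree is at most $A^{-1}$ times the $\sigma$-mass of its root, iterating over the layers $\widetilde\TT_1\subset\widetilde\TT_2\subset\cdots$ gives, by a geometric series in $A^{-1}$ (using $A>10$),
\begin{equation*}
\sigma(\widetilde\TT)\leq C(A,B,M,\delta_0)\,\sigma(\sss(\widetilde\TT)).
\end{equation*}
Here one uses Lemma~\ref{lemdif1} to pass from $\sigma(\tree(P))$ to $\Theta(P)^2\mu(P)=\sigma(\{P\})$ for each $D\sigma$ piece $\tree(P)$ encountered, exactly as in the proof of Lemma~\ref{l.sigmatinc}; the constant $C$ absorbs the $\sum_k A^{-k}$ and the $c(B,\delta_0,M)$ from Lemma~\ref{lemdif1}.

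Next I would globalize. Summing the previous display over all maximal $MDec$-enlarged trees $\widetilde\TT$ (with the convention that when $\TT_0\in MDec$ we omit $\widetilde\TT_0$, the one rooted at $Q^0$), and using that the stopping cubes of one $MDec$ tree are the roots of maximal trees of type $W$, $LW$, or $TInc$ — and that these roots are \emph{disjoint} across different $MDec$ trees, since the maximal trees partition $\DD$ — we obtain
\begin{equation*}
\sigma(MDec\setminus\TT_0)\leq C(A,B,M,\delta_0)\!\!\sum_{\substack{\TT'\in W\cup LW\cup TInc}}\!\!\sigma(\{\roo(\TT')\}).
\end{equation*}
Finally, for each such $\TT'$ one has trivially $\sigma(\{\roo(\TT')\})\leq\sigma(\TT')$, and summing over the three families (again using disjointness of the trees) yields $\sigma(MDec\setminus\TT_0)\leq C(A,B,M,\delta_0)\,(\sigma(W)+\sigma(LW)+\sigma(TInc))$, which is \eqref{e.sigmadecro}. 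For \eqref{e.sigmadec}, when $\TT_0\notin MDec$, the root $Q^0$ itself lies in a $W$, $LW$, or $TInc$ tree, so no tree needs to be removed and the same argument gives the bound on all of $MDec$.

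The main obstacle I anticipate is bookkeeping the chain of $MDec$ trees correctly: an $MDec$-enlarged tree can have stopping cubes that are roots of \emph{other} $MDec$ trees only if those inherit $D\sigma$ status from further down — but by maximality of the $MDec$-enlargement this cannot happen, so every cube in $\sss(\widetilde\TT)$ is genuinely a root of a $W$, $LW$, or $TInc$ tree, and there is no infinite regress within $MDec$. Making this precise (that the ``escaping mass'' of $MDec$ is entirely captured by the roots of the other three families, with no double counting) is the delicate point; the geometric decay in $A^{-1}$ and Lemma~\ref{lemdif1} then make the summation routine. One subtlety worth flagging is that the root $Q^0$ may itself start a $D\sigma$ chain that is never ``closed off'' by a non-$D\sigma$ stopping cube if the construction terminates; this is exactly why $\TT_0$ must be excluded in \eqref{e.sigmadecro}, and why the statement splits into the two cases.
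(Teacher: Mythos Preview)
There is a genuine gap in your argument: the inequality
\[
\sigma(\widetilde\TT)\leq C(A,B,M,\delta_0)\,\sigma(\sss(\widetilde\TT))
\]
is false in general, and the geometric series you invoke proves exactly the \emph{opposite} comparison. For a $D\sigma$ simple tree with root $P$ one has $\sigma(\sss(P))<A^{-1}\sigma(\{P\})$, so iterating through the layers of the enlargement yields $\sigma(\widetilde\TT)\leq C\,\sigma(\{\roo(\widetilde\TT)\})$, with the mass controlled by the \emph{root}, not by the final stopping collection. Indeed $\sigma(\sss(\widetilde\TT))$ can be arbitrarily small compared to $\sigma(\widetilde\TT)$: if every simple tree encountered is $D\sigma$ (for instance, if at every stage all stopping cubes are of type $LD$), the enlargement never terminates and $\sss(\widetilde\TT)$ may be empty while $\sigma(\widetilde\TT)\geq\sigma(\{\roo(\widetilde\TT)\})>0$.

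The paper's argument exploits the correct direction of the decay. One first shows $\sigma(\widetilde\TT)\leq C\,\sigma(\{\roo(\widetilde\TT)\})$ via the geometric series (this is your computation, with the right conclusion). Then one observes that if $\widetilde\TT\in MDec$ and $\roo(\widetilde\TT)\neq Q^0$, the root $R=\roo(\widetilde\TT)$ must lie in $\sss(\TT')$ for some maximal tree $\TT'$; this $\TT'$ cannot itself be $MDec$ (since the stopping cubes of an $MDec$ tree are roots of non-$D\sigma$ simple trees, whereas $\tree(R)$ is $D\sigma$), so $\TT'\in W\cup LW\cup TInc$. Finally, $R\notin\TT'$, but its parent $R^1$ is in $\TT'$; using $\Theta(R)\lesssim\Theta(R^1)$ and the bounded number of children per cube, one gets $\sum_{R}\sigma(\{R\})\lesssim\sigma(\TT')$ and the lemma follows by summing over $\TT'$. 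Your ``downward'' linkage (stopping cubes of $MDec$ trees are roots of the other types) is correct, but it cannot be combined with the decay, which goes upward; the paper instead uses the ``upward'' linkage (roots of $MDec$ trees are stopping cubes of the other types), which matches the direction of the estimate.
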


Recall that $Q^0$ is the largest cube from $\DD$. That is, it is the starting cube in the construction of $E$.

\begin{proof}
We will prove \eqref{e.sigmadecro}, as \eqref{e.sigmadec} follows by an almost identical argument.  Let $\TT \in MDec$ and let $R=\roo(\TT)$. We first note that
\begin{equation}\label{e.estimatedec}
\sigma(\TT)\leq C(B,A) \sigma(R)
\end{equation}
To prove this we define $\mathcal S_{0}(\TT):=\{R\}$, and in general $\mathcal S_{k}(\TT)$
is the collection of the cubes from $\ttt\cap \,\TT\setminus \bigcup_{j=0}^{k-1} \mathcal S_{j}$
which are maximal.
Since $\TT \in MDec$, then $\tree(R')$ is $D\sigma$ for all $R'\in \ttt\cap\, \TT$. Therefore,
$\sigma(\sss(R'))\leq A^{-1} \sigma(R)$ and also $\sigma(\mathcal S_{k}(\TT))\leq A^{-1} \sigma( \mathcal S_{k-1}(\TT))$. Iterating this estimate we get
$$
\sigma(\mathcal S_{k}(\TT))\leq A^{-k} \sigma(R).
$$
And now the proof of \eqref{e.estimatedec} follows easily using Lemma \ref{lemdif1}:
\begin{align*}
\sigma(\TT)&\leq c'(B,\delta_0,M) \sum_{R'\in \ttt\cap \TT} \sigma(R') \leq c'(B,\delta_0,M) \sum_{k=0}^{\infty} \sigma(\mathcal S_{k}(\TT))\\
&\leq c'(B,\delta_0,M) \sum_{k=0}^{\infty}A^{-k} \sigma(R)\leq c'(B,\delta_0,M,A) \sigma(R).
\end{align*}

Next we observe that since $R=\roo(\TT)\neq Q_{0}$ and $\TT\in MDec$, then there exists a tree $\TT'\in W\cup LW\cup TInc$  such that $R\in \sss(\TT')$. We use this fact together with \eqref{e.estimatedec} to finish the proof of the lemma.

\begin{align*}
\sigma(MDec\setminus \TT_{0})  &= \sum_{\TT\in MDec\setminus \TT_{0}}\sigma(\TT)\\
 &\leq c'(B,\delta_0,M,A) \sum_{\substack{R=\roo(\TT)\\ \TT\in MDec\setminus \TT_{0}}}\sigma(R)\\
 &\leq C(B,\delta_0,M,A) \sum_{\TT'\in W\cup LW\cup TInc} \sigma(\TT').
 \end{align*}
The second inequality follows from the fact that any cube $Q\in\DD$ satisfies
 $\Theta(Q)\lesssim\Theta(Q^1)$, since the $\ell(Q)\approx\ell( Q^1)$ where $Q^1$ is the parent cube of $Q$.

\end{proof}

\begin{lemma} \label{l.rodec}
Let $\TT_{0}$ be the maximal tree with $\roo(\TT_{0})= Q^{0}$. If  $\TT_{0}\in MDec$, then
\begin{equation*} 
\sigma(\TT_{0})\lesssim \| \RR \mu \|_{L^2(\mu)}^2.
\end{equation*}
\end{lemma}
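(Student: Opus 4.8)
The plan is to reduce matters, via the estimate \eqref{e.estimatedec} from the proof of Lemma \ref{l.sigmamdec} (applied to $\TT_0$, which belongs to $MDec$), to the single inequality
$$\Theta(Q^0)^2\,\mu(Q^0)\lesssim\|\RR\mu\|_{L^2(\mu)}^2 ,$$
since that estimate gives $\sigma(\TT_0)\le C(A,B,M,\delta_0)\,\sigma(\{Q^0\})=C(A,B,M,\delta_0)\,\Theta(Q^0)^2\mu(Q^0)$. We may assume the right-hand side finite. The crucial feature of the top scale is that the external field $f=\RR(\chi_{(Q^0)^c}\mu)-m_{Q^0}(\RR\mu)$ appearing in \eqref{eqdeff} vanishes identically: indeed $\supp\mu\subset Q^0$, so $\chi_{(Q^0)^c}\mu=0$, and by the antisymmetry of the truncated kernels $\int\RR_\varepsilon\mu\,d\mu=0$, hence $m_{Q^0}(\RR\mu)=0$ and in particular $\|\RR\mu\|_{L^2(\mu)}^2=\sum_{Q\in\DD}\|D_Q(\RR\mu)\|_{L^2(\mu)}^2$.

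The main step is to run the variational argument of Section~7 ``at the root''. Since every simple tree occurring in the $MDec$ tree $\TT_0$ is of type $D\sigma$, the quantities $\sigma(\cdot)$ decay geometrically along $\TT_0$, and this decay (precisely what underlies \eqref{e.estimatedec}) is what controls the approximating measure. Concretely, let $\eta=\sum_P\frac{\mu(P)}{\LL^d(P)}\,\LL^d\rest P$, the sum running over the stopping family formed by the roots of the maximal $W$, $LW$ and $TInc$ trees hanging below $\TT_0$, together with the maximal cubes of density $\ge 2B\,\Theta(Q^0)$; this truncation of $\TT_0$ at density $2B\,\Theta(Q^0)$ guarantees that $\eta(B(x,r))\lesssim_{A,B}\Theta(Q^0)\,r^s$ for all $r\ge\Phi(x)$, where $\Phi(x)$ is the side length of the stopping cube through $x$. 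Using this density bound and the oscillation estimate \eqref{eqc392} summed along the chain of tree-roots through a point, one checks, by an application of Theorem~\ref{teontv}, that the suppressed operator $\RR_{\Phi,\eta}$ is bounded in $L^r(\eta)$ for every $1<r<\infty$ with norm $\lesssim_{A,B,M}\Theta(Q^0)$. This is the analogue for the top tree of Lemmas~\ref{lemrieszmax}, \ref{lemboundeta} and \ref{l.t1eta}, and it is the main obstacle: one must verify, via the geometric $\sigma$-decay, that the high-density ``fingers'' of $\TT_0$ carry negligible mass, so that the right constant $\Theta(Q^0)$ (rather than merely $\sup_Q\Theta(Q)$) is obtained.

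With this boundedness in hand, the contradiction scheme of Lemma~\ref{lemvar1} applies verbatim with $R=Q^0$, $f\equiv 0$, $H\approx Q^0$ and $\psi=\Theta(Q^0)\,\psi_{Q^0}$: assuming $\|\RR\eta\|_{L^r(\eta)}^r\le\lambda\,\Theta(Q^0)^r\mu(Q^0)$ with $\lambda$ small, one minimizes the functional $F$ of \eqref{e.functional}, invokes the maximum principle (Lemma~\ref{lemmaxprin}) for the minimizer, integrates the resulting pointwise inequality against $\wt\psi\,\LL^d$, and uses the (carried-over) estimates \eqref{eqacpsi} and \eqref{eqclam43} to force $\bigl|\int\RR\nu\,\psi\,d\LL^d\bigr|\lesssim\lambda^{1/(rr')}\Theta(Q^0)\mu(Q^0)$, contradicting $\bigl|\int\RR\nu\,\psi\,d\LL^d\bigr|=\bigl|\int\vphi\,d\nu\bigr|\approx\Theta(Q^0)\mu(Q^0)$ for admissible $\nu$. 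Here the vanishing of $f$ is exactly what replaces the gain of a power of $B$ available in Lemma~\ref{lemvar1} through the presence of the $HD_1$ level; at the top there is no such level, but none is needed. One concludes that for some fixed $1<r<2$,
$$\|\RR\eta\|_{L^r(\eta)}^r\gtrsim_r\Theta(Q^0)^r\,\mu(Q^0).$$

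Finally I would transfer this lower bound to $\mu$. By Lemmas~\ref{lemdifer} and \ref{lemdifer22}, for $x,x'$ in a common stopping cube $P$ one has $|\RR(\chi_{Q^0\setminus P}\mu)(x)-\RR(\chi_{Q^0\setminus P}\eta)(x')|\lesssim p(P,Q^0)+q(P,\TT_0)$, and $\sum_P\bigl(p(P,Q^0)+q(P,\TT_0)\bigr)^r\mu(P)\lesssim_{A,B}\Theta(Q^0)^r\mu(Q^0)$; combining these with the previous display (choosing $\lambda$, and if necessary the truncation threshold, so that the implicit constant beats the error, exactly as in the passage from Lemma~\ref{lemvar1} to Lemmas~\ref{lem22} and \ref{lemtrac}) yields $\|\RR\mu\|_{L^r(\mu)}^r\gtrsim\Theta(Q^0)^r\mu(Q^0)$. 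Since $r<2$, Hölder's inequality gives $\Theta(Q^0)^r\mu(Q^0)\lesssim\|\RR\mu\|_{L^r(\mu)}^r\le\|\RR\mu\|_{L^2(\mu)}^r\,\mu(Q^0)^{1-r/2}$, that is $\Theta(Q^0)^2\mu(Q^0)\lesssim\|\RR\mu\|_{L^2(\mu)}^2$, and hence $\sigma(\TT_0)\lesssim\|\RR\mu\|_{L^2(\mu)}^2$, as desired.
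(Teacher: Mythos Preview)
Your approach is genuinely different from the paper's and contains a real gap.

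The paper's proof is short and elementary. After reducing via \eqref{e.estimatedec} to $\sigma(\{Q^0\})\lesssim\|\RR\mu\|_{L^2(\mu)}^2$, it applies Lemma~\ref{lemtouch} to $R'=Q^0$ to find $Q_u\in\sss_0(Q^0)$ with $\ell(Q_u)\geq\delta\,\ell(Q^0)$ and $\mu(Q_u)\gtrsim\mu(Q^0)$. If $Q_u$ is of $BR$ type, the oscillation $|m_{Q_u}\RR\mu-m_{Q^0}\RR\mu|\gtrsim M\,\Theta(Q^0)$ gives $\|D_{Q^0}(\RR\mu)\|_{L^2(\mu)}^2\gtrsim\Theta(Q^0)^2\mu(Q^0)$ directly. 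If $Q_u$ is of $HD$ or $LD$ type, one checks (using the $D\sigma$ condition in the $HD$ case) that $\mu(Q_u)\leq\frac12\mu(Q^0)$, picks $Q_d\subset Q^0\setminus Q_u$ of the same generation with $\mu(Q_d)\gtrsim\mu(Q^0)$, and a short antisymmetry computation (Lemma~\ref{l.twosepcubes}) shows that two well-separated cubes of comparable mass force $\|\RR\mu\|_{L^2(\mu)}^2\gtrsim\Theta(Q^0)^2\mu(Q^0)$. No variational machinery is needed.

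The gap in your route is the transfer from $\eta$ to $\mu$. You are right that $f\equiv0$ removes the $B^{-r}$ term in the contradiction of Lemma~\ref{lemvar1}, so one would obtain $\|\RR\eta\|_{L^r(\eta)}^r\geq c\,\Theta(Q^0)^r\mu(Q^0)$. But the constant $c$ here depends on the $L^r(\eta)$-norm of $\RR_\eta$ (through the analogues of \eqref{eqacpsi} and \eqref{eqcla491}), hence on $B$ and $M$; it is \emph{not} large. In Section~7 the passage to $\mu$ (Lemma~\ref{lem22}) succeeds only because of the extra factor $A$ in \eqref{equarir}, which comes from the tractable-tree conditions \eqref{e.1stop}--\eqref{e.2stop} and is what allows one to absorb the error $p+q$ (note the factor $A^{1/r}$ in \eqref{eqda59}). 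At the root of an $MDec$ tree there is no such gain, while the error $\sum_P\bigl(p(P)+q(P,\TT_0)\bigr)^r\mu(P)$ is of order $C(B)\,\Theta(Q^0)^r\mu(Q^0)$ (your stopping family contains cubes of density up to $2B\,\Theta(Q^0)$). So the error dominates the signal and the argument does not close. A secondary issue is the asserted boundedness of $\RR_\eta$ in $L^r(\eta)$ with norm $\lesssim\Theta(Q^0)$: the estimate \eqref{eqc392} holds only within a single simple tree, and summing oscillations along a chain of roots inside $\TT_0$ may accumulate unboundedly many contributions of size $M\,\Theta(Q^0)$ from $BR$-type intermediate roots; the $\sigma$-decay of $MDec$ bounds the \emph{mass} of such regions, not their depth at a fixed point.
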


The proof of the Lemma \ref{l.rodec} will require the following result, whose proof we postpone.

\begin{lemma}\label{l.twosepcubes}
Let $Q^{0}$ be a cube that contains the support of measure $\mu$. Suppose there exist cubes $Q_{u}, Q_{d} \subset Q^0$ such that $\dist(Q_{u}, Q_{d} )\approx \ell(Q^0 )$ and $\mu(Q_{u})\approx \mu(Q_{d})\approx \mu(Q^0 )$. Then 
$$\|\RR\mu\|_{L^2(\mu)}^2\geq C\,\sigma(Q^0).$$
 \end{lemma}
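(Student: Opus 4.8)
The plan is to test $\RR\mu$ against a carefully chosen vector field, exploiting the antisymmetry of $K^s$ together with a convexity trick that removes all cancellation. Fix a $1$-Lipschitz \emph{convex} function $\Psi\colon\R^d\to\R$ with $|\nabla\Psi|=1$ $\mu$-a.e.\ --- for concreteness $\Psi(z)=|z-o|$ for a point $o$ to be chosen below --- and put $G=\nabla\Psi$, so that $|G|=1$ $\mu$-a.e.\ and $\|G\|_{L^2(\mu)}^2\le\mu(Q^0)$. Working with the truncations $\RR_\ve\mu$ for $\ve<\dist(Q_u,Q_d)$ (which will not affect the estimates below) and taking the supremum over $\ve$, Cauchy--Schwarz gives
$$\|\RR\mu\|_{L^2(\mu)}^2\ \ge\ \frac{|\langle \RR_\ve\mu,G\rangle_{L^2(\mu)}|^2}{\|G\|_{L^2(\mu)}^2}\ \ge\ \frac{|\langle \RR_\ve\mu,G\rangle_{L^2(\mu)}|^2}{\mu(Q^0)},$$
so, since $\mu(Q^0)^4\big/\bigl(\ell(Q^0)^{2s}\mu(Q^0)\bigr)=\Theta(Q^0)^2\mu(Q^0)=\sigma(Q^0)$, it suffices to prove $|\langle \RR_\ve\mu,G\rangle_{L^2(\mu)}|\gtrsim\mu(Q^0)^2/\ell(Q^0)^s$.

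By the antisymmetry $K^s(x-y)=-K^s(y-x)$ and symmetrizing in $x,y$,
$$\langle \RR_\ve\mu,G\rangle_{L^2(\mu)}=\frac12\iint_{|x-y|>\ve} K^s(x-y)\cdot\bigl(G(x)-G(y)\bigr)\,d\mu(x)\,d\mu(y).$$
Since $\Psi$ is convex, its gradient is monotone, so $(x-y)\cdot(G(x)-G(y))\ge0$ and hence $K^s(x-y)\cdot(G(x)-G(y))\ge0$ pointwise; for the radial choice $\Psi(z)=|z-o|$ one has the explicit identity
$$K^s(x-y)\cdot\bigl(G(x)-G(y)\bigr)=\frac{\bigl(|x-o|+|y-o|\bigr)\bigl(1-\cos\angle(x-o,\,y-o)\bigr)}{|x-y|^{s+1}}\ \ge\ 0.$$
Thus there is no cancellation whatsoever, and we may bound $\langle \RR_\ve\mu,G\rangle_{L^2(\mu)}$ from below by restricting the double integral to the pairs in $(Q_u\times Q_d)\cup(Q_d\times Q_u)$ (the integrand is symmetric in $x,y$, so these two pieces are equal, and for $\ve<\dist(Q_u,Q_d)$ the constraint $|x-y|>\ve$ is vacuous there). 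For $x\in Q_u$, $y\in Q_d$ we have $\dist(Q_u,Q_d)\le|x-y|\le\diam Q^0=\ell(Q^0)$, hence $|x-y|\approx\ell(Q^0)$; and $|x-o|+|y-o|\ge|x-y|\gtrsim\ell(Q^0)$ by the triangle inequality. Therefore the integrand is $\gtrsim\ell(Q^0)^{-s}\bigl(1-\cos\angle(x-o,y-o)\bigr)$, and everything reduces to choosing the pole $o$ so that $\angle(x-o,y-o)$ is bounded away from $0$ for a set of pairs $(x,y)\in Q_u\times Q_d$ of $\mu\times\mu$-measure $\gtrsim\mu(Q_u)\,\mu(Q_d)\approx\mu(Q^0)^2$; with such a choice $\langle\RR_\ve\mu,G\rangle_{L^2(\mu)}\gtrsim\mu(Q^0)^2/\ell(Q^0)^s$, as required.

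The construction of $o$ and the angle bound are the crux of the proof, and I expect them to be the main obstacle. A natural attempt is to take $o\in\supp\mu\cap Q_d$: then every $x\in Q_u$ has $|x-o|\ge\dist(Q_u,Q_d)\gtrsim\ell(Q^0)$, so $x$ is far from the pole, while one must discard the part of $Q_d$ lying too near $o$ --- but by the $s$-growth of $\mu$ (a consequence of $\sup_{Q}\Theta(Q)\le C$) one has $\mu(B(o,\rho))\le C\rho^s\le\tfrac1{10}\mu(Q^0)$ for some $\rho\approx(\mu(Q^0)/C)^{1/s}\lesssim\ell(Q^0)$, so $Q_d\setminus B(o,\rho)$ still has $\mu$-mass $\gtrsim\mu(Q^0)$. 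One should moreover pick $o$, via a median (Fubini) argument in a direction $e$ separating $Q_d$ from $Q_u$, so that a definite fraction of $\mu\rest Q_d$ lies on the side of $o$ pointing \emph{away} from $Q_u$; for such $y$ the vector $y-o$ points away from $Q_u$ while $x-o$ points from $Q_d$ toward $Q_u$, which forces $\angle(x-o,y-o)$ to be bounded below. The delicate point is ruling out degenerate configurations in which $\mu\rest Q_u$ or $\mu\rest Q_d$ is concentrated along a ray emanating from the chosen pole (so that $x-o$ and $y-o$ are nearly parallel); this is handled by the freedom in the choice of $\Psi$: if a pole inside $Q_d$ fails, a pole inside $Q_u$ works, and in general one may instead use a piecewise-linear convex $\Psi$ (a maximum of finitely many affine functions, adapted to the relative position of $Q_u$ and $Q_d$), for which $G=\nabla\Psi$ is a piecewise-constant unit vector field still satisfying the non-negativity identity.
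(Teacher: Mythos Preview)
Your symmetrization framework (test $\RR\mu$ against $G=\nabla\Psi$ for a convex $\Psi$, use antisymmetry of $K^s$ and monotonicity of $\nabla\Psi$ to get a nonnegative integrand, then restrict to $Q_u\times Q_d$) is correct and is exactly the mechanism behind the paper's proof --- but you have chosen the hardest instance of it. The paper takes the unit vector $u$ along the shortest segment from $Q_d$ to $Q_u$, passes to descendants so that $\dist(Q_u,Q_d)>2(\ell(Q_u)+\ell(Q_d))$, splits $Q^0$ by the perpendicular bisecting hyperplane into half-spaces $D\supset Q_d$ and $U\supset Q_u$, and computes $\int_D\langle\RR\mu,u\rangle\,d\mu$. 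Antisymmetry kills the $D\times D$ part, the $D\times U$ integrand has constant sign because $\langle x-y,u\rangle$ does, and on $Q_d\times Q_u$ one has $|\langle x-y,u\rangle|\gtrsim\ell(Q^0)$ and $|x-y|\le\ell(Q^0)$, giving $|\int_D\RR^u\mu\,d\mu|\gtrsim\mu(Q^0)^2/\ell(Q^0)^s$ and hence the lemma by Cauchy--Schwarz. In your language this is precisely the piecewise-linear choice $\Psi(z)=|\langle z-m,u\rangle|$ that you relegate to a last-resort fallback; it is the natural first choice and makes the whole angle discussion disappear.

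Your main attempt with $\Psi(z)=|z-o|$ and $o\in Q_d$ has the gap you yourself flag, and the patch you propose (``if a pole in $Q_d$ fails, a pole in $Q_u$ works'') is false: if $\supp\mu$ lies near a single line through both cubes, every pole on that line gives angles close to $0$ or $\pi$ only when the points are on opposite sides, but with $o\in Q_d$ most of $Q_d$ may sit on the same side as $Q_u$, and the symmetric problem occurs with $o\in Q_u$. If you insist on the radial $\Psi$, put the pole at the \emph{midpoint} $o$ of the shortest segment between the cubes, not inside either cube: after shrinking so that $\ell(Q_u)+\ell(Q_d)$ is a small multiple of $\dist(Q_u,Q_d)$, a direct computation gives $(x-o)\cdot(y-o)<0$ for every $x\in Q_u$, $y\in Q_d$, hence $1-\cos\angle(x-o,y-o)\ge1$ uniformly, and your argument goes through with no case analysis. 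Either way the conclusion is the same, but the paper's linear choice is the clean route.
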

 
\begin{proof}[Proof of Lemma \ref{l.rodec}]

Applying Lemma \ref{lemtouch} to $R':=Q^0$, we deduce that there exist $\delta$ and $\eta$ such that
$$
\mu\biggl(\bigcup_{Q\in \SSS(Q^0 )} Q\biggr)\geq \eta \mu(Q^0 ),
$$
where $\SSS(Q^0)= \{Q\in \sss_{0}(Q^0): \,\, \ell(Q)\geq \delta \ell(Q^0 ) \}$. Let $Q_{u}\in \SSS(Q^0 )$ with maximal $\mu$ measure, then
\begin{equation}\label{e.mass}
\mu(Q_{u})\gtrsim \delta^{d}\eta \, \mu(Q^0 ).
\end{equation}

We now distinguish three possible cases.
In the first one, let us suppose $Q_{u} \in BR(Q^0 )$. Then part (b) of Lemma \ref{lemt0}  and \eqref{e.mass} gives
\begin{align*}
 \| D_{Q^0 }\RR \mu \|_{L^2(\mu)} \mu(Q^0 )^{1/2} & \geq \int_{Q^0 }| D_{Q^0 }\RR \mu |d\mu \\
 										& \geq \int_{Q_{u}}|m_{Q_{u}}\RR \mu- m_{Q^0 }\RR \mu |d\mu \\
										& \geq \frac{M}{2}\Theta(Q^0 ) \mu(Q_{u}) 
										 \geq \frac{M}{2} \delta^{d}\eta \, \Theta(Q^0 ) \mu(Q^0 ).
\end{align*}
We use this estimate to get the desired result:
$$
 \| \RR \mu \|_{L^2(\mu)}^{2} \geq  \| D_{Q^0 }\RR \mu \|_{L^2(\mu)}^{2}\geq \frac{M^{2}}{4} \delta^{2d}\eta^{2}\sigma(Q^0 ).
 $$
 
In the second case, let us assume $Q_{u}\in HD(Q^0 )$. We claim that $\mu(Q_{u})\leq \frac{1}{2}\mu(Q^0 )$.  Suppose that is not the case, then 
$$
\sigma(\sss(Q^0 ))\geq \sigma(Q_{u})> \frac{B^{2}}{2}\Theta(Q^0 )^{2}\mu(Q^0 ).
$$
This contradicts the fact that the tree is $\sigma$-decreasing. Therefore $\mu(Q_{u})\leq \frac{1}{2}\mu(Q^0 )$ and obviously $\mu(Q^0 \setminus Q_{u})\geq \frac{1}{2}\mu(Q^0 )$. Consider the cube $Q_{d}$ such that $Q_{d}\subset Q^0 \setminus Q_{u}$,  $\ell(Q_{d})=\ell(Q_{u})$ and $\mu(Q_{d})$ is maximal. Then $\mu(Q_{d})\approx \mu(Q^0 )$ with constant depending on $\delta$ and by the separation condition \eqref{eqsepara} $\dist(Q_{u}, Q_{d}) \approx \ell(Q_{u}) \approx \ell(Q^0 )$ with constant also depending on $\delta$. We see that the cubes $Q_{u},Q_{d}$ verify the assumptions of Lemma \ref{l.twosepcubes}, therefore we conclude
$$
 \| \RR \mu \|_{L^2(\mu)}^2\geq C\sigma(Q^0 ).
 $$
 
 The third case is very similar to the previous one. Let us now assume that $Q_{u}\in LD(Q^0 )$, then $\Theta(Q_{u})\leq \delta_{0} \Theta(Q^0 )$, which in particular gives that  $\mu(Q_{u})\leq \delta_{0} \mu(Q^0 )$. We now proceed as in the second case to obtain the desired estimate.
\end{proof}
\vv

\begin{proof}[Proof of Lemma \ref{l.twosepcubes}]  
By replacing $Q_u$ and $Q_d$ by suitable descendants, we may assume that
\begin{equation}\label{eqdaqo39}
\ell(Q^0)\approx\dist(Q_u,Q_d)>2\bigl(\ell(Q_u) + \ell(Q_d)\bigr).
\end{equation}
Let $L_{1}$ be the shortest segment that joins the cubes $Q_{u}$ and $Q_{d}$ and let us call $u$ a unit vector parallel to the segment. Let  $\mathcal H_{2}$ be the hyperplane that is perpendicular to $L_{1}$ and passes through the middle point of $L_{1}$. Then $\mathcal H_{2}$ divides $Q^0 $ in two regions, $D$ and $U$, so that by \rf{eqdaqo39} $D$ contains $Q_d$ and $U$ contains $Q_u$, say.  
Let us denote by $\RR^u$ the singular integral operator associated with the kernel $K^s(x-y)\cdot u$. By
the antisymmetry of the kernel, we have
\begin{align*}
 \| \RR \mu \|_{L^2(\mu)}\, \mu(D)^{1/2} &   \geq 
 \left|\int_{D}\RR^{u}\mu\,d\mu\right| =
 \left|\int_{D}\RR^{u}(\chi_{U}\mu)d\mu\right|\\
								&=\int_{Q_{u}}\int_{Q_{d}} 
								\frac{|\langle x-y, u\rangle|}{|x-y|^{s+1}}d\mu(x)d\mu(y)\\
								&\gtrsim \int_{Q_{u}}\int_{Q_{d}} 
								\frac{\dist(Q_{u},Q_{d})}{\ell(Q^0 )^{s+1}}d\mu(x)d\mu(y)\\
								&= \frac{\dist(Q_{u},Q_{d})}{\ell(Q^0 )^{s+1}}\mu(Q_{u})\mu(Q_{d})
								\approx \frac{\mu(Q^0 )^{2}}{\ell(Q^0 )^{s}}.
\end{align*}
 From the above estimates the lemma follows.
\end{proof}

\vv
Theorem \ref{teopri} is an immediate consequence of the preceding results. 
Indeed, by Lemmas \ref{l.sigmamdec} and \ref{l.rodec}
we have
$$\sigma(MDec) \leq C(A,B,M,\delta_{0})\left( \sigma( W)+ \sigma( LW)+ \sigma( TInc)
+ \|\RR \mu \|_{L^2(\mu)}^2
\right).$$
Thus,
$$\sigma(\DD)
\leq 
C(A,B,M,\delta_{0})\left( \sigma( W)+ \sigma( LW)+ \sigma( TInc)
+ \|\RR \mu \|_{L^2(\mu)}^2
\right).$$
By Lemmas \ref{lemtw} and \ref{lemtame} we know that 
\begin{align*}
\sigma( W)+ \sigma( LW)+ \sigma( TInc)& \leq C(A,B,\delta_0,M,\delta_W)\, \sum_{Q\in\DD}\|D_Q(\RR \mu)\|_{L^2(\mu)}^2 \\& =C(A,B,\delta_0,M,\delta_W)\,\|\RR \mu \|_{L^2(\mu)}^2,
\end{align*}
and so we are done.

\enlargethispage{1cm}

\begin{bibsection}
\begin{biblist}

\bib{Adams-Hedberg}{book}{
    AUTHOR = {Adams, David R.},
    author= {Hedberg, Lars Inge},
     TITLE = {Function spaces and potential theory},
    SERIES = {Grundlehren der Mathematischen Wissenschaften [Fundamental
              Principles of Mathematical Sciences]},
    VOLUME = {314},
 PUBLISHER = {Springer-Verlag},
   ADDRESS = {Berlin},
      YEAR = {1996},
     PAGES = {xii+366},
}

\bib{David-Semmes-fractured}{book}{
    AUTHOR = {David, Guy}, 
    AUTHOR = {Semmes, Stephen},
     TITLE = {Fractured fractals and broken dreams},
    SERIES = {Oxford Lecture Series in Mathematics and its Applications},
    VOLUME = {7},
      NOTE = {Self-similar geometry through metric and measure},
 PUBLISHER = {The Clarendon Press Oxford University Press},
   ADDRESS = {New York},
      YEAR = {1997},
     PAGES = {x+212},
}

\bib{ENV10}{article}{
   author={Eiderman, V.},
   author={Nazarov, F.},
   author={Volberg, A.},
   title={Vector-valued Riesz potentials: Cartan-type estimates and related
   capacities},
   journal={Proc. Lond. Math. Soc. (3)},
   volume={101},
   date={2010},
   number={3},
   pages={727--758},
}

\bib{ENV12}{article}{
   author={Eiderman, V.},
   author={Nazarov, F.},
   author={Volberg, A.},
   title={The $s$-Riesz transform of an $s$-dimensional measure in $\mathbb R^{2}$ is unbounded for $1 < s < 2$},
 year= {2012},
       eprint = {http://arxiv.org/abs/1109.2260},
}

\bib{MR2975336}{article}{
   author={Eiderman, Vladimir},
   author={Volberg, Alexander},
   title={$L^2$-norm and estimates from below for Riesz transforms on
   Cantor sets},
   journal={Indiana Univ. Math. J.},
   volume={60},
   date={2011},
   number={4},
   pages={1077--1112},
}

\bib{JNV}{article}{
   author={Jaye, B.},
   author={Nazarov, F.},
   author={Volberg, A.},
   title={The fractional Riesz transform and an exponential potential},
   year= {2012},
   eprint = {http://arxiv.org/abs/1204.2135},
}

\bib{MPV}{article}{
   author={Mateu, Joan},
   author={Prat, Laura},
   author={Verdera, Joan},
   title={The capacity associated to signed Riesz kernels, and Wolff
   potentials},
   journal={J. Reine Angew. Math.},
   volume={578},
   date={2005},
   pages={201--223},
}

\bib{MT}{article}{
   author={Mateu, Joan},
   author={Tolsa, Xavier},
   title={Riesz transforms and harmonic ${\rm Lip}_1$-capacity in Cantor
   sets},
   journal={Proc. London Math. Soc. (3)},
   volume={89},
   date={2004},
   number={3},
   pages={676--696},
}

\bib{NToV}{article}{
   author={Nazarov, F.},
   author={Tolsa, X.},
   author={Volberg, A.},
   title={On the uniform rectifiability of AD regular measures with bounded Riesz transform operator: the case of codimension $1$},
   year= {2012},
   eprint = {http://arxiv.org/abs/1212.5229},
}

\bib{NTV}{article}{
   author={Nazarov, F.},
   author={Treil, S.},
   author={Volberg, A.},
   title={The {$Tb$}-theorem on non-homogeneous
   spaces that proves a conjecture of {V}itushkin},
   year= {2002},
   journal={CRM preprint 519},
   eprint = {http://arxiv.org/abs/1401.2479},
}

\bib{Pa}{article}{
    AUTHOR = {Paramonov, P. V.},
     TITLE = {Harmonic approximations in the {$C^1$}-norm},
   JOURNAL = {Mat. Sb.},
    VOLUME = {181},
      date = {1990},
    NUMBER = {10},
     PAGES = {1341--1365},
}

\bib{Pr}{article}{
   author={Prat, Laura},
   title={Potential theory of signed Riesz kernels: capacity and Hausdorff
   measure},
   journal={Int. Math. Res. Not.},
   date={2004},
   number={19},
   pages={937--981},
}

\bib{Laura}{article} {
    AUTHOR = {Prat, Laura},
     TITLE = {On the semiadditivity of the capacities associated with signed
              vector valued {R}iesz kernels},
   JOURNAL = {Trans. Amer. Math. Soc.},
    VOLUME = {364},
      YEAR = {2012},
    NUMBER = {11},
     PAGES = {5673--5691},
       URL = {http://dx.doi.org/10.1090/S0002-9947-2012-05724-2},
}

\bib{Tolsa-sem}{article}{
    AUTHOR = {Tolsa, Xavier},
     TITLE = {Painlev\'e's problem and the semiadditivity of analytic
              capacity},
   JOURNAL = {Acta Math.},
    VOLUME = {190},
      YEAR = {2003},
    NUMBER = {1},
     PAGES = {105--149},
       URL = {http://dx.doi.org/10.1007/BF02393237},
}

\bib{T}{article}{
   author={Tolsa, Xavier},
   title={Calder\'on-Zygmund capacities and Wolff potentials on Cantor sets},
   journal={J. Geom. Anal.},
   volume={21},
   date={2011},
   number={1},
   pages={195--223},
}

\bib{To}{book}{
    AUTHOR = {Tolsa, Xavier},
     TITLE = {Analytic capacity, the {C}auchy transform, and non-homogeneous {C}alder\'on-{Z}ygmund theory},
    SERIES = {Progress in Mathematics},
    VOLUME = {307},
 PUBLISHER = {Birkh\"auser Verlag},
   ADDRESS = {Basel},
      YEAR = {2014},
     PAGES = {xiii+396},
}

\bib{Vih}{article}{
   author={Vihtil{\"a}, Merja},
   title={The boundedness of Riesz $s$-transforms of measures in $\mathbb R^n$},
   journal={Proc. Amer. Math. Soc.},
   volume={124},
   date={1996},
   number={12},
   pages={3797--3804},
}

\bib{Vo}{book}{
   author={Volberg, Alexander},
   title={Calder\'on-Zygmund capacities and operators on nonhomogeneous
   spaces},
   series={CBMS Regional Conference Series in Mathematics},
   volume={100},
   publisher={Published for the Conference Board of the Mathematical
   Sciences, Washington, DC},
   date={2003},
   pages={iv+167},
}

\end{biblist}
\end{bibsection}

\end{document}